\numberwithin{figure}{section}
\def\R{{\mathbb R}}
\def\C{{\mathbb C}}
\def\T{{\mathbb T}}
\def\Z{{\mathbb Z}}
\def\N{{\mathbb N}}
\def\e{\varepsilon}
\def\build#1_#2^#3{\mathrel{
\mathop{\kern 0pt#1}\limits_{#2}^{#3}}}
\def\td_#1,#2{\mathrel{\mathop{\build\longrightarrow_{#1\rightarrow #2}^{}}}}
\DeclareFontFamily{U}{MnSymbolC}{}
\DeclareSymbolFont{MnSyC}{U}{MnSymbolC}{m}{n}
\DeclareFontShape{U}{MnSymbolC}{m}{n}{
    <-6>  MnSymbolC5
   <6-7>  MnSymbolC6
   <7-8>  MnSymbolC7
   <8-9>  MnSymbolC8
   <9-10> MnSymbolC9
  <10-12> MnSymbolC10
  <12->   MnSymbolC12}{}
\DeclareMathSymbol{\intprod}{\mathbin}{MnSyC}{'270}
\newtheorem{theorem}{Theorem}
\newtheorem{corollary}{Corollary}
\newtheorem{proposition}{Proposition}
\newtheorem{lemma}{Lemma}
\newtheorem{remark}{Remark}
\newtheorem{definition}{Definition}
\begin{document}
\title[Spectrum of Lax operator of BO equation]{On the spectrum of the Lax operator of the Benjamin-Ono equation on the torus}
\author[P. G\'erard]{Patrick G\'erard}
\address{Laboratoire de Math\'ematiques d'Orsay, CNRS, Universit\'e Paris--Saclay, 91405 Orsay, France} \email{{\tt patrick.gerard@math.u-psud.fr}}
\author[T. Kappeler]{Thomas Kappeler}
\address{Institut f\"ur Mathematik, Universit\"at Z\"urich, Winterthurerstrasse 190, 8057 Zurich, Switzerland} 
\email{{\tt thomas.kappeler@math.uzh.ch}}
\author[P. Topalov]{Petar Topalov}
\address{Department of Mathematics, Northeastern University,
567 LA (Lake Hall), Boston, MA 0215, USA}
\email{{\tt p.topalov@northeatsern.edu}}

\subjclass[2010]{ 37K15 primary, 47B35 secondary}

%\date{September 29, 2021}

\begin{abstract}
We investigate the spectrum of the Lax operator $L_u$ of the Benjamin-Ono equation on the torus 
for complex valued potentials $u$ in the Sobolev space $H^{-s}(\T, \C)$, $0 \le s < 1/2$,
with small imaginary part and prove analytic properties of the moment map, defined in terms of
spectral data of $L_u$.
\end{abstract}

\keywords{Lax operator, Benjamin--Ono equation, moment map, 
spectral properties, complex valued potentials}

\thanks{
T.K. partially supported by the Swiss National Science Foundation.
P.T. partially supported by the Simons Foundation, Award \#526907.}

\maketitle

\tableofcontents

\medskip

\section{Introduction}\label{intro}
In this paper we consider the Lax operator $L_u$ of the Benjamin-Ono equation, introduced by Nakamura \cite{Nak},
on the torus $\T := \R / 2\pi \Z$.
It acts on the Hardy space 
$$
H_+:= \{ f \in L^2(\T, \C) \, : \,  \widehat f(n) := \frac{1}{2\pi} \int_0^{2\pi}  f(x)  e^{- i n x} = 0 \ \ \forall \, n < 0  \}
$$
and is given by $L_u = D - T_u$, where $D = -i \partial_x$ is the Fourier multiplier on $H_+$, 
$$
f = \sum_{n \ge 0} \widehat f(n) e^{i nx} \mapsto  D f (x)= \sum_{n \ge 0} n \widehat f(n) e^{ inx} \, , 
$$
$T_u$ is the Toeplitz operator on $H_+$ with potential $u$, 
$$
 f \mapsto \Pi(uf)\, ,
$$
and where $\Pi :  L^2(\T, \C) \to H_+$ denotes the Szeg\H{o} projector.
The aim of this paper is to analyze the spectrum of $L_u$ in the case $u$ is  a {\em complex valued} potential
in a neighborhood of the real Sobolev space $H^{-s}_{r}$ in $H^{-s}_{c}$ with $0 \le s < 1/2$,
where for any $\sigma \in \R$,
$$
H^{\sigma}_c  \equiv H^{\sigma}(\T, \C) :=  \{ f = \sum_{n \in \Z}  \widehat f(n)  e^{i n x} \, : \, \widehat f(n) \in \C \ \   \forall \, n \in \Z, \ \|f\|_{\sigma} < \infty \} 
$$
with
$$
\|f\|_{\sigma} := \Big( \sum_{n \in \Z} \langle n \rangle^{2\sigma}|\widehat f(n)|^{2}  \Big)^{1/2}\, , \qquad \langle n \rangle := \max\{1, |n|\} \, ,
$$
and
$$
H^{\sigma}_r  \equiv H^{\sigma}(\T, \R) :=  \{ f  \in H^{\sigma}_c \, : \, \widehat f(-n) = \overline{ \widehat f}(n) \,\, \forall \, n \ge 0  \}\, .
$$
Since for any $a \in \C$, $L_{u+ a} = L_u - a$, there is no loss of generality by restricting our attention to
 potentials $u$ of mean zero, i.e., potentials in  $H^{-s}_{c, 0}$ where for any $\sigma \in \R$
$$
H^{\sigma}_{c, 0} \equiv H^{\sigma}_0(\T, \C) := \{ u \in  H^{\sigma}_c  \, : \, \langle u | 1 \rangle = 0 \} \, .
$$ 
Here $\langle f | g \rangle$ denotes the standard $L^2-$inner product
$$
\langle f | g \rangle := \frac{1}{2\pi}\int_0^{2\pi} f(x) \overline g(x) dx\, , \qquad f, g \in L^2(\T, \C) ,
$$
extended by duality to a sesquilinear map  $H^{- \sigma}_c \times H^{\sigma}_c \to \C $.
To state our main results, we need to introduce some more notation.
For any $\sigma \in \R,$ let $H^{\sigma}_{r, 0} := H^{\sigma}_{c, 0} \cap H^{\sigma}_{r} $
and
$$
H_+^\sigma:= \{  f \in H^\sigma_c \, : \, \widehat f(n) = 0 \ \forall \, n < 0  \} \, .
$$ 
Furthermore, we denote
by $ \ell^{1, \sigma}(\N, \C)$ the weighted $\ell^1-$sequence space,
$$
 \ell^{1, \sigma}(\N, \C) := \{ z = (z_n)_{n \ge 1} \, : \,  z_n \in \C \ \forall \, n \in \N, \  \|z \|_{1, \sigma} < \infty \} \, ,
 $$
 with the norm
 $$
 \|z \|_{1, \sigma}:=  \sum_{n \ge 1} n^\sigma |z_n| \, .
$$
Our main results are the following ones.
\begin{theorem}\label{main theorem}
For any $0 \le s < 1/2,$ there exists a neighborhood $U^{-s}$ of $H^{-s}_{r,0}$ in $H^{-s}_{c,0}$
with the following properties:\\
(i) For any $u \in  U^{-s}$, the Lax operator $L_{u} = D - T_u$ defines an unbounded operator on $H^{-s}_+$
with domain $H^{1-s}_+$. It has compact resolvent and hence
its spectrum $\text{spec}(L_u)$ is discrete. It consists of
a sequence of simple eigenvalues $\lambda_n(u)$, $n \ge 0$, satisfying
$\lim_{n \to \infty}|\lambda_n - n | = 0$
and
$$
| \lambda_{n+1}(u) - \lambda_n(u)| > 1/2\, , \quad  |\Im \lambda_n(u)| < 1/4\, , \qquad \forall \,  n \ge 0\, .
$$
 Furthermore, $\lambda_n : U^{-s} \to \C$ is analytic for any $n \ge 0$. \\
(ii) For any $u \in  U^{-s}$, the gap lengths,
$$
\gamma_n(u) := \lambda_n(u) - \lambda_{n-1}(u) - 1, \qquad \forall \, n \ge 1\, ,
$$
satisfy $\Gamma(u) := (\gamma_n(u))_{n \ge 1} \in  \ell^{1, 1- 2s}(\N, \C)$
and the map $\Gamma : U^{-s} \to \ell^{1, 1- 2s}(\N, \C)$
is analytic.
\end{theorem}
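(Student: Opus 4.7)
My strategy is to treat the complex case as an analytic perturbation of the real self-adjoint theory, which in prior work of the authors is already established for $u \in H^{-s}_{r,0}$. The key analytical input is the product estimate $\|uf\|_{H^{-s}} \lesssim \|u\|_{H^{-s}} \|f\|_{H^{1-s}}$ (valid precisely because $0 \le s < 1/2$), which makes $T_u : H_+^{1-s} \to H_+^{-s}$ bounded with norm $\lesssim \|u\|_{H^{-s}}$, so $L_u = D - T_u$ is a well-defined closed operator on $H_+^{-s}$ with domain $H_+^{1-s}$. Since $D$ has compact resolvent and $T_u$ is $D$-bounded with relative bound less than one when $u$ is near the real slice, $L_u$ also has compact resolvent, and its spectrum is therefore discrete.

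For part (i), I would fix $u_0 \in H^{-s}_{r,0}$ and let $\lambda_n(u_0)$, $n \ge 0$, be the real simple eigenvalues given by the self-adjoint theory, with spectral gaps exceeding $1$ and $\lambda_n(u_0) - n \to 0$. Choose pairwise disjoint circles $C_n \subset \C$ of radius strictly less than $1/4$ around each $\lambda_n(u_0)$, and observe that $(L_{u_0} - z)^{-1}$ is uniformly bounded on $H_+^{-s}$ for $z \in \bigcup_n C_n$. For $u = u_0 + w$ with $\|w\|_{H^{-s}}$ sufficiently small, the Neumann series
$$
(L_u - z)^{-1} = (L_{u_0} - z)^{-1} \sum_{k \ge 0} \bigl( T_w (L_{u_0} - z)^{-1} \bigr)^k
$$
converges uniformly on $\bigcup_n C_n$, giving analytic dependence of the resolvent on $u$. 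The Riesz projection $P_n(u) = -(2\pi i)^{-1} \oint_{C_n} (L_u - z)^{-1}\, dz$ stays rank one by continuity and varies analytically in $u$, so $\lambda_n(u) = \mathrm{tr}(L_u P_n(u))$ is the simple eigenvalue inside $C_n$ with analytic $u$-dependence. The global neighborhood $U^{-s}$ is assembled by patching such local neighborhoods, and the uniform bounds $|\lambda_{n+1} - \lambda_n| > 1/2$ and $|\Im \lambda_n| < 1/4$ follow by taking the circle radii uniformly less than $1/4$; the asymptotic $\lambda_n - n \to 0$ is inherited from the real case since the perturbation traps $\lambda_n(u)$ inside $C_n$, itself centered near $n$.

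The main obstacle is part (ii): proving $\ell^{1, 1-2s}$ summability of $\Gamma(u)$ and its analyticity into $\ell^{1, 1-2s}(\N, \C)$. For real $u$, this summability is precisely the prior result I would invoke. Heuristically, second-order perturbation theory gives $\gamma_n(u) = |\widehat u(-n)|^2 / n + O\bigl(n^{-1} \|u\|_{H^{-s}}^3\bigr)$ as $n \to \infty$, so $\sum_n n^{1-2s} |\gamma_n(u)|$ is controlled by $\sum_n n^{-2s} |\widehat u(-n)|^2 \lesssim \|u\|_{H^{-s}}^2$. To upgrade this to a rigorous uniform bound valid on all of $U^{-s}$ and to deduce analyticity with values in the sequence space, I would combine two ingredients: (a) a trace formula for $\gamma_n$ obtained by inserting $L_u$ and $L_u - \mathrm{Id}$ into Riesz contour integrals, which expresses $\gamma_n(u)$ as an explicit main term in the Fourier coefficients of $u$ plus a remainder controlled uniformly in $n$ by the product estimate and the decay of $(D - z)^{-1}$ on high Fourier modes; and (b) a complexification principle, namely that a Fr\'echet-holomorphic map between complex Banach spaces whose restriction to a real subspace takes values in a real Banach subspace automatically extends to an analytic map into the complexified sequence space on a complex neighborhood. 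The technical heart is the uniform-in-$n$ remainder estimate on the circles $C_n$, where the threshold $s < 1/2$ enters crucially through the product estimate $H^{-s} \cdot H^{1-s} \hookrightarrow H^{-s}$.
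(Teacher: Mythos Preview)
Your outline has two genuine gaps, each at exactly the point the paper flags as requiring new ideas.

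\textbf{Part (i): the Neumann series does not converge uniformly over $\bigcup_n C_n$ in the standard norm.}  You assert that for $\|w\|_{-s}$ small the series $\sum_k \bigl(T_w(L_{u_0}-z)^{-1}\bigr)^k$ converges uniformly on $\bigcup_n C_n$.  But $T_w$ is only bounded $H^{1-\sigma}_+\to H^{-s}_+$ (with $\sigma=(s+1/2)/2$), so you need $\|(L_{u_0}-z)^{-1}\|_{H^{-s}_+\to H^{1-\sigma}_+}$ bounded uniformly in $n$ for $z\in C_n$.  Already for $u_0=0$ this norm is $\sup_{k\ge 0}\langle k\rangle^{1-\sigma+s}/|k-z|$, which behaves like $n^{1-\sigma+s}$ on $C_n$ and blows up.  Hence ``$\|w\|_{-s}$ small'' cannot be made independent of $n$, and patching local neighborhoods gives no uniform tube around $H^{-s}_{r,0}$.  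The paper's fix is twofold: (a) it works in the $n$-shifted norms $\|f\|_{-s;n}:=\|e^{-inx}f\|_{-s}$, on which $\|T_u(D-\lambda)^{-1}\|_{H^{-s;n}_+\to H^{-s;n}_+}\le C_s\|u\|_{-s}/\rho$ \emph{uniformly in $n$} (Lemma~8); and (b) for a general real base point $w$, it first approximates $w$ by a finite-gap potential $w_0\in\mathcal U_N$, whose eigenfunctions for $n\ge N$ have the explicit form $f_n=e^{i\partial_x^{-1}w_0}e^{inx}$, so that the shifted-norm estimate for $T_v(L_{w_0}-\lambda)^{-1}$ carries over (Lemma~12, Corollary~5).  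This finite-gap approximation is the core new idea and is absent from your outline.

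\textbf{Part (ii): the ``complexification principle'' is false.}  A map that is componentwise analytic $U^{-s}\to\C$ and takes values in $\ell^{1,1-2s}$ on the real slice need \emph{not} take values in $\ell^{1,1-2s}$ on a complex neighborhood.  The paper makes exactly this point: for real $u$ one has $\gamma_n(u)\ge 0$, so the trace formula $-\lambda_0=\sum_{n\ge 1}\gamma_n$ already forces $(\gamma_n)\in\ell^1$; once the $\gamma_n$ are complex this positivity is lost and summability must be proved from scratch.  The paper does this by introducing the quasi-moment map $F_n(u)=-\langle P_n(u)1\,|\,1\rangle$ and proving \emph{directly}, via a lengthy Neumann-series/combinatorial argument (Proposition~2, Lemma~10, Proposition~5), that $(F_n)_{n\ge 1}\in\ell^{1,2-2s}$ with a bound uniform on the complex neighborhood.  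It then shows $F_n=-\kappa_n\gamma_n$ with $\kappa_n\sim 1/n$ bounded away from zero (Proposition~3, Proposition~6), whence $(\gamma_n)\in\ell^{1,1-2s}$.  Your heuristic second-order expansion is the right leading term, but the actual uniform remainder control is the substance of Sections~3 and~6 and cannot be replaced by an abstract extension principle.
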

\begin{remark}
(i) Since for any $u$ in $H^{-s}_{r, 0}$ with
$0 \le s < 1/2$,  $\gamma_n(u)$, $n \ge 1$, are action variables for the Benjamin-Ono equation
(cf. \cite{GK} $(s=0)$, \cite{GKT} $(0 < s < 1/2)$),
we refer to the map $\Gamma$ as the {\em moment map} of the Benjamin-Ono equation.\\
(ii) It has been shown in  \cite[Appendix C]{GK} that for any $u \in H^0_{r,0}$, $\gamma_n(u)$, $n \ge 1$,
are the lengths of the gaps of the spectrum of $L_u,$ when considered as an operator acting on the space of Hardy functions
on $\R$.
\end{remark}

For any $u \in U^{-s},$ denote by $\mathcal H_\lambda(u)$ the generating function
$$
\mathcal H_\lambda(u) := \langle (L_{u} - \lambda)^{-1} 1 | 1 \rangle \, , \qquad \lambda \in \C \setminus \text{spec}(L_{u})\, ,
$$
introduced for real valued potentials in \cite{GK} ($s=0$) and \cite{GKT} ($0 < s < 1/2$). 
(In contrast to  \cite{GK},  \cite{GKT}, it is convenient to define $\mathcal H_\lambda(u)$ with the opposite sign of $\lambda$ in this paper.)
For such potentials,
$\mathcal H_\lambda(u)$ can be thought of as a perturbation determinant of $ (L_{u}  - \lambda)^{-1}$ (cf. \cite[Appendix A]{GKT1}) 
and encodes $\text{spec}(L_{u})$ as well as the Benjamin-Ono hierarchy.

Theorem \ref{main theorem} allows to extend the product representation for $\mathcal H_\lambda(u)$, established for
potentials in $H^{-s}_{r,0}$ \big(cf. \cite[Section 3]{GK} (s=0) and \cite{GKT} ($0 < s < 1/2$)\big), to  $U^{-s}$.  
% cf. Section \ref{proof of main results near zero} and Section \ref{proof of main results} .
 \begin{corollary}\label{product representations}
  For any $u$ in $U^{-s}$, the following holds:\\
(i) The generating function $\mathcal H_\lambda(u)$ is a meromorphic function in $\lambda$ with poles contained in $\text{spec}(L_u)$
and admits the  product representation
$$
\mathcal H_\lambda(u) = \frac{1}{\lambda_0(u) - \lambda} \prod_{p \ge 1} \Big(1 - \frac{\gamma_p(u)}{\lambda_p(u) - \lambda} \Big) \, 
$$
where for any $\lambda \in \C \setminus \text{spec}(L_{u})$,  the infinite product is absolutely convergent.\\
(ii)   For any $n \ge 0,$
 \begin{equation}\label{trace formula for eigenvalues}
 \lambda_n(u) = n -  \sum_{k \ge n+1} \gamma_k(u)
 \end{equation}
 where the infinite sum is absolutely convergent.\\
 (iii)  If $s=0,$ one has
 $$
\frac{1}{2\pi} \int_0^{2\pi} u^2 dx = 2 \sum_{k \ge 1} k \gamma_k(u)
 $$
 where the infinite sum is absolutely convergent.
% (iv) For any $n \ge 1$, the function $\kappa_n(u)$  admits the  product representation
%$$
%\kappa_n(u) = \frac{1}{\lambda_n(u) - \lambda_0(u)}  \prod_{p \ne n} \Big(1 - \frac{\gamma_p(u)}{\lambda_p(u) - \lambda_n(u)} \Big)
%$$
%whereas for $n=0$ one has
%$$
%\kappa_0(u) =  \prod_{p \ge 1} \Big(1 - \frac{\gamma_p(u)}{\lambda_p(u) - \lambda_0(u)} \Big) \ .
%$$
%Both infinite products are absolutely convergent.\\
 \end{corollary}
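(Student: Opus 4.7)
The strategy is to leverage Theorem \ref{main theorem} together with the known statements for real potentials from \cite{GK} ($s=0$) and \cite{GKT} ($0 < s < 1/2$), and then extend from $H^{-s}_{r,0}$ to $U^{-s}$ by the identity principle for holomorphic functions on a complex Banach space that vanish on a totally real subspace. All three assertions are already established for $u \in H^{-s}_{r,0}$; it remains to verify that both sides of each identity are analytic in $u$ on $U^{-s}$, modulo the standard sign change in the definition of $\mathcal H_\lambda(u)$.

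For part (i), the meromorphicity of $\lambda \mapsto \mathcal H_\lambda(u)$ on $\C$ with poles in $\text{spec}(L_u)$ follows from Theorem \ref{main theorem}(i), since $L_u$ has compact resolvent. To obtain the product formula, I would first fix $\mu \in \C$ with $|\Im \mu| > 1/2$. The bound $|\Im \lambda_p(u)| < 1/4$ from Theorem \ref{main theorem}(i) then gives $|\lambda_p(u) - \mu| > 1/4$ uniformly in $u \in U^{-s}$ and $p \ge 0$, so $\mu \notin \text{spec}(L_u)$ for every $u \in U^{-s}$, and
$$
\sum_{p \ge 1}\Big| \frac{\gamma_p(u)}{\lambda_p(u) - \mu} \Big| \le 4 \sum_{p \ge 1} |\gamma_p(u)| \le 4 \|\Gamma(u)\|_{1,1-2s} < \infty
$$
(using $p^{1-2s} \ge 1$ since $1-2s \ge 0$), uniformly on compact subsets of $U^{-s}$. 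Each factor $1 - \gamma_p(u)/(\lambda_p(u) - \mu)$ is analytic in $u$ by Theorem \ref{main theorem}(i)--(ii), so uniform absolute convergence makes the infinite product analytic on $U^{-s}$. On the other side, $u \mapsto L_u$ is affine-linear continuous into the bounded operators $H^{1-s}_+ \to H^{-s}_+$, so $u \mapsto (L_u - \mu)^{-1}$ is analytic on $U^{-s}$, and hence so is $\mathcal H_\mu(u)$. Both sides coincide on $H^{-s}_{r,0}$ by \cite{GK}, \cite{GKT}; the identity principle yields equality on $U^{-s}$ at $\lambda = \mu$, and extension to arbitrary $\lambda \in \C \setminus \text{spec}(L_u)$ follows by meromorphic continuation in $\lambda$.

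Part (ii) reduces to a telescoping computation: for $N > n \ge 0$,
$$
\sum_{k=n+1}^{N} \gamma_k(u) = \sum_{k=n+1}^N \bigl(\lambda_k(u) - \lambda_{k-1}(u) - 1\bigr) = \lambda_N(u) - \lambda_n(u) - (N-n),
$$
and the asymptotics $|\lambda_N(u) - N| \to 0$ from Theorem \ref{main theorem}(i) allow one to pass to the limit $N \to \infty$, yielding $\lambda_n(u) = n - \sum_{k > n}\gamma_k(u)$; absolute convergence is inherited from $\Gamma(u) \in \ell^{1,1-2s}$. For part (iii), the left-hand side $\frac{1}{2\pi}\int_0^{2\pi} u^2\,dx = \sum_{n \in \Z}\widehat u(n)\widehat u(-n)$ is a continuous quadratic form on $H^0_{c,0}$, hence entire in $u$; the right-hand side is the evaluation of the analytic map $\Gamma : U^0 \to \ell^{1,1}$ (this is Theorem \ref{main theorem}(ii) at $s=0$) against the bounded linear functional $(z_k) \mapsto 2\sum_{k \ge 1} k z_k$, hence also analytic on $U^0$. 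Equality on $H^0_{r,0}$ from \cite{GK} together with the identity principle gives equality on $U^0$.

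The main technical point lies in part (i), where the $u$- and $\lambda$-dependencies must be handled in turn. Choosing a single $\mu$ with $|\Im \mu| > 1/2$ decouples them: it lies uniformly outside $\text{spec}(L_u)$ over all $u \in U^{-s}$ and provides the denominator bound needed for the absolute convergence of the product, so that analytic continuation in $u$ can be carried out at this fixed spectral parameter before general $\lambda$ is reinstated by meromorphic extension.
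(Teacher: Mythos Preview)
Your proof is correct and, for parts (i) and (iii), follows essentially the same route as the paper: verify that both sides are analytic in $u$ on $U^{-s}$ at a fixed spectral parameter, invoke the known identities on the real subspace $H^{-s}_{r,0}$ from \cite{GK}, \cite{GKT}, and conclude by analytic continuation. The only cosmetic difference is your choice of a horizontal half-plane $|\Im\mu|>1/2$ (exploiting the bound $|\Im\lambda_n(u)|<1/4$ from Theorem~\ref{main theorem}(i)) in place of the paper's left half-plane $\Re\lambda<-1/3$; either yields the uniform lower bound on $|\lambda_p(u)-\mu|$ needed for absolute convergence and analyticity of the product.

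Your treatment of part (ii) is genuinely different and more elementary. The paper again argues by analytic continuation from the real case, whereas you observe that the identity follows directly from the telescoping sum $\sum_{k=n+1}^N\gamma_k(u)=\lambda_N(u)-\lambda_n(u)-(N-n)$ together with the asymptotics $\lambda_N(u)-N\to 0$ supplied by Theorem~\ref{main theorem}(i). This bypasses any appeal to the real-valued theory for (ii) and makes the trace formula an immediate consequence of the spectral asymptotics already established; the paper's approach, by contrast, keeps all three items on a uniform footing but at the cost of invoking an external reference where none is needed.
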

 
 \medskip
 
 \noindent
  {\em Ausgangslage.} 
% {\em Starting situation.} 
 It follows from  \cite{GK} $(s=0)$ and \cite{GKT} $(0 < s < 1/2)$ that for any $u \in H^{-s}_{r, 0}$, the operator $L_{u}$
is an unbounded operator on $H_+^{-s}$ with domain $H^{1-s}_+$ and compact resolvent. Its spectrum $\text{spec}(L_{u})$ consists of a sequence 
of simple real eigenvalues $\lambda_0(u) < \lambda_1(u) < \cdots $, with the property that $\gamma_n(u)  \ge 0$ for any $n \ge 1$. 
Since  $\text{spec}(L_{u})$ is simple, it is possible to introduce 
a canonically normalized basis of eigenfunctions  $f_n(\cdot) \equiv f_n(\cdot, u)$, $n \ge 0$, corresponding to the eigenvalues $\lambda_n(u)$
(cf. \cite{GK} (s=0), \cite{GKT} ($0 < s < 1/2$)).
Furthermore it was established in the latter papers that for any $0 \le s < 1/2$, the image of the moment map
$$
\Gamma: H^{-s}_{r, 0} \to \ell^{1, 1-2s}(\N, \R), u \mapsto (\gamma_n(u))_{n \ge 1}
$$ 
is  the positive quadrant $\mathcal Q^{-s}_+ := \ell^{1, 1-2s}(\N, \R_{\ge 0})$ 
of $\ell^{1, 1-2s}(\N, \R)$ and hence a noncompact infinite-dimensional convex polytope. 
The convexity of $\mathcal Q^{-s}_+$ is an instance
of the convexity of the image of the moment map in an infinite-dimensional setting.
The theory of the convexity of the image of the moment map in the finite-dimensional compact case
originated in the papers of Atiyah \cite{At} and Guillemin and Sternberg \cite{GS}.

\smallbreak

Remarkably, the trace formula \eqref{trace formula for eigenvalues} allows to express the eigenvalues of $L_{u}$
as {\em affine} functions of the gap lengths.  Hence the positive quadrant $\mathcal Q^{-s}_+$ is a {\em moduli space} 
for the spectrum of the operator $L_{u}$ with $u \in H^{-s}_{r, 0}$. We point out that for the
Lax operator of the KdV equation (cf. \cite{KP}) as well as for the one of the defocusing NLS equation (cf. \cite{GK1}),
moduli spaces for their spectra are also given by the sequences of gap lengths, but that the eigenvalues are 
{\em transcendental} functions of the gap lengths.

\smallbreak

The moment map is related to the map (cf. \cite{GK}, \cite{GKT})
$$
F: H^{-s}_{r, 0} \to \ell^{1, 2-2s}(\N, \R), \, u \mapsto (F_n(u))_{n \ge 1}\, , \quad  F_n(u):= | \langle 1 |  f_n \rangle |^2\, ,
$$
by
$$
F_n(u) = \kappa_n(u) \gamma_n(u) \, , \qquad \forall \, n \ge 1\, ,
$$
where $\kappa_n(u) $ are scaling factors, admitting product expansions in terms of $\gamma_k(u), k \ge 1$.
The map $F$ is continuous and is referred to as the quasi-moment map of the Benjamin-Ono equation.

For potentials $u$ with $\Im u \ne 0$, the methods developed in \cite{GK}, \cite{GKT} do not allow 
to prove that the eigenvalues $\lambda_n$, $n \ge 0$, of $L_u$ extend analytically to a neighborhood  of $H^{-s}_{r,0}$ in  $H^{-s}_{c,0}$, which is independent of $n$.
Furthermore, since the eigenvalues and hence the gap lengths might be complex valued, the trace formula
$\lambda_0(u)= - \sum_{n \ge 0} \gamma_n(u)$ no longer implies that $(\gamma_n(u))_{n \ge 0}$ is an $\ell^1$ sequence.
Therefore new arguments are required to prove decay properties of $\gamma_n$ as $n \to \infty$.
\smallskip

\noindent 
{\em Ideas of the proofs.}  Our analysis of the operator $L_u$ is based on perturbation theory. 
Assume that $w$ is a real valued potential in $H^{-s}_{r,0}$ with $0 \le s < 1/2$. For $u$
in a (sufficiently small) neighborhood $U_w^{-s} \subset H^{-s}_{c,0}$ of $w$,  we view $L_u$ as a perturbation of $L_w$ and
 will use the full strength of the study of the Lax operator with real valued potentials, established in \cite{GK} and \cite{GKT}.
In the case $w=0$, the operator $L_{0} = D$ is particularly simple and hence we treat this case first. Recall that $D$ is a Fourier multiplier
with the property that $D$ is an unbounded operator on $H^{-s}_+$ with domain $H^{1-s}_+$. It has a compact resolvent and its spectrum $\text{spec}(D)$ consists 
of the eigenvalues $\lambda_n(0) = n$, $n \ge 0$, all of them being simple.
For any $n \ge 0$, the canonically normalized eigenfunction $ f_n(\cdot, 0)$, corresponding to $\lambda_n(0)$, is given by $e^{inx}$.
In Section \ref{Lax operator} we show that for any $u$ in a sufficiently small neighborhood $U_0^{-s}$
of $0$ in $H^{-s}_{c,0}$ and for any $\lambda \in \C,$ $L_u -\lambda$ can be defined 
as an unbounded linear operator on $H^{-s}_+$ with domain $\text{dom}(L_u - \lambda) = H^{1-s}_+.$ In addition, $L_u -  \lambda $ can be viewed as a bounded linear operator 
$L_u - \lambda : H^{1-s}_+ \to H^{-s}_+$. To analyze the set of $\lambda'$s, for which $L_u - \lambda$ is invertible, we write $L_u - \lambda$ as
$$
L_u - \lambda = (\text{Id} - T_u (D- \lambda)^{-1})(D - \lambda)\, , \qquad \, \forall \lambda \in \C \setminus \Z_{\ge 0}\, ,
$$ 
and estimate the size of the operator $T_u (D- \lambda)^{-1} : H^{-s}_+ \to H^{-s}_+$. For $u$ in a sufficiently small neighborhood $U_0^{-s}$
of $0$ in $H^{-s}_{c,0}$, we show that 
$L_u - \lambda$ is invertible for any $\lambda$ with $\text{dist}(\lambda,  \Z_{\ge 0}) \ge 1/4$ and that $L_{u}$ has compact resolvent with
spectrum $\text{spec} (L_{u})$ consisting of simple eigenvalues $\lambda_n(u) \in \C$, $n \ge 0$, satisfying $| \lambda_n(u) - n |  < 1/4$.
It follows that the eigenvalues $\lambda_n$, $n \ge 0$, and hence the gaps $\gamma_n = \lambda_n - \lambda_{n-1} - 1$, $n \ge 1$, are analytic functionals on $U_0^{-s}$.
%To show that for any $u \in U_0^{-s}$, $\Gamma(u)= (\gamma_n(u))_{n \ge 1}$ is in $\ell^{1, 1-2s}$
The key step of the proof of the claimed properties of the moment map on $U_0^{-s}$ is to show that the quasi-moment map $F$ admits an analytic extension,
$ F: U_0^{-s} \to \ell^{1, 2-2s}(\N, \C)$ -- see Section \ref{Quasi-moment map}. To this end, we express $F_n$, $n \ge 1$, in terms of the Riesz projector
$P_{n} : H^{-s}_+ \to H^{1-s}_+$, $n \ge 0$, given by the contour integral
$$
P_{n}(u) = - \frac{1}{2\pi i} \int_{|\lambda - n| = 1/3} (L_{u} - \lambda)^{-1} d \lambda\, .
$$
%They are analytic maps, defined on $U_0$, with values in the space of bounded linear operators on $H_+$.
The claimed estimates for $F$ are then obtained by a somewhat involved analysis, using the expansion of
the operator $(\text{Id} - T_u (D- \lambda)^{-1})^{-1}$, appearing in $(L_u - \lambda )^{-1} = (D - \lambda)^{-1} (\text{Id} - T_u (D- \lambda)^{-1})^{-1}$,
in its Neumann series.

\smallskip
In Section \ref{Lax operator.II} we analyze $L_u$ for $u$ in a (sufficiently small) neighborhood $U_w^{-s} \subset H^{-s}_{c,0}$ of an arbitrary real valued potential 
$w$ in $H^{-s}_{r,0}$ with $0 \le s < 1/2$.  Our novel approach is based on the use of {\em finite gap potentials}, which were studied in detail in \cite{GK}.  
A potential $w$ in $H^{0}_{r,0}$ is said to be a finite gap potential if the set $ \{ n \ge 1\, : \, \gamma_n(w) \ne 0 \}$
is finite. Finite gap potentials are contained in $\cap_{m \ge 0} H^{m}_{r,0}$. Let $\mathcal U_0 := \{ 0 \}$ and for any given $N \ge 1$, 
denote by $\mathcal U_N$ the set of real valued finite gap potentials $w$
with $\gamma_N(w) \ne 0$ and $\gamma_n(w) = 0$ for any $n > N$. For any $0 \le s < 1/2,$ $\cup_{N\ge 0} \, \mathcal U_N$ is dense in  $H^{-s}_{r,0}$.
A key property of finite gap potentials is that for any  $w \in \mathcal U_N$, the eigenfunctions $f_n$ of $L_w$ with $n \ge N$ are given by 
\begin{equation}\label{formula eigenfunctions} 
f_n(x, w) = g_\infty(x, w) e^{inx}\, ,  \ \ \forall \, n \ge N,  \qquad g_\infty(x,w):= e^{i \partial_x^{-1}w(x)}.
\end{equation}
We remark that $g_\infty(x, w)$ is a solution of $-i\partial_x g - w g = 0$ and refer to 
Appendix \ref{asymptotics eigenfunctions} for a discussion of the asymptotics of the eigenfunctions $f_n(\cdot, w)$
for arbitrary elements $w$ in $H^{-s}_{r,0}$, $0 \le s < 1/2$. 
By approximating $w \in H^{-s}_{r,0}$ by an appropriately chosen finite gap potential $w_0$ with $w_0 \in \mathcal U_N$ for some $N \ge 1$, the formulas \eqref{formula eigenfunctions} 
for the eigenfunctions $f_n(\cdot, w_0)$ with $n \ge N$ allow us to apply the methods  developed to analyze $L_u$ for
 $u \in H^{-s}_{c,0}$ near $0$ to obtain corresponding results for $L_u$ with $u \in U_w^{-s}$.

 \smallskip

\noindent
{\em Subsequent work.} Besides being of interest in its own right, our study of the spectrum of the Lax operator $L_u$ 
for complex valued potentials  is motivated by our ongoing research project concerning the Birkhoff map $\Phi$ of the Benjamin-Ono equation, 
constructed in \cite{GK} on $L^2_{r,0}$ and then extended in \cite{GKT} to $H^{-s}_{r,0}$ for any $0 < s < 1/2$.
Theorem \ref{main theorem} allows us to prove that $\Phi$ is not only a homeomorphism, 
but a (real analytic) diffeomorphism,  $\Phi: H^{-s}_{r,0} \to h^{1/2 -s}_+$ (cf. \cite{GKT2}, \cite{GKT3}). 
The latter result will be applied to further analyze regularity properties of the solution map
of the Benjamin-Ono equation  and will serve as the starting point for studying (Hamiltonian) perturbations of the Benjamin-Ono equation by KAM methods.
Due to the nonlocal nature of the operator $L_u$, to prove that the map $\Phi$ is a real analytic diffeomorphism is considerably more difficult than
to prove the corresponding result for the Birkhoff map of the KdV equation (cf. \cite{KP}) or of the defocusing NLS equation (cf. \cite{GK1}).
A key ingredient into the proof of the analyticity of $\Phi$ is that 
%$\Gamma$ maps  $U^{-s}$ into  $\ell^{1, 1-2s}_+(\N, \C)$ and that 
the moment map $\Gamma : U^{-s} \to \ell^{1, 1-2s}_+(\N, \C)$ is analytic.
 
 \smallskip
 
 \noindent
{\em Organisation.} Section \ref{Lax operator} -  Section \ref{proof of main results near zero} are concerned with the proofs of
the results of  Theorem \ref{main theorem} and Corollary \ref{product representations}
for complex valued potentials $u$ {\em near zero}.  In addition, in Section \ref{proof of main results near zero} we discuss
 properties of the scaling factors $\kappa_n$ and the
 normalizing factors $\mu_n$, introduced in \cite{GK}.
 In Section \ref{Lax operator.II} - Section \ref{proof of main results} we analyze the spectrum of the Lax operator $L_u$ for $u\in H^{-s}_{c,0}$ with $\Im u$ small  
 and prove Theorem \ref{main theorem} and Corollary \ref{product representations}.
 In Appendix \ref{asymptotics eigenfunctions} we prove asymptotic estimates for canonically normalized eigenfunctions $f_n(\cdot, u)$ of $L_u$
 for real valued potentials $u \in H^{-s}_{r,0}$ with $0 \le s < 1/2$. Finally, in Appendix \ref{normally analytic} we review the notion of normally analytic maps.
 
 \bigskip
 
 \noindent
{\em Notation.} By and large, we use the notations introduced in \cite{GK} and \cite{GKT}.
For the convenience of the reader, we list the most frequently used ones in this paragraph.

For any $s \in \mathbb R$, denote by $H^s_c$ the Sobolev space $H^s_c = H^s(\mathbb T, \mathbb C)$ where
$\mathbb T = \mathbb R / 2\pi \mathbb Z$. For $s=0$, we also write $L^2_c$ for $H^0_c$.
Furthermore, $H^s_{c, 0}$ denotes the subspace of $H^s_c$ of elements with average zero.
For $s=0$, we also write $L^2_{c, 0}$ for $H^0_{c, 0}$. 

For any $s \in \mathbb R$, denote by $H^s_r$ the Sobolev space $H^s_r = H^s(\mathbb T, \mathbb R)$.
For $s=0$, we also write $L^2_r$ for $H^0_r$. Similarly, $H^s_{r, 0}$ is the subspace of $H^s_r$
of elements in $H^s_r$ of average zero. For $s=0$, we also write $L^2_{r, 0}$ for $H^0_{r, 0}$. 

The open ball in $H^{-s}_{c, 0}$ of radius $r > 0$, centered at $0$, is denoted by $B^{-s}_{c, 0}(r) \equiv B^{-s}_{c,0}(0, r)$.

We denote by $\langle f | g \rangle := \frac{1}{2\pi} \int_{0}^{2\pi} f \overline g d x$ the $L^2-$inner product on $L^2_c$ and 
by $\langle f , g \rangle$ the nondegenerate bilinear form on $L^2_c \times L^2_c$, given by 
$\langle f , g \rangle := \frac{1}{2\pi} \int_{0}^{2\pi} f  g d x$, which is extended to $H^{-s}_c \times H^s_c$ for any $s \in \R$ by duality.

For any $f \in H^s_c$ and $k \in \Z$, denote by $\widehat f(k) = \langle f(x) |  e^{ikx} \rangle$ the $k$th Fourier coefficient of $f$.

For any $s \in \R,$ we denote by $H^s_+$ the subspace of the Sobolev space $H^s_c$,
consisting of elements $f$ in $H^s_c$ with $\widehat f(k) = 0$ for any $k < 0$. 
For $s=0$ we also write $H_+$ instead of $H^0_+$.

For any $m \in \Z$, let  $ \Z_{\ge m}:= \{ k \in \Z \ : \ k \ge m \}$ and set  $\N_0 := \Z_{\ge 0}$
as well as $\N := \Z_{\ge 1}$.

The standard $\ell^p$ sequence spaces are denoted as follows: $\ell^p_+ \equiv \ell^p(\N, \C)$ and more generally, $\ell^p_{\ge m} \equiv \ell^p(\Z_{\ge m}, \C)$ ($1 \le p \le \infty$).
The standard weighted sequence spaces, needed in this paper, are  $h^s_+ \equiv h^s(\N, \C)$ ($s \in \R$) and  $\ell^{1, s}_+ \equiv \ell^{1, s}(\N, \C)$ ($s \in \R$).
In case $s=0$, we also write $\ell^2_+$ for $h^0_+$.

%%%%%%%%%%%%%%%%%%%%%%%%%%%%%%%%%%%%%%%%%%%%%%%%%%%%%%%%%%%%%%%
%%%%%%%%%%%%%%%%%%%%%%%%%%%%%%%%%%%%%%%%%%%%%%%%%%%%%%%%%%%%%%%%

\section[Lax operator I]{Proof of Theorem \ref{main theorem}$(i)$ for $u$ small}\label{Lax operator}

In this section, we define the Lax operator $L_u$ for complex valued potentials $u \in H^{-s}_{c,0}$ with $0 \le s < 1/2$
and obtain some general results about its spectrum.
Our main goal is to study spectral properties of $L_u$ for $u$ {\em near the zero potential} and to prove Theorem \ref{main theorem}$(i)$
for such potentials. In Section \ref{Lax operator.II} 
we will consider the case of potentials $u$ with $\Im u$ small.
%In this case the analysis of the spectrum of $L_u$ turns out to be quite elementary and hence we present it in a separate section. 

We begin with some preliminary considerations. For any $0 \le s < 1/2$, define
$$
\sigma := (s + 1/2)/2 > 0\, .
$$
Note that $1/4 \le \sigma < 1/2$
\begin{lemma}\label{multiplication u f}
For any $0 \le s < 1/2$, there exists a constant $C_s \ge 1$ so that the following holds:\\
(i) For any $f  \in H^{1-\sigma}_c$, $g\in H^{s}_c$, the function $f g$ is in $H^{s}_c$ and
$$
\| f g \|_{s} \le C_s \|f\|_{1-\sigma} \|g \|_{s} \,, \qquad \forall g \in H^s_c, f \in H^{1-\sigma}_c  .
$$
(ii) For any $u \in H^{-s}_c$ and $f \in H^{1-\sigma}_c$, the product $uf$ defines an element in $H^{-s}_c$, determined by
$$
\langle g | u f \rangle : = \langle g \overline f | u \rangle \, , \qquad \forall g \in H^{s}_c  \, .
$$
Furthermore,
$$
\| u f \|_{-s} \le C_s  \| u \|_{-s} \|f \|_{1-\sigma}\, , \qquad \forall u \in H^{-s}_c, \ f \in H^{1-\sigma}_c .
$$
Since $H^{1-s}_c$ embeds into $H^{1-\sigma}_c$, the estimates in items (i) and (ii)  hold in particular
when $\sigma$ is replaced by $s$.
\end{lemma}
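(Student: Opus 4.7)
The plan is to reduce (i) to a standard Sobolev product estimate on the circle and then deduce (ii) by duality, exactly following the defining identity $\langle g | uf\rangle = \langle g\overline f | u\rangle$ in the statement.

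For (i), the key arithmetic fact is that the prescribed $\sigma = (s+1/2)/2$ lies strictly in the interval $(s,\,1/2)$ for every $0 \le s < 1/2$, so in particular $1-\sigma > 1/2$ and $s < 1-\sigma$ (since $s+\sigma < 1$). On the one-dimensional torus, any $H^r_c$ with $r > 1/2$ multiplies $H^t_c$ into itself whenever $|t| \le r$; applied with $r = 1-\sigma$ and $t = s$, this yields the claim. I would carry this out directly on the Fourier side: write $\widehat{fg}(k) = \sum_{m \in \Z}\widehat f(k-m)\widehat g(m)$, use the elementary bound $\langle k\rangle^s \le 2^s(\langle k-m\rangle^s + \langle m\rangle^s)$ to split the convolution into two pieces, and in each piece estimate an $\ell^2 \ast \ell^1$ convolution by Cauchy--Schwarz. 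The $\ell^1$ tail in the piece where the weight is absorbed by $g$ reduces to
$$
\Big(\sum_{k\in\Z} \langle k\rangle^{-2(1-\sigma)}\Big)^{1/2} \|f\|_{1-\sigma},
$$
and this series converges precisely because $1-\sigma > 1/2$. The symmetric piece, where the weight is absorbed by $f$, is handled identically. Summing the two pieces gives the desired bound.

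For (ii), I would first check that $\langle g \,|\, uf\rangle := \langle g\overline f \,|\, u\rangle$ is a well-defined antilinear functional on $H^s_c$. Since complex conjugation is an isometry of $H^{1-\sigma}_c$, part (i) applied to $g \in H^s_c$ and $\overline f \in H^{1-\sigma}_c$ gives $g\overline f \in H^s_c$ with $\|g\overline f\|_s \le C_s \|f\|_{1-\sigma}\|g\|_s$, so $\langle g\overline f \,|\, u\rangle$ is defined via the sesquilinear duality pairing $H^s_c \times H^{-s}_c \to \C$ extended in the introduction. Consequently
$$
|\langle g \,|\, uf\rangle| \le \|u\|_{-s}\,\|g\overline f\|_s \le C_s\,\|u\|_{-s}\,\|f\|_{1-\sigma}\,\|g\|_s,
$$
and identifying $H^{-s}_c$ with the antidual of $H^s_c$ via this pairing produces a unique element $uf \in H^{-s}_c$ with the asserted norm bound. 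The closing sentence of the lemma is then automatic from the continuous embedding $H^{1-s}_c \hookrightarrow H^{1-\sigma}_c$ (which holds because $1-s \ge 1-\sigma$), so replacing $\sigma$ by $s$ in the right-hand side only weakens the estimate.

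The only potentially delicate point is the $1/2$ threshold, which the choice of $\sigma$ comfortably respects on both sides: $1-\sigma > 1/2$ is what makes the convolution/Cauchy--Schwarz estimate converge, while $s \le 1-\sigma$ is what makes the triangle-inequality splitting $\langle k\rangle^s \lesssim \langle k-m\rangle^s + \langle m\rangle^s$ match up with the available weights. A small amount of bookkeeping is also needed to confirm that the construction in (ii) genuinely extends the ordinary pointwise product when $u$ is smooth, so that the notation $uf$ is unambiguous; this is immediate by density once the bound is in hand.
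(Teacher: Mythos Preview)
Your treatment of (ii) by duality is correct and matches the paper's argument exactly. The gap is in (i): the two convolution pieces are \emph{not} symmetric in the target estimate, and the piece ``where the weight is absorbed by $f$'' cannot be handled identically to the other one.

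Concretely, in the piece $B(k) = \sum_m |\widehat f(k-m)|\,\langle m\rangle^s|\widehat g(m)|$ you put the weighted $g$ in $\ell^2$ and $|\widehat f|$ in $\ell^1$ via Cauchy--Schwarz, obtaining $\|\widehat f\|_{\ell^1} \le \big(\sum_j \langle j\rangle^{-2(1-\sigma)}\big)^{1/2}\|f\|_{1-\sigma}$, which is finite since $1-\sigma > 1/2$. In the other piece $A(k) = \sum_m \langle k-m\rangle^s|\widehat f(k-m)|\,|\widehat g(m)|$, Young's inequality offers only two options: $\|\langle\cdot\rangle^s\widehat f\|_{\ell^2}\,\|\widehat g\|_{\ell^1}$, which would require $\|\widehat g\|_{\ell^1} \lesssim \|g\|_s$ and hence $s > 1/2$; or $\|\langle\cdot\rangle^s\widehat f\|_{\ell^1}\,\|\widehat g\|_{\ell^2}$, which requires $1-\sigma - s > 1/2$, i.e.\ $s < 1/6$. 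So for $1/6 \le s < 1/2$ your argument for piece $A$ does not close.

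The paper sidesteps this asymmetry by interpolation: since $1-\sigma > 1/2$, multiplication by a fixed $f \in H^{1-\sigma}_c$ is bounded on $H^{1-\sigma}_c$ (algebra property) and on $L^2_c$ (Sobolev embedding $H^{1-\sigma}_c \hookrightarrow L^\infty$), both with operator norm $\lesssim \|f\|_{1-\sigma}$; complex interpolation between $L^2_c$ and $H^{1-\sigma}_c$ then gives the bound on $H^s_c$ for every $0 \le s \le 1-\sigma$. Your direct approach can be repaired, but only by replacing the algebraic splitting $\langle k\rangle^s \lesssim \langle k-m\rangle^s + \langle m\rangle^s$ with a decomposition of the sum into the regions $|k-m| \ge |m|$ and $|k-m| < |m|$, which in the first region lets you transfer the surplus decay $\langle k-m\rangle^{-(1-\sigma-s)}$ onto $\langle m\rangle^{-(1-\sigma-s)}$; that is no longer ``identical'' to the treatment of piece $B$.
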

\begin{proof}
(i) Since $1-\sigma > 1/2$, $H^{1-\sigma}_c$ acts by multiplication on itself and on $L^2_c.$ 
By interpolation it then follows that it acts on $H^s_c$ and that there exists
$C_s \ge 1$ so that 
$$
\| f g \|_{s} \le C_s \|f\|_{1-\sigma} \|g \|_{s}\, , \qquad \forall g \in H^{s}_c, f \in H^{1-\sigma}_c \, .
$$
(ii) By item (i) it follows that for any $u \in H^{-s}_c$, $g \in H^s_c$, and $f \in H^{1-\sigma}_c$,
$g\overline f$ is in $H^s_c$ and hence that $u f$ defines a bounded linear
functional on $H^s_c$, determined by 
$$
\langle g | u f \rangle : = \langle g \overline f | u \rangle \, , \qquad \forall g \in H^{s}_c\, .
$$
Furthermore, it follows that $|\langle g | u f \rangle | \le \|u\|_{-s} \| g \overline f \|_{s}$
and hence by (i)
$$
\| uf \|_{-s} \le C_s \|u\|_{-s} \|f \|_{1-\sigma}\, .
$$
\end{proof}
Lemma \ref{multiplication u f} implies that for any $0 \le s < 1/2$, the Toeplitz operator $T_u$
with potential $u \in H^{-s}_c$ is a well defined bounded linear operator,
$$
T_u : H^{1-s}_+ \to H^{-s}_+, \ f \mapsto  \Pi(uf) 
$$
and so is $L_u$,
$$
L_u : H^{1-s}_+ \to H^{-s}_+, \ f \mapsto  Df - T_uf 
$$
where $D = -i \partial_x$ 
%denotes the Fourier multiplier
%$$
%D:  H^{1-s}_+ \to H^{-s}_+ : \, \sum_{k \ge 0} \widehat f(k) e^{ikx}  \mapsto \sum_{k \ge 0} k \widehat f(k) e^{ikx}
%$$
and where $\Pi$ is the Szeg\H{o} projector, 
$$
 \Pi : H^{-s}_c \to H^{-s}_+ : \ f \mapsto \sum_{k \ge 0} \widehat f(k) e^{ikx} \,  .
$$
Alternatively, we view $L_u$ as an unbounded operator $L_u : H^{-s}_+ \to H^{-s}_+$ with domain $H^{1-s}_+$.

 In a first step, we determine half planes in $\C$ consisting of complex numbers $\lambda \in \C$ 
 with the property that the operator $L_{u} - \lambda : H^{1-s}_+ \to H^{-s}_+$ is invertible
 for any $u$ in a given bounded subset of $H^{-s}_{c,0}$ with $0 \le s < 1/2$.
  Note that the spectrum of the operator $D: H_+ \to H_+$ with domain $H^1_+$ 
is discrete, consisting of the sequence of simple eigenvalues $\lambda_n = n$, $n \ge 0$.
It follows that for any $\lambda \in \C \setminus \N_0$,
$$
D - \lambda : H^{1-s}_+ \to H^{-s}_+
$$
is a bounded invertible operator with bounded inverse, $(D - \lambda)^{-1} : H^{-s}_+ \to H^{1-s}_+$
and we can write
$$
L_u - \lambda = \big( {\rm Id} - T_u (D - \lambda)^{-1} \big) (D - \lambda).
$$
Recall that $\N_0$ denotes the set of nonnegative integers.
To show that $L_{u} - \lambda : H^{1-s}_+ \to H^{-s}_+$ is invertible, it thus suffices to show that
the operator norm of $T_u (D - \lambda)^{-1} :  H^{-s}_+ \to  H^{-s}_+$ is smaller than $1$
where the latter norm is defined with respect to a conveniently chosen norm on $H^{-s}_+$ which might depend on $\lambda$. 
 For any $0 \le s < 1/2$ and $M> 0$, define
 \begin{equation}\label{definition K_M}
 K_M := (2C_s M)^{2/(\frac{1}{2} - s)} + 1
 \end{equation}
 where $C_s$ is the constant given in Lemma \ref{multiplication u f}.
 \begin{lemma}\label{resolvent set 1}
 Let $M > 0$ and $0 \le s < 1/2$. Then the following holds:\\
 (i) For any $\lambda \in \C$ in the half plane $\{ \Re \lambda \le - K_M \}$ and any $u \in H^{-s}_{c,0}$ with $\|u\|_{-s} \le M,$
 the operator $L_u - \lambda : H^{1-s}_+ \to H^{-s}_+$ is invertible.\\
 (ii) For any $u \in H^{-s}_{c,0}$ with $\|u\|_{-s} \le M$
 and any $\lambda \in \C$ in the half planes $\{\Im \lambda \ge  \Re \lambda + 2 K_M \}$, $\{\Im \lambda \le  - \Re \lambda - 2 K_M \}$,
 the operator $L_u - \lambda : H^{1-s}_+ \to H^{-s}_+$ is invertible.
 \end{lemma}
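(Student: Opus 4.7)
The plan is to exploit the factorization
\[
L_u - \lambda = \bigl(\mathrm{Id} - T_u (D-\lambda)^{-1}\bigr)(D-\lambda),
\]
valid for every $\lambda \in \C\setminus \N_0$ since $D-\lambda : H^{1-s}_+ \to H^{-s}_+$ is then boundedly invertible. It therefore suffices to establish that, on the half planes described in (i) and (ii), the operator norm of $T_u(D-\lambda)^{-1}$ on $H^{-s}_+$ is strictly less than one, uniformly over $u$ with $\|u\|_{-s} \le M$; a Neumann series argument then produces the inverse of $L_u - \lambda$. I will bound this norm by factoring through $H^{1-\sigma}_+$, using
\[
\|T_u(D-\lambda)^{-1}\|_{H^{-s}_+ \to H^{-s}_+} \le \|T_u\|_{H^{1-\sigma}_+ \to H^{-s}_+}\, \|(D-\lambda)^{-1}\|_{H^{-s}_+ \to H^{1-\sigma}_+}.
\]
By Lemma \ref{multiplication u f}(ii) the first factor is at most $C_s \|u\|_{-s} \le C_s M$, so everything reduces to controlling the Fourier multiplier $(D-\lambda)^{-1}$ between the indicated spaces.

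Since $(D-\lambda)^{-1}$ acts on $e^{inx}$ by the scalar $(n-\lambda)^{-1}$, its norm as an operator $H^{-s}_+ \to H^{1-\sigma}_+$ equals $\sup_{n\in\N_0}\langle n\rangle^{1-\sigma+s}/|n-\lambda|$. With $\sigma=(s+1/2)/2$ one has $1-\sigma+s = 3/4 + s/2 \in (1/2,1)$ and $1-(1-\sigma+s)=(1/2-s)/2>0$, which is the exponent that drives the decay. Next I bound $|n-\lambda|$ from below in each regime:
\begin{itemize}
\item In case (i), for $\Re\lambda \le -K_M$ one directly has $|n-\lambda| \ge n - \Re\lambda \ge n + K_M$ for every $n \ge 0$.
\item In case (ii), for $\lambda$ in either of the half planes $\{\Im\lambda \ge \Re\lambda + 2K_M\}$ or $\{\Im\lambda \le -\Re\lambda - 2K_M\}$, the geometric distance from the real point $n\in\N_0$ to the bounding line is $(n+2K_M)/\sqrt{2}$, so $|n-\lambda| \ge (n+2K_M)/\sqrt{2}$ uniformly in $n$.
\end{itemize}
In both cases the lower bound has the form $|n-\lambda| \ge c\,(n+K_M)$ for a universal $c>0$, and an elementary one-variable calculus argument (optimizing $n^{\sigma'}/(n+K_M)$ at $n_\ast = \sigma' K_M/(1-\sigma')$ with $\sigma' = 3/4 + s/2$) yields
\[
\sup_{n\in\N_0}\frac{\langle n\rangle^{1-\sigma+s}}{n+K_M} \le K_M^{-(1/2-s)/2}.
\]

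Combining these estimates, $\|T_u(D-\lambda)^{-1}\|_{H^{-s}_+ \to H^{-s}_+} \le c^{-1} C_s M\, K_M^{-(1/2-s)/2}$ for an absolute constant. By the definition \eqref{definition K_M} of $K_M$, we have $K_M^{(1/2-s)/2} \ge 2C_s M$, which makes the right-hand side strictly less than $1$ (after absorbing the harmless factor from case (ii) into the choice of constant if needed). The Neumann series $\sum_{k\ge 0}\bigl(T_u(D-\lambda)^{-1}\bigr)^k$ then converges in $\mathcal{L}(H^{-s}_+)$, showing that $\mathrm{Id}-T_u(D-\lambda)^{-1}$ is invertible on $H^{-s}_+$, and hence $L_u - \lambda: H^{1-s}_+ \to H^{-s}_+$ is invertible, as claimed.

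The only subtle point is the careful choice of the intermediate exponent $\sigma \in (1/4,1/2)$: one needs $1-\sigma > 1/2$ so that Lemma \ref{multiplication u f}(i) supplies the multiplicative estimate for the Toeplitz operator, while simultaneously $1-\sigma+s < 1$ so that $|n-\lambda|\sim n$ is large enough to absorb the Sobolev weight and produce a genuine decay in $K_M$. The geometric identification of the distance from $\N_0$ to the boundary lines in case (ii) is the one place where a picture, rather than a direct Fourier computation, makes the argument transparent.
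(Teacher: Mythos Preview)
Your argument is correct and follows essentially the same approach as the paper: factor $L_u-\lambda$ through $D-\lambda$, estimate $T_u(D-\lambda)^{-1}$ via the intermediate space $H^{1-\sigma}_+$ using Lemma~\ref{multiplication u f}, and show the resulting bound is $\le 1/2$ in case (i) and $\le \sqrt{2}/2$ in case (ii). The only cosmetic differences are that the paper obtains the multiplier bound by splitting $|k-\lambda|^2=|k-\lambda|^{2(1-\sigma+s)}|k-\lambda|^{2(\sigma-s)}$ and using $|k-\lambda|\ge\langle k\rangle$, $|k-\lambda|\ge K_M$ separately (rather than your calculus optimization), and in (ii) it reduces first to $\Re\lambda\ge -K_M$ and verifies $(k-\mu)^2+(\mu+2K_M)^2\ge \langle k\rangle^2/2$ and $\ge K_M^2$ algebraically, whereas your distance-to-line observation handles the whole half-plane at once; your phrase ``absorbing the harmless factor into the choice of constant'' should simply read that $c^{-1}\cdot\tfrac12=\sqrt{2}/2<1$ already suffices.
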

 \begin{proof}
(i) For any $\lambda \in \C \setminus \N_0$ and $f \in H^{-s}_+$, we have by Lemma \ref{multiplication u f}
 $$
 \| T_u (D - \lambda)^{-1} f  \|_{-s}= \| \Pi (u (D - \lambda)^{-1} f ) \|_{-s} \le C_s \|u\|_{-s} \| (D - \lambda)^{-1} f \|_{1- \sigma}.
 $$
 Note  that 
 $$
 \| (D - \lambda)^{-1} f \|_{1- \sigma}^2 = \sum_{k \ge 0} \frac{ \langle k \rangle^{2(1-\sigma)}}{| k - \lambda |^2} |\widehat f(k)|^2 .
 $$
 Since for any $\lambda$ with $\Re \lambda \le -K_M$ , one has 
 $$
 |k - \lambda | \ge K_M \, , \qquad   |k - \lambda | \ge \langle k \rangle \, , \qquad \forall \, k \in \N_0 \, ,
 $$
 it follows that 
 $$
  \| (D - \lambda)^{-1} f \|_{1- \sigma}^2 \le \frac{1}{K_M^{2(\sigma - s)}} \sum_{k \ge 0}\frac{1} {\langle k \rangle^{2s}} |\widehat f(k)|^2 
  \le  \frac{1}{K_M^{2(\sigma - s)}} \| f\|_{-s}^2 \, .
 $$
 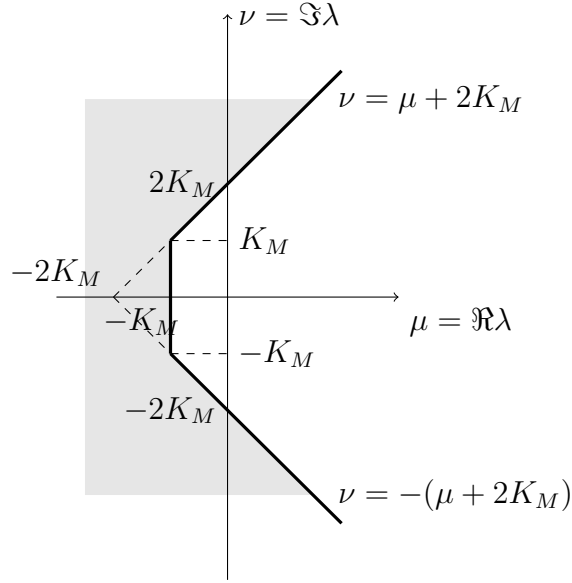
\begin{figure}
\begin{tikzpicture}[scale=0.75]
\fill [color=gray!20] (1.5,3.5) -- (-2.5,3.5) -- (-2.5,-3.5) -- (1.5,-3.5) -- (-1,-1) -- (-1,1) -- cycle ;
\draw (-2,0) node[above left] {$-2K_M$} ;
\draw (-1.5,0) node[below] {$-K_M$} ;
\draw (0,2) node[left] {$2K_M$} ;
\draw (0,-2) node[left] {$-2K_M$} ;
\draw  [very thick]  (-1,-1) -- (-1,1) ;
\draw  [very thick]  (-1,1) -- (2,4)  ;
\draw  [very thick]  (-1,-1) -- (2,-4) ;
\draw  [->]  (-3,0) -- (3,0) ;
\draw  [->]   (0,-5) -- (0,5) ;
\draw  [dashed]  (-1,1) -- (0,1) node[right]  {$K_M$}  ;
\draw  [dashed]   (-1,-1) -- (0,-1) node[right]  {$-K_M$}  ;
\draw [dashed] (-2,0) -- (-1,1) ;
\draw [dashed] (-2,0) -- (-1,-1) ;
\draw (3,0) node[below right] {$\mu=\Re \lambda$} ;
\draw (0,5) node[right] {$\nu=\Im \lambda$} ;
\draw (1.75,3.5) node[right] {$\nu =\mu +2K_M$} ;
\draw (1.75,-3.5) node[right] {$\nu =-(\mu +2K_M)$} ;
\end{tikzpicture}
\caption{ For $\lambda$ in the grey area, $L_u -\lambda$ is invertible.}
\end{figure}
Combining the estimates obtained so far and using that $\sigma - s = (1/2 - s)/2$
one infers from the definition of $K_M$ that for any $u \in H^{-s}_{c,0}$ with $\|u\|_{-s} \le M$
 $$
  \| T_u (D - \lambda)^{-1} f  \|_{-s} \le \frac{C_s M}{K_M^{(\frac{1}{2} - s)/2}} \|f\|_{-s}  \le \frac{1}{2} \| f \|_{-s}\, ,
  $$
  implying that $L_u - \lambda$ is invertible for any $\lambda \in \C$ with $\Re \lambda \le - K_M$.\\
  (ii) In view of item (i) it suffices to show that  $L_u - \lambda$ is invertible for any $\lambda \in \C$
  with $\mu:= \Re \lambda \ge - K_M$ and $\pm \nu := \Im \lambda \ge \mu + 2 K_M$.
  Since for such $\lambda$, 
  $$
   | k -  \lambda |^2 = (k - \mu)^2 + \nu^2 \ge (k - \mu )^2 + (\mu + 2 K_M  )^2, 
   $$
   arguing as in item (i), 
  it suffices to verify that for any $k \in \N_0$ and $\mu \ge - K_M,$
  $$
  (k - \mu )^2 + (\mu + 2 K_M  )^2 \ge  \langle k \rangle^2/2  , \qquad (k - \mu )^2 + (\mu + 2 K_M  )^2 \ge  K_M^2 \, .
  $$
  First note that the latter inequality clearly holds for  such $k$ and $\mu$ and that the first one holds for $k=0$.
  Finally, for any $k \in \N,$ one has
  $$
   (k - \mu )^2 + (\mu + 2 K_M  )^2  -  k^2/2 = \frac{1}{2}\big( k - 2\mu \big)^2  + 4K_M (\mu + K_M) \ge 0 \, .
  $$
  It then follows that for any $u \in H^{-s}_{c,0}$ with $\|u\|_{-s} \le M$ and $\lambda$ as above
 $$
  \| T_u (D - \lambda)^{-1} f  \|_{-s} \le \frac{\sqrt{2}C_s M}{K_M^{(\frac{1}{2} - s)/2}} \|f\|_{-s}  \le \frac{\sqrt{2}}{2} \| f \|_{-s}\, ,
  $$
  implying that $L_u - \lambda$ is invertible for any $\lambda \in \C$ in either of the two half planes.
 \end{proof}

In the remainder of this section we study the operator $L_u$ for $u \in H^{-s}_{c,0}$ near zero.
First we need to introduce some more notation.
For any $m \in \Z$ and $s \in \R,$ define the sequence space
$$
h^s_{\ge m} \equiv  h^s(\Z_{\ge m}, \C ) :=  \{ z=(z_k)_{k \ge m} \subset \C \ : \ \|z \|_s < \infty \}  , 
$$
$$
\|z\|_s := \big( \sum_{k=m}^\infty \langle k \rangle^{2s} |z_k|^2  \big)^{1/2} , 
$$
and for any $n \in \Z$, the $n-$shift operator $\mathcal S_n : h^s_{\ge 0} \to h^s_{\ge -n}$, given by
\begin{equation}\label{n-shift}
z=(z_k)_{k \ge 0} \mapsto \big( (\mathcal S_n z)_{k'} \big)_{k' \ge -n} \ , \qquad (\mathcal S_n z)_{k'}:= z_{k' + n} \, .
\end{equation}
Note that
\begin{equation}\label{def norm with shift}
\| \mathcal S_n z \|_{s} =  \big( \sum_{k'=-n}^\infty \langle k' \rangle^{2s} |(\mathcal S_n z)_{k'}|^2  \big)^{1/2} 
=  \big( \sum_{k=0}^\infty \langle k - n \rangle^{2s} |z_k|^2  \big)^{1/2} \, .
\end{equation}
Denote by $H^{s; n}_+$ the $\C-$vector space $H^s_+$ endowed with the inner product, induced by the norm
\begin{equation}\label{shifted norm}
\| f \|_{s;n} := \| \mathcal S_n \widehat f \|_s\, , \qquad  \widehat f := (\widehat f(k))_{k \ge 0} \, .
\end{equation}
\begin{lemma}\label{equivalence of norms}
For any $s \in \R_{\ge 0}$, $n \in \N_0$,  and $f \in H^{-s}_+$, the following estimates hold:
\begin{equation}\label{equivalence of shifted norms}
\| f \|_{-s;n} \le 2^s \langle n \rangle^s \| f \|_{-s} \,  , \qquad 
\| f \|_{-s} \le 2^s \langle n \rangle^s \| f \|_{-s;n} \, .
\end{equation}
\end{lemma}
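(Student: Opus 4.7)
The plan is to reduce both inequalities in \eqref{equivalence of shifted norms} to a single pointwise estimate comparing the weights $\langle k\rangle^{-s}$ and $\langle k-n\rangle^{-s}$. Unwinding the definitions \eqref{def norm with shift} and \eqref{shifted norm}, both $\|f\|_{-s}^2$ and $\|f\|_{-s;n}^2$ are weighted $\ell^2$ sums of $|\widehat f(k)|^2$, with weights $\langle k\rangle^{-2s}$ and $\langle k-n\rangle^{-2s}$ respectively. So the two desired inequalities reduce term-by-term to showing
$$
\langle k\rangle \le 2\langle n\rangle \langle k-n\rangle \quad \text{and} \quad \langle k-n\rangle \le 2\langle n\rangle\langle k\rangle, \qquad \forall\, k, n \in \N_0 .
$$

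First I would establish the elementary fact that $a+b \le 2ab$ whenever $a,b \ge 1$, which is equivalent to $(a-1)(b-1) \ge ab - a - b + 1 \ge 0$ combined with the trivial bound $1 \le ab$. Combined with the triangle inequality $|k| \le |k-n| + |n|$ (which yields $\langle k\rangle \le \langle k-n\rangle + \langle n\rangle$, since the right-hand side is at least $1$), and symmetrically $\langle k-n\rangle \le \langle k\rangle + \langle n\rangle$, this gives exactly the two pointwise bounds above, noting that $\langle k-n\rangle$ and $\langle n\rangle$ are each at least $1$.

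Having these pointwise weight comparisons, raising both sides to the power $2s \ge 0$ and multiplying by $|\widehat f(k)|^2 \ge 0$, then summing over $k \ge 0$, immediately yields
$$
\|f\|_{-s;n}^2 \le 2^{2s}\langle n\rangle^{2s}\|f\|_{-s}^2, \qquad \|f\|_{-s}^2 \le 2^{2s}\langle n\rangle^{2s}\|f\|_{-s;n}^2 .
$$
Taking square roots finishes the proof. There is no real obstacle here: the only mild point is that one must remember that $\langle\cdot\rangle=\max\{1,|\cdot|\}$ is bounded below by $1$, so that both the triangle inequality and the bound $a+b\le 2ab$ apply cleanly; the case $k=n$ (where $\langle k-n\rangle=1$) presents no special difficulty.
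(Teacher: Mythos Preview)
Your proof is correct and follows essentially the same route as the paper: both reduce the norm inequalities to the pointwise weight comparisons $\langle k\rangle \le 2\langle n\rangle\langle k-n\rangle$ and $\langle k-n\rangle \le 2\langle n\rangle\langle k\rangle$, then sum. The only difference is that you supply a short justification for these elementary inequalities (via $a+b\le 2ab$ for $a,b\ge 1$), whereas the paper simply asserts them.
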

\begin{proof}
To verify the first estimate of \eqref{equivalence of shifted norms}, note that 
$\langle k \rangle \le 2 \langle k-n \rangle \langle n \rangle$
for any $k, n \ge 0$ and hence 
$\frac{1}{\langle k-n \rangle } \le 2 \frac{\langle n \rangle}{\langle k \rangle}$,
implying  that 
$$
\frac{1}{\langle k-n \rangle^{2s}} \le 2^{2s} \frac{\langle n \rangle^{2s}}{\langle k \rangle^{2s}}\, ,  \qquad
\forall s \ge 0\, , \ k,n \ge 0\ .
$$
Hence in view of \eqref{def norm with shift} and \eqref{shifted norm} one has for any $f \in H^{-s}_+$
$$
\| f\|_{-s; n} 
%= \Big( \sum_{k=0}^{\infty} \frac{1}{\langle k-n \rangle ^{2s}} |\widehat f(k) |^2 \Big)^{1/2}
\le 2^s \langle n \rangle^s \Big( \sum_{k=0}^{\infty} \frac{1}{\langle k \rangle ^{2s}} |\widehat f(k) |^2 \Big)^{1/2}
= 2^s \langle n \rangle^s \|f \|_{-s} \, .
$$
To establish the second estimate of \eqref{equivalence of shifted norms}, we argue similarly. For any $k, n \ge 0$,
one has $\langle k - n \rangle \le  2 \langle k \rangle \langle n \rangle$ or 
$$
\frac{1}{\langle k \rangle^{2s} } \le 2^{2s} \frac{\langle n \rangle^{2s} }{\langle k - n \rangle^{2s}} \, , \qquad
\forall s \ge 0\, , \ k,n \ge 0\ ,
$$
and hence for any $f \in H^{-s}_+$ with $s \ge 0$,
$$
\|f \|_{-s} \le 2^s \langle n \rangle^s \Big( \sum_{k=0}^{\infty} \frac{1}{\langle k - n \rangle^{2s}} |\widehat f(k) |^2 \Big)^{1/2}
= 2^s \langle n \rangle^s \| f \|_{-s; n}\, .
$$
\end{proof}

For $0 < \rho \le 1/3$ and $n \in \N_0$, denote by $D_n(\rho)$ the open disk in $\C$
$$
D_n(\rho) := \{ \lambda \in \C \ : \ |\lambda - n | < \rho \} 
$$
and by $\partial D_n(\rho)$ its boundary,
$$
\partial D_n(\rho) = \{ \lambda = n + \rho e^{i\theta} \ : \ \theta \in \R / 2\pi \Z \} \, .
$$
Note that the distance between different discs is at least $1/3$. 
Furthermore, for any $n \ge 1$, we introduce the closed vertical strips $Vert_n(\rho)$ of width one
 with the disc $D_n(\rho)$ removed,
$$
\text{Vert}_n(\rho) := \{ \lambda \in \C \ : \ | \lambda - n | \ge \rho; \  | \Re \lambda - n | \le 1/2  \},
$$
whereas for $n = 0,$ we define
$$
\text{Vert}_0(\rho) :=  \{  \lambda \in \C \ : \ | \lambda | \ge \rho;  \  \Re \lambda  \le 1/2  \}.
$$
We remark that the union of $\text{Vert}_n(\rho) \cup D_n(\rho)$, $n \ge 0$, covers the complex plane.
\begin{lemma}\label{estimate of lambda in Vert}
For any $k, n \in \N_0$, $0 < \rho \le 1/3$, and $\lambda \in \text{Vert}_n(\rho)$,
$$
| k-\lambda | \ge \rho \langle k-n \rangle .
$$
\end{lemma}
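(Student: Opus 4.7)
My plan is to separate the two defining conditions of $\mathrm{Vert}_n(\rho)$ and split into cases according to whether $k = n$ or $k \ne n$.

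First, in the case $k = n$, one has $\langle k - n \rangle = 1$, so the claim reduces to $|k - \lambda| = |n - \lambda| \ge \rho$. For $n \ge 1$ this is literally the condition $|\lambda - n| \ge \rho$ built into the definition of $\mathrm{Vert}_n(\rho)$, and for $n = 0$ it is the condition $|\lambda| \ge \rho$. So this case is immediate.

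Next, suppose $k \ne n$. Since $k, n \in \N_0$, this forces $|k - n| \ge 1$, hence $\langle k - n \rangle = |k - n|$. Writing $\lambda = \mu + i \nu$ with $\mu = \Re \lambda$, I would first drop the imaginary part,
\[
|k - \lambda| \ge |k - \mu|,
\]
and then estimate $|k-\mu|$ using only the strip condition. For $n \ge 1$, the hypothesis $|\mu - n| \le 1/2$ together with the reverse triangle inequality gives
\[
|k - \mu| \;\ge\; |k - n| - |n - \mu| \;\ge\; |k - n| - \tfrac{1}{2}.
\]
Because $|k - n| \ge 1$, the right-hand side is at least $|k-n|/2$, and since $\rho \le 1/3 \le 1/2$, I conclude $|k - \lambda| \ge |k-n|/2 \ge \rho \langle k - n \rangle$. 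For $n = 0$ (so $k \ge 1$), the strip condition is $\mu \le 1/2$, which yields directly $k - \mu \ge k - 1/2 \ge k/2 \ge \rho k = \rho \langle k - n \rangle$.

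I do not anticipate any real obstacle here: this lemma is essentially a geometric sanity check that the contours $\partial D_n(\rho)$ and vertical strips $\mathrm{Vert}_n(\rho)$ were engineered precisely so that the resolvent bound $|k - \lambda|^{-1} \le \rho^{-1} \langle k - n \rangle^{-1}$ holds uniformly, the form needed to control $(D - \lambda)^{-1}$ in the shifted norm $\|\cdot\|_{-s;n}$ of Lemma \ref{equivalence of norms}. The only mildly delicate point is to notice that $|k-n|\ge 1$ (ensured by $k\ne n$ for integer $k,n$) is exactly what converts the additive loss of $1/2$ into a multiplicative loss of a factor $2$.
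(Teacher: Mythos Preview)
Your proof is correct and follows essentially the same route as the paper's own proof: split into $k=n$ (immediate from the disc-exclusion condition) and $k\ne n$ (use the strip condition to get $|k-\lambda|\ge |k-n|-1/2\ge \tfrac12\langle k-n\rangle\ge \rho\langle k-n\rangle$). The paper's version is simply more terse and does not separate out the $n=0$ case.
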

\begin{proof}
Let $\lambda \in \text{Vert}_n(\rho)$ with $n \ge 0$.
Then $| n -\lambda | \ge \rho = \rho \cdot \langle 0 \rangle$
whereas for $k \ne n,$ 
$$
| k -\lambda | \ge | k - n | - 1/2 \ge \frac{1}{2} \langle k- n \rangle .
$$
\end{proof}
For $u \in H^{-s}_{c,0}$ with $0 \le s < 1/2$, $\lambda \in \text{Vert}_n(\rho)$ with $n \ge 0$, and $0 < \rho \le 1/3,$
we write $L_u - \lambda$ in the form
\begin{equation}\label{formula for L_u - lambda}
L_u - \lambda = (D - \lambda) - T_u = \big( \text{Id} - T_u(D - \lambda)^{-1}\big)\big( D - \lambda \big)  .
\end{equation}
In order to show that $L_u - \lambda : H^{1-s}_+ \to H^{-s}_+$ is invertible for $u$ near 0, we prove the following
\begin{lemma}\label{estimate of T_u(D- lambda)^-1}
Let $0 \le s < 1/2$ and $0 < \rho \le 1/3.$ Then for any $u \in H^{-s}_{c,0}$ and $n \ge 0$,
$$
\| T_u(D - \lambda)^{-1} \|_{H^{-s; n}_+ \to H^{-s; n}_+} \le \frac{1}{\rho} C_s \|u \|_{-s} \, , \qquad \forall \lambda \in \text{Vert}_n(\rho) \, ,
$$
where $C_s > 0$ is the constant of Lemma \ref{multiplication u f}.
\end{lemma}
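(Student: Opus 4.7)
The plan is to reduce the shifted-norm estimate to the unshifted multiplication bound of Lemma \ref{multiplication u f}(ii) via the simple Fourier-side identity
\begin{equation*}
\|v\|_{\tau;n} \, = \, \|e^{-inx}v\|_{H^\tau_c}\, , \qquad v \in H^\tau_+\, ,
\end{equation*}
valid for any $\tau \in \R$. This follows directly from the definition of $\|\cdot\|_{\tau;n}$: multiplication by $e^{-inx}$ shifts the Fourier index by $-n$, so the weight $\langle k \rangle^{2\tau}$ defining the $H^\tau_c$-norm of $e^{-inx}v$ matches the weight $\langle k-n\rangle^{2\tau}$ appearing in $\|v\|_{\tau;n}$.

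Given $f \in H^{-s}_+$, I would set $g := (D-\lambda)^{-1}f \in H^{1-s}_+ \subset H^{1-\sigma}_+$ (recall $\sigma \ge s$). First I would estimate the output by observing that $e^{-inx}\Pi(ug)$ is nothing but the spectral projection of $e^{-inx}(ug) = u \cdot (e^{-inx}g)$ onto Fourier modes $\ge -n$, and such a truncation is a contraction on $H^{-s}_c$. Hence
\begin{equation*}
\|T_u g\|_{-s;n} \, = \, \|e^{-inx}\Pi(ug)\|_{H^{-s}_c} \, \le \, \|u \cdot (e^{-inx}g)\|_{H^{-s}_c} \, .
\end{equation*}
Since $e^{-inx}g$ lies in $H^{1-\sigma}_c$, Lemma \ref{multiplication u f}(ii) applies and gives
\begin{equation*}
\|u \cdot (e^{-inx}g)\|_{H^{-s}_c} \, \le \, C_s \|u\|_{-s}\,\|e^{-inx}g\|_{1-\sigma;c} \, = \, C_s \|u\|_{-s}\,\|g\|_{1-\sigma;n}\, .
\end{equation*}

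Finally, I would transfer the derivative gain back to $f$ using the spectral separation of Lemma \ref{estimate of lambda in Vert}: for $\lambda \in \text{Vert}_n(\rho)$ and $k \ge 0$ one has $|k-\lambda| \ge \rho \langle k-n\rangle$, so
\begin{equation*}
\|g\|_{1-\sigma;n}^2 \, = \, \sum_{k\ge 0} \frac{\langle k-n\rangle^{2(1-\sigma)}}{|k-\lambda|^2}|\widehat f(k)|^2 \, \le \, \frac{1}{\rho^2}\sum_{k \ge 0} \langle k-n\rangle^{-2\sigma}|\widehat f(k)|^2 \, .
\end{equation*}
Because $\sigma = (s+1/2)/2 > s$, each weight $\langle k-n\rangle^{-2\sigma}$ is dominated by $\langle k-n\rangle^{-2s}$, whence $\|g\|_{1-\sigma;n} \le \rho^{-1}\|f\|_{-s;n}$. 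Chaining the three inequalities yields the claimed bound.

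The only mildly delicate point is the first step: one must recognize that conjugating $\Pi$ by multiplication by $e^{-inx}$ turns it into the Fourier truncation to modes $\ge -n$, which is a contraction in every $H^\tau_c$-norm. Once this observation is in place, the remainder is a direct Fourier-side computation combining Lemmas \ref{multiplication u f} and \ref{estimate of lambda in Vert}, and the constant $C_s$ in the conclusion is exactly the one supplied by Lemma \ref{multiplication u f}.
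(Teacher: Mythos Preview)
Your proof is correct and follows essentially the same approach as the paper's. The paper carries out the identical argument directly at the level of Fourier coefficients (writing out the double sum, shifting indices $m'=m-n$, $k'=k-n$, and then invoking Lemma~\ref{multiplication u f}), whereas you package the same computation via the identity $\|v\|_{\tau;n}=\|e^{-inx}v\|_{H^\tau_c}$ together with the observation that conjugating $\Pi$ by $e^{-inx}$ gives the truncation to modes $\ge -n$; the ingredients (Lemma~\ref{multiplication u f} and Lemma~\ref{estimate of lambda in Vert}) and the resulting constant are identical.
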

\begin{proof}
Let $f \in H^{-s}_+$ and $\lambda \in  \text{Vert}_n(\rho)$. Expressing $T_u( (D - \lambda)^{-1} f)$ as a Fourier series yields
$$
T_u( (D - \lambda)^{-1} f) = \sum_{m=0}^{\infty} \Big(\sum_{k=0}^{\infty}  \widehat u(m-k) \frac{\widehat f(k)}{k - \lambda} \Big) e^{imx} \, .
$$
Hence $\| T_u( (D - \lambda)^{-1} f) \|_{-s; n}^2$ can be estimated by
$$
 \sum_{m=0}^{\infty} \frac{1}{\langle m-n \rangle^{2s}} \Big(\sum_{k=0}^{\infty} | \widehat u(m-k) | \frac{|\widehat f(k)|}{|k - \lambda|} \Big)^2 \, .
$$
With $m':= m-n$, $k':= k-n$ and using that $\widehat f(k' + n ) = \mathcal S_n \widehat f(k')$, Lemma \ref{estimate of lambda in Vert} implies that
$$
\| T_u( (D - \lambda)^{-1} f) \|_{-s,;n}^2 \le 
\sum_{m'= - n}^{\infty} \frac{1}{\langle m' \rangle^{2s}} \Big(\sum_{k'=-n}^{\infty} | \widehat u(m'-k') | \frac{|\mathcal S_n \widehat f(k')|}{\rho \langle k' \rangle} \Big)^2 \,  .
$$
Applying Lemma \ref{multiplication u f} one concludes that
$$
\| T_u( (D - \lambda)^{-1} f) \|_{-s; n} \le \frac{1}{\rho} C_s \|u \|_{-s} \| \mathcal S_n \widehat f \|_{-s} = 
\frac{1}{\rho} C_s \|u \|_{-s} \| f \|_{-s; n} \, .
$$
\end{proof}
As an immediate consequence of Lemma \ref{estimate of T_u(D- lambda)^-1}, one obtains the following
\begin{corollary}\label{1/2 bound}
Let $0 \le s < 1/2$, $0< \rho \le 1/3,$ and let $C_s$ be the constant of Lemma \ref{multiplication u f}. 
Then for any $u \in H^{-s}_{c,0}$ with $\| u \|_{-s} \le  \rho/4 C_s$ 
the following holds:\\
(i) For any $n \ge 0$
$$
\| T_u(D - \lambda)^{-1} \|_{H^{-s; n}_+ \to H^{-s; n}_+} \le \frac{1}{4}\, , \qquad \forall \lambda \in \text{Vert}_n(\rho) .
$$
(ii) For any $\lambda \in \bigcup_{n \ge 0} \text{Vert}_n(\rho)$, the operator $L_u - \lambda : H^{1-s}_+ \to H^{-s}_+$
is invertible with inverse given by 
\begin{equation}\label{formula inverse}
(L_u - \lambda)^{-1} = (D - \lambda)^{-1} (\text{Id} -  T_u(D - \lambda)^{-1})^{-1}\, .
\end{equation}
\end{corollary}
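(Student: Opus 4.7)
The proof of Corollary \ref{1/2 bound} follows almost directly from Lemma \ref{estimate of T_u(D- lambda)^-1} combined with a standard Neumann series argument, so my plan is to simply assemble the pieces carefully.

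For part (i), the only step is to plug the hypothesis $\|u\|_{-s} \le \rho/(4C_s)$ into the estimate supplied by Lemma \ref{estimate of T_u(D- lambda)^-1}. This gives
$$
\| T_u(D-\lambda)^{-1} \|_{H^{-s;n}_+ \to H^{-s;n}_+} \le \frac{1}{\rho}C_s \cdot \frac{\rho}{4 C_s} = \frac{1}{4}
$$
for every $\lambda \in \text{Vert}_n(\rho)$ and every $n \ge 0$.

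For part (ii), I would first note that any $\lambda \in \bigcup_{n \ge 0}\text{Vert}_n(\rho)$ satisfies $|\lambda - k| \ge \rho > 0$ for every $k \in \N_0$ (use $\text{Vert}_n(\rho)$ if $k=n$, and the fact that $|\Re\lambda - n| \le 1/2$ otherwise, which separates $\lambda$ from any other nonnegative integer). Hence $\lambda \notin \text{spec}(D)$, and $D - \lambda : H^{1-s}_+ \to H^{-s}_+$ is a bounded invertible operator with bounded inverse $(D-\lambda)^{-1} : H^{-s}_+ \to H^{1-s}_+$. Fix such a $\lambda$ and choose $n \ge 0$ with $\lambda \in \text{Vert}_n(\rho)$. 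By part (i), the operator $T_u(D-\lambda)^{-1}$ has norm at most $1/4$ when $H^{-s}_+$ is equipped with the shifted norm $\|\cdot\|_{-s;n}$. The Neumann series then shows that $\text{Id} - T_u(D-\lambda)^{-1}$ is invertible on $(H^{-s}_+, \|\cdot\|_{-s;n})$, with inverse of operator norm at most $4/3$. Factoring as in \eqref{formula for L_u - lambda} yields the formula \eqref{formula inverse}.

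The only subtlety worth flagging is the switch of norms: part (i) and the Neumann series give invertibility in the norm $\|\cdot\|_{-s;n}$, but we want invertibility of $L_u - \lambda$ as a map $H^{1-s}_+ \to H^{-s}_+$, i.e.\ with respect to the original norm $\|\cdot\|_{-s}$. This is not an obstacle, however, since by Lemma \ref{equivalence of norms} the two norms are equivalent on $H^{-s}_+$, defining the same topology and hence the same class of bounded (invertible) operators; the inverse $(\text{Id} - T_u(D-\lambda)^{-1})^{-1}$ is therefore automatically bounded on $H^{-s}_+$ as well. Composing with $(D-\lambda)^{-1}$ delivers the claimed inverse.
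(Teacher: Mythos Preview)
Your proof is correct and follows essentially the same approach as the paper: plug the hypothesis $\|u\|_{-s}\le \rho/(4C_s)$ into Lemma \ref{estimate of T_u(D- lambda)^-1} for (i), then use the Neumann series on $H^{-s;n}_+$ together with Lemma \ref{equivalence of norms} to transfer invertibility to the standard norm, and conclude via the factorization \eqref{formula for L_u - lambda}. The extra care you take in noting explicitly that $\lambda\notin\N_0$ and in flagging the norm-switch subtlety is appropriate.
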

\begin{proof}
Item (i) follows from Lemma \ref{estimate of T_u(D- lambda)^-1}. It implies that for any $\lambda \in \text{Vert}_n(\rho)$ with $n \ge 0$,
 the Neumann series $\sum_{k = 0}^\infty (T_u(D - \lambda)^{-1})^k$ absolutely converges to a bounded operator
on $H^{-s; n}_+$, hence in turn $\text{Id} -  T_u(D - \lambda)^{-1} : H^{-s; n}_+ \to H^{-s; n}_+$ is invertible
and by Lemma \ref{equivalence of norms}, so is $\text{Id} -  T_u(D - \lambda)^{-1} : H^{-s}_+ \to H^{-s}_+$.
By \eqref{formula for L_u - lambda} it then follows that for any $\lambda \in \bigcup_{n \ge 0}\text{Vert}_n(\rho),$
the operator $L_u - \lambda : H^{1-s}_+ \to H^{-s}_+$ is invertible with inverse given by \eqref{formula inverse}.
\end{proof}
To summarize our findings, denote by $B^{s}_{c,0}(r) \equiv B^{s}_{c,0}(0, r)$ the open ball  in $H^{s}_{c,0}$, $s \in \R$,
 of radius $r > 0$, centered at $0$, 
$$
B^{s}_{c,0}(r) := \{ u \in H^{s}_{c,0} \ : \ \|u\|_s  < r  \} .
$$
and by $\text{Vert}^0_n(\rho)$ the interior of $\text{Vert}_n(\rho)$. So e.g. for any $n \ge 1$
$$
\text{Vert}^0_n(\rho) =  \{ \lambda \in \C \ : \ | \lambda - n | > \rho; \  | \Re \lambda - n | < 1/2  \} \, .
$$
We have proved the following
\begin{proposition}\label{Lu for u small}
Let $0 \le s < 1/2$, $0 < \rho \le1/3$, and let $C_s$ be the constant of Lemma \ref{multiplication u f}.  
Then for any $u \in H^{-s}_{c,0}$ with $\| u \|_{-s} \le \rho /4C_s$, 
$L_{u}$ defines an unbounded operator on $H^{-s}_+$ with domain $H^{1-s}_+$.
It has compact resolvent, hence has discrete spectrum,  and its resolvent set contains $\bigcup_{n \ge 0}\text{Vert}_n(\rho)$.
Furthermore, the map
$$
B^{-s}_{c,0}(r_s) \times \bigcup_{n \ge 0}\text{Vert}^0_n(1/4) \to \mathcal B(H^{-s}_+, H^{1-s}_+), (u, \lambda) \mapsto (L_u - \lambda)^{-1}
$$
is analytic where $r_s$ is defined as $r_s := 1/16C_s$.
\end{proposition}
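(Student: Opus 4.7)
The first three assertions are essentially immediate from what has already been set up. By Lemma \ref{multiplication u f}, the Toeplitz operator $T_u : H^{1-s}_+ \to H^{-s}_+$ is bounded for any $u \in H^{-s}_{c,0}$ with $0 \le s < 1/2$, and $D : H^{1-s}_+ \to H^{-s}_+$ is bounded, so $L_u = D - T_u$ is a bounded linear map $H^{1-s}_+ \to H^{-s}_+$, equivalently an unbounded operator on $H^{-s}_+$ with domain $H^{1-s}_+$. Corollary \ref{1/2 bound}(ii) then yields invertibility of $L_u - \lambda : H^{1-s}_+ \to H^{-s}_+$ for every $\lambda \in \bigcup_{n \ge 0} \mathrm{Vert}_n(\rho)$ and every $u$ in the stated ball. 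Composing the inverse with the compact embedding $H^{1-s}_+ \hookrightarrow H^{-s}_+$ (Rellich, since $1-s > -s$) shows the resolvent is compact on $H^{-s}_+$, which forces $\mathrm{spec}(L_u)$ to be discrete.

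The substantive content is the analyticity of the resolvent. The plan is to exploit formula \eqref{formula inverse} and to expand the second factor as a Neumann series,
$$
(L_u - \lambda)^{-1} = (D - \lambda)^{-1} \sum_{k=0}^{\infty} \bigl( T_u (D-\lambda)^{-1} \bigr)^k .
$$
For fixed $n \ge 0$ and $\lambda \in \mathrm{Vert}^0_n(1/4)$, Lemma \ref{estimate of T_u(D- lambda)^-1} with $\rho = 1/4$ gives the operator-norm bound $\| T_u (D-\lambda)^{-1}\|_{H^{-s;n}_+ \to H^{-s;n}_+} \le 4 C_s \|u\|_{-s}$, which is strictly less than one whenever $\|u\|_{-s} < r_s = 1/(16 C_s)$. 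Hence the series converges absolutely in the shifted operator norm, uniformly on compact subsets of $B^{-s}_{c,0}(r_s) \times \mathrm{Vert}^0_n(1/4)$. By Lemma \ref{equivalence of norms} the shifted norm and the standard norm on $H^{-s}_+$ are equivalent (with a constant depending on $n$, but $n$ is fixed on each such patch), so the convergence also takes place in the standard operator norm on $\mathcal B(H^{-s}_+, H^{-s}_+)$; composing with the bounded operator $(D-\lambda)^{-1} : H^{-s}_+ \to H^{1-s}_+$, which is uniformly bounded on the same compacta, gives locally uniform convergence as a map into $\mathcal B(H^{-s}_+, H^{1-s}_+)$.

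It then suffices to check that each summand $(u, \lambda) \mapsto (D-\lambda)^{-1} \bigl(T_u (D-\lambda)^{-1}\bigr)^k$ is jointly analytic. The dependence on $u$ is a homogeneous polynomial of degree $k$, since $u \mapsto T_u$ is linear bounded from $H^{-s}_{c,0}$ into $\mathcal B(H^{1-s}_+, H^{-s}_+)$. The dependence on $\lambda$ is holomorphic on $\C \setminus \N_0$: on the Fourier side $(D-\lambda)^{-1}$ is the multiplier $(k-\lambda)^{-1}$, which is holomorphic in $\lambda$ in the operator-norm topology away from $\N_0$. A locally uniform limit of analytic maps between complex Banach spaces is analytic, so the full Neumann sum is analytic on $B^{-s}_{c,0}(r_s) \times \bigcup_{n \ge 0} \mathrm{Vert}^0_n(1/4)$.

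The main bookkeeping obstacle is exactly the switch between $\|\cdot\|_{-s;n}$ and $\|\cdot\|_{-s}$: the Neumann estimate of Corollary \ref{1/2 bound}(i) is clean only in the shifted norm, so one must cover the resolvent region by the patches $\mathrm{Vert}^0_n(1/4)$, $n \ge 0$, and on each patch invoke Lemma \ref{equivalence of norms} to translate the Neumann-series estimate into the standard norm. Since analyticity is a local property and these patches cover the domain, this local-to-global step causes no difficulty, but it is the one point in the argument that genuinely requires care.
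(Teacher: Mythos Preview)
Your proof is correct and follows the same route as the paper: the Neumann series expansion of $(\mathrm{Id} - T_u(D-\lambda)^{-1})^{-1}$ coming from Corollary \ref{1/2 bound}, with the shifted-norm estimate of Lemma \ref{estimate of T_u(D- lambda)^-1} and the equivalence of norms from Lemma \ref{equivalence of norms}. The paper in fact presents this proposition as a summary of the preceding lemmas (``We have proved the following'') and does not spell out the analyticity step; your argument fills in exactly the details one would expect, including the patchwise use of the $\|\cdot\|_{-s;n}$ norms and the observation that analyticity is local so the decomposition into the disjoint open strips $\mathrm{Vert}^0_n(1/4)$ causes no trouble.
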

Proposition \ref{Lu for u small} allows to introduce the Riesz projectors.
\begin{corollary}\label{Riesz projector analytic I}
Let $0 \le s < 1/2$, $\rho = 1/4$, and let $C_s$ be the constant of Lemma \ref{multiplication u f}. 
Then for any $u \in H^{-s}_{c,0}$ with $\| u \|_{-s} \le 1/16C_s$ and $n \ge 0$, the Riesz projector
\begin{equation}\label{Riesz projector for u small}
P_n(u):=  - \frac{1}{2\pi i} \int_{\partial D_n(1/3)} (L_u - \lambda)^{-1} d \lambda\  \in \mathcal B(H^{-s}_+, H^{1-s}_+) 
\end{equation}
is well defined (with $\partial D_n(1/3)$ being counterclockwise oriented)  and
$$
B^{-s}_{c,0}(r_s)  \to \mathcal B(H^{-s}_+, H^{1-s}_+), \ u \mapsto P_{n}(u)
$$
is analytic where $r_s = 1/16C_s$.
\end{corollary}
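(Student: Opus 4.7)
The plan is to deduce the corollary essentially directly from Proposition~\ref{Lu for u small} by checking that the contour $\partial D_n(1/3)$ sits inside the analyticity region guaranteed by that proposition, and then justifying that the $\lambda$-integral of an analytic Banach-space valued function depending on a parameter $u$ is again analytic in $u$.

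First, I would verify the geometric inclusion $\partial D_n(1/3)\subset \text{Vert}^0_n(1/4)$ for every $n\ge 0$. For $n\ge 1$ this amounts to checking, for any $\lambda=n+\tfrac{1}{3}e^{i\theta}$, that $|\lambda-n|=1/3>1/4$ and $|\Re\lambda-n|\le 1/3<1/2$; both hold trivially. For $n=0$ the condition $\Re\lambda<1/2$ is ensured by $\Re\lambda\le 1/3$. Thus every point on the contour of integration lies in the resolvent set of $L_u$, for every $u\in B^{-s}_{c,0}(r_s)$ with $r_s=1/(16C_s)$, and by Proposition~\ref{Lu for u small} the operator $(L_u-\lambda)^{-1}\in \mathcal B(H^{-s}_+,H^{1-s}_+)$ is well defined and depends analytically on the pair $(u,\lambda)$ on $B^{-s}_{c,0}(r_s)\times\text{Vert}^0_n(1/4)$.

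Next, I would use this joint analyticity to conclude that for each fixed $u$, the map $\lambda\mapsto (L_u-\lambda)^{-1}$ is continuous on the compact circle $\partial D_n(1/3)$ with values in the Banach space $\mathcal B(H^{-s}_+,H^{1-s}_+)$. The Riemann/Bochner integral defining $P_n(u)$ therefore exists, which gives item~\eqref{Riesz projector for u small}. To establish the analyticity in $u$, I would expand the resolvent via the Neumann series provided by Corollary~\ref{1/2 bound},
\[
(L_u-\lambda)^{-1}=(D-\lambda)^{-1}\sum_{k\ge 0}\bigl(T_u(D-\lambda)^{-1}\bigr)^{k},
\]
where each summand is a homogeneous polynomial of degree $k$ in $u$. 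Lemma~\ref{estimate of T_u(D- lambda)^-1} with $\rho=1/4$ shows that this series converges in $\mathcal B(H^{-s;n}_+)$ uniformly in $\lambda\in\partial D_n(1/3)$ for $u$ in a fixed closed subball of $B^{-s}_{c,0}(r_s)$, and Lemma~\ref{equivalence of norms} transfers the convergence to the unshifted space $\mathcal B(H^{-s}_+,H^{1-s}_+)$. Exchanging the absolutely convergent sum with the integral over the compact contour then yields a power series representation
\[
P_n(u)=\sum_{k\ge 0}P_n^{(k)}(u),\qquad P_n^{(k)}(u):=-\frac{1}{2\pi i}\int_{\partial D_n(1/3)}(D-\lambda)^{-1}\bigl(T_u(D-\lambda)^{-1}\bigr)^{k}\,d\lambda,
\]
with each $P_n^{(k)}$ a bounded homogeneous polynomial of degree $k$ in $u$, and the series convergent in $\mathcal B(H^{-s}_+,H^{1-s}_+)$ on the ball.

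The only mildly delicate point is the uniformity of the Neumann expansion over $\lambda$ on the contour; this is exactly the content of Lemma~\ref{estimate of T_u(D- lambda)^-1}, which gives a $\lambda$-independent geometric ratio $\le 1/4$ on $\text{Vert}_n(1/4)$ when $\|u\|_{-s}\le 1/(16C_s)$. Once that uniform bound is in hand, interchanging the sum and the integral is routine, and the analyticity of $u\mapsto P_n(u)$ in $\mathcal B(H^{-s}_+,H^{1-s}_+)$ follows. No new estimate beyond those already established in Proposition~\ref{Lu for u small} and Corollary~\ref{1/2 bound} is required.
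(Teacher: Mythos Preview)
Your proof is correct and matches the paper's approach: the corollary is stated in the paper without a separate proof, as an immediate consequence of Proposition~\ref{Lu for u small}, and your argument fills in exactly the details one would expect---the contour inclusion $\partial D_n(1/3)\subset\text{Vert}^0_n(1/4)$ and the fact that integrating an analytic $\mathcal B(H^{-s}_+,H^{1-s}_+)$-valued map over a compact contour preserves analyticity in $u$. Your explicit Neumann-series expansion is a valid route to the latter, though a slightly shorter alternative is simply to invoke the joint analyticity of $(u,\lambda)\mapsto(L_u-\lambda)^{-1}$ from Proposition~\ref{Lu for u small} and differentiate under the integral (or apply Morera in $u$); either way, no new estimate beyond those already established is needed.
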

We remark that for any $\lambda \in \cup_{n \ge 0} {\rm Vert}_n(\rho)$,
the inverse $(L_u - \lambda)^{-1} $ is given by the (absolutely convergent) series
$$
\sum_{k = 0}^\infty  (D - \lambda)^{-1} (T_u (D - \lambda)^{-1} )^{k} = (D - \lambda)^{-1}  + (D - \lambda)^{-1}T_u (D - \lambda)^{-1}  + \ldots \ .
$$
We now have all the ingredients to prove Theorem \ref{main theorem}(i) for $u \in H^{-s}_{c,0}$ near $0$, 
which in this case reads as follows.
\begin{theorem}\label{main result of section 2}
Let $u \in H^{-s}_{c,0}$, $0 \le s < 1/2$,  with $\| u \|_{-s} \le \rho / 4C_s$, $0 < \rho \le 1/4$, and let
$C_s$ be the constant of Lemma \ref{multiplication u f}.  
Then the spectrum of $L_{u}$ consists of a sequence of eigenvalues,
denoted by $\lambda_n(u)$, $n \ge 0$. For any $n \ge 0,$ $\lambda_n(u)$ has algebraic multiplicity one and is contained in $D_n(\rho)$. 
Furthermore, the eigenvalues
$$
B^{-s}_{c,0}(r_s)  \to \mathbb C \, , \ u \mapsto \lambda_n(u)\, , \qquad \forall n \ge 0 , 
$$
and hence the gap lengths
$$
B^{-s}_{c,0}(r_s)  \to \mathbb C \, , \ u \mapsto \gamma_n(u)= \lambda_n(u) - \lambda_{n-1}(u) - 1 \, , \qquad \forall n \ge 1 ,
$$
are analytic maps.
\end{theorem}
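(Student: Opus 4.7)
The plan is as follows. For any $u \in B^{-s}_{c,0}(r_s)$, Proposition \ref{Lu for u small} shows that $L_u$ is closed with compact resolvent and that its resolvent set contains $\bigcup_{n \ge 0} \text{Vert}_n(\rho)$. Since $\C = \bigcup_{n \ge 0} \bigl(D_n(\rho) \cup \text{Vert}_n(\rho)\bigr)$, this places the entire discrete spectrum $\text{spec}(L_u)$ inside the disjoint union $\bigcup_{n \ge 0} D_n(\rho)$. For each $n \ge 0$, Corollary \ref{Riesz projector analytic I} provides a Riesz projector $P_n(u) \in \mathcal B(H^{-s}_+, H^{1-s}_+)$, depending analytically on $u$, whose range is the algebraic eigenspace of $L_u$ attached to those eigenvalues that lie inside the contour $\partial D_n(1/3)$, and hence inside the smaller disk $D_n(\rho)$.

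The key step is to show that $\operatorname{rank} P_n(u) = 1$ for every $u \in B^{-s}_{c,0}(r_s)$. At $u = 0$ the operator $D$ is diagonal in the basis $\{e^{ikx}\}_{k \ge 0}$ with resolvent $(D-\lambda)^{-1} e^{ikx} = (k-\lambda)^{-1} e^{ikx}$, so the residue computation gives $P_n(0) e^{ikx} = \delta_{kn}\, e^{inx}$ and $P_n(0)$ is a rank-one projector onto $\C\, e^{inx}$. Since $u \mapsto P_n(u)$ is continuous in the operator norm of $\mathcal B(H^{-s}_+)$, and two bounded idempotents at distance less than one in operator norm necessarily have equal rank (the classical lemma of Sz.-Nagy, cf.\ Kato, Ch.\ I.4.6), the function $u \mapsto \operatorname{rank} P_n(u)$ is locally constant on the connected ball $B^{-s}_{c,0}(r_s)$, and is therefore identically equal to $1$. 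In particular, $L_u$ has exactly one eigenvalue $\lambda_n(u) \in D_n(\rho)$, of algebraic multiplicity one. This rank argument is the main technical point; everything else then reduces to routine consequences of the analytic functional calculus already set up.

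To obtain analyticity of $\lambda_n$, fix any $u_0 \in B^{-s}_{c,0}(r_s)$ and, using that $P_n(u_0)$ has rank one, pick $v \in H^{-s}_+$ together with a continuous linear functional $\phi$ on $H^{-s}_+$ such that $\phi\bigl(P_n(u_0)v\bigr) \neq 0$. On the open neighborhood of $u_0$ where $\phi\bigl(P_n(u)v\bigr) \neq 0$, set
$$
e_n(u) := \frac{P_n(u)v}{\phi\bigl(P_n(u)v\bigr)} \in H^{1-s}_+.
$$
Then $e_n(u)$ is analytic, nowhere-vanishing, and spans the one-dimensional $L_u$-invariant subspace $\operatorname{Ran}(P_n(u))$, so $L_u e_n(u) = \lambda_n(u) e_n(u)$. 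Applying $\phi$ and using $\phi(e_n(u)) = 1$ yields
$$
\lambda_n(u) = \phi\bigl(L_u e_n(u)\bigr),
$$
which is analytic in $u$ as a composition of analytic maps. Since $u_0$ was arbitrary, $\lambda_n : B^{-s}_{c,0}(r_s) \to \C$ is analytic, and so are the gap lengths $\gamma_n(u) = \lambda_n(u) - \lambda_{n-1}(u) - 1$ as differences of analytic maps.
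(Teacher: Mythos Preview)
Your proof is correct and follows the same overall strategy as the paper: use the analytic Riesz projectors $P_n(u)$ from Corollary \ref{Riesz projector analytic I}, show their rank is constantly one by a connectedness argument, and deduce analyticity of $\lambda_n$ from that of $P_n$. The implementations differ in two places. For the rank constancy, the paper observes that ${\rm Tr}\,P_n(u)$ is an integer-valued analytic function of $u$ on the connected ball $B^{-s}_{c,0}(r_s)$ and hence constant; you instead invoke the Sz.-Nagy lemma on nearby idempotents. For the analyticity of $\lambda_n$, the paper uses the single trace formula
\[
\lambda_n(u) = -\,{\rm Tr}\Big(\frac{1}{2\pi i}\int_{\partial D_n(1/3)} \lambda\,(L_u-\lambda)^{-1}\,d\lambda\Big),
\]
which is globally defined and manifestly analytic, whereas you build a local analytic eigenvector $e_n(u)$ and read off $\lambda_n(u)=\phi(L_u e_n(u))$. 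Both routes are standard perturbation-theoretic alternatives; the trace approach is a bit more economical since it avoids the local choice of $(v,\phi)$ and the patching of neighborhoods, while your eigenvector construction has the advantage of also producing an analytic section of eigenfunctions, should one need it later.
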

\begin{remark}
(i) Note that for any $n \ge 0$ and $0 < \rho \le 1/4$, $D_{n+1}(\rho) = D_n(\rho) + 1$. Since by Theorem \ref{main result of section 2} one has
 $\lambda_n(u) \in D_n(\rho)$ for any $u \in H^{-s}_{c,0}$ with $\|u\|_{-s} < \rho/4C_s,$ it follows that
$$
\lambda_n(u) + 1 \in D_{n+1}(\rho) \, , \qquad \forall n \ge 0 \, .
$$
(ii) Since for any $u \in H^{-s}_{c,0}$ with $\| u \|_{-s} \le 1/16C_s$ and  $n \ge 0$, 
the eigenvalue of $\lambda_n(u)$ of $L_{u}$ is contained in $D_n(1/4)$ one has
$$
\text{dist}(\lambda_n(u), \partial D_n(1/3)) \ge 1/12 \, , \qquad \forall n \ge 0\,  .
$$
\end{remark}

\smallbreak
\noindent
{\em Proof of Theorem \ref{main result of section 2}.}
Let $u \in B^{-s}_{c,0}(r_s)$.
By Proposition \ref{Lu for u small}, $L_{u}$ has compact resolvent and 
hence for any $n \ge 0$, the projection $P_{n}(u)$ has finite rank and hence is of trace class. 
The value of ${\rm{Tr}}P_{n}$  at $u$ is given by the rank of $P_{n}(u)$, hence is an integer. 
Since by Corollary \ref{Riesz projector analytic I}, for any $n \ge 0$, the map
$B^{-s}_{c,0}(r_s) \to \C, \, u  \mapsto {\rm{Tr}}P_{n}(u)$ is analytic (and hence continuous),  
$ {\rm{Tr}}P_{n}$ is constant. Noting that
 the spectrum of $D$ consists of the sequence of simple eigenvalues $\lambda_n=n$, $n \ge 0$,
it then follows that for any $n \ge 0$, $L_{u}$ has precisely one eigenvalue in $D_n(1/4)$
and that this eigenvalue is of algebraic multiplicity one. We denote it by $\lambda_n(u)$.
Going through the above arguments one sees that for any $u \in H^{-s}_{c,0}$ with $\| u \|_{-s} \le \rho/4C_s$,
one has $\lambda_n(u) \in D_n(\rho)$.\\
It remains to prove that for any $n \ge 0$, 
$B^{-s}_{c,0}(r_s)  \to \mathbb C \ , \  u \mapsto \lambda_n(u)$ is analytic. This follows by observing that
$\lambda_n(u)$ equals the trace  of the operator  $- \frac{1}{2\pi i} \int_{\partial D_n(1/3)}  \lambda (L_u - \lambda)^{-1} d \lambda$,
$$
\lambda_n(u) =  -  \text{Tr} \big( \frac{1}{2\pi i} \int_{\partial D_n(1/3)}  \lambda (L_u - \lambda)^{-1} d \lambda \big) \,  .
$$ 
Furthermore, $\lambda_n(u)$ is real valued for $u$ real valued (cf. \cite{GK}, \cite{GKT}).
\hfill $\square$

\medskip

We finish this section with discussing sufficient conditions as well as a necessary condition for a potential $u \in B^{-s}_{c,0}(r_s)$ with $0 \le s < 1/2$ 
to satisfy $\gamma_n(u) = 0$ for a given $n \ge 1$. We refer to \cite[Lemma 2.5]{GK} for corresponding results in the case $u$ is in $L^2_r$. 
First we make some preliminary considerations. Arguing as in \cite[Section 2]{GK}, one shows that
for any $u \in B^{-s}_{c,0}(r_s)$ with $0 \le s < 1/2$ one has (cf. \cite[(2.7)]{GK})
\begin{equation}\label{shift identity}
L_u \mathcal S = \mathcal SL_u + \mathcal S - \langle u \mathcal S \cdot | 1 \rangle 1
\end{equation}
where for any $\sigma \in \R$, $\mathcal S \equiv \mathcal S_1: H^{\sigma}_+ \to H^{\sigma}_+, \ f \mapsto e^{ix}f$ is the one shift operator on $H^{\sigma}_+$.
The following lemma is a version of \cite[Lemma 2.6]{GK}, adapted to the situation at hand.
\begin{lemma}\label{identity with p k }
For any $u \in B^{-s}_{c,0}(r_s)$ with $0 \le s < 1/2$ and any set $h_n$, $n \ge 0$, of eigenfunctions of $L_{u}$,
corresponding to the eigenvalues $\lambda_n \equiv \lambda_n(u)$, one has
$$
(\lambda_p - \lambda_k  -1 )P_p \mathcal Sh_k = - \langle u \mathcal S h_k | 1 \rangle P_p1 \, , \qquad \forall \, p, k \ge 0 .
$$
\end{lemma}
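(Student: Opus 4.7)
The plan is to derive the identity by a one-line application of the shift relation \eqref{shift identity} to the eigenfunction $h_k$, followed by applying the Riesz projector $P_p$ to both sides, and exploiting the commutation of $P_p$ with $L_u$ together with the fact that $L_u$ acts as the scalar $\lambda_p$ on the one-dimensional range of $P_p$ (by Theorem \ref{main result of section 2}, the eigenvalue $\lambda_p$ has algebraic multiplicity one inside $\partial D_p(1/3)$).

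\medskip

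\textbf{Step 1.} Apply \eqref{shift identity} to $h_k$. Since $L_u h_k = \lambda_k h_k$, one obtains
\begin{equation*}
L_u \mathcal S h_k = \mathcal S L_u h_k + \mathcal S h_k - \langle u \mathcal S h_k | 1 \rangle \cdot 1 = (\lambda_k + 1)\mathcal S h_k - \langle u \mathcal S h_k | 1 \rangle \cdot 1,
\end{equation*}
which I rewrite as
\begin{equation*}
\bigl( L_u - (\lambda_k + 1) \bigr)\mathcal S h_k = - \langle u \mathcal S h_k | 1 \rangle \cdot 1.
\end{equation*}

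\medskip

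\textbf{Step 2.} Apply $P_p$ to both sides. Since $P_p$ is defined as a contour integral of the resolvent $(L_u - \lambda)^{-1}$ (cf.\ Corollary \ref{Riesz projector analytic I}), it commutes with $L_u$, and since the range of $P_p$ is the one-dimensional eigenspace corresponding to $\lambda_p$ (by Theorem \ref{main result of section 2}), $L_u$ acts on it as multiplication by $\lambda_p$. Therefore
\begin{equation*}
P_p \bigl( L_u - (\lambda_k + 1) \bigr)\mathcal S h_k = \bigl( L_u - (\lambda_k + 1) \bigr) P_p \mathcal S h_k = (\lambda_p - \lambda_k - 1)\, P_p \mathcal S h_k.
\end{equation*}
Applying $P_p$ to the right-hand side of Step 1, the scalar $-\langle u \mathcal S h_k | 1 \rangle$ factors out, yielding $-\langle u \mathcal S h_k | 1 \rangle P_p 1$. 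Equating the two expressions gives the claimed identity.

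\medskip

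The only mild obstacle is to verify that the arguments live in the appropriate function spaces: $h_k \in H^{1-s}_+$ is an eigenfunction, so $\mathcal S h_k \in H^{1-s}_+$ and Lemma \ref{multiplication u f} ensures that $u \mathcal S h_k \in H^{-s}_c$ so that the pairing $\langle u \mathcal S h_k | 1 \rangle$ is well defined; moreover $L_u \mathcal S h_k \in H^{-s}_+$ so that Step 1 is an identity in $H^{-s}_+$, and $P_p : H^{-s}_+ \to H^{1-s}_+$ is bounded, so Step 2 makes sense as an identity in $H^{1-s}_+$. All of this is automatic from the boundedness statements in Corollary \ref{Riesz projector analytic I} and Lemma \ref{multiplication u f}, so the argument should go through essentially as in \cite[Lemma 2.6]{GK}.
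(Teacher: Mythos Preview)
Your proof is correct and follows essentially the same approach as the paper: apply the shift identity \eqref{shift identity} to $h_k$ to obtain $(L_u - \lambda_k - 1)\mathcal S h_k = -\langle u \mathcal S h_k | 1 \rangle 1$, then apply $P_p$ and use that $P_p$ commutes with $L_u$ and that $L_u P_p = \lambda_p P_p$. Your additional remarks about the function spaces are a bit more explicit than the paper's, but the core argument is identical.
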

\begin{proof}
Let $u \in B^{-s}_{c,0}(r_s)$ with $0 \le s < 1/2$ and let $h_n$, $n \ge 0$, be a set of eigenfunctions of $L_{u}$,
corresponding to the eigenvalues $\lambda_n$. Note that $h_n \in H^{1-s}_+$ for any $n \ge 0$.
Recall that by \eqref{shift identity}
$$
(L_u - \lambda_k  -1 )\mathcal S h_k = - \langle u \mathcal S h_k | 1 \rangle 1 \,.
$$
We now apply the projection $P_p \equiv P_p(u)$, $p \ge 0$, to both sides of the latter identity.
Using that $L_u$ and $P_p$ commute and that $L_uP_p = \lambda_p P_p$ one concludes that
$$
(\lambda_p - \lambda_k  -1 )P_p \mathcal S h_k = - \langle u \mathcal S h_k | 1 \rangle P_p1 \,.
$$
\end{proof}
\begin{lemma}\label{characterization gamma_n = 0} 
Let $u \in B^{-s}_{c,0}(r_s)$ with $0 \le s < 1/2$. Then the following holds:\\
(i) For any set $h_k$, $k \ge 0$, of eigenfunctions of $L_{u}$,
corresponding to the eigenvalues $\lambda_k(u)$, one has $\langle h_0 | 1 \rangle \ne 0$ and for any $n\ge 1$,
the following statements are equivalent:
$$
(G1) \  \langle h_n | 1 \rangle = 0; \quad (G2) \ L_u \mathcal S h_{n-1} = \lambda_n \mathcal S h_{n-1}; \quad
(G3)\  \langle u \mathcal S h_{n-1} | 1 \rangle = 0.
$$
Furthermore, any of these three conditions implies  $(G4) \  \gamma_n(u) = 0$.\\
(ii) If $\gamma_n(u)= 0,$ then $\langle P_n(u) 1 | 1 \rangle = 0$.
\end{lemma}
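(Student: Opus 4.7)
The plan is to base everything on the shift identity \eqref{shift identity} together with the spectrum localization guaranteed by Theorem \ref{main result of section 2}, which asserts that every $\lambda_k(u)$ lies in the pairwise disjoint discs $D_k(1/4)$, $k \ge 0$.

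\textbf{Preliminary: $\langle h_0 | 1 \rangle \ne 0$.} I would argue by contradiction. If $\widehat{h_0}(0) = 0$, then $h_0 = \mathcal S g$ with $g := \mathcal S^* h_0 \ne 0$, and substituting into $L_u h_0 = \lambda_0 h_0$ while using \eqref{shift identity} produces
\[
\mathcal S \big( (L_u - (\lambda_0 - 1)) g \big) = \langle u h_0 | 1 \rangle \cdot 1.
\]
The left-hand side lies in the range of $\mathcal S$, so its $0$-th Fourier coefficient vanishes, while the right-hand side is a constant; hence both sides vanish. This would make $\lambda_0 - 1 \in D_{-1}(1/4)$ an eigenvalue of $L_u$, contradicting the spectrum localization.

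\textbf{Equivalence (G1)~$\Leftrightarrow$~(G2)~$\Leftrightarrow$~(G3), each implying (G4).} Evaluating \eqref{shift identity} on $h_{n-1}$ gives
\[
(L_u - \lambda_n) \mathcal S h_{n-1} = -\gamma_n \mathcal S h_{n-1} - \langle u \mathcal S h_{n-1} | 1 \rangle \cdot 1.
\]
The equivalence (G2)~$\Leftrightarrow$~(G3) follows at once: since $\widehat{(\mathcal S h_{n-1})}(0) = 0$ while $1$ has only its $0$-th Fourier coefficient nonzero, the vectors $\mathcal S h_{n-1}$ and $1$ are linearly independent, so (G2) forces both $\gamma_n = 0$ and (G3); conversely, (G3) reduces the displayed identity to $L_u \mathcal S h_{n-1} = (\lambda_{n-1}+1) \mathcal S h_{n-1}$, and since $\lambda_{n-1}+1 \in D_n(1/4)$ it must equal $\lambda_n$. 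For (G2)~$\Rightarrow$~(G1): simplicity of $\lambda_n$ forces $\mathcal S h_{n-1} = c h_n$ with $c \ne 0$, and taking the $0$-th Fourier coefficient gives $\widehat{h_n}(0) = 0$. Finally, (G1)~$\Rightarrow$~(G2) is handled by the same argument as in the preliminary step with $n-1$ in place of $-1$: write $h_n = \mathcal S g$, use \eqref{shift identity} to deduce $L_u g = (\lambda_n - 1)g$, and use that $\lambda_{n-1}$ is the unique eigenvalue in $D_{n-1}(1/4)$ to conclude that $g$ is proportional to $h_{n-1}$ and hence $h_n$ to $\mathcal S h_{n-1}$.

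\textbf{Part (ii).} I would specialize Lemma \ref{identity with p k } to $p = n$, $k = n-1$; inserting $\gamma_n = 0$ gives $\langle u \mathcal S h_{n-1} | 1 \rangle \cdot P_n 1 = 0$, and I would split on whether $P_n 1$ vanishes. If $P_n 1 = 0$, the claim is immediate. Otherwise (G3) holds, hence by the equivalence above (G1) holds, i.e.\ $\langle h_n | 1 \rangle = 0$, and since the range of $P_n$ is one-dimensional and spanned by $h_n$ we have $P_n 1 = c\, h_n$ for some $c \in \C$, so that $\langle P_n 1 | 1 \rangle = c \langle h_n | 1 \rangle = 0$. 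The only mildly subtle point will be this dichotomy: one cannot exclude $P_n 1 = 0$ a priori, which is precisely why (ii) is phrased in terms of $\langle P_n 1 | 1 \rangle$ rather than in terms of $\langle h_n | 1 \rangle$.
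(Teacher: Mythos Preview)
Your proof is correct and follows essentially the same approach as the paper: the shift identity \eqref{shift identity} applied to $h_{n-1}$, combined with the spectrum localization from Theorem \ref{main result of section 2}, drives all the equivalences, and part (ii) is handled identically via Lemma \ref{identity with p k }. Your packaging of the identity as $(L_u - \lambda_n)\mathcal S h_{n-1} = -\gamma_n \mathcal S h_{n-1} - \langle u \mathcal S h_{n-1} | 1 \rangle\, 1$ and the explicit linear-independence argument make the implication $(G2)\Rightarrow(G3)\wedge(G4)$ slightly cleaner than the paper's more compressed phrasing, but the underlying logic is the same.
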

\begin{proof}
Let $u \in B^{-s}_{c,0}(r_s)$ with $0 \le s < 1/2$ and let $h_k$, $k \ge 0$, be a set of eigenfunctions of $L_{u}$,
corresponding to the eigenvalues $\lambda_k \equiv \lambda_k(u)$.
(i) Assume that for some $n \ge 0$, $\langle h_n | 1 \rangle = 0$. Then $h_n = \mathcal S g_n$ where $g_n \in H^{1-s}_+$, $g_n \ne 0$,
and by \eqref{shift identity}
$$
\lambda_n \mathcal S g_n (= L_u \mathcal S g_n) = \mathcal S L_ug_n + \mathcal S g_n - \langle u \mathcal S g_n | 1 \rangle 1 \, .
$$
Applying $\mathcal S^* = T_{e^{-ix}}$ to both sides of the latter identity and using that $\mathcal S^* 1 = 0$ one concludes that
$\lambda_n g_n = (L_u +1)g_n$ or
$$
L_u g_n = (\lambda_n -1)g_n \, ,
$$ 
implying that $\lambda_n -1$ is an eigenvalue of $L_{u}$.
In the case $n=0$, this  contradicts that $\lambda_0 - 1 \in \rm{Vert}_0(1/4)$ is contained
in the resolvent set of $L_{u}$.
Hence we conclude that  $\langle h_0 | 1 \rangle \ne 0$. 
If $n \ge 1$, it follows that $\lambda_n - 1$ is in $D_{n-1}(1/4)$ and hence by Theorem \ref{main result of section 2}
one has  $\lambda_n - 1 = \lambda_{n-1}$, implying that $\mathcal S h_{n-1}$ and $h_n$ are collinear.
This shows that $(G1)$ implies $(G2)$.
Applying \eqref{shift identity} to $h_{n-1}$, one gets
\begin{equation}
\begin{aligned}
L_u \mathcal S h_{n-1} & = \mathcal S L_uh_{n-1} + \mathcal S h_{n-1} - \langle u \mathcal S h_{n-1} | 1 \rangle 1 \nonumber \\
& = (\lambda_{n-1} + 1) \mathcal S h_{n-1} - \langle u \mathcal S h_{n-1} | 1 \rangle 1 \, , \nonumber
\end{aligned}
\end{equation}
from which we infer that the statements $(G2)$ and $(G3)$ are equivalent. Assuming that $(G2)$ holds,
one has $L_u \mathcal S h_{n-1} = (\lambda_{n-1} + 1) \mathcal S h_{n-1}$. Hence $\lambda_{n-1} + 1$ is an eigenvalue
of $L_u$. Since it is contained in $D_{n}(\rho)$, it follows from Theorem \ref{main result of section 2} that $\lambda_{n-1} + 1 = \lambda_{n}$,
showing that $\gamma_n = 0$, i.e., $(G4)$ holds. Furthermore, it implies that $h_n$ and $\mathcal S h_{n-1}$ are collinear and hence 
$\langle h_n | 1 \rangle = 0$, meaning that $(G1)$ holds.\\
(ii) Assume that $\gamma_n(u) = 0$ for some $n \ge 1$.
Since by Lemma \ref{identity with p k } with $p=n$ and $k = n-1$,
$$
\gamma_n(u) P_n(u) \mathcal S h_{n-1} = - \langle u \mathcal S h_{n-1} | 1 \rangle P_{n}(u)1 \, ,
$$
it then follows that $\langle u \mathcal S h_{n-1} | 1 \rangle P_{n}(u)1 = 0.$ 
If $P_{n}(u)1 = 0$, then clearly, $\langle  P_n(u) 1 | 1 \rangle = 0$. On the other hand 
if $\langle u \mathcal S h_{n-1} | 1 \rangle = 0 $, then by item (i),
 $\langle h_n | 1 \rangle = 0$. Since $P_n(u) 1$ is a scalar mutiple of $h_n$, this implies that $\langle P_{n}1 | 1 \rangle = 0.$ 
\end{proof}
 
  %%%%%%%%%%%%%%%%%%%%%%%%%%%%%%%%%%%%%%%%%%%%%%%%%%%%%
 %%%%%%%%%%%%%%%%%%%%%%%%%%%%%%%%%%%%%%%%%%%%%%%%%%%%%
 
 \section{Quasi-moment map for $u$ small}\label{Quasi-moment map}
 In this section we introduce the so called quasi-moment map for $u$ in a neighborhood of $0$ in $H^{-s}_{c, 0}$, $0 \le s < 1/2$,
 and prove that it is analytic with values in a weighted $\ell^1$ space. It is a key ingredient into the proof of Theorem \ref{main theorem}(ii)
 for potentials near zero. 
 Furthermore, the results obtained in this section will be used in Section \ref{Lax operator.II} to derive corresponding results
 for the quasi-moment map in a neighborhood of $H^{-s}_{r,0}$ in $H^{-s}_{c,0}$.

By Corollary \ref{1/2 bound} with $0 \le s < 1/2$ and $\rho = 1/4$, it follows that for any $n \ge 0$ and $u \in B^{-s}_{c,0}(r_s)$
$$
\| T_u(D - \lambda)^{-1} \|_{H^{-s; n}_+ \to H^{-s; n}_+} \le 1/4\, , \qquad \forall \lambda \in \text{Vert}_n(1/4) \, ,
$$
where $r_s = 1/16C_s$ and $C_s \ge 1$ is the constant of Lemma \ref{multiplication u f}.
As a consequence, for any $\lambda \in \bigcup_{n \ge 0} \text{Vert}_n(1/4)$, 
the inverse of the operator $L_u - \lambda : H^{1-s}_+ \to H^{-s}_+$ is given by the Neumann series
\begin{equation}\label{Neumann series for inverse}
 (L_u - \lambda)^{-1} = \sum_{m = 0}^\infty (D - \lambda)^{-1}( T_u(D - \lambda)^{-1})^m.
\end{equation}
 
 For any $n \ge 0$, introduce the map
 \begin{equation}\label{def G_n}
 F_n : B^{-s}_{c,0}(r_s) \to \mathbb C\ , \ u \mapsto \frac{1}{2\pi i}\int_{\partial D_n(1/3)} \langle (L_u - \lambda)^{-1} 1 | 1 \rangle d \lambda \, ,
 \end{equation}
 where we recall that the circle $\partial D_n(1/3)$ is counterclockwise oriented.
 By Corollary \ref{Riesz projector analytic I}, the map $F_n$ is analytic and by \eqref{Neumann series for inverse} it can be written as a series,
 $$
 F_n(u) =  \sum_{m = 0}^{\infty} \frac{1}{2\pi i } \int_{\partial D_n(1/3)} \langle (D- \lambda)^{-1}(T_u(D - \lambda)^{-1})^m 1 | 1 \rangle d \lambda \, .
 $$
 We note that for a real valued potential $u \in B^{-s}_{r,0}(r_s) \subset H^{-s}_{r, 0}$ with $0 \le s < 1/2$, 
 the value $F_n(u)$, $n \ge 0$, can then be computed as follows: 
 let $(f_j)_{j \ge 0}$ be the $L^2-$orthonormal basis of eigenfunctions, corresponding to the eigenvalues $(\lambda_j(u))_{j\ge 0}$ of $L_u$,
 introduced in \cite{GK} $(s=0)$ and \cite{GKT} $(0 < s < 1/2)$.
 Expressing the constant function $1$ with respect to the basis $(f_j)_{j \ge 0}$, one has
 $ 1 = \sum_{j=0}^{\infty} \langle 1 | f_j \rangle f_j$ and hence
 $$
 (L_u - \lambda)^{-1} 1 = \sum_{j=0}^{\infty} \langle 1 | f_j \rangle (L_u - \lambda)^{-1} f_j 
 = \sum_{j=0}^{\infty} \langle 1 | f_j \rangle \frac{f_j}{\lambda_j - \lambda} \, .
 $$
 By Cauchy's theorem it then follows that
 \begin{equation}\label{identity1 for F_n for u real}
 F_n(u) =  \sum_{j=0}^{\infty} \langle 1 | f_j \rangle \langle f_j | 1 \rangle  \frac{1}{2\pi i } \int_{\partial D_n(1/3)}  \frac{1}{\lambda_j - \lambda} d \lambda
=  - | \langle 1 | f_n \rangle |^2 .
 \end{equation}
  Therefore, for any $n \ge 0$, $F_n$ is the analytic extension of $ -  |\langle 1 | f_n \rangle|^2$ to $B^{-s}_{c, 0}(r_s)$.
 Note that for any $n \ge 0$,  $ |\langle 1 | f_n \rangle|^2$ does not depend on the normalization of the phases of $(f_j)_{j \ge 0}$. Hence
 any other choice of an $L^2-$orthonormal basis of eigenfunctions of $L_u$ would yield the same result.

\begin{proposition}\label{F analytic near 0}
There exists $0 <  r_{s, *} \le r_s$ so that the map
$$
F :  B^{-s}_{c, 0}(r_{s, *}) \subset H^{-s}_{c, 0} \to \ell^{1, 2-2s}_+ , \ u \mapsto F(u):= (F_n(u))_{n \ge 1}
$$
is analytic. Furthermore
$$
\sum_{n=1}^\infty  \sum_{m=0}^\infty n^{2-2s}  \big|  \frac{1}{2\pi i} \int_{\partial D_n(1/3)}  \langle (D- \lambda)^{-1}(T_u(D - \lambda)^{-1})^m 1 | 1 \rangle d \lambda \big|
 $$
converges  uniformly with respect to $u$.
\end{proposition}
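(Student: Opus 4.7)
The plan is to expand $F_n$ using the Neumann series ensured by Corollary \ref{1/2 bound}, and then to establish a summable $n$-wise estimate uniform in $u$. Choose $r_{s,*} \le r_s$ so that $q := 3 C_s r_{s,*} < 1$; by Lemma \ref{estimate of T_u(D- lambda)^-1} this guarantees $\| T_u (D-\lambda)^{-1} \|_{H^{-s;n}_+ \to H^{-s;n}_+} \le q$ uniformly for $\lambda \in \partial D_n(1/3)$ and $u \in B^{-s}_{c,0}(r_{s,*})$. Expanding the resolvent termwise under the contour integral yields
\[
F_n(u) = \sum_{m \ge 0} I_{n,m}(u), \qquad I_{n,m}(u) := \frac{1}{2\pi i} \int_{\partial D_n(1/3)} \langle (D-\lambda)^{-1}(T_u(D-\lambda)^{-1})^m 1 \,|\, 1 \rangle \, d\lambda,
\]
and each $I_{n,m}$ is a polynomial of degree $m$ in the Fourier coefficients of $u$, hence analytic on $B^{-s}_{c,0}(r_{s,*})$.

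Next I would extract as much decay in $n$ as possible from each integrand. Moving $(D-\lambda)^{-1}$ to the right slot of the pairing via the adjoint of the Fourier multiplier, and applying it to the innermost $1$, both times using $(D-\lambda)^{-1} 1 = -1/\lambda$, produces the identity
\[
\langle (D-\lambda)^{-1}(T_u(D-\lambda)^{-1})^m 1 \,|\, 1 \rangle = \frac{1}{\lambda^2} \langle (T_u(D-\lambda)^{-1})^{m-1} \Pi u \,|\, 1 \rangle, \qquad m \ge 1.
\]
For $m = 0$ the integrand $-1/\lambda$ is holomorphic on $D_n(1/3)$ (since $n \ge 1$), so $I_{n,0}(u) = 0$; for $m = 1$ the identity collapses the integrand to $\widehat{\Pi u}(0)/\lambda^2 = 0$ by the mean-zero hypothesis, hence $I_{n,1}(u) = 0$. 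For $m \ge 2$ the factor $1/\lambda^2$ produces the crucial $\sim 1/n^2$ decay on $\partial D_n(1/3)$, and a further adjoint step $\langle T_u v \,|\, 1 \rangle = \langle v \,|\, \Pi \bar u \rangle$ brings a second mean-zero vector $\Pi \bar u$ into the pairing, anchoring the estimate to $\|u\|_{-s}$.

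The heart of the argument is then the uniform estimate
\[
\sum_{n \ge 1} n^{2-2s} |I_{n,m}(u)| \le C \cdot q^{m-2} \cdot \|u\|^2_{-s}, \qquad m \ge 2, \ u \in B^{-s}_{c,0}(r_{s,*}),
\]
with $C$ independent of $m$ and $u$. I would derive this by bounding the integrand through a Cauchy--Schwarz argument in the shifted norms $\|\cdot\|_{-s;n}$ of Lemma \ref{equivalence of norms}, using Lemma \ref{estimate of T_u(D- lambda)^-1} to absorb $m-2$ middle iterates of $T_u(D-\lambda)^{-1}$ into a factor $q^{m-2}$, and reducing the outer $\Pi u$, $\Pi \bar u$ factors to a weighted $\ell^2$-convolution of the coefficients $\widehat u(k)$. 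The decisive numerological fact is that the weight $n^{2-2s}$ combined with the $1/n^2$ decay leaves $n^{-2s}$, which is precisely the Sobolev weight attached to $|\widehat u(\pm n)|^2$ in $\|u\|^2_{-s}$; Cauchy--Schwarz in $n$ then closes the estimate. Once this inequality is in hand, summing the geometric series in $m$ yields the uniform convergence claim of the Proposition, and the analyticity of $F : B^{-s}_{c,0}(r_{s,*}) \to \ell^{1, 2-2s}_+$ follows because each partial sum is analytic and the series converges uniformly on a neighbourhood of $0$. I expect the main obstacle to be the careful unfolding of the $m$-fold convolution representing $I_{n,m}$, in which the interplay between the shifted operator norm and the unshifted Sobolev weight on $\widehat u$ must be navigated to prevent uncancelled positive powers of $n$ from appearing in the estimate.
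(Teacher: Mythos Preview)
Your structural setup is correct: the Neumann expansion, the vanishing of the $m=0,1$ terms, the extraction of the factor $1/\lambda^2$, and the adjoint step producing $\Pi\bar u$ all match the paper. The gap lies in the ``heart of the argument''—the estimate $\sum_{n\ge 1} n^{2-2s}|I_{n,m}(u)| \le C\,q^{m-2}\|u\|_{-s}^2$ cannot be obtained by bounding the integrand pointwise on $\partial D_n(1/3)$ via the shifted norms.

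Here is why. If you estimate $\big\langle (D-\lambda)^{-1}(T_u(D-\lambda)^{-1})^{m-2}\Pi u\,\big|\,\Pi\bar u\big\rangle$ on the circle using Lemma~\ref{estimate of T_u(D- lambda)^-1} for the middle $m-2$ iterates, the endpoint factors $\Pi u$ and $\Pi\bar u$ must be measured in $H^{-s;n}_+$ and its dual. By Lemma~\ref{equivalence of norms} this conversion costs $\|\Pi u\|_{-s;n}\lesssim n^{s}\|u\|_{-s}$ and similarly for the dual slot (indeed $\|1\|_{s;n}=n^{s}$), producing an unavoidable extra factor $n^{2s}$. Combined with the $1/n^2$ from $1/\lambda^2$ and the weight $n^{2-2s}$, you are left with $\sum_{n\ge 1} 1=\infty$. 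Your ``decisive numerological fact'' presumes that the remaining factor after $1/n^2$ behaves like $|\widehat u(n)\,\widehat u(-n)|$, but a sup-norm bound on the contour cannot produce such localization to the single Fourier mode~$n$; that localization is genuinely a residue phenomenon.

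The paper does not bound the integrand on the contour at all; it \emph{evaluates} the contour integral as a residue at $\lambda=n$ after fully expanding the $m$-fold convolution. This forces at least one internal index $k_j$ to equal $n$, which is precisely what anchors a factor $|\widehat u(\pm n)|$ (or an iterated convolution $Q_{u,n}^{\,r}[|\widehat u|](-n)$) to every surviving term. The calculation is organized combinatorially by the subset $J\subset\{1,\dots,m\}$ of indices with $k_j=n$; higher-order poles contribute derivatives that are bounded by a binomial coefficient. The resulting sums $S_{n,m,J}$ are then controlled by the key Lemma~\ref{key estimate}, whose proof uses the convolution operators $Q_{u,n}$, $Q_u$ of Lemma~\ref{operator Q_ u,n} together with a Cauchy--Schwarz argument in~$n$ of exactly the kind you describe—but applied only \emph{after} the residue localization has been carried out. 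Without that step the argument does not close.
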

\begin{remark}\label{identity2 for F_n for u real}
(i) It follows from  \cite[Corollary 3.4]{GK} $(s=0)$, \cite[(27)]{GKT} $(0 < s < 1/2)$ that for any $n \ge 1$ 
and $u \in H^{-s}_{r, 0}$ with $0 \le s < 1/2$,  
\begin{equation}\label{name quasi-momentum}
%- F_n(u) = 
|\langle 1 | f_n(\cdot, u) \rangle|^2 = \gamma_n(u) \kappa_n(u)
\end{equation}
 where  $\kappa_n(u)> 0$ is a scaling factor of the action variable $\gamma_n(u)$. Since $\kappa_n(u) = O(n^{-1})$ (cf. \cite[Corollary 3.4]{GK}$(s=0)$, 
 \cite[(3.2)]{GKT}$(0 < s < 1/2)$)
  it follows that  for any $u \in H^{-s}_{r,0}$,
  $F(u) = (-\kappa_n(u) \gamma_n(u))_{n \ge 1}  \in \ell^{1, 2-2s}(\N, \R)$.  
 In view of \eqref{name quasi-momentum}, we refer to $F$ as the quasi-moment map. \\
(ii) Our proof shows that for any $0 \le s < 1/2$, the map $F : B^{-s}_{c, 0}(r_{s, *}) \to \ell^{1, 2-2s}_+$ is normally analytic.
We refer to Appendix \ref{normally analytic} for a review of the notion of normally analytic maps.
\end{remark}
 Before proving Proposition \ref{F analytic near 0} we make some preliminary considerations. 
 Let $u \in B^{-s}_{c, 0}(r_{s})$ with $0 \le s < 1/2$.
 Since for any $n \ge 1,$
 the integral $\int_{\partial D_n(1/3)} \langle (D - \lambda)^{-1} 1 | 1 \rangle d \lambda =  
 \int_{\partial D_n(1/3)} - \lambda^{-1} d \lambda$ vanishes, one has
$$
 F_n (u) =  \sum_{m = 1}^{\infty} \frac{1}{2\pi i } \int_{\partial D_n(1/3)} \langle (D- \lambda)^{-1}(T_u(D - \lambda)^{-1})^m 1 | 1 \rangle d \lambda 
 $$
 and since 
 $$
  \langle (D- \lambda)^{-1}T_u(D - \lambda)^{-1}1 | 1 \rangle =  \frac{1}{\lambda^2} \langle \Pi u | 1 \rangle = 0 , 
 $$
 also $\int_{\partial D_n(1/3)} \langle (D- \lambda)^{-1}T_u(D - \lambda)^{-1} 1 | 1 \rangle d \lambda$ vanishes for any $n \ge 1$.
 Changing the index of summation from $m$ to $m-1$ then yields
  \begin{equation}\label{F_n as a series}
 F_n(u) = \sum_{m = 1}^{\infty} \frac{1}{2\pi i } \int_{\partial D_n(1/3)}  \langle (T_u(D - \lambda)^{-1})^m \Pi u | 1 \rangle \frac{1}{\lambda^2} d \lambda \,  ,
 \quad \forall n \ge 1 .
 \end{equation}
 The terms for $m=1$ and $m=2$ in the latter series can be computed as follows: For $m=1$ one has
 $$
 \begin{aligned}
 \langle T_u(D - \lambda)^{-1} \Pi u | 1 \rangle & = \sum_{k=0}^\infty \langle \Pi (u \frac{\widehat u(k)}{k - \lambda} e^{ikx}) | 1 \rangle\\
 & = \sum_{j \in \Z} \sum_{k=0}^\infty  \frac{\widehat u(j) \widehat u(k)}{k - \lambda} \langle e^{i(k+j)x} | 1 \rangle
 = \sum_{k=0}^\infty \frac{\widehat u(-k) \widehat u(k)}{k - \lambda} , 
 \end{aligned}
 $$
 implying that
 $$
 \frac{1}{2\pi i } \int_{\partial D_n(1/3)} \langle T_u(D - \lambda)^{-1} \Pi u | 1 \rangle \frac{1}{\lambda^2} d \lambda
 = - \frac{1}{n^2} \widehat u(n) \widehat u(-n) , \quad \forall n \ge 1 .
 $$
 Similarly, one computes the term for $m=2$,
 $$
 \begin{aligned}
 (T_u(D - \lambda)^{-1})^2 \Pi u &= T_u(D - \lambda)^{-1} \sum_{k_2 = 0}^\infty \sum_{k_1 = 0}^\infty \frac{\widehat u(k_1)}{k_1 - \lambda} \widehat u(k_2 - k_1) e^{ik_2 x} \\
 & = \Pi \big( \sum_{j \in \Z } \widehat u(j)   \sum_{k_2 = 0}^\infty \sum_{k_1 = 0}^\infty \frac{\widehat u(k_1)}{k_1 - \lambda} \frac{\widehat u(k_2 - k_1)}{k_2 - \lambda} e^{i(k_2+j) x}  \big)\\
& = \sum_{k_3=0}^\infty  \sum_{k_2 = 0}^\infty \sum_{k_1 = 0}^\infty \frac{ \widehat u(k_3 - k_2) \widehat u(k_2 - k_1) \widehat u(k_1)}{(k_2 - \lambda) (k_1 - \lambda) } e^{ik_3 x} 
 \end{aligned}
 $$
 and hence 
 $$
 \begin{aligned}
 \frac{1}{2\pi i } & \int_{\partial D_n(1/3)}  \langle (T_u(D - \lambda)^{-1})^2 \Pi u | 1 \rangle \frac{1}{\lambda^2} d \lambda\\
& =  \sum_{k_2 = 0}^\infty \sum_{k_1 = 0}^\infty  \widehat u(0- k_2) \widehat u(k_2 - k_1) \widehat u(k_1)  
\frac{1}{2\pi i } \int_{\partial D_n(1/3)}  \frac{1}{(k_2 - \lambda) (k_1 - \lambda) \lambda^2} d \lambda\\
&= - \frac{1}{n^2} \widehat u (-n) \sum_{k_1 \ge 0, k_1 \ne n} \frac{\widehat u(n - k_1) \widehat u(k_1)}{k_1 - n}
 -   \frac{1}{n^2}  \sum_{k_2 \ge 0, k_2 \ne n} \frac{\widehat u( - k_2) \widehat u(k_2 - n)}{k_2 - n}  \widehat u (n)\\
 &\quad  +  \widehat u (-n)  \widehat u (0)  \widehat u (n) \int_{\partial D_n(1/3)}  \frac{1}{(n - \lambda)^2 \lambda^2} d \lambda \, .
\end{aligned}
 $$
 Since by assumption $\widehat u(0) = 0$, the latter term vanishes. With $\ell_j:= k_j - n$ for $j=1, 2$ and using that 
 \begin{equation}\label{def u_*}
 u_*(x) := u(-x) = \sum_{j \in \Z} \widehat u(-j) e^{ijx}
 \end{equation} 
 one then gets
 $$
 \begin{aligned}
  \frac{1}{2\pi i } & \int_{\partial D_n(1/3)}  \langle (T_u(D - \lambda)^{-1})^2 \Pi u | 1 \rangle \frac{1}{\lambda^2} d \lambda \\
  & =  - \frac{1}{n^2} \widehat u (-n) \sum_{\ell_1 \ge -n, \ell_1 \ne 0} \frac{\widehat u_*(-n - \ell_1) \widehat u_*(\ell_1)}{\ell_1 }\\
& \quad  -   \frac{1}{n^2}  \sum_{\ell_2 \ge -n, \ell_2 \ne 0} \frac{\widehat u(-n  - \ell_2) \widehat u(\ell_2 )}{\ell_2 }  \widehat u_* (-n) \, .
  \end{aligned}
 $$
 It is convenient to introduce for any $n \ge 1$ and any $u \in H^{-s}_{c,0}$ with $0 \le s < 1/2$ the operators 
 $$
 Q_{u, n} : \ z = (z(\ell))_{\ell \in \Z} \mapsto (Q_{u, n}[z](k))_{k \in \Z}
 $$
 and 
 $$
  Q_{u} : \ z = (z(\ell))_{\ell \in \Z} \mapsto (Q_{u}[z](k))_{k \in \Z}
 $$
 where
 \begin{equation}\label{def Q_u,n}
 Q_{u, n}[z](k) := \sum_{\ell \ge -n, \ell \ne 0} \frac{|\widehat u(k-\ell)| \ z(\ell)}{ |\ell |} \, ,
 \end{equation}
 \begin{equation}\label{def Q_u}
  Q_{u}[z](k) := \sum_{ \ell \ne 0} \frac{|\widehat u(k-\ell)| \ z(\ell)}{ |\ell |} \, .
 \end{equation}
 By (the proof of) Lemma \ref{multiplication u f}(ii) , for any $0 \le s < 1/2$,
 $$
 Q_{u, n},   \ Q_{u} : h^{-s}(\Z, \C) \to h^{-s}(\Z, \C)
 $$
 are bounded linear operators. More precisely, by Lemma \ref{multiplication u f}(ii) the following holds:
 \begin{lemma}\label{operator Q_ u,n}
 For any $0 \le s < 1/2,$ there exists a constant $M_s \ge 1$ so that for any $n \ge 1$ and $z \in h^{-s}(\Z, \C)$
 $$
\|  Q_{u, n}[|z|] \|_{-s} \le  \|  Q_{u}[|z|] \|_{-s} \le M_s \| u \|_{-s} \|z \|_{-s} ,
 $$
 where $|z| = (|z_n|)_{n \in \Z}$.
 \end{lemma}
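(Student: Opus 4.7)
The plan is to reduce the nontrivial estimate to Lemma \ref{multiplication u f}(ii) by recognizing $Q_u[|z|]$ as the sequence of Fourier coefficients of a product of two functions on $\T$. The first inequality $\|Q_{u,n}[|z|]\|_{-s}\le\|Q_u[|z|]\|_{-s}$ is immediate and pointwise: every term in the sum defining $Q_{u,n}[|z|](k)$ appears also in $Q_u[|z|](k)$, and all summands are nonnegative when $z$ is replaced by $|z|$, so $0 \le Q_{u,n}[|z|](k) \le Q_u[|z|](k)$ for every $k \in \Z$; the comparison of weighted $\ell^2$ norms follows monotonically.

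For the second inequality, I would introduce two auxiliary objects. Let $\tilde u \in H^{-s}_c$ be the distribution with Fourier coefficients $\widehat{\tilde u}(j):=|\widehat u(j)|$, so that $\|\tilde u\|_{-s}=\|u\|_{-s}$; and let $g \in H^{1-\sigma}_c$ be defined by $\widehat g(\ell) := |z(\ell)|/|\ell|$ for $\ell \ne 0$, and $\widehat g(0):=0$, where $\sigma := (s+1/2)/2$. A direct convolution computation identifies $\widehat{\tilde u\,g}(k) = \sum_{\ell \ne 0}|\widehat u(k-\ell)|\,|z(\ell)|/|\ell| = Q_u[|z|](k)$ for every $k \in \Z$, so that $\|Q_u[|z|]\|_{-s} = \|\tilde u\,g\|_{-s}$. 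Since $\langle\ell\rangle = |\ell|$ for $\ell \ne 0$, one computes
$$
\|g\|_{1-\sigma}^2 = \sum_{\ell \ne 0}|\ell|^{2(1-\sigma)-2}|z(\ell)|^2 = \sum_{\ell \ne 0}|\ell|^{-2\sigma}|z(\ell)|^2,
$$
and because $0 \le s < 1/2$ forces $s \le \sigma$, this is bounded by $\sum_{\ell \ne 0}\langle\ell\rangle^{-2s}|z(\ell)|^2 \le \|z\|_{-s}^2$. Applying Lemma \ref{multiplication u f}(ii) to the pair $(\tilde u, g)$ then yields
$$
\|Q_u[|z|]\|_{-s} = \|\tilde u\,g\|_{-s} \le C_s \|\tilde u\|_{-s}\|g\|_{1-\sigma} \le C_s \|u\|_{-s}\|z\|_{-s},
$$
so one may simply take $M_s := C_s$.

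There is no substantial obstacle here, the argument being essentially mechanical once the correct auxiliary functions are introduced. The only delicate point is the choice of Sobolev exponent: one needs $1-\sigma > 1/2$ for the multiplication bound in Lemma \ref{multiplication u f}(ii) to apply, and simultaneously $\sigma \ge s$ in order that $\|g\|_{1-\sigma}$ be controlled by $\|z\|_{-s}$. The choice $\sigma=(s+1/2)/2$ fixed at the beginning of Section \ref{Lax operator} satisfies both requirements uniformly in $s \in [0,1/2)$, and the uniformity in $n$ is automatic since the bound for $Q_{u,n}$ goes through $Q_u$, which does not see $n$ at all.
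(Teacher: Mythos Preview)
Your proof is correct and follows the same approach as the paper: both reduce the estimate to Lemma~\ref{multiplication u f}(ii). The paper merely states that the lemma holds ``by (the proof of) Lemma~\ref{multiplication u f}(ii)'' without further detail; your argument with the auxiliary functions $\tilde u$ and $g$ is precisely the natural unpacking of that citation, and the choice $M_s = C_s$ is consistent with the paper's hypotheses.
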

 With this  notation introduced, one obtains the following estimates
 $$
 \big| \frac{1}{2\pi i } \int_{\partial D_n(1/3)} \langle T_u(D - \lambda)^{-1} \Pi u | 1 \rangle \frac{1}{\lambda^2} d \lambda \big|
 =  \frac{1}{n^2} | \widehat u_*(- n)|  | \widehat u(- n)| 
 $$
 and
 $$
 \begin{aligned}
 \big|  \frac{1}{2\pi i } & \int_{\partial D_n(1/3)}  \langle (T_u(D - \lambda)^{-1})^2 \Pi u | 1 \rangle \frac{1}{\lambda^2} d \lambda \big|  \\
  & \le  \frac{1}{n^2}  \ Q_{ u_*, n}[|\widehat u_*|](-n) \, | \widehat u (-n) |
  +    \frac{1}{n^2}   Q_{ u, n}[|\widehat u|](-n) \,  | \widehat u_* (-n)|
   \end{aligned}
 $$
 where 
 $$
 \widehat u:= (\widehat u(k))_{k \in \Z}\ , \qquad  |\widehat u |:= (|\widehat u(k)|)_{k \in \Z} .
 $$
 
 \smallskip
 
 \noindent
 {\em Proof of Proposition \ref{F analytic near 0}.}
 Consider the integrand in \eqref{F_n as a series} with $m \ge 1$ arbitrary.
 Arguing as for the terms with $m = 1$ and $m=2$,
 the term  $\langle  (T_u(D - \lambda)^{-1})^m \Pi u | 1 \rangle $ can be written as
 $$
 \begin{aligned}
 & \big\langle \sum_{k_{m+1}\ge 0} 
 \Big( \sum_{k_j \ge 0} \frac{ \widehat u(k_{m+1} - k_m) \widehat u(k_m - k_{m-1}) \cdots  \widehat u(k_2 - k_1) \widehat u( k_1)}
 {(k_m - \lambda) \cdots (k_1 - \lambda)}  \Big)
 e^{ik_{m+1}x} \ | 1 \big\rangle \\
 & =  \sum_{k_j \ge 0}  \frac{\widehat u( - k_m) \widehat u(k_m - k_{m-1}) \cdots 
 \widehat u(k_2 - k_1) \widehat u( k_1) } {(k_m - \lambda) \cdots (k_1 - \lambda)}
 \end{aligned}
 $$
 where in the sums above $1 \le j \le m$. Using that by Cauchy's formula for derivatives of an analytic function $f$,
 $$
 \frac{1}{(p-1)!} \frac{d^{p-1}}{d\mu^{p-1}} f(\mu)  =  \frac{1}{2 \pi i } \int_{\partial D_n(1/3)}\frac{f(\lambda)}{(\lambda - \mu)^p} d \lambda \,  ,
 $$
 one obtains
 $$
  \begin{aligned}
 & | \frac{1}{2\pi i }  \int_{\partial D_n(1/3)}  \langle (T_u(D - \lambda)^{-1})^m \Pi u | 1 \rangle \frac{1}{\lambda^2} d \lambda | \\
&  \le \sum_{p=1}^{m} \sum_{|J|=p} \sum_{\substack{k_j \ge 0 \\k_j = n \ \forall j \in J\\ k_j \ne n \ \forall j \in J^c}}
|\widehat u( - k_m)|  \big(\prod_{j=2}^{m}|\widehat u(k_j - k_{j-1})| \big)  |\widehat u( k_1)|
g_{k_1, \ldots, k_m; p}(n)
  \end{aligned}
 $$
 where in the latter sum, $J$ runs over all subsets of $\{ 1, \dots , m \}$ with $|J| = p$,
 $J^c := \{ 1, \ldots , m \} \setminus J$, and 
 $$
 g_{k_1, \ldots, k_m; p}(\lambda) := \frac{1}{(p-1)!} \Big| \frac{d^{p-1}}{d\lambda^{p-1}} 
(  \frac{1}{\lambda^2} \prod_{j \in J^c} \frac{1}{k_j - \lambda} ) \Big| \, .
 $$
 Using that $|k_j - n| \ge  1$ for any $j \in J^c$ one obtains the estimate
 $$
 g_{k_1, \ldots, k_m; p}(n) \le \frac{1}{n^2} \big( \prod_{j \in J^c} \frac{1}{|k_j - n|} \big) \  \frac{1}{(p-1)!} \ \#_{m, p}
 $$
 where $\#_{m, p}$ is the number of terms of 
 $\frac{d^{p-1}}{d\lambda^{p-1}} (  \frac{1}{\lambda^2} \prod_{j \in J^c} \frac{1}{k_j - \lambda} )$,
 counted with their multiplicities,
 $$
 \#_{m, p} = (|J^c| + 2) \cdots (|J^c| + 2 +p -2) = (m-p+2) \cdots m \, .
 $$
 Hence 
 $$
 \frac{1}{(p-1)!}  \#_{m, p} = \frac{m!}{(p-1)! (m-(p-1))!} =
  \Big( \begin{array}{ccc}   
  m \\ 
  p-1
  \end{array} \Big) \, ,
 $$
 yielding
 $$
 g_{k_1, \ldots, k_m; p}(n) \le \frac{1}{n^2} \Big( \prod_{j \in J^c} \frac{1}{|k_j - n|} \Big)
 \Big( \begin{array}{ccc}   
  m \\ 
  p-1
  \end{array} \Big) \, .
 $$
 Since by Lemma \ref{key estimate} below, for any $J \subset \{ 1, \ldots, m \}$ with $|J| = p,$
 $$
 \begin{aligned}
 \sum_{n \ge1} & n^{2-2s}  \sum_{\substack{k_j \ge 0\\ k_j   = n \ \forall j \in J\\ k_j \ne n \ \forall j \in J^c}}
\frac{1}{n^2}\frac{|\widehat u( - k_m)| |\widehat u(k_m - k_{m-1})| \cdots  |\widehat u(k_2 - k_1)| |\widehat u( k_1)|}{\prod_{j \in J^c} |k_j - n |}\\
& \le \big( M_s \|u\|_{-s} \big)^{m+1}
\end{aligned}
 $$
 and $\sum_{p=0}^m  
 \Big( \begin{array}{ccc}   
  m \\ 
  p
  \end{array} \Big) = 2^m$
 we then conclude that 
 $$
 \begin{aligned}
  \sum_{n=1}^\infty   n^{2-2s} & \big|\frac{1}{2\pi i} \int_{\partial D_n(1/3)}  \langle (T_u(D - \lambda)^{-1})^m \Pi u  | 1 \rangle  \frac{1}{\lambda^2} d \lambda \ \big| \\
 & \le \Big(\sum_{p=1}^m  
 \Big( \begin{array}{ccc}   
  m \\ 
  p
  \end{array} \Big) \Big) \big( M_s \|u\|_{-s} \big)^{m+1} \sup_{1 \le p \le m}  \Big( \begin{array}{ccc}   
  m \\ 
  p - 1
  \end{array} \Big)\\
 & \le \big( 4 M_s \|u\|_{-s} \big)^{m+1} \, .
 \end{aligned}
 $$
 It implies that for any $u \in H^{-s}_{c, 0}$ with 
 $$ 
 \| u \|_{-s} \le  r_{s, *} := \min(r_s, 1/8M_s)\, ,
 $$
 one has
 $$
 \begin{aligned}
 & \sum_{n=1}^\infty  \sum_{m=0}^\infty n^{2-2s}  \big|\frac{1}{2\pi i} \int_{\partial D_n(1/3)}  \langle (D - \lambda)^{-1} (T_u(D - \lambda)^{-1})^m 1 | 1 \rangle d \lambda \ \big| \\
 & \le  \frac{4M_s \|u\|_{-s}}{1 - 4 M_s \|u\|_{-s}} \le 8 M_s \|u\|_{-s} \le 1\, .
 \end{aligned}
 $$
 This finishes the proof of Proposition \ref{F analytic near 0}.
 \hfill  $\square$
 
 \smallskip
 
 It remains to prove the following lemma, used in the proof of Proposition \ref{F analytic near 0}.
 With the notation established in the proof of that proposition it reads as follows:
 \begin{lemma}\label{key estimate}
 For any $m \ge 1$ and any nonempty subset $J \subset \{ 1, \ldots , m \}$,
 $$
   \sum_{n \ge1}   n^{-2s}  S_{n,m,J} \le \big( M_s \|u\|_{-s} \big)^{m+1} \, ,
 $$
 where
 $$
 S_{n,m,J} := \sum_{\substack{k_1,\ldots ,k_m\ge 0\\
  k_j   = n \ \forall j \in J\\ k_j \ne n \ \forall j \in J^c}}
\frac{|\widehat u( - k_m)| |\widehat u(k_m - k_{m-1})| \cdots  |\widehat u(k_2 - k_1)| |\widehat u( k_1)|}{\prod_{j \in J^c} |k_j - n |} \,  .
$$
 \end{lemma}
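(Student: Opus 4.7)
\emph{Plan.} The idea is to decompose $S_{n,m,J}$ according to the ``segment structure'' induced by the fixed positions in $J$, identify each segment with an iterated application of $Q_u$ to the sequence $v := (|\widehat u(k)|)_{k \in \Z}$ (which satisfies $\|v\|_{-s} = \|u\|_{-s}$), and conclude with a Cauchy--Schwarz step in $n$ combined with the operator bound of Lemma \ref{operator Q_u,n}.

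Writing $J = \{j_1 < j_2 < \cdots < j_p\}$ with $p = |J| \geq 1$, and adopting the boundary conventions $j_0 := 0$, $j_{p+1} := m+1$, $k_0 := 0$, $k_{m+1} := 0$, the constraints $k_{j_i} = n$ split the chain $k_1,\dots,k_m$ into $p+1$ consecutive segments of $s_i := j_i - j_{i-1} - 1$ free indices (so that $\sum_{i=1}^{p+1}(s_i + 1) = m+1$ is the total number of factors of $|\widehat u|$). The summand factorises as $\prod_{i=1}^{p+1} A(a_i, b_i; s_i)$, where the boundary pairs are $(a_1,b_1) = (0,n)$, $(a_{p+1},b_{p+1}) = (n,0)$, and $(a_i,b_i) = (n,n)$ for $2 \le i \le p$. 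After shifting $\ell_j := k_j - n$ and relaxing the range $\ell_j \geq -n$ to $\ell_j \in \Z\setminus\{0\}$ (permissible for an upper bound, since every summand is non-negative), a direct comparison with the definition \eqref{def Q_u} of $Q_u^s$ yields
\begin{equation*}
A(n,n;s_i) \le Q_u^{s_i}[v](0), \qquad A(0,n;s_1) \le Q_u^{s_1}[v](n), \qquad A(n,0;s_{p+1}) \le Q_u^{s_{p+1}}[v](-n),
\end{equation*}
the last two via the chain-reversal $r_j := -\ell_{s+1-j}$ needed to align the reindexed summand with the template of $Q_u^s$. When $s_i = 0$ for some middle segment both sides reduce to $v(0) = 0$, consistently forcing the whole product to vanish.

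It remains to estimate the right-hand side. For the middle factors I use the pointwise bound $|Q_u^{s_i}[v](0)| \leq \|Q_u^{s_i}v\|_{-s}$, valid because $\langle 0 \rangle = 1$; for the $n$-dependent first and last segments I apply Cauchy--Schwarz in $n$:
\begin{equation*}
\sum_{n \geq 1} n^{-2s}\, Q_u^{s_1}[v](n)\, Q_u^{s_{p+1}}[v](-n) \leq \|Q_u^{s_1}v\|_{-s}\, \|Q_u^{s_{p+1}}v\|_{-s}.
\end{equation*}
Iterating the operator bound $\|Q_u\|_{h^{-s}\to h^{-s}} \leq M_s\|u\|_{-s}$ of Lemma \ref{operator Q_u,n} a total of $\sum_i s_i = m-p$ times and using $\|v\|_{-s} = \|u\|_{-s}$, I obtain
\begin{equation*}
\sum_{n \geq 1} n^{-2s} S_{n,m,J} \leq (M_s\|u\|_{-s})^{m-p}\,\|u\|_{-s}^{p+1} = M_s^{\,m-p}\|u\|_{-s}^{m+1} \leq (M_s\|u\|_{-s})^{m+1},
\end{equation*}
the final inequality using $p \geq 1$ and $M_s \geq 1$. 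The only delicate bookkeeping step is the chain-reversal identification of the first and last segments with $Q_u^{s_1}[v](n)$ and $Q_u^{s_{p+1}}[v](-n)$, which I would verify by explicit substitution; once this is in place the rest of the argument is a direct application of Cauchy--Schwarz and Lemma \ref{operator Q_u,n}.
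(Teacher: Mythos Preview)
Your proof is correct and follows essentially the same route as the paper's: both decompose $S_{n,m,J}$ into segments determined by the positions in $J$, identify each segment with an iterated application of $Q_u$ (or $Q_{u,n}$) to $|\widehat u|$, bound the ``interior'' segments via $|z(0)|\le\|z\|_{-s}$, and handle the two ``boundary'' segments by Cauchy--Schwarz in $n$ before invoking Lemma~\ref{operator Q_ u,n}. The only presentational differences are that the paper splits into four cases according to whether $1\in J$ and $m\in J$ (which in your notation correspond to $s_1=0$ or $s_{p+1}=0$), and for the segment abutting $k_0=0$ the paper introduces the reflected potential $u_*$ and writes $S^*_{n,r}=(Q_{u_*,n})^r[|\widehat{u_*}|](-n)$, whereas your chain-reversal $r_j=-\ell_{s+1-j}$ accomplishes the same identification as $Q_u^{s_1}[v](n)$ directly.
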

 \begin{proof}
 Let $m \ge 1$ and $J \subset \{ 1, \ldots , m \} \ne \emptyset$.
First observe that we can assume that  $J \subset \{ 1, \ldots , m \}$ does not contain consecutive integers $j$, $j+1$, since otherwise $S_{n,m,J}$ vanishes 
for any $n \ge 1$ due to the assumption that $\widehat u(0)=0$. 
We decompose $J^c$ into pairwise disjoint  intervals of integers of maximal lengths, $J^c_a$, $ 1 \le a \le A$, 
$$
J^c =\bigcup_{a=1}^AJ^c_a \, , \qquad r_a:= |J_a^c|
$$
where $J^c_1, \ldots, J^c_A$ are listed in decreasing order. Note that
$$
\sum_{a=1}^A  r_a = |J^c|=m-|J| 
$$
and that $A$ equals $|J|-1$, $|J|$, or $|J|+1$, depending on whether $|\{1, m \} \cap J |$ equals $2$, $1$, or $0$.
For any $r \ge 1$, introduce
$$
S_{n,r}:=\sum_{\substack{k_1,\ldots ,k_r\ge 0\\ k_j \ne n \ \forall j }}
\frac{|\widehat u(n - k_r)| |\widehat u(k_r - k_{r-1})| \cdots  |\widehat u(k_2 - k_1)| |\widehat u( k_1-n)|}{\prod_{j=1}^r |k_j - n |}\,  .
$$
Setting $\ell_j:=k_j-n$ in the latter sum, we obtain
\begin{eqnarray*}
S_{n,r}&=&\sum_{\substack{\ell_1,\ldots ,\ell_r\ge -n\\ \ell_j \ne 0 \ \forall j }}
\frac{|\widehat u(- \ell_r)| |\widehat u(\ell_r - \ell_{r-1})| \cdots  |\widehat u(\ell_2 - \ell_1)| |\widehat u( \ell_1)|}{\prod_{j=1}^r |\ell_j  |}\\
&=&(Q_{u,n})^r[|\hat u|] (0)\, .
\end{eqnarray*}
We recall that $Q_{u,n}$ is defined by \eqref{def Q_u,n} and that for any sequence $z = (z(\ell))_\ell$ of complex numbers, 
$|z|$ denotes the sequence $(|z(\ell )|)_\ell$. Since
$|z(0)|\leq \| z\| _{-s}$, Lemma \ref{operator Q_ u,n} implies that
\begin{equation}\label{estS}
S_{n,r}\leq (M_s\| u\|_{-s})^r \| u\|_{-s}\leq (M_s\| u\|_{-s})^{r +1}\, ,
\end{equation}
where $M_s \ge 1$ is the constant given by Lemma  \ref{operator Q_ u,n}.
Let us first consider the case where $1\in J$ and $m \in J$. Setting $r_a:= |J_a^c|$,  one has
\begin{eqnarray*}
S_{n,m,J}&=&|\widehat u(-n)| \Big(\prod_{a=1}^AS_{n, r_a}\Big)  |\widehat u(n)|\\
&\leq &(M_s\| u\|_{-s})^{|J^c|+A}
|\widehat u(-n)| |\widehat u(n)|\\
&\leq &(M_s\| u\|_{-s})^{m-1} |\widehat u(-n)| |\widehat u(n)|
\end{eqnarray*}
where we used that $A=|J|-1$ in the case at hand. Consequently
$$
\sum_{n=1}^\infty n^{-2s}S_{n,m,J}\leq (M_s\| u\|_{-s})^{m-1}\sum _{n=1}^\infty n^{-2s}
 |\widehat u(-n)||\widehat u(n)|\leq (M_s\| u\|_{-s})^{m+1}\,  .
 $$
In order to deal with the other cases, we introduce
\begin{eqnarray*}
 ^*S_{n,r}&:=&\sum_{\substack{k_1,\ldots ,k_r\ge 0\\ k_j \ne n \ \forall j }}
\frac{|\widehat u(- k_r)| |\widehat u(k_r - k_{r-1})| \cdots  |\widehat u(k_2 - k_1)| |\widehat u( k_1-n)|}{\prod_{j=1}^r |k_j - n |}\\
&=&\sum_{\substack{\ell_1,\ldots ,\ell_r\ge -n\\ \ell_j \ne 0 \ \forall j }}
\frac{|\widehat u(-n- \ell_r)| |\widehat u(\ell_r - \ell_{r-1})| \cdots  |\widehat u(\ell_2 - \ell_1)| |\widehat u( \ell_1)|}{\prod_{j=1}^r |\ell_j  |}\\
&=&(Q_{u,n})^{r}[|\widehat {u}|] (-n)\ 
\end{eqnarray*}
 and
\begin{eqnarray*}
S^*_{n,r}&:=&\sum_{\substack{k_1,\ldots ,k_r\ge 0\\ k_j \ne n \ \forall j }}
\frac{|\widehat u(n- k_r)| |\widehat u(k_r - k_{r-1})| \cdots  |\widehat u(k_2 - k_1)| |\widehat u( k_1)|}{\prod_{j=1}^r |k_j - n |}\\
&=&\sum_{\substack{\ell_1,\ldots ,\ell_r\ge -n\\ \ell_j \ne 0 \ \forall j }}
\frac{|\widehat u(- \ell_r)| |\widehat u(\ell_r - \ell_{r-1})| \cdots  |\widehat u(\ell_2 - \ell_1)| |\widehat u( \ell_1+n)|}{\prod_{j=1}^r |\ell_j  |} \, .
\end{eqnarray*} 
Using $u_*(x) = u(-x)$, defined in \eqref{def u_*}, $S^*_{n,r}$ can be written as
\begin{eqnarray*}
S^*_{n,r} &=&\sum_{\substack{\ell_1,\ldots ,\ell_r\ge -n\\ \ell_j \ne 0 \ \forall j }}
\frac{ |\widehat u_*(-n -  \ell_1)|  |\widehat u_*(\ell_1 - \ell_{2})| \cdots  |\widehat u_* (\ell_{r-1} - \ell_r)| |\widehat u_*( \ell_r)|}{\prod_{j=1}^r |\ell_j  |}\\
&=&(Q_{u_*,n})^r[|\widehat u_*|] (-n)\, .
\end{eqnarray*} 
With these notations introduced, $S_{n,m,J}$ can be expressed as
\begin{eqnarray*}
S_{n,m,J}&=&\, ^*S_{n,r_1} \left (\prod_{a=2}^A S_{n,r_a}\right )|\widehat u(n)| \, , \qquad \ \  {\rm if}\ 1\in J, \, m\not \in J\, ,\\
S_{n,m,J}&=&|\widehat u(-n)|\left (\prod_{a=1}^{A-1}S_{n,r_a}\right ) S^*_{n,r_A}\, , \qquad  {\rm if}\ 1\not \in J, \, m\in J\, ,\\
S_{n,m,J}&=&\, ^*S_{n,r_1} \left (\prod_{a=2}^{A-1}S_{n,r_a}\right )\, S^*_{n,r_A} \, , \qquad \ \  {\rm if}\ 1\not \in J, \, m\not \in J\, .\\
\end{eqnarray*}
Note that by the definitions \eqref{def Q_u,n} - \eqref{def Q_u}, $0\leq Q_{u,n}[|z|](\ell )\leq Q_u[|z|](\ell )$. Hence in the case where $1\in J$ and $m\not \in J$,
$$
S_{n,m,J}\leq  (Q_u)^{r_1}[|\widehat {u}|] (-n)|  \,  \widehat u(n)|  \,  (M_s\| u\|_{-s})^{\sum_{a=2}^A(r_a+1)}
$$
and therefore
\begin{eqnarray*}
\sum_{n=1}^\infty n^{-2s}S_{n,m,J}&\leq &(M_s\| u\|_{-s})^{\sum_{a=2}^A
(r_a+1)}\sum _{n=1}^\infty n^{-2s}
 (Q_u)^{r_1}[|\widehat {u}|] (-n)|\hat u(n)|\\
 &\leq &(M_s\| u\|_{-s})^{\sum_{a=2}^A
(r_a+1)}     \| (Q_u)^{r_1}[|\widehat {u}|] \|_{-s}  \| u\|_{-s}\\
 &\leq &(M_s\| u\|_{-s})^{m+1}\, ,
 \end{eqnarray*}
 since, in the case at hand, $A=|J|$. Using that $\| u_*\|_{-s} =  \| u \|_{-s}$, the remaining two cases can be dealt with in a similar way.
\end{proof}

  \section{Proof of Theorem \ref{main theorem}$(ii)$ and Corollary \ref{product representations} for $u$ small}\label{proof of main results near zero} 
 In this section we prove Theorem \ref{main theorem}(ii) and  Corollary \ref{product representations} 
  for potentials in $H^{-s}_{c,0}$, $0 \le s < 1/2$, near zero. In addition we discuss properties of 
 the scaling factors $\kappa_n(u)$, $n \ge 1$, and the normalizing constants $\mu_n(u)$, $n \ge 1$.

 As a first step, we study the analytic extension of the generating function, introduced in
 \cite{GK} and then further analyzed in \cite{GKT}. This function is then used
 to define the scaling factors $\kappa_n$, $n \ge 1$, for the components of the quasi-moment map $F$, needed for the proof of 
 Theorem \ref{main theorem}(ii) for $u$ small.
 
 For $u \in B^{-s}_{c,0}(r_{s,*})$ with $r_{s,*}>0 $ given by Proposition \ref{F analytic near 0}, and $\lambda$ in the resolvent set of $L_{u}$, define 
 \begin{equation}\label{generating function}
 \mathcal H_\lambda(u) :=  \langle (L_u - \lambda)^{-1} 1 | 1 \rangle .
 \end{equation}
Up to a sign, the function $\mathcal H_\lambda(u)$ equals the generating function, introduced in \cite{GK} for $u \in H^0_r$.
 In view of Proposition \ref{Lu for u small} and Theorem \ref{main result of section 2}
 it follows that  $\mathcal H_\lambda(u)$ is a meromorphic function of $\lambda$ with the set of poles
 contained in the spectrum of $L_{u}$, all of them being simple. By the definition \eqref{def G_n}, for any $n \ge 0$,
  $F_n(u)$ is the residue of $\mathcal H_\lambda(u)$ at $\lambda = \lambda_n(u)$.
  We point out that $F_n(u)$ might vanish. Our first result says that $ \mathcal H_\lambda(u)$ can be written as an infinite sum:
  \begin{lemma}\label{mathcal H as a sum}
  For any $u$ in $B^{-s}_{c,0}(r_{s,*}),$ the following identities hold:
  \begin{itemize}
  \item[(i)] $F_n(u) =  - \langle P_n(u) 1 | 1 \rangle$,  \quad $\forall \, n \ge 0$;
  \item[(ii)]  $\mathcal H_\lambda(u) = \sum_{n \ge 0} \frac{F_n(u)}{\lambda - \lambda_n(u)} $.
  \end{itemize}
  \end{lemma}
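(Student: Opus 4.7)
The plan is as follows. For part (i), the identity is a direct matching of definitions: the Riesz projector \(P_n(u)\) in \eqref{Riesz projector for u small} and the scalar \(F_n(u)\) in \eqref{def G_n} are both given as contour integrals over the (counterclockwise) circle \(\partial D_n(1/3)\), so pairing the integrand of \(P_n(u)\) against \(1\) on both sides yields \(\langle P_n(u) 1 | 1 \rangle = -F_n(u)\) at once.

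For part (ii), I would introduce the auxiliary function
\[
G(\lambda) := \mathcal H_\lambda(u) - \sum_{n \ge 0} \frac{F_n(u)}{\lambda - \lambda_n(u)}
\]
and prove \(G \equiv 0\). By Proposition \ref{F analytic near 0}, \((F_n(u))_{n \ge 0} \in \ell^1_+\), and since \(\lambda_n(u) \in D_n(1/4)\) the series converges absolutely and uniformly on compact subsets of \(\C \setminus \text{spec}(L_u)\). From the Laurent expansion \((L_u - \lambda)^{-1} = -P_n(u)/(\lambda - \lambda_n(u)) + \text{holomorphic}\) near \(\lambda_n(u)\) and part (i), \(\mathcal H_\lambda(u)\) has a simple pole at \(\lambda_n(u)\) with residue \(F_n(u)\), so the principal parts of the two terms in \(G\) cancel and \(G\) extends to an entire function on \(\C\).

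The core of the argument is to bound \(G\) uniformly on \(\C\) and then invoke Liouville. On \(\bigcup_{m \ge 0} \text{Vert}_m(1/3)\), Lemma \ref{estimate of lambda in Vert} together with \(|\lambda_k(u) - k| \le 1/4\) yields \(|\lambda - \lambda_k(u)| \ge c \langle k - m \rangle\), which bounds the series by \(C\|F(u)\|_{\ell^1_+}\) independently of \(m\). A parallel bound on \(|\mathcal H_\lambda(u)|\) is obtained by working in the shifted norm \(\|\cdot\|_{-s;m}\) of Section \ref{Lax operator}: Corollary \ref{1/2 bound} with \(\rho = 1/3\) gives \(\|T_u(D-\lambda)^{-1}\|_{H^{-s;m}_+ \to H^{-s;m}_+} \le 1/4\) and \(\|(D-\lambda)^{-1}\|_{H^{-s;m}_+ \to H^{-s;m}_+} \le 3\), while the duality estimate \(|\langle f | g \rangle| \le \|f\|_{-s;m}\|g\|_{s;m}\) combined with \(\|1\|_{\pm s;m} = \langle m \rangle^{\pm s}\) forces the shift weights to cancel, yielding a bound on \(|\mathcal H_\lambda(u)|\) independent of \(m\). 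Applying the maximum principle on each closed disk \(\overline{D_m(1/3)}\), whose boundary lies in \(\text{Vert}_m(1/3)\), extends the bound to all of \(\C\), so the entire function \(G\) is bounded and hence constant. Letting \(\lambda \to -\infty\) along the real axis, \(\mathcal H_\lambda(u) = O(1/|\lambda|)\) from the Neumann series for \((L_u - \lambda)^{-1}\), while dominated convergence gives \(\sum_n F_n(u)/(\lambda - \lambda_n(u)) \to 0\); the constant therefore equals \(0\).

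The main technical obstacle I expect is precisely the uniform-in-\(m\) bound on \(|\mathcal H_\lambda(u)|\) over \(\text{Vert}_m(1/3)\): one must work consistently in the shifted norms so that \((D-\lambda)^{-1}\) has operator norm of order one rather than growing with \(m\), and then verify that the \(\langle m \rangle^{\pm s}\) weights coming from \(\|1\|_{\pm s;m}\) cancel exactly in the duality pairing. Everything else reduces to standard residue calculus together with the \(\ell^1_+\)-summability already furnished by Proposition \ref{F analytic near 0}.
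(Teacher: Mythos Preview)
Your proof is correct. Part (i) is identical to the paper's argument. For part (ii), however, you take a genuinely different route: the paper fixes $\lambda$ with $\Re\lambda \le -1/3$, observes that both sides are analytic as functions of $u$ on $B^{-s}_{c,0}(r_{s,*})$, and verifies equality for \emph{real} $u$ by expanding $1$ in the $L^2$-orthonormal eigenbasis $(f_j)_{j\ge 0}$ of the self-adjoint operator $L_u$ (cf.\ \cite{GK}, \cite{GKT}); analytic continuation in $u$ then gives the identity on the whole ball, and meromorphic continuation in $\lambda$ does the rest. You instead fix $u$ and argue purely in $\lambda$ via Liouville, matching residues and producing a uniform-in-$m$ bound on $|\mathcal H_\lambda(u)|$ over $\text{Vert}_m(1/3)$ by exploiting the cancellation $\|1\|_{-s;m}\|1\|_{s;m}=1$ in the shifted norms. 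The paper's approach is shorter and leans on the already-developed spectral theory for real potentials, which is the recurring strategy throughout the paper (analyticity in $u$ plus reduction to the self-adjoint case). Your approach is self-contained in $\lambda$ and does not invoke the real-valued theory at all; it also anticipates the uniform estimate on $\mathcal H_\lambda$ that the paper proves separately in Lemma~\ref{estimate of mathcal H}, so in effect you are doing a bit more work here but making the argument independent of the eigenbasis.
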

  \begin{proof}
Item (i)  follows from the definition \eqref{def G_n} of $F_n(u)$  and definition \eqref{Riesz projector for u small} of the projection $P_n(u)$.
Towards (ii), note that by Proposition \ref{F analytic near 0}, for any $u \in B^{-s}_{c,0}(r_{s,*}),$
 $ \sum_{n \ge 0} \frac{F_n(u)}{\lambda - \lambda_n(u)} $ is a meromorphic function, having the same poles as  
 the meromorphic function $\mathcal H_\lambda(u)$.
For any $\lambda$  with $\Re \lambda \le - 1/3, $ the functions $ \sum_{n \ge 0} \frac{F_n}{\lambda - \lambda_n} $ and $\mathcal H_\lambda$
are both analytic functions on $B^{-s}_{c,0}(r_{s,*})$. It then suffices to show that they coincide for $u$ real valued:
for any real valued potential $u$ in $B^{-s}_{c,0}(r_{s,*}),$ let $f_n \equiv f_n(\cdot, u)$, $n \ge 0,$ be the $L^2-$orthonormal basis of $H_+$,
 introduced in \cite{GK} $(s=0)$ and \cite{GKT} $(0 < s < 1/2)$, where for any $n \ge 0$, $f_n$ is
 an eigenfunction of $L_{u}$, corresponding to the eigenvalue $\lambda_n(u)$. Then $1 = \sum_{n \ge 0} \langle 1 | f_n \rangle f_n$
 and hence for any $\lambda$ with  $\Re \lambda \le - 1/3,$
 \begin{equation}\label{identity 1}
 \mathcal H_\lambda(u) = \sum_{n \ge 0}  \langle 1 | f_n \rangle \langle (L_u -  \lambda)^{-1} f_n | 1 \rangle
 =  \sum_{n \ge 0} \frac{ \langle 1 | f_n \rangle \langle f_n | 1 \rangle}{\lambda_n - \lambda}\,  .
 \end{equation}
 Since $ F_n(u)$ is the residue of  $\mathcal H_\lambda(u)$ at $\lambda = \lambda_n(u)$, it then follows that
  $\langle 1 | f_n \rangle \cdot \langle f_n | 1 \rangle = - F_n(u)$, showing that the claimed identity
  holds for $u$ real valued.
   \end{proof}
 Note that  by the definition \eqref{generating function}, one has $ \mathcal H_\lambda(0) = - 1/ \lambda.$
 Hence $F_n(0) = 0$ for any $n \ge 1$ and $F_0(0) = - 1$.
 
 \begin{lemma}\label{zeros of mathcal H} 
 Assume that $u \in B^{-s}_{c,0}(r_{s,*})$ with $0 \le s < 1/2$. Then for any $n \ge 1$  
 with $\gamma_n(u) \ne 0$ one has
 $$
 \mathcal H_{ \lambda_{n-1}(u) + 1} (u) = 0 \, .
 $$
 \end{lemma}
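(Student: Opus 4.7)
\smallskip

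\noindent
\textbf{Proof plan.} The plan is to identify an explicit element of $H^{1-s}_+$ that, under $L_u - (\lambda_{n-1}(u)+1)$, is mapped to the constant function $1$, and then read off $\mathcal{H}_{\lambda_{n-1}(u)+1}(u)$ from the inner product of that element with $1$. The key tool is the shift identity \eqref{shift identity}: applying it to any eigenfunction $h_{n-1}$ of $L_u$ with eigenvalue $\lambda_{n-1}(u)$ yields
$$
(L_u - (\lambda_{n-1}(u)+1))\,\mathcal S h_{n-1} \;=\; -\langle u\mathcal S h_{n-1} \,|\, 1\rangle\,\cdot 1.
$$
Since $|\lambda_{n-1}(u)-(n-1)|<\rho$, the point $\lambda_{n-1}(u)+1$ lies in $D_n(\rho)$, which by Theorem \ref{main result of section 2} contains the single eigenvalue $\lambda_n(u)$ of $L_u$; and because we assume $\gamma_n(u)\ne 0$, that single eigenvalue differs from $\lambda_{n-1}(u)+1$, so $\lambda_{n-1}(u)+1$ lies in the resolvent set of $L_u$.

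Next I would use Lemma \ref{characterization gamma_n = 0}$(i)$: since $(G3)$ implies $(G4)$, the assumption $\gamma_n(u)\ne 0$ forces $\langle u\mathcal S h_{n-1}\,|\,1\rangle\ne 0$. Dividing the displayed identity by $-\langle u\mathcal S h_{n-1}\,|\,1\rangle$ and inverting $L_u - (\lambda_{n-1}(u)+1)$ gives
$$
(L_u - (\lambda_{n-1}(u)+1))^{-1}\,1 \;=\; -\,\frac{1}{\langle u\mathcal S h_{n-1}\,|\,1\rangle}\,\mathcal S h_{n-1}.
$$
Taking the $L^2$-pairing with $1$ then yields
$$
\mathcal H_{\lambda_{n-1}(u)+1}(u) \;=\; -\,\frac{\langle \mathcal S h_{n-1}\,|\,1\rangle}{\langle u\mathcal S h_{n-1}\,|\,1\rangle}.
$$

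The proof concludes by observing that the numerator vanishes: since $\mathcal S^\ast = T_{e^{-ix}}$ and $\Pi(e^{-ix})=0$, one has $\mathcal S^\ast 1 = 0$, so $\langle \mathcal S h_{n-1}\,|\,1\rangle = \langle h_{n-1}\,|\,\mathcal S^\ast 1\rangle = 0$. This finishes the proof. The main obstacle, if any, is purely bookkeeping: one must justify that $\lambda_{n-1}(u)+1$ really lies in the resolvent set (not merely away from $\lambda_{n-1}(u)$ and $\lambda_n(u)$), which is immediate from the enclosure $\lambda_k(u)\in D_k(\rho)$ together with the fact that the closed disks $\overline{D_k(\rho)}$ are pairwise disjoint for $\rho\le 1/4$. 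Everything else is a direct consequence of the shift identity and the equivalence established in Lemma \ref{characterization gamma_n = 0}.
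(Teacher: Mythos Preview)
Your proof is correct and follows essentially the same approach as the paper's: both apply the shift identity \eqref{shift identity} to an eigenfunction $h_{n-1}$, use that $\lambda_{n-1}(u)+1$ lies in the resolvent set (since $\gamma_n(u)\ne 0$) to invert $L_u-(\lambda_{n-1}(u)+1)$, invoke Lemma~\ref{characterization gamma_n = 0}(i) to ensure $\langle u\mathcal S h_{n-1}\,|\,1\rangle\ne 0$, and conclude from $\langle \mathcal S h_{n-1}\,|\,1\rangle=0$. The only cosmetic difference is that the paper writes $\mathcal S h_{n-1}=-\langle u\mathcal S h_{n-1}\,|\,1\rangle\,(L_u-\lambda_{n-1}-1)^{-1}1$ and then pairs with $1$, whereas you first divide through to isolate $(L_u-\lambda_{n-1}-1)^{-1}1$.
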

 \begin{proof}  Let  $u \in B^{-s}_{c,0}(r_s)$ with $0 \le s < 1/2$ and $n \ge 1$.
 Assume that  $\gamma_n(u) \ne 0$ and 
 let $h_k$, $k \ge 0$, be a set of eigenfunctions of $L_{u}$, corresponding to the eigenvalues 
 $\lambda_k \equiv \lambda_k(u)$.
 By \eqref{shift identity} one has 
 \begin{equation}\label{identity for L_uSh_(n-1)}
L_u\mathcal S h_{n-1} 
%= SL_uh_{n-1} + Sh_{n-1} - \langle uS h_{n-1} | 1 \rangle 1 
= (\lambda_{n-1} + 1) \mathcal S h_{n-1}  - \langle u \mathcal S h_{n-1} | 1 \rangle 1 \, . 
\end{equation}
Since by assumption, $\gamma_n(u) \ne 0$ and hence $\lambda_{n-1} + 1$ is in the resolvent set of $L_{u}$, it follows that
$$
\mathcal S h_{n-1} =  - \langle u \mathcal S h_{n-1} | 1 \rangle (L_u - \lambda_{n-1} - 1)^{-1} 1\, .
$$
 Taking the inner product with the constant function $1$ of both sides of the latter identity and noting that  $\langle \mathcal S h_{n-1} | 1 \rangle = 0$ yields
 $$
 0 =  - \langle u \mathcal S h_{n-1} | 1 \rangle \langle (L_u - \lambda_{n-1} - 1)^{-1} 1 | 1 \rangle = - \langle u \mathcal S h_{n-1} | 1 \rangle \mathcal H_{ \lambda_{n-1} +1}(u) \, .
 $$
 By Lemma \ref{characterization gamma_n = 0}(i), $(G3)$ implies $(G4)$. Since by assumption $\gamma_n(u) \ne 0,$
 it means that $\langle u \mathcal S h_{n-1} | 1 \rangle \ne 0$, implying that $ \mathcal H_{ \lambda_{n-1} +1}(u) = 0$.
 \end{proof}
 
 \begin{lemma}\label{estimate of mathcal H}
  For any $u \in B^{-s}_{c,0}(r_{s,*})$ and any $n \ge 0$, 
 $$
| \mathcal H_\lambda(u) | \ge  \frac{1}{|\lambda|} \cdot \frac{2}{3} \ , \qquad \forall \lambda \in  \rm{Vert}_n(1/4) \, ,
$$
and
$$
| \mathcal H_\lambda(u) | \le  \frac{1}{|\lambda|}  \cdot \frac{4}{3} \ , \qquad \forall \lambda \in  \rm{Vert}_n(1/4) \, .
$$
 \end{lemma}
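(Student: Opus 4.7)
The plan is to compare $\mathcal H_\lambda(u)$ directly with $\mathcal H_\lambda(0) = -1/\lambda$ by expanding the resolvent in its Neumann series. Starting from the factorization $(L_u-\lambda)^{-1} = (D-\lambda)^{-1}(\operatorname{Id} - T_u(D-\lambda)^{-1})^{-1}$ and the observation that $\langle (D-\lambda)^{-1} f \,|\, 1 \rangle = -\widehat f(0)/\lambda$ for any $f \in H_+$, I would first rewrite
\[
\mathcal H_\lambda(u) = -\frac{1}{\lambda}\, g(\lambda)\,, \qquad g(\lambda) := \langle (\operatorname{Id} - T_u(D-\lambda)^{-1})^{-1} 1 \,|\, 1 \rangle \, .
\]
Both bounds then reduce to showing $|g(\lambda) - 1| \le 1/3$ for every $\lambda \in \operatorname{Vert}_n(1/4)$ and every $n \ge 0$, with considerable room to spare.

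Next I would expand $g(\lambda)$ as the Neumann series $\sum_{k \ge 0} \langle (T_u(D-\lambda)^{-1})^k 1 \,|\, 1 \rangle$, which converges absolutely by Corollary \ref{1/2 bound} in the shifted norm $\|\cdot\|_{-s;n}$. The term $k=0$ equals $\langle 1\,|\,1\rangle = 1$, and the term $k=1$ vanishes thanks to the mean-zero assumption: since $(D-\lambda)^{-1} 1 = -1/\lambda$, one has $T_u(D-\lambda)^{-1} 1 = -\Pi u/\lambda$, and hence $\langle T_u(D-\lambda)^{-1} 1\,|\,1\rangle = -\widehat u(0)/\lambda = 0$. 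Thus $g(\lambda) - 1$ equals the tail from $k \ge 2$.

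To estimate the tail I would combine three ingredients. First, the duality $|\langle f \,|\, 1 \rangle| \le \|f\|_{-s;n}\, \|1\|_{s;n}$, which follows from Cauchy--Schwarz applied to the weighted sum defining the shifted norms. Second, the elementary identities $\|1\|_{-s;n} = \langle n \rangle^{-s}$ and $\|1\|_{s;n} = \langle n \rangle^{s}$, whose product is exactly $1$ --- this is the whole point of working in the shifted norms, because the $n$-dependent weights cancel. Third, the operator bound $\|T_u(D-\lambda)^{-1}\|_{H^{-s;n}_+ \to H^{-s;n}_+} \le 1/4$ from Corollary \ref{1/2 bound}, which applies since $r_{s,*} \le r_s$ and $\lambda \in \operatorname{Vert}_n(1/4)$. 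Iterating the operator bound yields $\|(T_u(D-\lambda)^{-1})^k 1\|_{-s;n} \le (1/4)^k \langle n \rangle^{-s}$, and then the duality pairing gives $|\langle (T_u(D-\lambda)^{-1})^k 1\,|\,1\rangle| \le (1/4)^k$ uniformly in $n$. Summing the geometric tail provides $|g(\lambda) - 1| \le \sum_{k \ge 2} (1/4)^k = 1/12$, which is comfortably below $1/3$ and yields both estimates.

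I do not anticipate a genuinely hard step here; the proof is essentially bookkeeping. The one point worth emphasizing is that the choice of the shifted norm $\|\cdot\|_{-s;n}$ adapted to the spectral disc at $n$ is essential: in the unshifted norm $\|\cdot\|_{-s}$ the evaluation $\|1\|_{-s}$ carries no decay in $n$, whereas the operator bound of Corollary \ref{1/2 bound} is stated in the shifted norm, so a direct argument would lose an $n$-dependent factor and miss the required uniform estimate.
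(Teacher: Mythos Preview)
Your proof is correct and follows essentially the same approach as the paper: factor out $-1/\lambda$, expand in the Neumann series, and bound each term $|\langle (T_u(D-\lambda)^{-1})^k 1 \,|\, 1 \rangle| \le (1/4)^k$ via the shifted norms $\|\cdot\|_{\pm s;n}$ and Corollary~\ref{1/2 bound}. The only difference is that you additionally observe the $k=1$ term vanishes (since $\widehat u(0)=0$) and thus obtain the sharper bound $|g(\lambda)-1|\le 1/12$, whereas the paper simply sums from $k\ge 1$ to get $\sum_{k\ge 1}(1/4)^k = 1/3$, hitting the stated constants $2/3$ and $4/3$ exactly.
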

 \begin{proof}
Let  $u \in B^{-s}_{c,0}(r_{s,*})$ and $n \ge 0$. 
Expanding $(L_u - \lambda)^{-1} = (D - \lambda)^{-1}(Id - T_u(D - \lambda)^{-1})^{-1}$ in its Neumann series
one gets
$$
\mathcal H_\lambda(u) = - \frac{1}{\lambda} \big( 1 + \sum_{k \ge 1} \langle (T_u (D - \lambda)^{-1})^k 1 | 1 \rangle \big) \, .
$$
 Since by Corollary \ref{1/2 bound},  
$$
\| T_u(D - \lambda)^{-1} \|_{H^{-s; n}_+ \to H^{-s; n}_+} \le \frac{1}{4} \, , \qquad \forall  \lambda \in  \rm{Vert}_n(1/4),
$$
and by the definition \eqref{shifted norm},   $ \| 1 \|_{-s; n} = \langle n \rangle^{-s}$ as well as  $ \| 1 \|_{s; n} = \langle n \rangle^{s} $,
 it follows that 
$$
|\langle (T_u(D - \lambda)^{-1})^k 1 | 1 \rangle | \le  (\frac{1}{4})^k \| 1 \|_{-s; n} \| 1 \|_{s; n}  \le (\frac{1}{4})^k
$$
and hence
$$
| \mathcal H_\lambda(u) | \ge  \frac{1}{|\lambda|} \big( 1 - \sum_{k \ge 1} (\frac{1}{4})^k \big) =  \frac{1}{|\lambda|} \cdot \frac{2}{3} \ , \qquad \forall \lambda \in  \rm{Vert}_n(1/4) \, .
$$
 Similarly, one gets
 $$
 | \mathcal H_\lambda(u) | \le  \frac{1}{|\lambda|} \big( 1 + \sum_{k \ge 1} (\frac{1}{4})^k \big) =  \frac{1}{|\lambda|} \cdot \frac{4}{3} \ , \qquad \forall \lambda \in  \rm{Vert}_n(1/4) \, .
 $$
 \end{proof}
 
 \begin{lemma}\label{argument principle}
 Let $u \in B^{-s}_{c,0}(r_{s,*})$. Then for any $n \ge 1$, the difference $ZP_{u;n}$ of the number of zeroes of $\mathcal H_\lambda(u)$ in $D_n(1/3)$
 and the number of its poles  in $D_n(1/3)$ vanishes. For $n=0,$ the difference $ZP_{u; 0}$ equals $-1$.
 As a consequence, $\mathcal H_\lambda(u)$ has no zeroes in $D_0(1/3)$ and for any $n \ge 1$, at most one zero in $D_n(1/3)$.
 \end{lemma}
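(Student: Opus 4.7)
My plan is to apply the argument principle to $\mathcal H_\lambda(u)$ on the circle $\partial D_n(1/3)$, then use a continuity/connectedness argument in the parameter $u$ to reduce everything to the trivial case $u=0$. The first step is to justify that the integral
\[
\mathrm{ZP}_{u;n} = \frac{1}{2\pi i}\int_{\partial D_n(1/3)} \frac{\partial_\lambda \mathcal H_\lambda(u)}{\mathcal H_\lambda(u)}\,d\lambda
\]
is well defined. For this I would observe that every point of $\partial D_n(1/3)$ lies in $\mathrm{Vert}_n(1/4)$ (since $|\lambda-n|=1/3\ge 1/4$ and $|\Re\lambda-n|\le 1/3\le 1/2$), and Lemma \ref{estimate of mathcal H} therefore gives the uniform lower bound $|\mathcal H_\lambda(u)|\ge \tfrac{2}{3|\lambda|}>0$ on the circle, for all $u \in B^{-s}_{c,0}(r_{s,*})$. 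By Proposition \ref{Lu for u small}, $(u,\lambda)\mapsto \mathcal H_\lambda(u)$ and its $\lambda$-derivative are jointly analytic on a neighborhood of $B^{-s}_{c,0}(r_{s,*})\times\partial D_n(1/3)$, so the integrand is continuous in $(u,\lambda)$ and the integral depends continuously on $u$.

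Next, by the argument principle $\mathrm{ZP}_{u;n}$ is an integer for each $u$, counting zeros minus poles with multiplicity inside $D_n(1/3)$. A continuous $\Z$-valued function on the connected open set $B^{-s}_{c,0}(r_{s,*})$ is constant, so $\mathrm{ZP}_{u;n}=\mathrm{ZP}_{0;n}$. To evaluate the latter, I simply plug in $u=0$: one has $\mathcal H_\lambda(0)=-1/\lambda$, which is nowhere zero and has a unique simple pole at $\lambda=0$. Thus $\mathrm{ZP}_{0;0}=0-1=-1$ and $\mathrm{ZP}_{0;n}=0$ for every $n\ge 1$, which gives the two claimed values.

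For the consequences, I would combine this count with Theorem \ref{main result of section 2}, which asserts that $D_n(1/3)$ contains the unique simple eigenvalue $\lambda_n(u)$ of $L_u$. By Lemma \ref{mathcal H as a sum}(ii), the only possible pole of $\mathcal H_\lambda(u)$ in $D_n(1/3)$ is at $\lambda_n(u)$, and it is simple when present (i.e.\ when $F_n(u)\ne 0$) and absent otherwise. Hence the pole count in $D_n(1/3)$ is $0$ or $1$. For $n\ge 1$, $\mathrm{ZP}_{u;n}=0$ then forces the zero count to equal the pole count, which is at most one. For $n=0$, $\mathrm{ZP}_{u;0}=-1$ rules out a pole count of $0$ (which would force a negative number of zeros), so the pole count must be exactly $1$ and the zero count exactly $0$; in particular $\mathcal H_\lambda(u)$ has no zero in $D_0(1/3)$.

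I expect no serious obstacle here: all the analytic ingredients (meromorphy, nonvanishing on the contour, joint analyticity) have been set up in the preceding lemmas, and the proof is essentially a homotopy argument through the contractible ball $B^{-s}_{c,0}(r_{s,*})$. The only point requiring a little care is the uniform nonvanishing of $\mathcal H_\lambda(u)$ on $\partial D_n(1/3)$, which guarantees that the logarithmic integral stays continuous all the way from $u=0$ to an arbitrary $u$, and this is supplied by Lemma \ref{estimate of mathcal H}.
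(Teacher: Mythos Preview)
Your proof is correct and follows essentially the same approach as the paper: use Lemma~\ref{estimate of mathcal H} to ensure $\mathcal H_\lambda(u)$ has no zeros or poles on $\partial D_n(1/3)$, invoke the argument principle and continuity in $u$ over the connected ball to reduce to $u=0$, and compute directly from $\mathcal H_\lambda(0)=-1/\lambda$. Your derivation of the consequences (bounding the zero count via the pole count from Theorem~\ref{main result of section 2} and Lemma~\ref{mathcal H as a sum}) is slightly more explicit than the paper's, which leaves these as immediate observations.
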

 \begin{proof}
 Let $u$ be in $B^{-s}_{c,0}(r_{s,*})$ with $0 \le s < 1/2$ and $n \ge 0.$ 
 By Lemma \ref{estimate of mathcal H},  $ \mathcal H_\lambda(u)$ has no poles and no zeroes
 on the circle $\partial D_n(1/3)$. Since $ \mathcal H_\lambda(u)$ is meromorphic, it then
 follows by the argument principle that $ZP_{u;n}$ is constant on $B^{-s}_{c,0}(r_{s,*})$.
 Recall that for $u=0$, one has $ \mathcal H_\lambda(0) = - 1/\lambda$, implying that
 $ZP_{0; 0} = -1$ and for any $n \ge 1$, $ZP_{0; n} = 0$.
 \end{proof}
 
 \begin{corollary}\label{residue and gamma}
 Assume that $u \in B^{-s}_{c,0}(r_{s,*})$ with $0 \le s < 1/2$. For any $n \ge 1,$  $F_n(u) = 0$ if and only if $\gamma_n(u)=0$.
 \end{corollary}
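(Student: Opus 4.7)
The plan is to prove the two implications separately, leveraging essentially all the preparatory material from this section.

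For the direction $\gamma_n(u) = 0 \Rightarrow F_n(u) = 0$, the argument is immediate: by Lemma \ref{mathcal H as a sum}$(i)$ one has $F_n(u) = -\langle P_n(u) 1 | 1 \rangle$, and by Lemma \ref{characterization gamma_n = 0}$(ii)$ the vanishing of $\gamma_n(u)$ forces $\langle P_n(u) 1 | 1 \rangle = 0$. Combining these two identities yields $F_n(u) = 0$.

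For the converse $F_n(u) = 0 \Rightarrow \gamma_n(u) = 0$, the plan is to argue by contradiction using the zero/pole counting set up in Lemma \ref{argument principle}. Suppose $F_n(u) = 0$ while $\gamma_n(u) \ne 0$. First, invoke Lemma \ref{zeros of mathcal H} to produce the zero $\mathcal H_{\lambda_{n-1}(u)+1}(u) = 0$. Verify that this zero lies in the disc $D_n(1/3)$: by Theorem \ref{main result of section 2} one has $|\lambda_{n-1}(u) - (n-1)| < 1/4$, whence $\lambda_{n-1}(u) + 1 \in D_n(1/4) \subset D_n(1/3)$, and moreover $\lambda_{n-1}(u) + 1 \ne \lambda_n(u)$ because $\gamma_n(u) \ne 0$. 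Second, use the partial fraction expansion of Lemma \ref{mathcal H as a sum}$(ii)$ together with the hypothesis $F_n(u) = 0$ to conclude that $\mathcal H_\lambda(u)$ is analytic at $\lambda_n(u)$; since $\lambda_n(u)$ is the only eigenvalue of $L_u$ inside $D_n(1/3)$, the meromorphic function $\mathcal H_\lambda(u)$ has no pole at all in $D_n(1/3)$. But then $\mathcal H_\lambda(u)$ has (at least) one zero and no poles in $D_n(1/3)$, so $ZP_{u;n} \ge 1$, contradicting $ZP_{u;n} = 0$ from Lemma \ref{argument principle}. Hence $\gamma_n(u) = 0$.

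The main subtlety, though routine, is to confirm that the zero produced by Lemma \ref{zeros of mathcal H} is genuinely an interior point of $D_n(1/3)$ and is not cancelled by a pole. Both checks reduce to the eigenvalue localization $|\lambda_k(u) - k| < 1/4$ from Theorem \ref{main result of section 2}: it places $\lambda_{n-1}(u)+1$ inside $D_n(1/4)$ strictly away from $\partial D_n(1/3)$, separates it from $\lambda_n(u)$ whenever $\gamma_n(u) \ne 0$, and ensures that the only candidate pole of $\mathcal H_\lambda(u)$ inside $D_n(1/3)$ is $\lambda_n(u)$ itself, which is eliminated precisely by the vanishing of the residue $F_n(u)$.
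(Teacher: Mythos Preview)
Your proof is correct and follows essentially the same approach as the paper. The paper proves the contrapositive $\gamma_n(u)\ne 0\Rightarrow F_n(u)\ne 0$ directly (zero from Lemma~\ref{zeros of mathcal H}, then $ZP_{u;n}=0$ forces a pole, hence a nonzero residue), while you phrase the same step as a proof by contradiction; the ingredients and the logic are identical, and your treatment of the implication $\gamma_n(u)=0\Rightarrow F_n(u)=0$ matches the paper verbatim.
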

% \begin{remark}
% Later in this section, we will derive a formula of the type $F_n(u) = \kappa_n(u) \gamma_n(u)$ where
% $\kappa_n$ is a non-vanishing analytic function on  $B^{-s}_{c,0}(r_{s,*})$. 
% \end{remark}
 \begin{proof} Let $u$ be in $B^{-s}_{c,0}(r_{s,*})$ with $0 \le s < 1/2$ and $n \ge 1.$ 
 Assume that $\gamma_n(u) \ne 0$. By Lemma \ref{zeros of mathcal H}, the generating function
 vanishes at $\lambda = \lambda_{n-1}(u) + 1$ and hence by Lemma \ref{argument principle},
 $\lambda_n(u)$ is a pole of  $\mathcal H_{\lambda}$, implying together with Lemma \ref{mathcal H as a sum}(ii) that  $F_n(u) \ne 0$.
 To prove the converse, assume that $\gamma_n(u) = 0.$ Then by Lemma \ref{characterization gamma_n = 0}(ii) 
 we conclude that $\langle P_n(u) 1 | 1 \rangle = 0$
 and hence by Lemma \ref{mathcal H as a sum}(i)  that $F_n(u) = 0$. 
 \end{proof}
  
  For any $u \in B^{-s}_{c,0}(r_{s,*})$, define for $n \ge 1$ and $\lambda \in \rm{Vert}_n(1/4)$,
  \begin{equation}\label{def eta near zero}
  \eta_n(\lambda, u) := - 
  \frac{\lambda -  \lambda_n(u)}{\lambda -  \lambda_{n-1} (u) - 1 } (\lambda - \lambda_0(u))\mathcal H_\lambda(u) \, ,
 \end{equation}
 and for $n = 0$, 
 $$
  \eta_0(\lambda, u) := -   (\lambda - \lambda_0(u))\mathcal H_\lambda(u) \, ,
  \qquad \forall \lambda \in \rm{Vert}_0(1/4).
  $$
The functions $\eta_n$, $n \ge 0$, are analytic in $\lambda$ on their domains of definition. 
By Lemma \ref{zeros of mathcal H}  and Lemma \ref{argument principle}, for any $n \ge 0,$
the function $\eta_n(\lambda, u)$ extends analytically to $D_n(1/4)$ and hence for any $\lambda \in D_n(1/4)$
one has by Cauchy's formula
\begin{equation}\label{Cauchy eta_n}
\eta_n(\lambda, u) = \frac{1}{2\pi i} \int_{\partial D_n(1/3)} \frac{\eta_n(\mu, u)}{\mu - \lambda} d \mu .
\end{equation}
As a consequence, for any $n \ge 0$,
$$
\eta_n : \big( D_n(1/4) \cup \rm{Vert}_n(1/4) \big) \times B^{-s}_{c,0}(r_{s,*}) \to \C
$$
is analytic and by Lemma \ref{argument principle} vanishes nowhere.
Since $ \mathcal H_\lambda(0) = - 1/\lambda$
one infers that  for any $n \ge 0$,  $\eta_n(\lambda, 0) = 1$  for $\lambda$ in $\rm{Vert}_n(1/4)$.
\begin{lemma}\label{estimate of eta_n}
For any $u \in B^{-s}_{c,0}(r_{s,*})$ and $n \ge 0$,
$$
\frac{1}{C} \le | \eta_n(\lambda, u) | \le C\, , \qquad \forall \lambda \in D_n(1/4) \, ,
$$
where $C:=  5 \cdot 7 \cdot 4$.
\end{lemma}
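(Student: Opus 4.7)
The plan is to transfer bounds from the larger contour $\partial D_n(1/3)$ to the smaller disk $D_n(1/4)$ by applying Cauchy's formula twice, once to $\eta_n$ for the upper bound and once to $1/\eta_n$ for the lower bound. The text preceding the statement records that $\eta_n$ is analytic and vanishes nowhere on $D_n(1/4) \cup \text{Vert}_n(1/4)$; since the annulus $\{\lambda : 1/4 \le |\lambda - n| \le 1/3\}$ lies inside $\text{Vert}_n(1/4)$, $\eta_n$ actually extends analytically and without zeros to the closed disk $\overline{D_n(1/3)}$, so $1/\eta_n$ is likewise analytic on $D_n(1/3)$ and continuous up to $\partial D_n(1/3)$.

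For the upper bound I would apply \eqref{Cauchy eta_n}: if $\lambda \in D_n(1/4)$ and $\mu \in \partial D_n(1/3)$ then $|\mu - \lambda| \ge 1/12$ and the length of the contour is $2\pi/3$, so the standard ML estimate yields
$$
|\eta_n(\lambda, u)| \le \frac{1}{2\pi} \cdot \frac{2\pi}{3} \cdot 12 \cdot \sup_{\mu \in \partial D_n(1/3)} |\eta_n(\mu, u)| = 4 \sup_{\mu \in \partial D_n(1/3)} |\eta_n(\mu, u)|.
$$
Since $\partial D_n(1/3) \subset \text{Vert}_n(1/4)$, Lemma \ref{estimate of mathcal H} gives $|\mathcal{H}_\mu(u)| \le (4/3)/|\mu|$ on the contour. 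Using that $\lambda_k(u) \in D_k(1/4)$ for every $k$ (and hence $\lambda_{n-1}(u) + 1 \in D_n(1/4)$), elementary triangle-inequality estimates yield $|\mu - \lambda_n(u)| \le 7/12$, $|\mu - \lambda_{n-1}(u) - 1| \ge 1/12$, and $|\mu - \lambda_0(u)|/|\mu| \le (n + 7/12)/(n - 1/3)$. For $n \ge 1$ this last quantity is maximized at $n = 1$ with value $19/8$, so $|\eta_n(\mu, u)| \le 7 \cdot (19/8)(4/3) = 133/6 < 7 \cdot 5 = 35$; the case $n = 0$, where the ratio factor is absent, gives the even smaller bound $|\eta_0(\mu, u)| \le (7/12) \cdot 4 = 7/3$. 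In both cases the conclusion is $|\eta_n(\lambda, u)| \le 4 \cdot 35 = 140$.

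The lower bound follows by applying exactly the same Cauchy/ML argument to $1/\eta_n$, using now the lower estimate $|\mathcal{H}_\mu(u)| \ge (2/3)/|\mu|$ from Lemma \ref{estimate of mathcal H} and the reciprocal geometric bounds $|\mu - \lambda_n| \ge 1/12$, $|\mu - \lambda_{n-1} - 1| \le 7/12$, $|\mu - \lambda_0| \ge |\mu| - 1/4$. The analogous maximization gives $(n + 1/3)/(n - 7/12) \le 16/5$ at $n = 1$, whence $|1/\eta_n(\mu, u)| \le 7 \cdot (16/5)(3/2) = 168/5 < 7 \cdot 5 = 35$ on $\partial D_n(1/3)$, and hence $|\eta_n(\lambda, u)| \ge 1/140$ on $D_n(1/4)$. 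There is no real conceptual obstacle here; the only subtlety is verifying that the two rational expressions $(n + 7/12)/(n - 1/3)$ and $(n + 1/3)/(n - 7/12)$ are $n$-uniformly bounded with worst case at $n = 1$, and that the corresponding products with $(4/3)$ or $(3/2)$ remain below $5$ --- this is precisely what produces the factor $5$ in $C = 5 \cdot 7 \cdot 4$.
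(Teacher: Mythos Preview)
Your proposal is correct and follows essentially the same route as the paper: both apply Cauchy's formula on $\partial D_n(1/3)$ to $\eta_n$ for the upper bound and to $1/\eta_n$ for the lower bound, then estimate the integrand using Lemma~\ref{estimate of mathcal H} together with the localization $\lambda_k(u)\in D_k(1/4)$. The only cosmetic difference is that you first extract the ML factor $4$ and then bound $|\eta_n(\mu)|\le 7\cdot 5$ on the contour, whereas the paper groups $|\mu-\lambda_n|/\big(|\mu-\lambda_{n-1}-1|\,|\mu-\lambda|\big)\le 7\cdot 12$ and bounds $|(\mu-\lambda_0)\mathcal H_\mu(u)|$ separately by $4$ (respectively $1/|(\mu-\lambda_0)\mathcal H_\mu(u)|\le 5$ for the lower bound); both packagings arrive at $C=5\cdot 7\cdot 4$.
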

\begin{proof}
Let  $u \in B^{-s}_{c,0}(r_{s,*})$ and $n \ge 0$. 
By \eqref{Cauchy eta_n}
$$
\eta_n(\lambda, u) =  \frac{1}{2\pi i} \int_{\partial D_n(1/3)} 
\frac{\eta_n(\mu, u)}{\mu - \lambda} d \mu \, , \qquad \forall \lambda \in D_n(1/4) \, .
$$
We first prove the claimed upper bound for $| \eta_n(\lambda, u) |$. 
Let $n \ge 1.$
Since for any $k \ge 0,$ $\lambda_k(u) \in D_k(1/4)$ and $\lambda_k(u) + 1 \in D_{k+1}(1/4)$ one has
for any $\mu \in \partial D_n(1/3)$ and $\lambda \in D_n(1/4)$
$$
  \frac{| \mu - \lambda_n(u)|}{| \mu - \lambda_{n-1}(u) -1|}  \frac{1}{| \mu - \lambda |} 
 \le   \frac{\frac{1}{3} + \frac{1}{4}}{(\frac{1}{3} - \frac{1}{4})^2}  = 7 \cdot 12 
$$
and  by Lemma \ref{estimate of mathcal H} 
$$ 
| (\mu - \lambda_0 )\mathcal H_\mu(u)| \le \frac{n + \frac{1}{3} + \frac{1}{4}}{n - \frac{1}{3}} \  \frac{4}{3} \le 4 \, .
$$
Hence for any $\mu \in \partial D_n(1/3)$ and $\lambda \in D_n(1/4)$, one has by the definition of $\eta_n$
$$
\frac{ |\eta_n(\mu, u)|} {|\mu - \lambda |} =  \frac{| \mu - \lambda_n(u)|}{| \mu - \lambda_{n-1}(u) -1|} 
\frac{| (\mu - \lambda_0(u) ) \mathcal H_\mu(u)|}{|\mu - \lambda |}
 \le  4 \cdot 7 \cdot 12 \, .
$$
Formula \eqref{Cauchy eta_n} then yields
$$
 |\eta_n(\lambda, u)|  \le  C\, , \qquad  \forall \lambda \in D_n(1/4)\, ,
$$
where $C=  5 \cdot 7 \cdot 4$. In case $n = 0$, one argues similarly
and obtains
$$
\frac{ |\eta_0(\mu, u)|} {|\mu - \lambda |} = 
\frac{| (\mu - \lambda_0(u)) \mathcal H_\mu(u)|}{|\mu - \lambda |}
 \le  \frac{ \frac{1}{3} + \frac{1}{4}}{ \frac{1}{3}} \cdot \frac{4}{3} \cdot 12 \le C \, ,
$$
yielding also $ |\eta_0(\lambda, u)|  \le  C$ for any $\lambda \in D_n(1/4)$.
It remains to prove the claimed lower bound for $|\eta_n(\lambda, u) |$.
Since $\eta_n(\lambda, u)$ is analytic and does not vanish in 
$D_n(1/4) \cup \rm{Vert}_n(1/4)$, its inverse is analytic on $D_n(1/4) \cup \rm{Vert}_n(1/4)$. 
By Cauchy's theorem we have for any $\lambda \in D_n(1/4)$
\begin{equation}\label{Cauchy for 1/eta}
\frac{1}{\eta_n(\lambda, u)} =  \frac{1}{2\pi i} \int_{\partial D_n(1/3)} \frac{1}{\eta_n(\mu, u)} \frac {1}{\mu - \lambda} d \mu .
\end{equation}
Let us again first consider the case $n \ge 1$. For any $\mu \in \partial D_n(1/3)$ and $\lambda \in D_n(1/4)$,
$$
  \frac{| \mu - \lambda_{n-1}(u) -1|}{| \mu - \lambda_n(u)|}  \frac{1}{| \mu - \lambda |} 
 \le   \frac{\frac{1}{3} + \frac{1}{4}}{(\frac{1}{3} - \frac{1}{4})^2}  = 7 \cdot 12 
$$
and  by Lemma \ref{estimate of mathcal H} 
$$ 
\frac{1}{|(\mu - \lambda_0)\mathcal H_\mu(u)|} \le \frac{|\mu |}{| \mu - \lambda_0 |} \  \frac{3}{2}
 \le  \frac{n + \frac{1}{3}}{n - \frac{1}{3} - \frac{1}{4}} \  \frac{3}{2} \le 5 \, .
$$
Hence for any $\mu \in \partial D_n(1/3)$ and $\lambda \in D_n(1/4)$, one has by the definition of $\eta_n$
$$
\begin{aligned}
\frac{1}{ |\eta_n(\mu, u)|} \frac{1}{|\mu - \lambda |} & =  
\frac{| \mu - \lambda_{n-1}(u) -1|}{| \mu - \lambda_n(u)|}
\frac{1}{| (\mu - \lambda_0(u)) \mathcal H_\mu(u)|} \frac{1}{|\mu - \lambda |} \\
& \le  7 \cdot 12 \cdot 5 \, .
 \end{aligned}
$$
Formula \eqref{Cauchy for 1/eta} then yields
$$
\frac{1}{ |\eta_n(\lambda, u)|}  \le  C\, , \qquad  \forall \lambda \in D_n(1/4) \, ,
$$
where $C$ is again the constant $ 5 \cdot 7 \cdot 4$. In case $n = 0$, one argues 
similarly and obtains
$$
\begin{aligned}
\frac{1}{ |\eta_0(\mu, u)|} \frac{1}{|\mu - \lambda |} & = 
\frac{1}{| (\mu - \lambda_0(u) ) \mathcal H_\mu(u)|} \frac{1}{|\mu - \lambda |} \\
&  \le  \frac{|\mu |}{| \mu -  \lambda_0  |} \  \frac{3}{2} 
\le \frac{1}{3} \cdot 12 \cdot \frac{3}{2} \le C \, ,
 \end{aligned}
$$
yielding also $ \frac{1}{|\eta_0(\lambda, u)|}  \le  C$ for any $\lambda \in D_n(1/4)$.
\end{proof}

\smallbreak

Define for any $n \ge 1$
\begin{equation}\label{def kappa_n near zero}
\kappa_n : B^{-s}_{c,0}(r_{s,*}) \to \C, \ u \mapsto  \frac{1}{\lambda_n(u) - \lambda_0(u)} \eta_n(\lambda_n(u), u) 
\end{equation}
and for $n = 0,$
$$
\kappa_0 : B^{-s}_{c,0}(r_{s,*}) \to \C, \ u \mapsto    \eta_0(\lambda_0(u), u) \, .
$$
Being a composition of analytic functions, the $\kappa_n$ are analytic maps.
They satisfy the following estimates.
\begin{proposition}\label{proposition kappa_n}
For any $n \ge 0,$ the map $\kappa_n :  B^{-s}_{c,0}(r_{s,*}) \to \C$ is analytic
and satisfies for any $u \in B^{-s}_{c,0}(r_{s,*})$ the estimate
$$
| \kappa_n(u) | \le 2 C \frac{1}{\langle n \rangle} \, , \qquad
\frac{1} {| \kappa_n(u) |} \le 2C \langle n \rangle \, ,  \qquad  \forall \, n \ge 0 \, ,
$$
where $C =  5 \cdot 7 \cdot 4$ is the constant of Lemma \ref{estimate of eta_n}.
In particular, the map $B^{-s}_{c,0}(r_{s,*}) \to \ell^{\infty}_+, u \mapsto (\frac{1}{n \kappa_n(u)})_{n \ge 1}$ 
is analytic.
\end{proposition}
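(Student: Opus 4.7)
The plan is to reduce everything to the already established bounds on $\eta_n$ (Lemma \ref{estimate of eta_n}) and the location of the eigenvalues (Theorem \ref{main result of section 2}). Analyticity of $\kappa_n$ will come from a chain-rule argument, the two-sided bounds from elementary estimates of $|\lambda_n(u)-\lambda_0(u)|$, and the final $\ell^\infty$ statement from a uniform-boundedness argument combined with coordinate-wise analyticity.

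First I would dispose of analyticity. By Theorem \ref{main result of section 2}, the map $u \mapsto \lambda_n(u)$ is analytic from $B^{-s}_{c,0}(r_{s,*})$ to $\C$, and its values lie in the disc $D_n(1/4)$. The function $\eta_n$ was shown in the text preceding Lemma \ref{estimate of eta_n} to be jointly analytic on $\bigl(D_n(1/4)\cup\mathrm{Vert}_n(1/4)\bigr)\times B^{-s}_{c,0}(r_{s,*})$. Hence the composition $u \mapsto \eta_n(\lambda_n(u),u)$ is analytic. For $n\ge 1$, the factor $\lambda_n(u)-\lambda_0(u)$ is analytic and, since $\lambda_n(u)\in D_n(1/4)$ and $\lambda_0(u)\in D_0(1/4)$ are disjoint for $n\ge 1$, it never vanishes; hence $\kappa_n$ is analytic. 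For $n=0$, $\kappa_0=\eta_0(\lambda_0(\cdot),\cdot)$ is directly analytic.

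For the two-sided pointwise estimate, I would use the triangle inequality on $|\lambda_n(u)-\lambda_0(u)|$. Since $\lambda_n(u)\in D_n(1/4)$ and $\lambda_0(u)\in D_0(1/4)$, one has for every $n\ge 1$
\begin{equation*}
n-\tfrac{1}{2}\ \le\ |\lambda_n(u)-\lambda_0(u)|\ \le\ n+\tfrac{1}{2}.
\end{equation*}
Combined with Lemma \ref{estimate of eta_n}, namely $1/C\le|\eta_n(\lambda_n(u),u)|\le C$, this gives
\begin{equation*}
|\kappa_n(u)|\ \le\ \frac{C}{n-1/2}\ \le\ \frac{2C}{n}\ =\ \frac{2C}{\langle n\rangle},\qquad \frac{1}{|\kappa_n(u)|}\ \le\ C\bigl(n+\tfrac{1}{2}\bigr)\ \le\ 2C\langle n\rangle,
\end{equation*}
where the elementary inequalities $n-1/2\ge n/2$ and $n+1/2\le 2n$ for $n\ge 1$ are used. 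The case $n=0$ is immediate from Lemma \ref{estimate of eta_n} alone since $\langle 0\rangle=1$ and $1/C\le|\kappa_0(u)|\le C\le 2C$.

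For the final assertion that $u\mapsto\bigl(1/(n\kappa_n(u))\bigr)_{n\ge 1}$ is analytic into $\ell^\infty_+$, the pointwise bound just obtained gives $|1/(n\kappa_n(u))|\le 2C$ uniformly in $n$ and $u\in B^{-s}_{c,0}(r_{s,*})$, so the map takes values in $\ell^\infty_+$ with norm at most $2C$. Each coordinate is analytic, and the coordinate functionals separate points of $\ell^\infty_+$; combined with the uniform local boundedness, this yields analyticity into $\ell^\infty_+$ (for instance, by verifying termwise convergence of the Taylor series in the $\ell^\infty$ norm via Cauchy estimates applied uniformly in $n$). I do not expect a real obstacle here: the main work has already been done in establishing Lemma \ref{estimate of eta_n} and the eigenvalue localization. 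The only mildly delicate point is the analyticity into the non-separable space $\ell^\infty_+$, but the uniform boundedness makes the standard reduction to weak analyticity (via the separating family of coordinate functionals) straightforward.
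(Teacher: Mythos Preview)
Your proposal is correct and follows essentially the same route as the paper: analyticity of $\kappa_n$ via composition, the two-sided bounds from $n-\tfrac12\le|\lambda_n-\lambda_0|\le n+\tfrac12$ combined with Lemma~\ref{estimate of eta_n}, and the $\ell^\infty_+$-analyticity from componentwise analyticity plus uniform boundedness. The only cosmetic difference is that the paper outsources the last step to \cite[Theorem~A.3]{KP}, which packages exactly the uniform Cauchy-estimate argument you sketch.
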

\begin{proof}
Let $n \ge 0$. We have already seen that $\kappa_n$ is analytic. In the case $u$ real valued,
$\kappa_n(u)$ is real valued as well (cf. also \cite{GK}($s=0$), \cite{GKT}($0< s < 1/2$)).
Let  $u \in B^{-s}_{c,0}(r_{s,*})$. 
By the definition of $\kappa_n(u)$ and Lemma \ref{estimate of eta_n}, the claimed estimates  hold
for $n = 0$. If $n \ge 1$, then
$$
|\kappa_n(u)| =  \frac{1}{|\lambda_n(u) -  \lambda_0(u)|} | \eta_n(\lambda_n(u), u) | 
\le \frac{1}{n - \frac{1}{2}} C \le \frac{2C}{n}
$$
and
$$
|\kappa_n(u)| =  \frac{1}{|\lambda_n(u) -  \lambda_0(u)|} |\eta_n(\lambda_n(u), u) | 
\ge \frac{1}{n + \frac{1}{2}} \,  \frac{1}{C} \ge  \frac{1}{2C} \frac{1}{n} \, .
$$
By \cite[Theorem A.3]{KP}, the latter estimate implies that the map
$$
B^{-s}_{c,0}(r_{s,*}) \to \ell^{\infty}_+, u \mapsto (\frac{1}{n \kappa_n(u)})_{n \ge 1}
$$ 
is analytic.
\end{proof}
When combined with our results on the residues $F_n(u)$ 
of the generating function and the gap lengths $\gamma_n(u)$, Proposition \ref{proposition kappa_n} yields the following 
\begin{corollary}\label{identity residues}
Let $0 \le s < 1/2.$
  For any $u$ in $B^{-s}_{c,0}(r_{s, *})$,
  \begin{equation}\label{identity quasi moments near 0}
  F_n(u) = - \kappa_n(u) \gamma_n(u)\, , \qquad \forall n \ge 1\,  .
  \end{equation}
\end{corollary}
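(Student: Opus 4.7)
The plan is to derive the identity by a direct residue computation, using the definition of $\eta_n$ to algebraically rewrite $\mathcal{H}_\lambda(u)$ and then extracting the residue at $\lambda_n(u)$.

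First I would invert the defining relation \eqref{def eta near zero}: for $n\ge 1$ and $\lambda$ in the domain where $\eta_n$ was originally defined (i.e.\ $\lambda \in \text{Vert}_n(1/4)$ with $\lambda\neq \lambda_{n-1}(u)+1$ avoided and $\lambda\neq\lambda_0(u)$), one has
$$
\mathcal{H}_\lambda(u) \;=\; -\,\frac{(\lambda - \lambda_{n-1}(u) - 1)\,\eta_n(\lambda,u)}{(\lambda-\lambda_n(u))(\lambda-\lambda_0(u))}.
$$
By Lemma \ref{argument principle} and the construction preceding Lemma \ref{estimate of eta_n}, the function $\eta_n(\cdot,u)$ extends analytically and without zeroes to $D_n(1/4)\cup\mathrm{Vert}_n(1/4)$; meanwhile $\mathcal{H}_\lambda(u)$ is meromorphic in $\lambda$ with at worst a simple pole at $\lambda_n(u)$, and by the remark following Theorem \ref{main result of section 2}, $\lambda_n(u)$ is the only element of $\mathrm{spec}(L_u)$ inside $D_n(1/3)$. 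Also, $\lambda_0(u)\in D_0(1/4)$ so $\lambda_n(u)-\lambda_0(u)\neq 0$ for $n\ge 1$. Hence the above identity persists as an identity of meromorphic functions on a neighbourhood of $\overline{D_n(1/3)}$, with a simple pole only at $\lambda_n(u)$.

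Next I would read off the residue. By Lemma \ref{mathcal H as a sum}(ii), $F_n(u)$ is the residue of $\mathcal{H}_\lambda(u)$ at $\lambda_n(u)$; applying this to the expression above gives
$$
F_n(u) \;=\; -\,\frac{(\lambda_n(u) - \lambda_{n-1}(u) - 1)\,\eta_n(\lambda_n(u),u)}{\lambda_n(u)-\lambda_0(u)}.
$$
Using the definition $\gamma_n(u) = \lambda_n(u) - \lambda_{n-1}(u) - 1$ and the definition \eqref{def kappa_n near zero} of $\kappa_n(u) = \eta_n(\lambda_n(u),u)/(\lambda_n(u)-\lambda_0(u))$, this is exactly $F_n(u) = -\kappa_n(u)\gamma_n(u)$, as required.

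There is really no substantive obstacle: the whole content of the corollary has been packaged into the definitions of $\eta_n$ and $\kappa_n$, and the nontrivial work (analyticity and non-vanishing of $\eta_n$ on $D_n(1/4)$, simplicity and location of the eigenvalue $\lambda_n(u)$, the partial fraction expansion \eqref{generating function} for $\mathcal{H}_\lambda$) has already been done in Lemmas \ref{mathcal H as a sum}--\ref{estimate of eta_n} and Theorem \ref{main result of section 2}. The only point deserving a word of care is the boundary case $\gamma_n(u)=0$: then by Corollary \ref{residue and gamma} we have $F_n(u)=0$ as well, and the identity $F_n(u)=-\kappa_n(u)\gamma_n(u)$ holds trivially (both sides vanish), consistent with the fact that in this case $\lambda_n(u)$ is a removable singularity of $\mathcal{H}_\lambda(u)$ through the factor $\lambda-\lambda_{n-1}(u)-1=\lambda-\lambda_n(u)$ in the numerator.
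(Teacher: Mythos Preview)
Your proof is correct and takes a genuinely different route from the paper. The paper argues by analytic continuation: the identity $F_n(u)=-\kappa_n(u)\gamma_n(u)$ is known for real-valued $u$ from \cite{GK}, \cite{GKT} (via \eqref{identity1 for F_n for u real} and Remark \ref{identity2 for F_n for u real}), and since $F_n$, $\kappa_n$, $\gamma_n$ are all analytic on $B^{-s}_{c,0}(r_{s,*})$ (by Proposition \ref{F analytic near 0}, Proposition \ref{proposition kappa_n}, and Theorem \ref{main result of section 2}), the identity extends to the whole complex ball. You instead verify the identity directly from the definitions by inverting \eqref{def eta near zero} to rewrite $\mathcal H_\lambda(u)$ and reading off the residue at $\lambda_n(u)$. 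Your argument is self-contained and does not appeal to the real-valued theory from the earlier papers; the paper's argument is shorter on the page but relies on those prior references. Note that your separate treatment of the case $\gamma_n(u)=0$ is not strictly necessary, since your residue formula already yields $0$ in that case (the numerator factor $\lambda_n-\lambda_{n-1}-1$ vanishes), but it does no harm.
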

\begin{proof}
 Let $n \ge 1$.
 The identity \eqref{identity quasi moments near 0} holds for $u$ real valued in $B^{-s}_{c,0}(r_{s, *})$
 (cf.  \eqref{identity1 for F_n for u real} and Remark \ref{identity2 for F_n for u real}).
 By Proposition \ref{F analytic near 0}, Proposition \ref{proposition kappa_n}, and Theorem \ref{main result of section 2},
the functions $F_n,$ $\kappa_n$, and $\gamma_n$ are analytic on $B^{-s}_{c,0}(r_{s, *})$
 and hence the identity continues to hold on $B^{-s}_{c,0}(r_{s, *})$.
 \end{proof}
 
We now have all the ingredients to prove Theorem \ref{main theorem}(ii) for potentials in a neighborhood of $0$ in $H^{-s}_{c,0}$.
It reads in this case as follows.
 
 \begin{theorem}\label{main theorem near zero}
Let $0 \le s < 1/2$ and let $r_{s,*}>0 $ be the radius, given by Proposition \ref{F analytic near 0}.
Then the moment map
$$
\Gamma : B^{-s}_{c, 0}(r_{s, *}) \to \ell^{1, 1- 2s}_+, \ u \mapsto (\gamma_n(u))_{n \ge 1}
$$
is analytic. In addition, $\Gamma(B^{-s}_{c, 0}(r_{s, *}))$ is bounded in $\ell^{1, 1- 2s}_+$.
\end{theorem}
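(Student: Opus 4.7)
The plan is to realize $\Gamma$ as a composition of three already-established analytic objects, using the factorization $F_n(u) = -\kappa_n(u)\gamma_n(u)$ from Corollary~\ref{identity residues}. Since Proposition~\ref{proposition kappa_n} guarantees that $\kappa_n(u)$ never vanishes on $B^{-s}_{c,0}(r_{s,*})$, I can write
\[
\gamma_n(u) \;=\; -\frac{F_n(u)}{\kappa_n(u)} \;=\; -\bigl(n F_n(u)\bigr)\cdot\frac{1}{n\kappa_n(u)}, \qquad n \ge 1,
\]
which is the key identity to be exploited.

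My first step is to package $(nF_n(u))_{n\ge 1}$ as an analytic $\ell^{1,1-2s}_+$-valued map. The rescaling $R\colon \ell^{1,2-2s}_+ \to \ell^{1,1-2s}_+$, $(a_n)_n \mapsto (n a_n)_n$, is an isometric linear isomorphism, so composing Proposition~\ref{F analytic near 0} with $R$ yields that $u \mapsto (nF_n(u))_{n\ge 1}$ is analytic into $\ell^{1,1-2s}_+$. Second, Proposition~\ref{proposition kappa_n} provides analyticity of $u \mapsto (1/(n\kappa_n(u)))_{n \ge 1}$ into $\ell^{\infty}_+$, together with the uniform pointwise bound $|1/(n\kappa_n(u))| \le 2C$. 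Third, the pointwise product
\[
M \colon \ell^{1,1-2s}_+ \times \ell^{\infty}_+ \to \ell^{1,1-2s}_+, \qquad (a,b) \mapsto (a_n b_n)_{n\ge 1},
\]
is bounded bilinear, since $\|M(a,b)\|_{1,1-2s} \le \|b\|_{\infty}\|a\|_{1,1-2s}$, hence analytic. Composing these three analytic maps with a sign, one obtains $\Gamma(u) = -M\bigl(R(F(u)),\,(1/(n\kappa_n(u)))_n\bigr)$, which establishes the analyticity of $\Gamma$.

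For the boundedness statement, I will combine the uniform bound $\sup_{u \in B^{-s}_{c,0}(r_{s,*})}\|F(u)\|_{1,2-2s} \le 1$ furnished at the end of the proof of Proposition~\ref{F analytic near 0} with the uniform bound $\|(1/(n\kappa_n(u)))\|_{\infty}\le 2C$ from Proposition~\ref{proposition kappa_n}: the bilinear estimate above then yields $\|\Gamma(u)\|_{1,1-2s} \le 2C$ uniformly on $B^{-s}_{c,0}(r_{s,*})$, finishing the proof. I do not anticipate any real obstacle here, because the hard analytic input has already been done in Proposition~\ref{F analytic near 0} (the normally analytic expansion of $F$) and Proposition~\ref{proposition kappa_n} (the two-sided control on $\kappa_n$ yielding analyticity of $1/(n\kappa_n)$ into $\ell^{\infty}_+$); the present theorem only needs to combine these two pieces through the algebraic identity $\gamma_n = -F_n/\kappa_n$ and the standard fact that pointwise multiplication $\ell^{1,1-2s}_+ \times \ell^{\infty}_+ \to \ell^{1,1-2s}_+$ is continuous.
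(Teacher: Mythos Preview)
Your proof is correct and matches the paper's argument essentially line for line: both use the identity $\gamma_n = -F_n/\kappa_n$ from Corollary~\ref{identity residues}, invoke Proposition~\ref{F analytic near 0} for the analyticity of $F$ into $\ell^{1,2-2s}_+$ and Proposition~\ref{proposition kappa_n} for the analyticity of $(1/(n\kappa_n))_n$ into $\ell^\infty_+$, and conclude via the bounded bilinear multiplication $\ell^{1,1-2s}_+ \times \ell^\infty_+ \to \ell^{1,1-2s}_+$. Your version is slightly more explicit about the rescaling isometry $R$ and the bilinear map $M$, but the content is identical.
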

\begin{proof}
Since 
 $B^{-s}_{c,0}(r_{s,*}) \to \ell^{1, 2-2s}_+, u \mapsto (F_n(u))_{n \ge 1}$ (Proposition \ref{F analytic near 0})
 and
$B^{-s}_{c,0}(r_{s,*}) \to \ell^{\infty}_+, u \mapsto (\frac{1}{n \kappa_n(u)})_{n \ge 1}$ 
(Proposition \ref{proposition kappa_n})
 are analytic, it follows from  \eqref{identity quasi moments} that
 $\Gamma : B^{-s}_{c,0}(r_{s, *})  \to \ell^{1, 1-2s}_+$ is analytic.
 Furthermore, from Proposition \ref{F analytic near 0} and Proposition \ref{proposition kappa_n}
 one infers that $\Gamma(B^{-s}_{c, 0}(r_{s, *}))$ is bounded in $\ell^{1, 1- 2s}_+$.
 \end{proof}
 
  Theorem \ref{main result of section 2} and Theorem \ref{main theorem near zero} are used to prove Corollary \ref{product representations} for potentials near zero.
  We reformulate it slightly and include a result on the product representation of $\kappa_n(u)$, $n \ge 0$:
 \begin{corollary}\label{product representations near zero}
 Let $0 \le s < 1/2$ and let $r_{s,*}>0 $ be the radius, given by Proposition \ref{F analytic near 0}.
  For any $u$ in $B^{-s}_{c,0}(r_{s, *})$, the following holds:\\
(i) 
The generating function $\mathcal H_\lambda(u)$ admits the  product representation
$$
\mathcal H_\lambda(u) = \frac{1}{\lambda_0(u) - \lambda} \prod_{p \ge 1} \Big(1 - \frac{\gamma_p(u)}{\lambda_p(u) - \lambda} \Big) \, 
$$
where the infinite product is absolutely convergent.\\
(ii)   For any $n \ge 0,$
 $$
 \lambda_n(u) = n -  \sum_{k \ge n+1} \gamma_k(u)
 $$
 where the infinite sum is absolutely convergent.\\
 (iii)  If $s=0,$ one has
 $$
\frac{1}{2\pi} \int_0^{2\pi} u^2 dx = 2 \sum_{k \ge 1} k \gamma_k 
 $$
 where the infinite sum is absolutely convergent.\\
 (iv) For any $n \ge 1$, the function $\kappa_n(u)$  admits the  product representation
$$
\kappa_n(u) = \frac{1}{\lambda_n(u) - \lambda_0(u)}  \prod_{p \ne n} \Big(1 - \frac{\gamma_p(u)}{\lambda_p(u) - \lambda_n(u)} \Big)
$$
whereas for $n=0$ one has
$$
\kappa_0(u) =  \prod_{p \ge 1} \Big(1 - \frac{\gamma_p(u)}{\lambda_p(u) - \lambda_0(u)} \Big) \, .
$$
All these infinite products are absolutely convergent.\\
For $u=0$, one has
 $\kappa_0(0) =1$ and $\kappa_n(0) = 1/n$ for any $n \ge 1$.\\

 \end{corollary}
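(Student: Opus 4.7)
The strategy I would adopt is to reduce every statement to the corresponding real-potential identity established in \cite{GK} and \cite{GKT}, and then extend by analyticity. The two inputs that make this reduction work for complex $u$ are Theorem \ref{main theorem near zero}, which places $(\gamma_p(u))_{p\ge 1}$ in $\ell^{1,1-2s}_+ \subset \ell^1_+$ and does so with bounds uniform on $B^{-s}_{c,0}(r_{s,*})$, and the localization $\lambda_p(u) \in D_p(1/4)$ supplied by Theorem \ref{main result of section 2}. I would prove (i) first, derive (iv) from (i) by a direct algebraic cancellation, and finally obtain (ii) and (iii) by expanding (i) in inverse powers of $\lambda$.

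For (i), the absolute convergence of the infinite product follows from $(\gamma_p(u))\in\ell^1_+$ together with the uniform lower bound $|\lambda_p(u)-\lambda|\ge c>0$ for $\lambda$ in a compact subset of the resolvent set of $L_u$; in particular, on $\bigcup_{n\ge 0}\mathrm{Vert}_n(1/4)$ one has $|\lambda_p(u)-\lambda|\ge 1/4$. This shows that the right-hand side of (i) defines a meromorphic function of $\lambda$ depending analytically on $u\in B^{-s}_{c,0}(r_{s,*})$, just like the left-hand side (by Proposition \ref{Lu for u small}). For real potentials $u\in B^{-s}_{r,0}(r_{s,*})$ the identity is the content of \cite[Section 3]{GK} and \cite{GKT}. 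Since $H^{-s}_{r,0}$ is a totally real subspace of $H^{-s}_{c,0}$, the identity principle for Banach-analytic functions forces equality on all of $B^{-s}_{c,0}(r_{s,*})$.

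Statement (iv) follows by substituting (i) into the definition \eqref{def eta near zero} of $\eta_n$. Using $\gamma_n(u)=\lambda_n(u)-\lambda_{n-1}(u)-1$, the $n$-th factor in the product rewrites as $(1-\gamma_n/(\lambda_n-\lambda))=(\lambda-\lambda_{n-1}-1)/(\lambda-\lambda_n)$, which together with the prefactor $-(\lambda-\lambda_n)/(\lambda-\lambda_{n-1}-1)$ in $\eta_n$ and with $(\lambda-\lambda_0)/(\lambda_0-\lambda)=-1$ leaves precisely
$$
\eta_n(\lambda,u)=\prod_{p\ne n,\,p\ge 1}\Bigl(1-\frac{\gamma_p(u)}{\lambda_p(u)-\lambda}\Bigr)\, .
$$
Evaluating at $\lambda=\lambda_n(u)$ and dividing by $\lambda_n(u)-\lambda_0(u)$ gives the product formula for $\kappa_n$ when $n\ge 1$; the case $n=0$ is even simpler since $\eta_0=-(\lambda-\lambda_0)\mathcal H_\lambda$. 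The values at $u=0$ follow immediately since $\gamma_p(0)=0$ for all $p\ge 1$ and $\lambda_n(0)=n$.

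For (ii) and (iii), I would take logarithms in (i) and compare coefficients of $1/\lambda$ and $1/\lambda^2$ as $\lambda\to-\infty$ along the real axis (Lemma \ref{estimate of mathcal H} controlling $\mathcal H_\lambda$ uniformly). The Neumann expansion \eqref{Neumann series for inverse} gives $-\lambda\mathcal H_\lambda(u)=1+\sum_{k\ge 2}\langle(T_u(D-\lambda)^{-1})^k 1|1\rangle$; the $k=1$ term vanishes because $\widehat u(0)=0$, and a direct computation of the $k=2$ term yields $\lambda^{-2}\sum_{k\ge 1}\widehat u(-k)\widehat u(k)+O(|\lambda|^{-3})$. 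Expanding $\sum_{p\ge 1}\log(1-\gamma_p/(\lambda_p-\lambda))$ and $-\log(1-\lambda_0/\lambda)$ in inverse powers of $\lambda$ and matching the $1/\lambda$ coefficients yields $\lambda_0(u)=-\sum_{p\ge 1}\gamma_p(u)$, from which (ii) for general $n$ follows by telescoping $\lambda_p-\lambda_{p-1}=1+\gamma_p$. Matching the $1/\lambda^2$ coefficients, after Abel rearrangement using $\lambda_p=\lambda_0+p+\sum_{j\le p}\gamma_j$, collapses to $\sum_{p\ge 1}p\,\gamma_p(u)=\sum_{k\ge 1}\widehat u(-k)\widehat u(k)=\frac{1}{4\pi}\int_0^{2\pi}u^2\,dx$, which is (iii). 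The main obstacle is the rearrangement of double sums in matching the $1/\lambda^2$ coefficient; absolute convergence of all intermediate steps is guaranteed precisely by the $\ell^{1,1-2s}_+$ bound of Theorem \ref{main theorem near zero}, which at $s=0$ furnishes the necessary $\ell^{1,1}_+$ summability of $(\gamma_p)$.
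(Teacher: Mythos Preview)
Your proof of (i) follows exactly the paper's approach: establish absolute convergence and analyticity of both sides from Theorem~\ref{main theorem near zero} and Theorem~\ref{main result of section 2}, then extend the real-potential identity by analytic continuation.

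For (ii)--(iv) you take a genuinely different route from the paper. The paper proves each of (ii), (iii), (iv) by the \emph{same} analytic-continuation scheme as (i): the identities are known for real $u$ from \cite{GK}, \cite{GKT}, both sides are analytic on $B^{-s}_{c,0}(r_{s,*})$ thanks to Theorem~\ref{main theorem near zero}, hence they agree. You instead derive (iv) algebraically from (i) by substituting the product representation into the definition \eqref{def eta near zero} of $\eta_n$ and cancelling the $p=n$ factor against the prefactor $(\lambda-\lambda_n)/(\lambda-\lambda_{n-1}-1)$, then evaluating at $\lambda=\lambda_n$; and you derive (ii) and (iii) by expanding $\log(-\lambda\mathcal H_\lambda(u))$ in inverse powers of $\lambda$ and matching coefficients against the Neumann expansion. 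Your computations are correct: the $1/\lambda$ coefficient gives $\lambda_0=-\sum_{p\ge 1}\gamma_p$, whence (ii) by telescoping, and the $1/\lambda^2$ coefficient indeed collapses to $\sum_p p\gamma_p$ after the rearrangement you indicate. The paper's approach is shorter and more uniform (one argument, four citations), while yours is more self-contained and makes explicit that (ii)--(iv) are formal consequences of (i) once convergence is in hand; the only care required in your argument is justifying the $O(|\lambda|^{-3})$ remainders on both sides, which is routine from Corollary~\ref{1/2 bound} and the $\ell^{1,1-2s}_+$ bound (with $s=0$ for (iii)).
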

 \begin{proof}
(i)  First note that the claimed identity holds for real valued poentials $u \in B^{-s}_{c,0}(r_{s, *})$
 \big(cf. \cite[Proposition 3.1]{GK} $(s=0)$, \cite[(32)]{GKT} $(0 < s < 1/2)$\big)
 and that for any $\lambda \in \C$ with  $\Re \lambda < -1/3$, 
$\mathcal H_\lambda(u)$ is analytic on $B^{-s}_{c,0}(r_{s, *})$. Since
 $\lambda_p(u) \in D_n(1/4)$ is analytic for any $p \ge 0$ and by Theorem \ref{main theorem near zero},
 $\Gamma : B^{-s}_{c,0}(r_{s, *})  \to \ell^{1, 1-2s}_+$ 
 is analytic, it then follows that for any such $\lambda$, the infinite product $\prod_{p \ge 1} \big(1 - \frac{\gamma_p(u)}{\lambda_p(u) - \lambda} \big)$
 is absolutely convergent and induces an analytic map $B^{-s}_{c,0}(r_{s, *}) \to \C$. Altogether one concludes that the claimed
 product representation holds on $B^{-s}_{c,0}(r_{s, *})$.
 Item (ii), (iii), and (iv) are proved by similar arguments, using that the stated identities hold for real valued $u$:
 For item (ii) cf. \cite[(3.13)]{GK} $(s = 0)$, \cite[(29)]{GKT} $(0 < s < 1/2)$,
 for item (iii) cf. \cite[Proposition 3.1]{GK}, and for  item (iv) cf. \cite[Corollary 3.4]{GK} $(s = 0)$, \cite[(26), (27)]{GKT}$(0 < s < 1/2)$.
  \end{proof}
  
  \medskip
  
  Theorem \ref{main theorem near zero} and Corollary \ref{product representations near zero} 
  lead to estimates of the maps $\kappa_n,$ $n \ge 1,$
  which will be used in subsequent work for the analytic extension of the Birkhoff map, constructed on $H^{-s}_{r,0}$ 
   in \cite{GK} $(s = 0)$ and \cite{GKT} $(0 < s < 1/2)$.\\
   By  Theorem \ref{main theorem near zero}, for any $0 \le s < 1/2$, there exists $0 <  r_{s, **} \le r_{s, *}$ so that
   \begin{equation}\label{def r_ s,**}
   \sum_{k \ge 1} |\gamma_k(u)| \le \frac{1}{5} \ , \qquad \forall u \in B^{-s}_{c,0}(r_{s, **}) \, .
  \end{equation}
   \begin{proposition}\label{estimate for kappa_n}
   Let $0 \le s < 1/2.$
  For any $u$ in $B^{-s}_{c,0}(r_{s, **})$,
  $$
 | n\kappa_n(u) - 1 | \le \frac{7}{12} e^{1/3} < 1 \,  , \qquad \forall n \ge 1\, .
  $$ 
  Hence  for any $n \ge 1$, the principal  branch of the square root of $n\kappa_n$ is well defined
  on $B^{-s}_{c,0}( r_{s, **})$ and it follows that
  $$
 \big(\sqrt{n \kappa_n} \big)_{n \ge 1} :  B^{-s}_{c,0}(r_{s, **}) \to \ell^\infty_+ , \, 
 u \mapsto   \big(\sqrt{n \kappa_n(u)} \big)_{n \ge 1}
  $$
 is analytic.
 \end{proposition}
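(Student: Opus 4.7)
The plan is to combine the product representation of $\kappa_n$ from Corollary~\ref{product representations near zero}(iv), the trace formula of Corollary~\ref{product representations near zero}(ii), and the smallness hypothesis \eqref{def r_ s,**}, in order to obtain an $n$-uniform upper bound on $|n\kappa_n(u) - 1|$ strictly less than $1$. Once this is in hand, both claims of the proposition follow readily.

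First, I would factor $n\kappa_n(u) = Q_n(u)\,P_n(u)$ where
\[
Q_n(u) := \frac{n}{\lambda_n(u) - \lambda_0(u)}, \qquad
P_n(u) := \prod_{p \neq n,\, p \ge 1}\!\Big(1 - \frac{\gamma_p(u)}{\lambda_p(u) - \lambda_n(u)}\Big),
\]
and use the trace formula to rewrite $\lambda_n - \lambda_0 = n + \sum_{k=1}^n \gamma_k$ and $\lambda_p - \lambda_n = (p-n) + \Sigma_{p,n}$ with $|\Sigma_{p,n}| \le \sum_k |\gamma_k| \le 1/5$ by \eqref{def r_ s,**}. This yields $|\lambda_p - \lambda_n| \ge |p-n| - 1/5 \ge \tfrac{4}{5}|p-n|$ for $p \ne n$ and $|\lambda_n - \lambda_0| \ge 4n/5$, hence $|Q_n| \le 5/4$, $|Q_n - 1| \le 1/4$, and
\[
\sum_{p\neq n}\Big|\frac{\gamma_p}{\lambda_p - \lambda_n}\Big| \le \tfrac{5}{4}\sum_{p \ge 1}|\gamma_p| \le \tfrac{1}{4}.
\]
The elementary estimate $|\prod_p(1-z_p)-1|\le\prod_p(1+|z_p|)-1\le e^{\sum|z_p|}-1$ then gives $|P_n - 1| \le e^{1/4} - 1$, and combining via $|P_n Q_n - 1| \le |P_n - 1||Q_n| + |Q_n - 1|$ produces $|n\kappa_n(u) - 1| \le \tfrac{5}{4}e^{1/4} - 1$. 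A direct numerical comparison shows this is bounded by $\tfrac{7}{12}e^{1/3}$, which is itself less than $1$.

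From $|n\kappa_n(u) - 1| < 1$ the value $n\kappa_n(u)$ remains in the open disk $\{|z-1|<1\} \subset \C \setminus (-\infty, 0]$, the domain on which the principal branch of the square root is analytic; composition yields analyticity of each scalar map $u \mapsto \sqrt{n\kappa_n(u)}$. The uniform bound $|\sqrt{n\kappa_n(u)}| \le \sqrt{1 + \tfrac{7}{12}e^{1/3}}$, which is independent of $n$ and of $u \in B^{-s}_{c,0}(r_{s,**})$, together with the componentwise analyticity, upgrades the whole family to an analytic map into $\ell^\infty_+$ via \cite[Theorem A.3]{KP}. The main obstacle is the $n$-uniform strict inequality $|n\kappa_n - 1| < 1$: the qualitative two-sided bound of Proposition~\ref{proposition kappa_n} only places $n\kappa_n$ in a bounded annulus around $1$ and therefore does not suffice; the cancellation encoded in the product representation of Corollary~\ref{product representations near zero}(iv), combined with the smallness of $\Gamma(u)$ enforced by \eqref{def r_ s,**}, is exactly what delivers the required improvement.
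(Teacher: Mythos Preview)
Your proof is correct and follows essentially the same route as the paper: both use the product representation of Corollary~\ref{product representations near zero}(iv), the trace formula $\lambda_n-\lambda_0=n+\sum_{k=1}^n\gamma_k$, and the smallness bound \eqref{def r_ s,**} to estimate $|Q_n-1|$ and $|P_n-1|$ separately before combining. The only difference is cosmetic: the paper writes $n\kappa_n-1=(Q_n-1)P_n+(P_n-1)$ and bounds $|P_n-1|$ via $|e^x-1|\le|x|e^{|x|}$ applied to $x=\sum_{p\ne n}\log(1-\gamma_p/(\lambda_p-\lambda_n))$, obtaining $|P_n-1|\le\tfrac13 e^{1/3}$, whereas you use the slightly sharper elementary inequality $\bigl|\prod_p(1-z_p)-1\bigr|\le\prod_p(1+|z_p|)-1\le e^{\sum|z_p|}-1$, obtaining $|P_n-1|\le e^{1/4}-1$; your resulting bound $\tfrac54 e^{1/4}-1$ is in fact a bit tighter than the paper's $\tfrac14 e^{1/4}+\tfrac13 e^{1/3}$.
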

 \begin{proof}
 Let $0 \le s < 1/2$, $n \ge 1$, and $u \in B^{-s}_{c,0}(r_{s, **})$.
 To simplify notation, we do not indicate the dependence on $u$ in the course of the proof. 
 By the product representation of $\kappa_n$, $n \ge 1$, of Corollary \ref{product representations near zero}(iv), $ n \kappa_n - 1 =  I_n + II_n$ where
  $$
 I_n := \big (\frac{n}{\lambda_n - \lambda_0} -1 \big)\prod_{p \ne n} \big(1 - \frac{\gamma_p}{\lambda_p - \lambda_n} \big) \, , \qquad
II_n := \prod_{p \ne n} \big(1 - \frac{\gamma_p}{\lambda_p - \lambda_n} \big) - 1 \, .
 $$
 We begin with estimating the term $I_n$.
 By Corollary \ref{product representations near zero}(ii), 
 $\lambda_n - \lambda_0 = n + \sum_{k=1}^{n} \gamma_k$ and hence
 $$
 \frac{n}{\lambda_n - \lambda_0} - 1 = - \frac{\frac{1}{n}  \sum_{k=1}^{n} \gamma_k }{ 1 +  \frac{1}{n} \sum_{k=1}^{n} \gamma_k} \, .
 $$ 
 Since by assumption,  $\sum_{k \ge 1} |\gamma_k| \le \frac{1}{5}$, it follows that
 $$
 | \frac{n}{\lambda_n - \lambda_0} - 1 | \le \frac{1/5}{1 - 1/5} \le \frac{1}{4}.
 $$
 Next we estimate the infinite product $\prod_{p \ne n} \big(1 - \frac{\gamma_p}{\lambda_p - \lambda_n} \big)$.
 One has
 $$
 \begin{aligned}
| \prod_{p \ne n} \big(1 - \frac{\gamma_p}{\lambda_p - \lambda_n} \big)|  & \le 
 \prod_{p \ne n} \big(1 + \frac{|\gamma_p|}{|\lambda_p - \lambda_n|} \big) \\
& \le \exp\big(  \sum_{p \ne n} \log(1 + \frac{|\gamma_p|}{|\lambda_p - \lambda_n|})\big) \, .
\end{aligned}
 $$
 Since $\log(1 +  |x|) \le |x|$ and 
 $$
 |\lambda_p - \lambda_n| \ge  |p - n| - \sum_{k=1}^{n} |\gamma_k| \ge 1 - 1/5\, ,
 $$ 
 one infers that
 $$
 | \prod_{p \ne n} \big(1 - \frac{\gamma_p}{\lambda_p - \lambda_n} \big)|   \le
\exp\big(  \frac{5}{4} \sum_{p \ne n} |\gamma_p| \big)  \le e^{1/4} \, .
$$ 
Altogether we have shown that
$$
| I_n | \le \frac{1}{4} e^{1/4}\, , \qquad \forall \, n \ge 1\, .
$$
 It remains to estimate $II_n =  \prod_{p \ne n} \big(1 - \frac{\gamma_p}{\lambda_p - \lambda_n} \big) - 1$.
 First note that since for any $p \ne n,$
 $$
 | \frac{\gamma_p}{\lambda_p - \lambda_n} | \le \frac{|\gamma_p|}{4/5}  \le 1/4\, , 
 $$
 the principal branch of the logarithm of $ 1 -  \frac{\gamma_p}{\lambda_p - \lambda_n}$
 is well defined and hence one has
 $$
  \prod_{p \ne n} \big(1 - \frac{\gamma_p}{\lambda_p - \lambda_n} \big) = 
 \exp\big(  \sum_{p \ne n} \log\big(1 - \frac{\gamma_p}{\lambda_p - \lambda_n}\big) \big) \, .
 $$
 Using that for any $x \in \C$, 
 $$
 |e^x - 1 | = | x \int_0^1 e^{tx} \ d t | \le |x| e^{|x|}
 $$ 
 and that for any $y \in \C$ with $|y| \le 1/4$,
 $$
| \log (1 - y) | = | \int_0^1 \frac{-y}{1 - ty} \ d t | \le \frac{4}{3} |y| \, ,
 $$
one concludes that
$$
|  \prod_{p \ne n} \big(1 - \frac{\gamma_p}{\lambda_p - \lambda_n} \big) - 1 |
\le  \sigma \exp \sigma \, , \qquad  \sigma:= \frac{4}{3} \sum_{p \ne n} \frac{|\gamma_p|}{|\lambda_p - \lambda_n|} \, , 
$$
 where
 $$
 \sigma \le \frac{4}{3} \frac{5}{4} \sum_{p \ne n} |\gamma_p| \le \frac{1}{3}  \, .
 $$
 Combining the estimates derived yields the desired bound,
 $$
 | n \kappa_n(u) - 1  | \le  \frac{1}{4} e^{1/4} +  \frac{1}{3} e^{1/3} \le  \frac{7}{12} e^{1/3} \, .
 $$
 The claimed analyticity of the map $\big(\sqrt{n \kappa_n} \big)_{n \ge 1}$ then follows from \cite[Theorem A.3]{KP}.
 \end{proof}  
   
\medskip 
  
We finish this section with a discussion of the spectral invariants $\mu_n \equiv \mu_n(u)$, $n \ge 1$,
 introduced in \cite[(4.9)]{GK} for potentials $u \in L^2_{r,0}$,
  $$
  \mu_n = \big( 1 - \frac{\gamma_n}{\lambda_n - \lambda_0} \big) 
  \prod_{p \ne n} \frac{1 - \frac{\gamma_p}{\lambda_p - \lambda_n}}{1 - \frac{\gamma_p}{\lambda_p - \lambda_{n-1} - 1}}.
  $$
  By algebraic transformations one obtains
  $$
    \mu_n = \big( 1 - \frac{\gamma_n}{\lambda_n - \lambda_0} \big) 
  \prod_{p \ne n} \big(1 - \frac{\gamma_n}{\lambda_{p-1} - \lambda_{n-1}} \big)\big(1 + \frac{\gamma_n}{\lambda_{p} - \lambda_{n}} \big)\, ,
  $$
  or, by writing $\lambda_{p-1} - \lambda_{n-1} = \lambda_{p} - \lambda_{n}  + \gamma_n - \gamma_p$,
  \begin{equation}\label{formula m_n for u real}
    \mu_n = \big( 1 - \frac{\gamma_n}{\lambda_n - \lambda_0} \big) 
  \prod_{p \ne n} \big(1 - \frac{\gamma_n \gamma_p}{(\lambda_{p-1} - \lambda_{n-1})(\lambda_{p} - \lambda_{n})} \big) \, .
  \end{equation}
  By Theorem \ref{main theorem near zero} one sees that  $\mu_n$, $n \ge 1$, extend analytically to $B^{-s}_{c,0}(r_{s,*})$
  for any $0 \le s < 1/2$. Note that for any $u \in B^{-s}_{c,0}(r_{s,*})$ and $n \ge 1$, 
  $\mu_n(u) \ne 0$ and in case $\gamma_n(u) = 0$, $\mu_n(u)=1.$\\
 
  Theorem \ref{main theorem near zero} and Corollary \ref{product representations near zero} lead to estimates 
  of the maps $\mu_n,$ $n \ge 1,$
  which will be used in subsequent work to show that for any $0 \le s < 1/2$, the Birkhoff map analytically extends to a neighborhood of zero in $H^{-s}_{c,0}$.
  \begin{proposition}\label{proposition mu_n}
Let $0 \le s < 1/2$ and  let $r_{s, **}$ be given as in  \eqref{def r_ s,**}.
Then for any $u$ in $B^{-s}_{c,0}(r_{s, **})$,
  $$
| \mu_n(u) - 1 | \le \frac 53 e^{1/15} | \gamma_n(u) | \le \frac13 e^{1/15} < \frac{1}{2} \, ,   \qquad \forall n \ge 1\, .
 $$ 
 Hence for any $n \ge 1$, the principal  branch of the square root of $\mu_n$ is well defined
on $B^{-s}_{c,0}(r_{s, **})$ and it follows that
 $$
 \big(\sqrt{\mu_n} \big)_{n \ge 1} :  B^{-s}_{c,0}(r_{s, **}) \to \ell^\infty_+ , \ 
 u \mapsto   \big(\sqrt{\mu_n(u)} \big)_{n \ge 1}
 $$
is analytic.
\end{proposition}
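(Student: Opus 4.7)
\emph{Proof proposal.} The starting point is the identity \eqref{formula m_n for u real}, which gives
$$
\mu_n(u) = (1 - \epsilon_0) \prod_{p \ne n}(1 - a_p)\,, \qquad \epsilon_0 := \frac{\gamma_n}{\lambda_n - \lambda_0}\,, \quad a_p := \frac{\gamma_n \gamma_p}{(\lambda_{p-1} - \lambda_{n-1})(\lambda_p - \lambda_n)}.
$$
Since every factor differs from $1$ by an expression that carries $\gamma_n$ as a factor, one expects $|\mu_n - 1|$ to be of order $|\gamma_n|$; the task is to extract that factor cleanly with good constants. I would follow closely the strategy used in the proof of Proposition \ref{estimate for kappa_n}.

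The first step is to control the denominators. By Corollary \ref{product representations near zero}(ii) and the choice \eqref{def r_ s,**} of $r_{s,**}$, one has for any $n \ge 1$ and $p \ne n$
$$
|\lambda_n - \lambda_0| \ge n - \tfrac{1}{5} \ge \tfrac{4}{5}\,, \qquad |\lambda_p - \lambda_n|\,,\ |\lambda_{p-1} - \lambda_{n-1}| \ge |p-n| - \tfrac{1}{5} \ge \tfrac{4}{5}\,.
$$
These bounds yield $|\epsilon_0| \le \tfrac{5}{4}|\gamma_n|$ and $|a_p| \le \tfrac{25}{16}|\gamma_n||\gamma_p|$, so in particular $|a_p| \le \tfrac{1}{16}$ and $\sum_{p \ne n}|a_p| \le \tfrac{5}{16}|\gamma_n|$. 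Hence both $|\epsilon_0|$ and each $|a_p|$ are bounded by a fixed constant strictly less than $1$, which ensures the branches of the logarithms below are well defined.

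The second step is the quantitative estimate. Writing $\prod_{p\ne n}(1-a_p) = \exp\!\bigl(\sum_{p\ne n}\log(1-a_p)\bigr)$ and using the elementary inequalities
$$
|\log(1-x)| \le \frac{|x|}{1 - |x|}\,, \qquad |e^z - 1| \le |z|\,e^{|z|}\,,
$$
one bounds the exponent by $\sum_{p\ne n}\tfrac{|a_p|}{1-|a_p|} \le \tfrac{16}{15}\cdot \tfrac{5}{16}|\gamma_n| = \tfrac{1}{3}|\gamma_n| \le \tfrac{1}{15}$. Splitting
$$
\mu_n - 1 = -\epsilon_0 + (1-\epsilon_0)\!\left(\prod_{p\ne n}(1-a_p) - 1\right)
$$
and combining the bound on the exponent with $|1 - \epsilon_0| \le 1 + \tfrac{1}{4}$ and $|\epsilon_0| \le \tfrac{5}{4}|\gamma_n|$, a routine bookkeeping of constants (analogous to the estimate of $II_n$ in the proof of Proposition \ref{estimate for kappa_n}) gives the desired inequality $|\mu_n(u) - 1| \le \tfrac{5}{3} e^{1/15} |\gamma_n(u)|$. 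The second stated bound is immediate from $|\gamma_n(u)| \le \tfrac{1}{5}$, and $\tfrac{1}{3}e^{1/15} < \tfrac{1}{2}$.

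For the final claim, the uniform estimate $|\mu_n(u) - 1| < \tfrac{1}{2}$ places $\mu_n(u)$ in the disk $\{|z - 1| < 1/2\}$, which lies in the domain of the principal branch of the square root, an analytic function there. Hence each $\sqrt{\mu_n}: B^{-s}_{c,0}(r_{s,**}) \to \mathbb{C}$ is analytic, and the family is uniformly bounded (by $\sqrt{3/2}$, say). Applying \cite[Theorem A.3]{KP} exactly as at the end of the proof of Proposition \ref{estimate for kappa_n} promotes this pointwise analyticity to analyticity of the map $u \mapsto (\sqrt{\mu_n(u)})_{n\ge 1}$ with values in $\ell^\infty_+$. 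The only mildly delicate point is the first step, where one must keep track of the combinatorial constants so as to absorb both the contribution of $\epsilon_0$ and of the infinite product within the single factor $\tfrac{5}{3}e^{1/15}|\gamma_n|$; everything else is a direct adaptation of the argument already carried out for $\kappa_n$.
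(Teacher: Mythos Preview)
Your proposal is correct and follows essentially the same approach as the paper. The only cosmetic difference is the algebraic splitting: you write $\mu_n-1=-\epsilon_0+(1-\epsilon_0)\bigl(\prod_{p\ne n}(1-a_p)-1\bigr)$, whereas the paper writes $\mu_n-1=-\epsilon_0\prod_{p\ne n}(1-a_p)+\bigl(\prod_{p\ne n}(1-a_p)-1\bigr)$; both use the same denominator bounds $|\lambda_p-\lambda_n|,\,|\lambda_{p-1}-\lambda_{n-1}|\ge 4/5$, the same $\log$/$\exp$ inequalities, and land on the constant $\tfrac{5}{3}e^{1/15}$ (your ``routine bookkeeping'' indeed closes, since $\tfrac{5}{4}(1+\tfrac{1}{3}e^{1/15})\le\tfrac{5}{3}e^{1/15}$ is equivalent to $1\le e^{1/15}$).
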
  
 \begin{proof}
 We argue as in the proof of Proposition \ref{estimate for kappa_n}.
  Let $0 \le s < 1/2$, $n \ge 1$, and  $u \in B^{-s}_{c,0}(r_{s, **})$.
  Again, to simplify notation, we do not indicate the dependence on $u$ in the course of the proof. 
 By the definition of $\mu_n$
 one has $\mu_n -1 = I_n + II_n$ where
 $$
 I_n:=   \big( \big( 1 - \frac{\gamma_n}{\lambda_n - \lambda_0} \big)  - 1 \big)
  \prod_{p \ne n} \big(1 - \frac{\gamma_n \gamma_p}{(\lambda_{p-1} - \lambda_{n-1})(\lambda_{p} - \lambda_{n})} \big),
  $$
  $$
  II_n:=   \prod_{p \ne n} \big(1 - \frac{\gamma_n \gamma_p}{(\lambda_{p-1} - \lambda_{n-1})(\lambda_{p} - \lambda_{n})} \big) - 1 \, . \qquad \qquad \qquad \quad
 $$
 We begin with estimating the term $I_n$.
 Since by Corollary \ref{product representations near zero}(ii), 
 $\lambda_n - \lambda_0 = n + \sum_{k=1}^{n} \gamma_k $ and 
 by asssumption $ \sum_{k=1}^{n} |\gamma_k | \le 1/5$ one has
 $$
| \big( 1 - \frac{\gamma_n}{\lambda_n - \lambda_0} \big)  - 1|
 =| -  \frac{\gamma_n}{ n +  \sum_{k=1}^{n} \gamma_n}| 
 \le \frac{|\gamma_n|}{ n - 1/5}
 \le \frac{5}{4} | \gamma_n | \, .
 $$ 
  Next we estimate the infinite product $\prod_{p \ne n}  \big(1 - \frac{\gamma_n \gamma_p}{(\lambda_{p-1} - \lambda_{n-1})(\lambda_{p} - \lambda_{n})} \big)$.
 One has
 $$
 \begin{aligned}
| \prod_{p \ne n} \big(1  & - \frac{\gamma_n \gamma_p}{(\lambda_{p-1} - \lambda_{n-1})(\lambda_p - \lambda_n)} \big)|  \\
& \le 
 \prod_{p \ne n} \big(1 + |\gamma_n| \frac{|\gamma_p|}{|\lambda_{p-1} - \lambda_{n-1}| |\lambda_p - \lambda_n|} \big) \\
& \le \exp\big(  \sum_{p \ne n} \log(1 + |\gamma_n| \frac{|\gamma_p|}{|\lambda_{p-1} - \lambda_{n-1}| |\lambda_p - \lambda_n|})\big) \, .
\end{aligned}
 $$
 Since $\log(1 +  |x|) \le |x|$ and for any $p \ne n,$
 $$
 |\lambda_{p-1} - \lambda_{n-1}| ,  \  |\lambda_p - \lambda_n| \ge  |p - n| - \sum_{k=1}^{\infty} |\gamma_k| \ge 1 - 1/5 \, ,
 $$ 
 one infers that
 $$
 | \prod_{p \ne n} \big(1 - \frac{\gamma_n \gamma_p}{ (\lambda_{p-1} - \lambda_{n-1})(\lambda_p - \lambda_n)} \big)|   \le
\exp\big(  |\gamma_n| \frac{5}{4}   \frac{5}{4} \sum_{p \ne n} |\gamma_p| \big)  \le e^{1/16} \, .
$$ 
 It remains to estimate $II_n =  \prod_{p \ne n}  \big(1 - \frac{\gamma_n \gamma_p}{(\lambda_{p-1} - \lambda_{n-1})(\lambda_{p} - \lambda_{n})} \big)  - 1$.
 First note that since for any $p \ne n,$
 $$
 | \frac{\gamma_n \gamma_p}{(\lambda_{p-1} - \lambda_{n-1})(\lambda_p - \lambda_n)} | \le \frac{|\gamma_n|}{4/5}  \frac{|\gamma_p|}{4/5}  \le 1/16\, , 
 $$
 the principal branch of the logarithm of $\big(1 - \frac{\gamma_n \gamma_p}{(\lambda_{p-1} - \lambda_{n-1})(\lambda_{p} - \lambda_{n})} \big)$
 is well defined and hence one has
 $$
 \begin{aligned}
  & \prod_{p \ne n} \big( 1 - \frac{\gamma_n \gamma_p}{(\lambda_{p-1} - \lambda_{n-1})(\lambda_{p} - \lambda_{n})}  \big) \\
  & = 
 \exp\big(  \sum_{p \ne n} \log\big(1 - \frac{\gamma_n \gamma_p}{(\lambda_{p-1} - \lambda_{n-1})(\lambda_{p} - \lambda_{n})} \big) \big) \, .
 \end{aligned}
 $$
 Note that for any $x \in \C$, 
 $$
 | e^x - 1 | = | x \int_0^1 e^{tx} \ d t | \le |x| e^{|x|}
 $$ 
 and that for any $y \in \C$ with $|y|  < 1$,
 $$
| \log (1 - y) | = | \int_0^1 \frac{-y}{1 - ty} \ d t | \le \frac{1}{1-|y|} |y|\, .
 $$
 Since $y := \frac{\gamma_n \gamma_p}{(\lambda_{p-1} - \lambda_{n-1})(\lambda_{p} - \lambda_{n})}$ satisfies $|y| \le \frac{\frac 15 \frac 15}{\frac 45 \frac 45} \le \frac{1}{16}$ one has
 $$
 \frac{1}{1 - |y|} |y| \le \frac{16}{15} \frac 54 |\gamma_n|  \frac{|\gamma_p|}{|\lambda_{p} - \lambda_{n}|} \le \frac 43 |\gamma_n|  \frac{|\gamma_p|}{|\lambda_{p} - \lambda_{n}|} \, .
 $$
One concludes that
$$
|  \prod_{p \ne n} \big(1 -  \frac{\gamma_n \gamma_p}{(\lambda_{p-1} - \lambda_{n-1})(\lambda_{p} - \lambda_{n})}  \big) - 1 |
\le \frac{4}{3} |\gamma_n| \sigma \exp(\frac{4}{3} |\gamma_n| \sigma) \, , 
$$
where
$ \sigma:= \sum_{p \ne n} \frac{|\gamma_p|}{|\lambda_p - \lambda_n|}$
can be estimated as
 $ \sigma \le \frac{5}{4} \sum_{p \ne n} |\gamma_p| \le \frac{1}{4},$
 implying that
 $$
 |  \prod_{p \ne n} \big(1 -  \frac{\gamma_n \gamma_p}{(\lambda_{p-1} - \lambda_{n-1})(\lambda_{p} - \lambda_{n})}  \big) - 1 |
\le \frac{1}{3} |\gamma_n|  e^{1/15} \ .
 $$
 Combining all the estimates derived yields the desired bound,
 $$
 |\mu_n(u) - 1  | \le  \frac{5}{4} |\gamma_n| e^{1/16} +  \frac{1}{3} |\gamma_n | e^{1/15}  \le \frac 53 |\gamma_n|  e^{1/15} \ .
 $$
 The claimed analyticity of the map $u \mapsto  \big(\sqrt{\mu_n(u)} \big)_{n \ge 1}$ then follows from \cite[Theorem A.3]{KP}.
 \end{proof}  
  \begin{remark}\label{remark to proposition mu_n}
 Going through the proof of Proposition \ref{proposition mu_n} one verifies that for any $0 < s < 1/2,$
 there exists a constant $C >0$ so that for any $u \in B^{-s}_{c,0}(r_{s, **})$,
$$
| \mu_n(u) - 1 | \le C \Big( \frac{|\gamma_n(u)|}{n} + \frac{|\gamma_n(u)|}{n^{1-2s}} \ell^1_n \Big) \, , \qquad \forall n \ge 1\, ,
$$
where the constant $C > 0$ only depends on $s$.
Indeed, one verifies that for any $0 < s < 1/2,$ there exists a constant $\tilde C$ so that the term $I$ in the proof of Proposition \ref{proposition mu_n} can be bounded by 
 $\tilde C \frac{|\gamma_n(u)|}{n}$, whereas
 $$
 | II | \le \tilde C \frac{|\gamma_n|}{n^{1-2s}}  A_n \, , \qquad A_n := n^{1-2s}\sum_{p \ne n}\frac{ | \gamma_p |}{ (p-n)^2}  \, ,
 $$
 and $\sum_{n \ge 1}  A_n$ is estimated by splitting it  into the following three parts,
 $$
  \sum_{n \ge 1}  n^{1-2s}\sum_{p > n}\frac{ | \gamma_p |}{ (p-n)^2}  \le \sum_{p \ge 1} p^{1-2s}| \gamma_p | \sum_{n \ne p} \frac{1}{ (p-n)^2} < \infty \,  ,
 $$
 $$
  \sum_{n \ge 1}  n^{1-2s}\sum_{n/2 \le p < n}\frac{ | \gamma_p |}{ (p-n)^2}  \lesssim \sum_{p \ge 1}  p^{1-2s}| \gamma_p |  \sum_{n \ge 1} \frac{1}{ n^2} <\infty \, ,
 $$
 $$
  \sum_{n \ge 1}  n^{1-2s}\sum_{p < n/2}\frac{ | \gamma_p |}{ (p-n)^2}  \lesssim \sum_{p \ge 1} | \gamma_p | \sum_{n \ne p} \frac{n^{1-2s}}{ n^2} <\infty \, . \qquad
 $$
 Combining these estimates yields the claimed one.
\end{remark}

 %%%%%%%%%%%%%%%%%%%%%%%%%%%%%%%%%%%%%%%%%%%%%%%%%%%%%
 %%%%%%%%%%%%%%%%%%%%%%%%%%%%%%%%%%%%%%%%%%%%%%%%%%%%%
 
 \section{Proof of Theorem \ref{main theorem}($i$)}\label{Lax operator.II}
 In this section we analyze the spectrum $\text{spec}(L_u)$ of the Lax operator $L_{u}$
 for $u \in H^{-s}_{c, 0}$ with imaginary part $\Im u$ which is small in $H^{-s}_{c, 0}$.  
 Whereas the part of $\text{spec}(L_u)$  in any half plane
 $\{ \Re \lambda \le c\}$ can be analyzed by standard perturbation theory,  the part in the half space
 $\{ \Re \lambda \ge c \}$ relies on the approximation of $u$ by finite gap potentials. 
 
We first need to make some preliminary considerations and introduce some additional notation.
 Recall that for any $s \in \R,$ $h^s \equiv h^s(\Z, \C)$ denotes the weighted $\ell^2-$sequence space
 of sequences $z = (z(k))_{k \in \Z}$ with values in $\C$. It is convenient to extend the $n-$shift operator
 $\mathcal S_n$, introduced in \eqref{n-shift}, to all of $h^s,$
 $$
 \mathcal S_n : h^s \to h^s, \, \qquad (\mathcal S_n z)(k) := z(k+ n)\, , 
 $$
 and correspondingly, to define the shifted norm $\|f \|_{s; n}$ for any $f \in H^s_c$ by
 $$
 \|f \|_{s; n} := \| e^{-inx} f\|_s =  \| \mathcal S_n \widehat f \|_s \, , \qquad   \widehat f := (\widehat f(k))_{k \in \Z} \, .
 $$
 It is straightforward to verify that for any two sequences $z^{(j)} = (z^{(j)}(k))_{k \in \Z}$, $j=1,2$,
 for which the convolution $z^{(1)} \ast z^{(2)}$ is well defined, one has for any $n \ge 1,$
 $$
  \mathcal S_n(z^{(1)} \ast z^{(2)}) =  (\mathcal S_n z^{(1)}) \ast z^{(2)} \, .
 $$
% From Lemma \ref{multiplication u f} one then infers 
 We claim that for any $0 \le s < 1/2$,
 \begin{equation}\label{product estimate}
 \| fg \|_{-s;n} \le C_s \|f\|_{-s; n} \|g \|_{1-s}\, , \qquad \forall \,f \in H^{-s}_c , \, g \in H^{1-s}_c \, ,
 \end{equation}
  where $C_s> 0$ is the constant of Lemma \ref{multiplication u f}. Indeed, 
% $$
% \mathcal S_n \widehat{ f g } = \mathcal S_n (\widehat f \ast \widehat g) = (\mathcal S_n \widehat f) \ast \widehat g 
% =\mathcal F \big(  \mathcal F^{-1} (\mathcal S_n \widehat f) \, g \big) \, ,
% $$ 
%where $\mathcal F$ denotes the Fourier transform and $\mathcal F^{-1}$ its inverse, one has by Lemma \ref{multiplication u f},
 $$
  \| fg \|_{-s;n}  = \| (e^{-inx}f) g \|_{-s} \le C_s \|e^{-inx}f\|_{-s} \|g\|_{1-s} = C_s \|f \|_{-s;n} \|g\|_{1-s} \, .
 $$
We will need the following 
\begin{lemma}\label{estimate for N-gap potentials}
For any $0 \le s < 1/2$, $f, v \in H^{-s}_c$, $g \in H^{1-s}_c$, $N \ge 1$, $n \ge N$,
and $0 < \rho \le 1/3$,
the function 
$$
I(\lambda) (x) := \sum_{k \ge N} \frac{\langle f \, | \, g e^{ikx} \rangle}{k - \lambda} \Pi(v g e^{ikx})\, , \qquad \lambda \in {\rm Vert}_n(\rho) \, ,
$$ 
satisfies the following estimate
$$
\| I(\lambda) \|_{-s; n} \le   \frac{ C_s^3}{\rho}  \|v\|_{-s}  \| g\|_{1-s}^2  \|  f\|_{-s; n} \, ,    \qquad \, \forall \, \lambda \in {\rm Vert}_n(\rho), \, \forall \, n \ge N \, ,
$$
where $C_s \ge 1$ is the constant of Lemma  \ref{multiplication u f}.
\end{lemma}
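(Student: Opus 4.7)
The plan is to recognize $I(\lambda)$ as the Szegő projection of a product, and then to estimate the resulting product by iterated use of Lemma \ref{multiplication u f}(ii) and the shifted product estimate \eqref{product estimate}. The key algebraic identity is $\langle f\,|\,g e^{ikx}\rangle = \widehat{f\bar g}(k)$, which lets us rewrite
$$
I(\lambda) = \Pi\bigl(vg\,\phi_\lambda\bigr), \qquad \phi_\lambda(x) := \sum_{k \ge N}\frac{\widehat{f\bar g}(k)}{k - \lambda}\, e^{ikx},
$$
so that $\phi_\lambda$ is, up to the truncation $k \ge N$, the resolvent $(D-\lambda)^{-1}$ applied to $f\bar g$.

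To exploit the shifted norm, I would conjugate by $e^{-inx}$. Setting $\tilde\phi_\lambda := e^{-inx}\phi_\lambda$, the identity $e^{-inx}\Pi(h) = P_{\ge -n}(e^{-inx}h)$ together with the contractivity of the Fourier truncation $P_{\ge -n}$ on $H^{-s}_c$ yields
$$
\|I(\lambda)\|_{-s;n} \le \|e^{-inx}vg\phi_\lambda\|_{-s} = \|vg\tilde\phi_\lambda\|_{-s}.
$$
Two successive applications of Lemma \ref{multiplication u f}(ii) (with $\sigma$ replaced by $s$, as in the concluding remark of that lemma) then give
$$
\|vg\tilde\phi_\lambda\|_{-s} \le C_s\|vg\|_{-s}\|\tilde\phi_\lambda\|_{1-s} \le C_s^2\|v\|_{-s}\|g\|_{1-s}\|\tilde\phi_\lambda\|_{1-s}.
$$

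The core computation is the control of $\|\tilde\phi_\lambda\|_{1-s}$. Changing the summation index $m = k - n$ (which, since $n \ge N$, merely extends the range $m \ge N-n$ within $\Z$),
$$
\tilde\phi_\lambda(x) = \sum_{m \ge N-n} \frac{\widehat{f\bar g}(m+n)}{m + n - \lambda}\, e^{imx},
$$
and the resolvent estimate of Lemma \ref{estimate of lambda in Vert} in the form $|m + n - \lambda| \ge \rho\langle m\rangle$ cause the weights $\langle m\rangle^{2(1-s)}/|m+n-\lambda|^2$ to collapse to $\rho^{-2}\langle m\rangle^{-2s}$. Summing and enlarging to $m \in \Z$ gives $\|\tilde\phi_\lambda\|_{1-s} \le \rho^{-1}\|f\bar g\|_{-s;n}$, and a final application of \eqref{product estimate} together with $\|\bar g\|_{1-s} = \|g\|_{1-s}$ yields $\|f\bar g\|_{-s;n} \le C_s\|f\|_{-s;n}\|g\|_{1-s}$. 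Chaining the three inequalities produces the claimed constant $C_s^3/\rho$.

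The only delicate point — and I expect it to be bookkeeping rather than a genuine obstacle — is verifying that the $e^{-inx}$-twist correctly interchanges the sequence shift $\mathcal S_n$ built into $\|\cdot\|_{-s;n}$ with the substitution $k \mapsto k - n$, so that the weight $\rho\langle k - n\rangle$ produced by Lemma \ref{estimate of lambda in Vert} is exactly what is needed to absorb the Sobolev weights $\langle m\rangle^{2(1-s)}$ after the index change. The hypothesis $n \ge N$ is used only here, to guarantee that extending the truncated sum to all $m \in \Z$ is harmless.
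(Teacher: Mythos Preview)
Your proof is correct and follows essentially the same route as the paper's. The only difference is presentational: the paper expands $\|I(\lambda)\|_{-s;n}^2$ directly on the Fourier side as a convolution sum and applies Lemma~\ref{multiplication u f} in one stroke to obtain $\frac{C_s}{\rho}\|vg\|_{-s}\|f\bar g\|_{-s;n}$, whereas you isolate the intermediate function $\phi_\lambda$, pass to $\tilde\phi_\lambda = e^{-inx}\phi_\lambda$, and estimate $\|\tilde\phi_\lambda\|_{1-s}$ separately before applying the product inequalities. The ingredients (Lemma~\ref{estimate of lambda in Vert}, Lemma~\ref{multiplication u f}, and the shifted product estimate~\eqref{product estimate}) and the resulting constant $C_s^3/\rho$ are identical.
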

 \begin{proof} 
 By Lemma \ref{multiplication u f}, $f\bar g$ and $v g$ are in $H^{-s}_c$.
 Since$\widehat{f\bar g} (k' + n) = \mathcal S_n (\widehat{f\bar g}) (k')$ one has by the definition of the shifted norm
 that for any $\lambda \in {\rm Vert}_n(\rho)$
 $$
 \| I(\lambda) \|_{-s; n}^2 = \sum_{\ell' \ge -n} \frac{1}{\langle \ell' \rangle^{2s}}
 \big( \sum_{k' \ge N - n} | \widehat{v g}(\ell' - k')| \frac{|\mathcal S_n(\widehat{f\bar g}) (k' )|}{|k' - (\lambda - n)|}    \big)^2 \, .
 $$
 Applying Lemma \ref{multiplication u f} and Lemma \ref{estimate of lambda in Vert}
 %(cf. also the proof of Lemma \ref{estimate of T_u(D- lambda)^-1}) 
 one concludes that
 $$
 \| I(\lambda) \|_{-s; n}  \le  \frac{C_s}{\rho} \|v g\|_{-s} \|\mathcal S_n(\widehat{f \bar g})\|_{-s} \, , \qquad \forall \, \lambda \in {\rm Vert}_n(\rho) \, .
 $$
 Using Lemma  \ref{multiplication u f} once more it then follows from \eqref{product estimate} that
 $$
  \| I(\lambda) \|_{-s; n}  \le  \frac{C_s^3}{\rho}  \|v \|_{-s} \|g\|_{1-s}^2 \|f\|_{-s; n} \, .
 $$
 \end{proof}
 We will apply Lemma \ref{estimate for N-gap potentials} with $g$ being of the form $g(x)= e^{i h(x)}$ where $h$ is a real valued function in $H^{1-s}_{r,0}$.
 By Moser's composition estimates one has
 \begin{equation}\label{Moser}
\|  e^{i h(\cdot)} \|_{1-s} \le C_{s,1}(1 + \|h\|_{1-s})
 \end{equation}
 where $C_{s,1} \ge 1$ is a constant, depending only on $0 \le s < 1/2$.
 In order to show that for any $\lambda \in {\rm Vert}_n(\rho)$, $L_u - \lambda : H^{1-s}_+ \to H^{-s}_+$ is invertible for $u \in H^{-s}_{c,0}$ near 
 the real valued potential $w \in H^{-s}_{r,0}$, we argue as follows: 
 From \cite[Theorem 1.1]{GK} it follows that for any $0 \le s < 1/2$, the set $\cup_{N \ge 1} \mathcal U_N$ is dense in $H^{-s}_{r,0}$, where according to \cite[Section 7]{GK}
 for any $N \ge 1,$ $\mathcal U_N$ denotes the set of potentials $w \in H^0_{r,0}$  so that
 $$
 \gamma_N(w) > 0 \, , \qquad \gamma_n(w) = 0 \, , \quad \forall \, n \ge N+1  \, .
 $$
Furthermore, for any $N \ge 1$,
 $\mathcal U_N \subset \cap_{m \ge 1} H^m_{r,0}$ and for any $w \in \mathcal U_N$  
 $$
 \lambda_n(w) = n\, , \quad 
 f_n(\cdot, w) = e^{i \partial_x^{-1}w} e^{in x}\, , \qquad \forall \, n \ge N \, ,
 $$
 where we recall that for any $k \ge 0$, $f_k(\cdot, w)$ denotes the eigenfunction, corresponding to the eigenvalue
 $\lambda_k(w)$ of $L_{w}$, canonically normalized as in \cite{GK}.  We remark that $\partial_x^{-1}w(x)$ is in $H^{1-s}_{r,0}$.
 For any $M > 0$, $0 \le s < 1/2$, and $0 < \rho \le 1/3$, one has
 \begin{equation}\label{C(M, s, rho)}
 C_{M, s,\rho}:= \frac{1}{\rho} C_s^3C^2_{s,1}(2 + M)^2 > 12 \, .
 \end{equation}
 Given any  $w \in H^{-s}_{r,0}$  with $w \ne 0$ and  $\|w\|_{-s} \le M$, it follows from \cite[Theorem 1.1]{GK} ($s=0$),
 \cite[Theorem 6]{GKT} ($ -1/2 < s < 0$)
 that there exist $N \ge 1$  so that the potential $w_0 \in \mathcal U_N$, characterized by 
 $\Phi_n(w_0) = \Phi_n(w)$, $1 \le  n \le N$, and $\Phi_n(w_0) = 0$, $n > N$, satisfies $ \|w -w_0 \|_{-s} \le  1/ 8 C_{M, s, \rho}$.
 Note that  $\gamma_n(w_0) = 0$ for any $n > N$ 
 and that for $w_0$ to be in $ \mathcal U_N$, $N$ has to be chosen in such a way that $\gamma_N(w) > 0$. This is possible
 since by assumption $w \ne 0$.
  If $w$ is itself a finite gap potential, we choose $w_0$ to be $w$ itself and $N \ge 1$ accordingly.
 For later reference we record the properties of $w_0 \in \mathcal U_N$ as follows
 \begin{equation}\label{estimate w_0}
 \|w -w_0 \|_{-s} \le  1/ 8 C_{M, s, \rho} , \qquad \Phi_n(w_0) = \Phi_n(w), \ 1 \le  n \le N.
 \end{equation}
\begin{lemma}\label{T u (D - lambda)-1 general case}
For any $M > 0$, $0 \le s < 1/2$, $0 < \rho \le 1/3$, $w \in H^{-s}_{r,0}$ with $\|w\|_{-s} \le M$, and $w_0 \in \mathcal U_N$ with $N \ge 1$,
satisfying \eqref{estimate w_0}, there exists $n_0 > N$ so that for any $v \in H^{-s}_{c, 0}$ and $n \ge n_0$,
$$
\| T_v(L_{w_0} - \lambda)^{-1} \|_{H^{-s; n}_+ \to H^{-s; n}_+}
\le  2 C_{M, s, \rho} \|v \|_{-s} \ , \qquad \forall \lambda \in \text{Vert}_n(\rho) \, ,
$$
where $ H^{-s; n}_+$ is the Hilbert space $H^{-s}_+$,  endowed with the inner product, 
associated with the shifted norm $\| \cdot \|_{-s; n}$ defined in \eqref{shifted norm}.
\end{lemma}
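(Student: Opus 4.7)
The plan is to use the explicit description of the spectral data of $L_{w_0}$ afforded by the finite gap property: for $k \ge N$ one has $\lambda_k(w_0) = k$ and $f_k(\cdot, w_0) = g\, e^{ikx}$ with $g := e^{i \partial_x^{-1} w_0}$, and since $w_0$ is real valued with mean zero, $g$ is unimodular so that $\bar g = 1/g$. Because $w_0 \in \bigcap_{m \ge 1} H^m_{r,0}$ is smooth, the eigenfunctions $(f_k(\cdot, w_0))_{k \ge 0}$ form an orthonormal basis of $H_+$ and extend by duality to $H^{\pm s}_+$, allowing me to write, for $f \in H^{-s}_+$ and $\lambda$ in the resolvent set,
$$
(L_{w_0} - \lambda)^{-1} f
= \sum_{k=0}^{N-1} \frac{\langle f \,|\, f_k(\cdot, w_0)\rangle}{\lambda_k(w_0) - \lambda}\, f_k(\cdot, w_0)
+ \sum_{k \ge N} \frac{\langle f \bar g \,|\, e^{ikx}\rangle}{k - \lambda}\, g\, e^{ikx},
$$
using the identity $\langle f \,|\, g e^{ikx}\rangle = \langle f \bar g \,|\, e^{ikx}\rangle$. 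Applying $T_v$ to each side splits $\|T_v(L_{w_0} - \lambda)^{-1} f\|_{-s;n}$ into a tail contribution (sum over $k \ge N$) and a head contribution (sum over $k < N$), to be treated separately.

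The tail sum takes the form $\sum_{k \ge N} \frac{\langle f \bar g \,|\, e^{ikx}\rangle}{k - \lambda}\, \Pi(v g e^{ikx})$, which is precisely the object treated by Lemma \ref{estimate for N-gap potentials}. Applying that lemma yields a bound of $\tfrac{C_s^3}{\rho} \|v\|_{-s} \|g\|_{1-s}^2 \|f\|_{-s;n}$. Since $w_0$ has mean zero, the Fourier characterization gives $\|\partial_x^{-1} w_0\|_{1-s} = \|w_0\|_{-s}$, and \eqref{estimate w_0} combined with $C_{M,s,\rho} > 12$ yields $\|w_0\|_{-s} \le M + 1$. Moser's estimate \eqref{Moser} then gives $\|g\|_{1-s} \le C_{s,1}(2 + M)$, and comparison with the definition \eqref{C(M, s, rho)} of $C_{M,s,\rho}$ shows that the tail contribution is at most $C_{M,s,\rho} \|v\|_{-s} \|f\|_{-s;n}$ uniformly in $\lambda \in \text{Vert}_n(\rho)$ and $n \ge N$.

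For the head piece $\sum_{k=0}^{N-1} \frac{\langle f \,|\, f_k\rangle}{\lambda_k(w_0) - \lambda}\, T_v f_k$, each eigenfunction $f_k \equiv f_k(\cdot, w_0)$, $k < N$, is smooth and $|\lambda_k(w_0)| \le C_N$ for some constant depending only on $w_0$ and $N$. For $\lambda \in \text{Vert}_n(\rho)$ with $n$ sufficiently large (larger than $N + 2C_N$, say), one has $|\lambda - \lambda_k(w_0)| \ge n/2$. Using Lemma \ref{equivalence of norms} together with Lemma \ref{multiplication u f}, I would bound $|\langle f \,|\, f_k\rangle| \le 2^s \langle n\rangle^s \|f_k\|_s\, \|f\|_{-s;n}$ and $\|T_v f_k\|_{-s;n} \le 2^s \langle n\rangle^s C_s \|v\|_{-s} \|f_k\|_{1-s}$, so that the head is dominated by $\langle n\rangle^{2s-1}$ times $\|v\|_{-s}\|f\|_{-s;n}$, with a constant depending only on $w_0, N, s$. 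Since $2s - 1 < 0$, choosing $n_0 > N$ large enough makes this at most $C_{M,s,\rho} \|v\|_{-s} \|f\|_{-s;n}$ for all $n \ge n_0$; adding the two estimates yields the desired factor of $2 C_{M,s,\rho}$.

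The main obstacle is the head piece: the first $N$ eigenfunctions are not of the shifted form $g\, e^{ikx}$ and therefore lie outside the scope of Lemma \ref{estimate for N-gap potentials}. Controlling them in the shifted norm $\|\cdot\|_{-s;n}$ unavoidably costs two factors of $\langle n \rangle^s$ via Lemma \ref{equivalence of norms}—one from the inner product pairing with $f$, one from re-weighting $T_v f_k$. These factors can be absorbed by the single factor of $n^{-1}$ gained from the distance from $\lambda$ to $\lambda_k(w_0)$ precisely because $s < 1/2$, which is also why one can only guarantee the bound for $n \ge n_0$ rather than uniformly for all $n \ge 0$.
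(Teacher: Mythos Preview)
Your proof is correct and follows essentially the same approach as the paper: the same decomposition into a tail ($k \ge N$) handled by Lemma \ref{estimate for N-gap potentials} together with Moser's estimate \eqref{Moser}, and a head ($k < N$) controlled by crude bounds that decay in $n$. The only cosmetic difference is in how you handle $\|T_v f_k\|_{-s;n}$: you pass through $\|\cdot\|_{-s}$ via Lemma \ref{equivalence of norms}, picking up a second factor of $\langle n\rangle^{s}$ and hence decay $\langle n\rangle^{2s-1}$, whereas the paper uses the shifted product estimate \eqref{product estimate} with $\sigma = (s+1/2)/2$ to get $\|T_v f_k\|_{-s;n} \le C_s \|v\|_{-s} \|f_k\|_{1-\sigma;n}$ and then $\|f_k\|_{1-\sigma;n} \lesssim \langle n\rangle^{1-\sigma}\|f_k\|_{1-\sigma}$, yielding decay $\langle n\rangle^{-(\sigma-s)}$; both exponents are negative since $s < 1/2$, so either route suffices.
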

\begin{proof}
Let $f_k \equiv f_k(\cdot, w_0)$, $k \ge 0$, be the eigenfunctions of $L_{w_0}$,  canonically normalized as in \cite{GK}.
Since $w_0 \in \mathcal U_N$, one has for any $n \ge N$, $\lambda_n \equiv \lambda_n(w_0) = n$ and 
$f_n = g_\infty e^{inx}$ where $g_\infty := e^{i \partial_x^{-1}w_0}$.
Furthermore, for any $\lambda \in {\rm Vert}_n(\rho),$  $L_{w_0} -  \lambda : H^{1-s}_+  \to H^{-s}_+$ is invertible. 
For any $v \in H^{-s}_{c,0}$ and $f \in H^{-s}_+$, write $T_v(L_{w_0} - \lambda)^{-1} f = I(\lambda) + II(\lambda)$ where
$$
I(\lambda) := \sum_{k \ge N} \frac{\langle f \, | \, f_k \rangle}{\lambda_k - \lambda} T_v(f_k) \, , \qquad
II(\lambda) := \sum_{k =0}^{N-1} \frac{\langle f \, | \, f_k \rangle}{\lambda_k - \lambda} T_v(f_k) \, .
$$
Then
$$
I(\lambda) = \sum_{k \ge N} \frac{\langle f \, | \, g_\infty e^{ikx} \rangle}{k - \lambda} \Pi(vg_\infty e^{ikx})
$$
and since by \eqref{Moser} and  \eqref{C(M, s, rho)} -- \eqref{estimate w_0},
$$ 
\|g_\infty\|_{1-s} \le C_{s,1}(1 + \|\partial_x^{-1}w_0 \|_{1-s}) \le 
 C_{s,1}(1 + \|w_0 \|_{-s})  \le  C_{s,1}(2 + M)\, ,
$$
it then follows from Lemma \ref{estimate for N-gap potentials} that for any $n \ge N,$
$$
\| I(\lambda) \|_{-s; n}   \le \frac{C_s^3}{\rho}  \|v \|_{-s} \|g_\infty\|_{1-s}^2 \|f\|_{-s; n} 
\le C_{M, s, \rho}  \|v \|_{-s}  \|f\|_{-s; n} \, .
$$
To obtain the required estimate of $II(\lambda)$ we will choose $n_0 \ge N$ sufficiently large. Note that
$$
\| II(\lambda) \|_{-s; n} \le  \sum_{k =0}^{N-1} \frac{| \langle f \, | \, f_k \rangle |}{ |\lambda_k - \lambda|} \| T_v(f_k)\|_{{-s; n}}.
$$
%Since $f_n$, $n \ge 0$, is a basis of $H^{-s}_+$ there exists $C_{s,2} \equiv C_{s, 2}(w_0)  \ge 1$ so that, 
Taking into account Lemma \ref{equivalence of norms}, one has
$$
\sum_{k =0}^{N-1} | \langle f \, | \, f_k \rangle | \le  
\sum_{k =0}^{N-1}  \|f \|_{-s} \| f_k \|_{s} \le 
N \big( \max_{0 \le k \le N -1 } \|f_k\|_{s}  \big) \|f\|_{-s; n}  2^s \langle n \rangle^s 
$$
and by Lemma \ref{multiplication u f} (with $\sigma := (s + 1/2)/2$), for any $0 \le k \le N - 1$,
$$
\| T_v(f_k) \|_{-s; n} \le C_{s}  \|v \|_{-s} \|f_k \|_{1-\sigma; n}
\le C_{s}  \| v \|_{-s} \max_{0 \le k \le N -1 } \|f_k\|_{1-\sigma; n} \, .
$$
Furthermore,  $\| f_k \|_{1-\sigma; n} \le 2^s \langle n \rangle^{1-\sigma} \| f_k \|_{1-\sigma}$ (cf.  Lemma \ref{equivalence of norms}) 
and for any $n \ge N,$ $\lambda \in {\rm Vert}_n(\rho)$ (cf. by Lemma \ref{estimate of lambda in Vert}),
$$
|\lambda - \lambda_k| \ge | n-k | - \frac 12 \ge n  - N + \frac 12 , \qquad \forall \, 0 \le k \le N - 1 \, ,
$$
where we used that $\lambda_k(w_0) = k - \sum_{\ell > k} \gamma_\ell(w_0) \le k$.
Combining the above estimates yields
$$
\| II(\lambda) \|_{-s; n} \le \frac{n}{ n - N + 1/2} N C_{s} \big( \max_{0 \le k \le N} \|f_k\|_{1-\sigma} \big)^2
\frac{4^s}{\langle n \rangle^{\sigma - s}} \| v \|_{-s} \|f \|_{-s; n} \, .
$$
By choosing $n_0 > N$  sufficiently large it follows that 
$$
\| II(\lambda) \|_{-s; n} \le  C_{M, s, \rho} \|v \|_{-s}  \|f \|_{-s; n} \, ,
$$
yielding the claimed estimate when combined with the one obtained for $\| I(\lambda) \|_{-s; n}$. 
\end{proof}
 Lemma \ref{T u (D - lambda)-1 general case} yields the following
 \begin{corollary}\label{invertibility general case}
 Let $M > 0$, $0 \le s < 1/2$, $0 < \rho \le 1/3$, $w \in H^{-s}_{r,0}$ with $\|w\|_{-s} \le M$, $w_0 \in \mathcal U_N$ with $N \ge 1$,
satisfying \eqref{estimate w_0}, and $n_0 > N$ so that  Lemma \ref{T u (D - lambda)-1 general case} holds.
Then for any $n \ge n_0$ and $\lambda \in \text{Vert}_n(\rho)$, 
$$
\| T_v(L_{w_0} - \lambda)^{-1} \|_{H^{-s; n}_+ \to H^{-s; n}_+} \le  1/2   \, , \qquad \forall \, w_0 + v \in B^{-s}_{c,0}(w_0, 1/4C_{M, s, \rho})\, ,
$$
 where $C_{M, s, \rho}$ is given by \eqref{C(M, s, rho)}. 
 \end{corollary}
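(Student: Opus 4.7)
The proof plan is essentially a one-line substitution into the preceding lemma. The hypothesis that $w_0 + v$ lies in the open ball $B^{-s}_{c,0}(w_0, 1/4C_{M, s, \rho})$ is, by the definition of that ball, equivalent to the estimate $\|v\|_{-s} < 1/4C_{M, s, \rho}$. Applying Lemma \ref{T u (D - lambda)-1 general case} to the potential $v \in H^{-s}_{c,0}$ at any $n \ge n_0$ and any $\lambda \in \mathrm{Vert}_n(\rho)$ yields
\[
\| T_v(L_{w_0} - \lambda)^{-1} \|_{H^{-s; n}_+ \to H^{-s; n}_+} \le 2 C_{M, s, \rho} \|v\|_{-s} \le 2 C_{M, s, \rho} \cdot \frac{1}{4 C_{M, s, \rho}} = \frac{1}{2},
\]
which is precisely the desired bound.

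There is no real obstacle at this step; the corollary is a quantitative reformulation of Lemma \ref{T u (D - lambda)-1 general case} packaged in the form needed later. Its purpose is to enable the factorization
\[
L_{w_0 + v} - \lambda = \bigl(\mathrm{Id} - T_v(L_{w_0}-\lambda)^{-1}\bigr)(L_{w_0} - \lambda),
\]
so that a Neumann series argument, entirely analogous to the one used in Corollary \ref{1/2 bound}, will show that $L_{w_0+v} - \lambda : H^{1-s}_+ \to H^{-s}_+$ is invertible for every $\lambda \in \bigcup_{n \ge n_0} \mathrm{Vert}_n(\rho)$, with the explicit inverse
\[
(L_{w_0+v} - \lambda)^{-1} = (L_{w_0} - \lambda)^{-1} \sum_{k=0}^\infty \bigl(T_v(L_{w_0}-\lambda)^{-1}\bigr)^k.
\]
Lemma \ref{equivalence of norms} will then be invoked to transfer invertibility between the shifted norm $\|\cdot\|_{-s;n}$, in which the geometric series converges, and the standard norm $\|\cdot\|_{-s}$ on $H^{-s}_+$.

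All of the genuine analytic work — the spectral decomposition of $(L_{w_0}-\lambda)^{-1}$ along the finite-gap basis, the explicit formulas $\lambda_n(w_0)=n$ and $f_n(\cdot,w_0) = g_\infty e^{inx}$ for $n \ge N$, and the key estimate on the tail piece $I(\lambda)$ via Lemma \ref{estimate for N-gap potentials} combined with Moser's composition bound \eqref{Moser} — has already been carried out in the proof of Lemma \ref{T u (D - lambda)-1 general case}. The choice of $n_0 > N$ there ensured that the finite piece $II(\lambda)$, whose norm is controlled by a factor $\langle n \rangle^{-(\sigma-s)}$ multiplied by a constant depending on $N$, is absorbed into the single constant $C_{M,s,\rho}$. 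Thus the present corollary requires nothing beyond recording this bound and dividing by $2$.
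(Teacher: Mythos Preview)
Your proof is correct and matches the paper's approach exactly: the corollary is stated there without a separate proof, as an immediate consequence of Lemma~\ref{T u (D - lambda)-1 general case}, and the only content is the substitution $\|v\|_{-s} < 1/(4C_{M,s,\rho})$ into the bound $2C_{M,s,\rho}\|v\|_{-s}$ that you carry out. The additional paragraphs about the factorization and Neumann series accurately describe how the corollary is used downstream, but are not needed for the proof itself.
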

 \begin{remark}
 Note that by \eqref{estimate w_0}, 
 $B^{-s}_{c,0}(w, 1/8C_{M, s, \rho}) \subset B^{-s}_{c,0}(w_0, 1/4C_{M, s, \rho})$.
 \end{remark}
 
 As a last ingredient of the proof of the Counting Lemma for potentials in $H^{-s}_{c,0}$ near $H^{-s}_{r,0}$  
 we need an extension of  \cite[Lemma 5.1]{GK} to $H^{-s}_{r,0}$. To state it denote for any $w \in H^{-s}_{r,0}$,
 $n \ge 1$, and $K \ge 1$, by ${\rm Box}_{K, n}(w)$ the closed rectangle in $\C$, given by 
 the set of complex numbers, satisfying
 $$
 - K + \lambda_0(w) \le \Re(\lambda) \le \lambda_n(w) + 1/2\, , \qquad |\Im(\lambda) | \le K. 
 $$
Let $r_0:= 0$, $\tau_0 := \lambda_0(w)$ and for any $k \ge 1,$  
 $$
 r_k := \gamma_k(w)/2 \, , \quad  \tau_k := \lambda_k(w) - \gamma_{k}(w)/2\, .
$$
Furthermore, recall that for any given $\tau \in \C$ and $r > 0$, $D_\tau(r)$
denotes the open disc $ \{ \lambda \in \C \, : \, |\lambda - \tau| < r \}$.
\begin{lemma}\label{lambda analytic}
Let $0 \le s < 1/2$, $0 < \rho \le 1/3,$ $K \ge 1,$ and $n \ge 0.$
 For any $w \in H^{-s}_{r, 0}$, there exists
$\e \equiv \e_{s, \rho, K, n} > 0$ so that for any $u \in B^{-s}_{c, 0}(w, \e)$ and $0 \le k \le n,$
$\# \big( spec(L_{u}) \cap D_{\tau_k}(r_k + \rho) \big) = 1$
and
$$ spec(L_{u}) \cap  {\rm Box}_{K,n}(w)
\subset \bigcup_{0 \le k \le n} D_{\tau_k}(r_k + \rho) .
$$
The unique eigenvalue of $L_{u}$ in $D_{\tau_k}(r_k + \rho)$ is 
denoted by $\lambda_k(u)$.
\end{lemma}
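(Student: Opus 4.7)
The plan is a standard Riesz-projector perturbation argument, made possible by the fact that $L_w$ is self-adjoint-like (real simple spectrum) and has compact resolvent by \cite{GK}, \cite{GKT}. First I verify the geometry of the discs relative to $\text{spec}(L_w)=\{\lambda_k(w)\}_{k\ge 0}$. For $k\ge 1$, $\tau_k=(\lambda_k(w)+\lambda_{k-1}(w)+1)/2$ is the midpoint of $[\lambda_{k-1}(w)+1,\lambda_k(w)]$ and $r_k=\gamma_k(w)/2$ is half its length, so $|\tau_k-\lambda_k(w)|=\gamma_k(w)/2<r_k+\rho$, while $|\tau_k-\lambda_{k-1}(w)|=1+\gamma_k(w)/2$ and $|\tau_k-\lambda_{k+1}(w)|\ge 1+\gamma_k(w)/2$, both of which exceed $r_k+\rho$ as soon as $\rho\le 1/3$. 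A similar computation for $k=0$ and for the pairwise distances between centers shows that the discs $D_{\tau_k}(r_k+\rho)$, $0\le k\le n$, are pairwise disjoint and each contains exactly one element of $\text{spec}(L_w)$, namely $\lambda_k(w)$. Using $\lambda_{j+1}(w)-\lambda_j(w)\ge 1$, one also checks that $\text{spec}(L_w)\cap{\rm Box}_{K,n}(w)=\{\lambda_0(w),\dots,\lambda_n(w)\}$, so in particular these eigenvalues of $L_w$ already satisfy the conclusion of the lemma at $u=w$.

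Second, introduce the compact contour
\[
\Gamma:=\partial\,{\rm Box}_{K,n}(w)\ \cup\ \bigcup_{k=0}^n \partial D_{\tau_k}(r_k+\rho).
\]
By the previous step $\Gamma$ is contained in the resolvent set of $L_w$, and by compact-resolvent-plus-compactness the map $\lambda\mapsto (L_w-\lambda)^{-1}\in\mathcal B(H^{-s}_+,H^{1-s}_+)$ is continuous on $\Gamma$, hence bounded: $M_w:=\sup_{\lambda\in\Gamma}\|(L_w-\lambda)^{-1}\|<\infty$. Writing $L_u-\lambda=(\text{Id}-T_{u-w}(L_w-\lambda)^{-1})(L_w-\lambda)$ and applying Lemma \ref{multiplication u f} to the perturbation $T_{u-w}:H^{1-s}_+\to H^{-s}_+$ yields
\[
\|T_{u-w}(L_w-\lambda)^{-1}\|_{\mathcal B(H^{-s}_+)}\le C_s M_w\|u-w\|_{-s},\qquad \lambda\in\Gamma.
\]
Choosing $\e=\e_{s,\rho,K,n}$ so small that this norm is $\le 1/2$ on $\Gamma$ for every $u\in B^{-s}_{c,0}(w,\e)$, the Neumann series gives $(L_u-\lambda)^{-1}=(L_w-\lambda)^{-1}\sum_{j\ge 0}(T_{u-w}(L_w-\lambda)^{-1})^j$, uniformly bounded on $\Gamma$ and jointly analytic in $(u,\lambda)$. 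In particular $L_u-\lambda$ is invertible for every $\lambda\in\Gamma$ and every such $u$, and $(L_u-\lambda)^{-1}$ is compact (product of $(L_w-\lambda)^{-1}$, which is compact, with a bounded operator).

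Third, for each $0\le k\le n$ define the Riesz projector
\[
P_k(u):=-\frac{1}{2\pi i}\int_{\partial D_{\tau_k}(r_k+\rho)}(L_u-\lambda)^{-1}\,d\lambda\ \in\mathcal B(H^{-s}_+).
\]
It is finite rank (compact resolvent), analytic in $u$, and its trace, being an integer, is locally constant; by the first step $\text{rk}\,P_k(w)=1$, so $\text{rk}\,P_k(u)=1$ and $L_u$ has a unique eigenvalue $\lambda_k(u)\in D_{\tau_k}(r_k+\rho)$ of algebraic multiplicity one. Running the same argument on the Riesz projector associated with the open set ${\rm Box}_{K,n}(w)\setminus\bigcup_{k=0}^n\overline{D_{\tau_k}(r_k+\rho)}$, whose boundary is a union of arcs of $\Gamma$ and hence still in the perturbed resolvent set, one gets rank zero at $u=w$ and hence rank zero throughout $B^{-s}_{c,0}(w,\e)$, which rules out any eigenvalue of $L_u$ in the box outside the discs, thus giving the second assertion.

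The only genuine ingredient is the uniform resolvent bound $M_w<\infty$ in step two; this reduces to continuity of $(L_w-\lambda)^{-1}$ on the compact set $\Gamma\subset\rho(L_w)$, which is classical. The arithmetic check in step one, the Neumann-series perturbation in step two, and the rank-stability of analytic families of finite-rank projections in step three are all routine.
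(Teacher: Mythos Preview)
Your argument is correct and is precisely the standard Riesz--projector perturbation argument that the paper has in mind: the paper does not give a proof here but refers to an adaptation of \cite[Lemma~5.1]{GK}, which proceeds by exactly this route---uniform resolvent bounds on a compact contour avoiding $\text{spec}(L_w)$, Neumann-series invertibility of $L_u-\lambda$ for $\|u-w\|_{-s}$ small, and constancy of the rank of the resulting analytic family of finite-rank projections. Your geometric verification in step one and the factorization $L_u-\lambda=(\text{Id}-T_{u-w}(L_w-\lambda)^{-1})(L_w-\lambda)$ together with Lemma~\ref{multiplication u f} are the right ingredients, and nothing is missing.
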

\begin{proof}
The proof of Lemma 5.1 in \cite{GK} can easily be adapted to the setup at hand
and hence is omitted.
\end{proof}

Now we have all the ingredients to prove the Counting Lemma. To state it, we define
 for any $\tau \in \C$, $\nu, \nu' > 0$, and $r > 0$,
 $$ 
 {\rm Vert}_\tau(r; \nu, \nu'):= \{ \lambda \in \C \, : \, |\lambda - \tau| \ge r; \,\,  
 \tau - \nu \le \Re \lambda  \le \tau + \nu' \}.
 $$
 It is convenient to also define
 $$
  {\rm Vert}_\tau(r;  \infty, \nu'):= \{ \lambda \in \C \, : \, |\lambda - \tau| \ge r; \,\,  
  \Re \lambda  \le \tau + \nu' \}\, .
  $$
 Lemma \ref{resolvent set 1}, Corollary \ref{invertibility general case}, and Lemma \ref{lambda analytic}
 yield a proof of Theorem \ref{main theorem}(i), which we reformulate slightly as follows.
 \begin{theorem}\label{general counting lemma}(Counting Lemma)
 Let $0 \le s < 1/2$, $0 < \rho \le 1/3$, and $M > 0$. For any $w \in H^{-s}_{r,0}$ with $\|w\|_{-s} \le M$, 
 choose $w_0 \in \mathcal U_N$ with $N\ge 1$, satisfying \eqref{estimate w_0}, and $n_0 > N$ 
 so that  Lemma \ref{T u (D - lambda)-1 general case} holds. Then there exists a neighborhood
 $U^{-s}_w$ of $w$ in $H^{-s}_{c,0}$, contained in $B^{-s}_{c,0}(w, 1/8C_{M, s, \rho})$, with the following properties:
 for any $u \in U^{-s}_w$, 
 the spectrum ${\rm spec}(L_{u})$ of the operator $L_{u}$ is discrete, consists of simple eigenvalues only, and
  for any $ n \ge 0 $, one has
 $$
  \#\big({\rm spec}(L_{u}) \cap D_{\tau_n}(r_n+ \rho) \big) = 1\,
  $$
  and
 $$
   {\rm spec}(L_{u}) \cap {\rm Vert}_{\tau_n}(r_n + \rho; \nu_{n}, \nu_{n+1}) = \emptyset . 
$$
 Here $\tau_0 := \lambda_0(w)$, $\nu_{0} =  \infty$, $r_0:= 0$  and for any $ 1 \le n <  n_0 $,
$$
 r_n := \gamma_n(w)/2, \quad  \tau_n := \lambda_n(w) - \gamma_{n}(w)/2 \, , \quad \nu_n := ( \tau_{n} - \tau_{n-1})/2\, ,
$$
 whereas
$$
 r_{n_0} := 0\, , \quad \tau_{n_0 } := n_0 \, , \quad  \nu_{n_0} := \frac 12 \big( 1 + \gamma_{n_0 - 1}(w)/2 + \sum_{k \ge n_0 } \gamma_k(w) \big) \, ,
$$  
and for any $n \ge n_0 + 1$, 
$$
  r_n:= 0\, , \qquad  \tau_n := n\, , \qquad \nu_n := 1/2 \, . \qquad \qquad
$$
In particular, $D_{\tau_n}(r_n + \rho)=   D_n(\rho)$ for any $n \ge n_0 $ and
${\rm Vert}_{n_0}(\rho) \subset   {\rm Vert}_{\tau_n}(r_{n_0} + \rho; \nu_{n_0}, \nu_{n_0+1}) $
 whereas
$$
  {\rm Vert}_{\tau_n}(r_n + \rho; \nu_{n}, \nu_{n+1}) = {\rm Vert}_n(\rho) \, , \qquad \forall \, n \ge n_0 + 1\, .
$$
  \end{theorem}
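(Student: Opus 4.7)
The plan is to glue together three invertibility results for $L_u - \lambda$: Lemma \ref{resolvent set 1} on a far left half-plane, Corollary \ref{invertibility general case} on the vertical strips $\text{Vert}_n(\rho)$ with $n \ge n_0$, and Lemma \ref{lambda analytic} on the bounded region that contains the finitely many ``low index'' eigenvalues. Concretely I would take $U^{-s}_w := B^{-s}_{c,0}(w, \varepsilon)$ and choose $\varepsilon > 0$ small enough that (a) $\varepsilon \le 1/8 C_{M,s,\rho}$, so that $\|u - w_0\|_{-s} \le 1/4 C_{M,s,\rho}$ for every $u \in U^{-s}_w$ via \eqref{estimate w_0}, and (b) $\varepsilon \le \varepsilon_{s, \rho, K, n_0 - 1}$ from Lemma \ref{lambda analytic}, with $K$ chosen so that $\text{Box}_{K, n_0 - 1}(w)$ contains all the discs $D_{\tau_k}(r_k + \rho)$, $0 \le k \le n_0 - 1$.

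For the high-index regime I would write, for $\lambda \in \text{Vert}_n(\rho)$ with $n \ge n_0$,
$$
L_u - \lambda = \bigl(\text{Id} - T_{u-w_0}(L_{w_0} - \lambda)^{-1}\bigr)(L_{w_0} - \lambda),
$$
and apply Corollary \ref{invertibility general case} with $v = u - w_0$ to bound $\|T_{u-w_0}(L_{w_0} - \lambda)^{-1}\|_{H^{-s;n}_+ \to H^{-s;n}_+} \le 1/2$. A Neumann series then inverts the first factor on $H^{-s;n}_+$, Lemma \ref{equivalence of norms} transfers invertibility to $H^{-s}_+$, and so $L_u - \lambda : H^{1-s}_+ \to H^{-s}_+$ is invertible. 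Combined with Lemma \ref{resolvent set 1}, this places $\{\Re \lambda \le -K_M\} \cup \bigcup_{n \ge n_0}\text{Vert}_n(\rho)$ in the resolvent set; via the compact embedding $H^{1-s}_+ \hookrightarrow H^{-s}_+$, $L_u$ then has compact resolvent and hence discrete spectrum.

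For counting, I would introduce $P_n(u) := -\frac{1}{2\pi i} \int_{\partial D_n(\rho)} (L_u - \lambda)^{-1}\, d\lambda$ for $n \ge n_0$, observe that the uniform resolvent bound makes $u \mapsto P_n(u)$ analytic on $U^{-s}_w$, and deform $u$ to $w_0$ along the line $t \mapsto w_0 + t(u - w_0)$, $t \in [0, 1]$. Corollary \ref{invertibility general case} remains valid along the entire path, so the integer $\text{Tr}\, P_n(u)$ is constant; its value at $w_0$ is $1$ since $\lambda_n(w_0) = n$ is a simple eigenvalue of $L_{w_0}$ for $n \ge N$. Simplicity of $\lambda_n(u)$ and its analytic dependence on $u$ then follow from the trace identity $\lambda_n(u) = -\text{Tr}\bigl(\frac{1}{2\pi i} \int_{\partial D_n(\rho)} \lambda (L_u - \lambda)^{-1}\, d\lambda\bigr)$ exactly as in the proof of Theorem \ref{main result of section 2}. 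For $0 \le n \le n_0 - 1$, the analogous count and analyticity inside $D_{\tau_n}(r_n + \rho)$ is already contained in Lemma \ref{lambda analytic}. Emptiness of spectrum on the strips $\text{Vert}_{\tau_n}(r_n + \rho; \nu_n, \nu_{n+1})$ is automatic, since by construction of the $\tau_n$, $r_n$, $\nu_n$ these strips are covered by the resolvent set assembled above.

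The crux is the uniform-in-$n$ resolvent bound on $\text{Vert}_n(\rho)$ for $n \ge n_0$: this is the genuine novelty of approximating the real reference $w$ by a finite-gap potential $w_0 \in \mathcal{U}_N$, exploiting the explicit eigenfunction formula $f_n(\cdot, w_0) = g_\infty(\cdot, w_0) e^{inx}$ from \eqref{formula eigenfunctions} together with the Moser-type estimate \eqref{Moser}, both of which feed into Lemma \ref{estimate for N-gap potentials}. Once that uniform bound is available, the Riesz projector arithmetic and the synthesis with the bounded-region analysis of Lemma \ref{lambda analytic} are routine and essentially mirror the near-zero argument of Theorem \ref{main result of section 2}.
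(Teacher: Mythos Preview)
Your proposal is correct and follows precisely the route the paper indicates: the paper's own proof of this theorem is nothing more than the sentence that Lemma~\ref{resolvent set 1}, Corollary~\ref{invertibility general case}, and Lemma~\ref{lambda analytic} together yield the result, and you have supplied the glue. One small point worth tightening: when you say the low-index strips ${\rm Vert}_{\tau_n}(r_n+\rho;\nu_n,\nu_{n+1})$ are covered by the resolvent set assembled, be sure you are also invoking part~(ii) of Lemma~\ref{resolvent set 1} (the diagonal half-planes $\{\pm\Im\lambda \ge \Re\lambda + 2K_{M'}\}$, with $M'$ slightly larger than $M$ to accommodate $\|u\|_{-s}$), not just the left half-plane, and choose $K$ in $\text{Box}_{K,n_0-1}(w)$ large enough that the portion of those strips with $|\Im\lambda|>K$ falls inside those half-planes; otherwise the unbounded-imaginary-part part of the low-index region is not accounted for.
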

  
\begin{figure}
\begin{tikzpicture}[scale=0.75]
\fill [color=gray!30] (-8,-8) -- (-8,8) -- (4,8) -- (4,-8) -- cycle ;
\draw (4,0) node {$\bullet$} ;
\draw (4,0) node[below right] {$\lambda_n(w)+1/2$} ;
\draw (4,8) node {$\bullet$} ;
\draw (4,8) node[above right] {$(\lambda_n(w)+1/2, K)$} ;
\draw (0,8) node {$\bullet$} ;
\draw (0,8) node[above right] {$K$} ;
\draw (-8,0) node {$\bullet$} ;
\draw (-8,0) node[below left] {$-K + \lambda_0(w)$} ;
\filldraw [ fill=gray!00, draw=black] (-7,0) circle (0.5) ;
\filldraw [ fill=gray!00, draw=black] (-5,0) circle (1) ;
\filldraw [ fill=gray!00, draw=black] (-2,0) circle (0.75) ;
\filldraw [ fill=gray!00, draw=black] (2,0) circle (0.5) ;
\draw  [->]  (-8,0) -- (7,0) ;
\draw  [->]   (0,-9) -- (0,9) ;
\draw (7,0) node[above] {$\mu =\Re \lambda$} ;
\draw (0,9) node[above right] {$\nu =\Im \lambda$} ;
\draw (-7,0) node {$\bullet$} ;
\draw (-7,0) node[below]  {$\tau_0$} ;
\draw (-5,0) node {$\bullet$} ;
\draw (-5,0) node[below]  {$\tau_1$} ;
\draw (-2,0) node {$\bullet$} ;
\draw (-2,0) node[below]  {$\tau_2$} ;
\draw (2,0) node {$\bullet$} ;
\draw (2,0) node[below]  {$\tau_n$} ;
\draw (1.5,-3) node{${\rm Box}_{K,n}(w)$} ;
\end{tikzpicture}
\caption{ $L_{u}$ has one eigenvalue in each white disc (schematic).}
\end{figure}
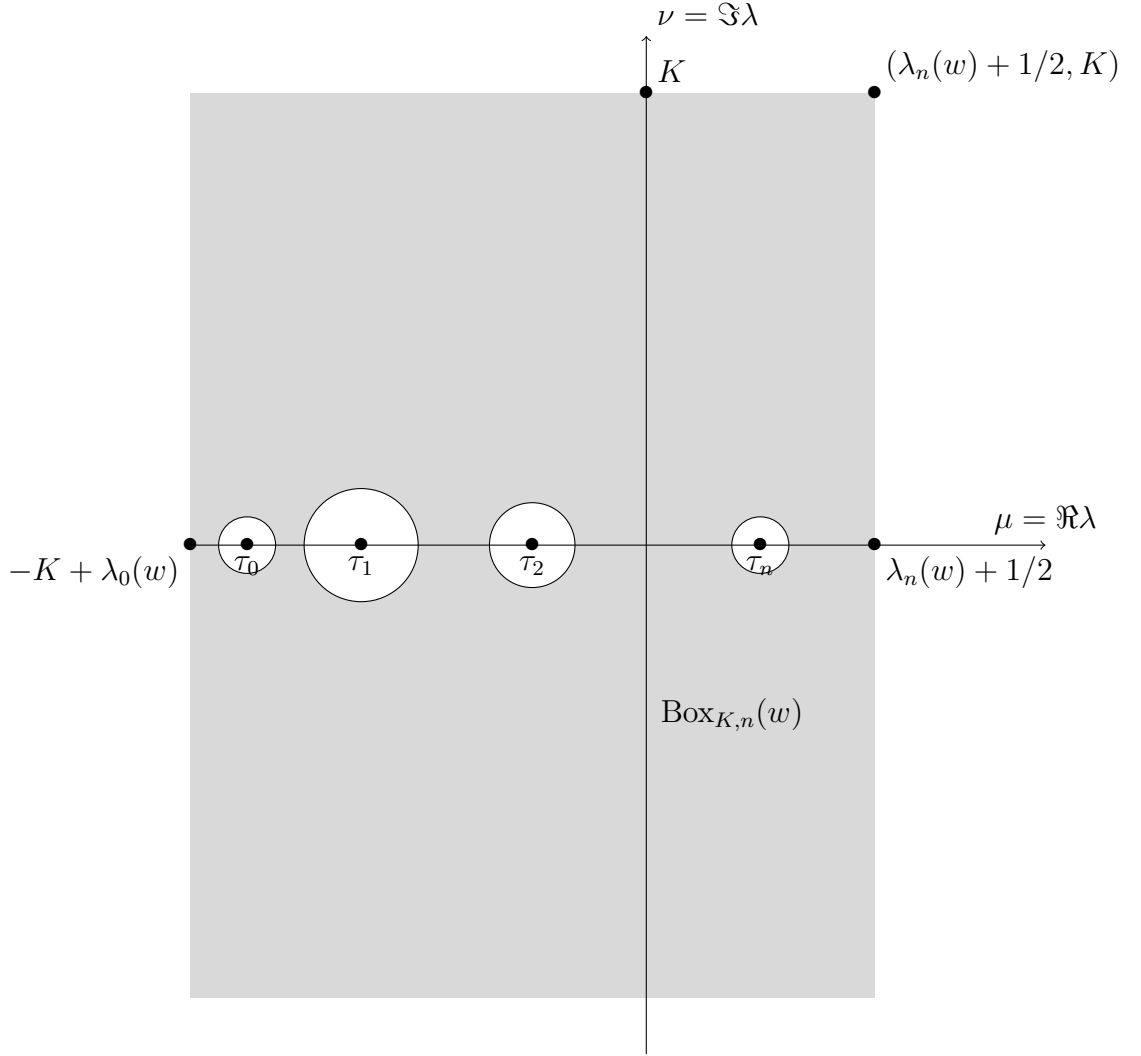 

%%%%%%%%%%%%%%%%%%%%%%%%%%%%%%%%%%%%%%%%%%%%%%%
%%%%%%%%%%%%%%%%%%%%%%%%%%%%%%%%%%%%%%%%%%%%%%%

 \section{Quasi-moment map}\label{Quasi-moment map Im u small}

In this section, we study the quasi-moment map $F(u) = (F_n(u))_{n \ge 1}$.
Recall that for any $u \in H^{-s}_{r,0}$ and $n \ge 1$, the component $F_n(u)$ of $F(u)$ 
is given by  $ - |\langle 1| f_n(\cdot, u)\rangle |^2 = - \gamma_n \kappa_n$ and hence $F(u) \in  \ell^{1, 2-2s}(\N, \R)$ 
(cf. Remark \ref{identity2 for F_n for u real}) and 
that $F$ admits an analytic extension to a neighborhood of  $0 \in H^{-s}_{c,0}$ for any $0 \le s < 1/2$ (cf.  Proposition \ref{F analytic near 0}). 
Our aim is to show that for any given $w \in H^{-s}_{r,0}$ with $0 \le s < 1/2$, $F: H^{-s}_{r,0} \to \ell^{1, 2-2s}(\N, \R)$ 
extends to an analytic map on a neighborhood of $w$ in $H^{-s}_{c,0}$. The proof relies on the approximation of $w$
by finite gap potentials, which allows us to use similar methods of proofs as in the case where $\| u \|_{-s}$ is small.
 So we will be as brief as possible and only discuss in detail the additional arguments involved.

 First note that it follows by Theorem \ref{general counting lemma} that for any given $w \in H^{-s}_{r,0}$ with $0 \le s < 1/2$ and $n \ge 1,$ 
$F_n$ admits an analytic extension to $U^{-s}_w$, 
 \begin{equation}\label{def F_n general}
 F_n : U^{-s}_w \to \mathbb C\ , \ u \mapsto \frac{1}{2\pi i}\int_{\partial D_{\tau_n}(r_n + 1/3)} \langle (L_u - \lambda)^{-1} 1 | 1 \rangle d \lambda \, ,
 \end{equation}
 where the circle $\partial D_{\tau_n}(r_n + 1/3)$ is counterclockwise oriented. 
 Here $U^{-s}_w,$ $\tau_n$, $r_n$, and $D_{\tau_n}(r_n + 1/3)$ are given by  Theorem \ref{general counting lemma}
 with $\rho = 1/4$ and $M = \|w\|_{-s}$.
 
 \begin{proposition}\label{F analytic general}
Let $w \in H^{-s}_{r,0}$ with $0 \le s < 1/2$. Then for any $u \in U^{-s}_w$, $F(u)  \in \ell^{1, 2-2s}_+$
and $F :  U^{-s}_w \to \ell^{1, 2-2s}_+$ is analytic. In particular, by shrinking $U^{-s}_w$ if needed, 
$F(U^{-s}_w)$ is bounded in $\ell^{1, 2-2s}_+$.
%Furthermore
%$$
%\sum_{n=1}^\infty  \sum_{m=0}^\infty \big| n^{2-2s} \frac{1}{2\pi i} \int_{\partial D_n(1/3)}  \langle (D- \lambda)^{-1}(T_u(D - \lambda)^{-1})^m 1 | 1 \rangle d \lambda \big|
% $$
%converges  uniformly with respect to $u$.
\end{proposition}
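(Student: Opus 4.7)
The plan is to adapt the proof of Proposition \ref{F analytic near 0} to the present setting by replacing the unperturbed operator $D = L_0$ by the Lax operator $L_{w_0}$ of a finite gap potential $w_0 \in \mathcal U_N$ chosen close to $w$ as in \eqref{estimate w_0}. The analyticity of each individual component $F_n : U_w^{-s} \to \mathbb C$ follows at once from the contour integral representation \eqref{def F_n general} together with Theorem \ref{general counting lemma}, so the real task is to establish the uniform bound
$$
\sup_{u \in U_w^{-s}} \sum_{n \ge 1} n^{2-2s}|F_n(u)| < \infty,
$$
from which both the asserted boundedness of $F(U_w^{-s})$ in $\ell^{1,2-2s}_+$ and, by a Weierstrass-type argument, the analyticity of $F$ follow at once. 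The finitely many indices $1 \le n < n_0$ (with $n_0$ as in Theorem \ref{general counting lemma}) contribute a harmless finite sum, so the real content is in the tail $n \ge n_0$.

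For $n \ge n_0$ I write $u = w_0 + v$ and factor $L_u - \lambda = (\text{Id} - T_v(L_{w_0}-\lambda)^{-1})(L_{w_0}-\lambda)$. Corollary \ref{invertibility general case} supplies, in the shifted norm associated with $\|\cdot\|_{-s;n}$, the bound $\|T_v(L_{w_0}-\lambda)^{-1}\|_{H^{-s;n}_+ \to H^{-s;n}_+} \le 1/2$ on $\partial D_n(1/3)$, so the Neumann series
$$
(L_u - \lambda)^{-1} = \sum_{m \ge 0}(L_{w_0}-\lambda)^{-1}\bigl(T_v(L_{w_0}-\lambda)^{-1}\bigr)^m
$$
converges on the contour and, inserted into \eqref{def F_n general}, produces $F_n(u) = \sum_{m \ge 0} I_{n,m}(u)$ where $I_{n,m}(u)$ is the contour integral of $\bigl\langle (L_{w_0}-\lambda)^{-1}\bigl(T_v(L_{w_0}-\lambda)^{-1}\bigr)^m 1 \,\big|\, 1\bigr\rangle$ around $\partial D_n(1/3)$. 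I would next insert the spectral decomposition $(L_{w_0}-\mu)^{-1} = \sum_{k \ge 0}(\lambda_k(w_0)-\mu)^{-1}\, |f_k\rangle\langle f_k|$ (with $f_k := f_k(\cdot,w_0)$), splitting each factor into a finite-rank "low-$k$" block $k < N$ and a "high-$k$" block $k \ge N$ on which $\lambda_k(w_0) = k$ and $f_k = g_\infty e^{ikx}$ with $g_\infty := e^{i\partial_x^{-1}w_0}$. A decisive observation is that $f_k = g_\infty e^{ikx}$ together with the inclusion $g_\infty e^{iNx} \in H_+$ forces $\langle 1 | f_k\rangle = 0$ for every $k > N$; the residue calculation therefore gives $I_{n,0}(u) = F_n(w_0) = 0$ for every $n > N$, and the series defining $F_n(u)$ is genuinely driven by $v = u - w_0$.

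For $m \ge 1$, Lemma \ref{estimate for N-gap potentials} plays exactly the role that Lemma \ref{multiplication u f} played in Section \ref{Quasi-moment map}: it furnishes the shifted-norm bound required to iterate $T_v$ against the "high-$k$" part of the resolvent, while the Moser estimate \eqref{Moser} controls $\|g_\infty\|_{1-s}$ in terms of $\|w\|_{-s}$. Applying Cauchy's higher-derivative formula to compute the residue at $\lambda = n$, summing over the subsets $J \subset \{1,\ldots,m\}$ that record which resolvent factors produce that pole, and iterating the convolution operators $Q_{v,n}, Q_v$ of \eqref{def Q_u,n}--\eqref{def Q_u} on the Fourier side, one obtains an estimate of the form
$$
\sum_{n \ge n_0} n^{2-2s}|I_{n,m}(u)| \le (C \|v\|_{-s})^{m+1}
$$
with $C$ depending only on $s$, $\|w\|_{-s}$, and $N$. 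Summing the geometric series in $m$, after shrinking $U^{-s}_w$ so that $C \|v\|_{-s}$ is small enough, yields the desired uniform bound. The principal obstacle is the bookkeeping in this final combinatorial step: each resolvent factor now splits into the finite-rank block indexed by $k < N$ (whose contribution involves finitely many explicit eigenfunctions and decays polynomially in $|n-k|$) and the block indexed by $k \ge N$ (to which Lemma \ref{estimate for N-gap potentials} applies directly), and one must verify that the mixed terms, in which some resolvent factors come from the low-$k$ block, contribute only a finite-rank correction that fits into the same geometric bound, so that the combinatorial scheme of Lemma \ref{key estimate} transposes essentially verbatim.
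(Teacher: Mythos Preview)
Your overall strategy---Neumann series relative to $L_{w_0}$, then splitting each resolvent factor into the finite-rank block $k < N$ and the explicit block $k \ge N$ where $f_k = g_\infty e^{ikx}$---is the same high-level idea as the paper's. Your observations that $I_{n,0}(u) = 0$ for $n > N$ (because $1 \in E_1 := \mathrm{span}\{f_\alpha : \alpha \le N\}$) and that Lemma~\ref{estimate for N-gap potentials} plays the role of Lemma~\ref{multiplication u f} are also exactly right.

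Where your proposal diverges from the paper is in the organization of the combinatorics, and this is precisely where you concede the ``principal obstacle'' lies. You aim for the termwise estimate $\sum_{n \ge n_0} n^{2-2s}|I_{n,m}(u)| \le (C\|v\|_{-s})^{m+1}$ and hope to sum a geometric series. The paper does \emph{not} do this; instead it introduces a $2\times 2$ block decomposition with respect to $E_1 \oplus E_2$ (where $E_2 = \overline{\mathrm{span}}\{f_k : k > N\}$), writes the $\Pi_1$-corner of the inverse as $(\mathrm{Id}-A_{11})^{-1}(\mathrm{Id}-B)^{-1}$ with $B_1 := A_{12}(\mathrm{Id}-A_{22})^{-1}A_{21}$ and $B := B_1(\mathrm{Id}-A_{11})^{-1}$, and then expands this into a \emph{finite} list of main terms---$(\mathrm{Id}-A_{11})^{-1}$, $A_{11}B_1$, $B_1 A_{11}$, $B_1^k$ for $1 \le k < K_s$---plus a remainder $R$. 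The operator norm bounds $\|A_{11}\| \lesssim n^{-1}$ and $\|B_1\| \lesssim n^{2s-1}$ (the $n^{2s}$ loss coming from switching between the shifted norm $\|\cdot\|_{-s;n}$ on $E_2$ and the unshifted norm on $E_1$, via Lemma~\ref{equivalence of norms}) suffice to control $R$, whereas the finitely many main terms are handled by the explicit Fourier expansion of $A_{22}$ on $E_2$, reducing to the scheme of Lemma~\ref{key estimate}.

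The point is that the operator norm bound for $B_1$ alone, $\|B_1\| \lesssim n^{2s-1}$, is \emph{not} summable against $n^{2-2s}/n$ when $k=1$: one gets $n^0$, which diverges. So the single-$B_1$ term genuinely requires the fine Fourier computation, and the parameter $K_s := \min\{K : K(1-2s) > 2-2s\}$ is the cutoff beyond which crude norm bounds finally take over. In your direct approach every mixed configuration of low and high indices would need this fine analysis, and the claim that ``the combinatorial scheme of Lemma~\ref{key estimate} transposes essentially verbatim'' underestimates the work: each passage high $\to$ low $\to$ high is a $B_1$-factor, so controlling arbitrary patterns of such passages is exactly what the block expansion is designed to organize. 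Your approach is not wrong, but the paper's block decomposition isolates a finite list of terms requiring delicate treatment from an infinite remainder that does not, and this is a real simplification rather than mere bookkeeping.
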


Before proving Proposition \ref{F analytic general}, we make some preliminary considerations.
 Let $0 \le s < 1/2$, $ \rho = 1/4$, and $M > 0$ be given. For any $w \in H^{-s}_{r,0}$ with $\|w\|_{-s} \le M$, 
 choose $w_0 \in \mathcal U_N$ with $N\ge 1$, satisfying \eqref{estimate w_0}, and $n_0 > N$ 
 so that  Lemma \ref{T u (D - lambda)-1 general case} holds. Then there exists a neighborhood
 $U^{-s}_w$ of $w$ in $H^{-s}_{c,0}$, contained in $B^{-s}_{c,0}(w, 1/8C_{M, s, 1/4})$, so that 
 Theorem \ref{general counting lemma} holds. 
Together with Corollary \ref{invertibility general case}  it then follows that
 for any $u = w_0 + v \in U^{-s}_w$ and $n \ge n_0$, 
 \begin{equation}\label{estimate 1}
\| T_v(L_{w_0} - \lambda)^{-1} \|_{H^{-s; n}_+ \to H^{-s; n}_+} \le 1/2\, , 
\quad \forall \lambda \in  {\rm Vert}_{n}(1/4).
\end{equation}
It implies that for any $\lambda \in \bigcup_{n \ge n_0} {\rm Vert}_{n}(1/4)$, 
the inverse of the operator $L_u - \lambda : H^{1-s}_+ \to H^{-s}_+$
can be written as 
$$
(L_{w_0} - \lambda)^{-1}( \text{Id} - T_v(L_{w_0} - \lambda)^{-1})^{-1} \, .
$$
 Denote by $f_n \equiv f_n(\cdot, w_0)$, $n \ge 0$, the eigenfunctions of $L_{w_0}$, corresponding to the eigenvalues $\lambda_n \equiv \lambda_n(w_0)$, $n \ge 0$,
 normalized as in \cite{GK}. Then $\lambda_n = n$ and $f_n= g_\infty e^{inx}$ for any $n \ge N$ where $g_\infty = e^{i \partial_x^{-1}w_0}$. Furthermore, let
 $$
 E_1:= \text{span}\{ f_\alpha \,  : \, 0 \le \alpha \le N \} \, , \qquad E_2 \equiv E_2^{-s}:=  \text{span}\{ f_k \, :  \, k > N \} \, .
 $$
 Here $E_1$ is a subspace of $\cap_{m \ge 0}H^{m}_+$ of dimension $N+1$, spanned by $f_\alpha,$ $0 \le \alpha \le N$, 
and endowed with the $L^2-$norm, 
 whereas $E_2$ is the closed subspace of $H^{-s}_+$, spanned by $f_k$, $k > N$, 
 endowed with the norm $\| \cdot \|_{-s}$.
 We denote by $E_j^{-s; n}$ the space $E_j,$ when endowed with the norm $\| \cdot \|_{-s; n}$. Since for any $n > N$,  $\gamma_n(w_0) = 0$ and hence 
 $\langle 1 | f_n \rangle = 0$ it follows that the constant function $1$ is in $E_1$, 
 \begin{equation}\label{expansion of 1}
 1 = \sum_{\alpha \le N} \langle 1 | f_\alpha \rangle f_\alpha\, .
 \end{equation}
 It implies that $\langle (L_u - \lambda)^{-1} 1 | 1 \rangle$ is given by
 $$
  \sum_{\alpha, \beta \le N} \langle 1 | f_\alpha \rangle  \langle f_\beta | 1 \rangle  
 \langle (L_{w_0} - \lambda)^{-1}( \text{Id} - T_v(L_{w_0} - \lambda)^{-1})^{-1} f_\alpha | f_\beta \rangle
 $$
 which equals 
 \begin{equation}\label{formula 0}
   \sum_{\alpha, \beta \le N} \frac{ \langle 1 | f_\alpha \rangle  \langle f_\beta | 1 \rangle  }{\lambda_\beta - \lambda} 
   \langle ( \text{Id} - T_v(L_{w_0} - \lambda)^{-1})^{-1} f_\alpha | f_\beta \rangle \, .
 \end{equation}
 We now introduce for $1 \le j \le 2$ the canonical $L^2-$ projection $\Pi_j$ and 
 the canonical inclusion $\iota_j$, associated to the subspace $E_j$,
 $$
 \Pi_j : H^{-s}_+ \to E_j \, , \qquad \iota_j : E_j \to H^{-s}_+ \, ,
 $$ 
 and define for any $\lambda \in \text{Vert}_n(1/4)$, $n \ge n_0$, 
 $$
A_{ij} \equiv  A(\lambda)_{ij} := \Pi_i  T_v(L_{w_0} - \lambda)^{-1}  \iota_j  : E_j \to E_i\, ,  \quad  1 \le i, j \le 2 \, .
 $$
 A straightforward formal computation shows that
 $$
 \Pi_1  \big( \text{Id} - T_v(L_{w_0} - \lambda)^{-1} \big)^{-1} \iota_1 =  (\text{Id}- A_{11})^{-1} (\text{Id} - B)^{-1}\, , 
 $$
 where $B \equiv B(\lambda) : E_1 \to E_1$ is given by
 \begin{equation}\label{38i}
 B :=  B_1 (\text{Id}  - A_{11})^{-1}\, , \qquad  B_1 := A_{12} (\text{Id}  - A_{22})^{-1} A_{21}   \, .
 \end{equation}
 These computations are justified by the estimates below for $n$ sufficiently large.
 To estimate the quasi-moment map $F(u)$, we expand
 $$
 (\text{Id}- A_{11})^{-1} = \text{Id} + (\text{Id}- A_{11})^{-1}A_{11} =   \text{Id} + A_{11} +  (\text{Id}- A_{11})^{-1}A_{11}^2
 $$
 and
 $$
 (\text{Id} - B)^{-1} =  \text{Id} + B  + \cdots  + B^{K-1} +  (\text{Id} - B)^{-1} B^K \, ,
 $$
yielding
\begin{equation}\label{formula 1}
 \Pi_1  ( \text{Id} - T_v(L_{w_0} - \lambda)^{-1})^{-1} \iota_1 =  (\text{Id}- A_{11})^{-1} + A\, ,
\end{equation}
where 
$$
A \equiv A(\lambda) := (\text{Id}- A_{11})^{-1}(\text{Id} - B)^{-1} B: E_1 \to E_1 \, .
$$ 
The operator $A$ is expanded as
\begin{align}
 A & = (\text{Id} - B)^{-1}B + (\text{Id}- A_{11})^{-1} A_{11}B + (\text{Id}- A_{11})^{-1} A_{11} (\text{Id} - B)^{-1}B^2  \nonumber\\
& = (\text{Id} - B)^{-1}B + A_{11}B + (\text{Id}- A_{11})^{-1} A_{11}^2 B + (\text{Id}- A_{11})^{-1} A_{11} (\text{Id} - B)^{-1}B^2 \, , \nonumber
\end{align}
yielding
\begin{equation}\label{39i}
A =  (\text{Id} - B)^{-1}B + A_{11}B_1 + R_1 \, ,
\end{equation}
where the remainder $R_1 \equiv R_1(\lambda):  E_1 \to E_1$ is given by
\begin{equation}\label{39ii}
R_1 := A_{11}B A_{11} +  (\text{Id}- A_{11})^{-1}A_{11}^2B +  (\text{Id}- A_{11})^{-1}A_{11} (\text{Id} - B)^{-1} B^2 \, .
\end{equation}
We then expand $(\text{Id} - B)^{-1}B$ in the above expression for $A$ further, 
\begin{equation}\label{39iii}
(\text{Id} - B)^{-1}B = \sum_{j=1}^{K-1} B^j +  (\text{Id} - B)^{-1} B^K \, .
\end{equation} 
Note that
\begin{equation}\label{39iv}
B = B_1 + BA_{11} = B_1 + B_1 A_{11} + BA_{11}^2,
\end{equation}
and by induction, for any $2 \le k \le K-1$,
\begin{equation}\label{39v}
B^k =  B_1^k + \sum_{j=0}^{k-1} B_1^jBA_{11}B^{k-j-1}.
\end{equation}
Choosing $K_s:= \min \{ K \in \N : K > (2-2s)/(1-2s) \}$ we then get
by \eqref{38i}--\eqref{39v}
$$
 \Pi_1  ( \text{Id} - T_v(L_{w_0} - \lambda)^{-1})^{-1} \iota_1 =  
 (\text{Id}- A_{11})^{-1} + A_{11}B_1  + B_1 A_{11}  + \sum_{k=1}^{K_s -1} B_1^k + R
$$
where the remainder $R \equiv R(\lambda)$ is given by
$$
R := R_1 + BA_{11}^2 +  \sum_{k=2}^{K_s -1} \sum_{j=0}^{k-1}B_1^jBA_{11}B^{k-j-1} + B^{K_s} (\text{Id} - B)^{-1} \, .
$$
By \eqref{formula 0} and \eqref{formula 1} one has for any $n \ge n_0$
\begin{equation}\label{identity 1 F_n }
 F_n(u) 
 %= \frac{1}{2\pi i}\int_{\partial D_{n}(1/3)} \langle (L_u - \lambda)^{-1} 1 | 1 \rangle d \lambda 
 = \sum_{\alpha, \beta \le N}  \langle 1 | f_\alpha \rangle  \langle f_\beta | 1 \rangle  (  F^0_{n; \alpha, \beta}(u) + F_{n; \alpha, \beta}(u) )
 \end{equation}
 where
 \begin{equation}\label{def F^0_n}
 F^0_{n; \alpha, \beta}  (u) 
 : = \frac{1}{2\pi i}\int_{\partial D_{n}(1/3)}   \frac{1}{\lambda_\beta - \lambda} \langle (\text{Id}- A_{11}(\lambda))^{-1} f_\alpha | f_\beta \rangle d \lambda
 \end{equation}
 and
 \begin{align}\label{identity 2 F_n }
F_{n; \alpha, \beta}  (u) 
 & : = \frac{1}{2\pi i}\int_{\partial D_{n}(1/3)}   \frac{1}{\lambda_\beta - \lambda} \langle A(\lambda) f_\alpha | f_\beta \rangle d \lambda\nonumber \\
& = \frac{1}{2\pi i}\int_{\partial D_{n}(1/3)}   \frac{1}{\lambda_\beta - \lambda} \langle \big(A_{11}B_1  + B_1 A_{11}  + \sum_{k=1}^{K_s -1} B_1^k \big)  f_\alpha \, | \, f_\beta \rangle d \lambda 
\nonumber\\
& \quad+ \frac{1}{2\pi i}\int_{\partial D_{n}(1/3)}   \frac{1}{\lambda_\beta - \lambda} \langle R(\lambda) f_\alpha | f_\beta \rangle d \lambda \, .
\end{align}
To estimate the remainder term, we analyze the operator norms of the various operators involved. 
For any bounded linear operator $G: E_i \to E_j$, we denote by $\|G \|$ the usual operator norm. 
For a bounded linear operator $G: E_i \to E_j$, where $E_i$ and $E_j$ are endowed with the norm $\| \cdot \|_{-s; n}$,
the corresponding operator norm is denoted by $\| G \|_{-s; n}$. 

\medskip
\noindent
{\em Estimate of $\| A_{11}(\lambda) \|.$} For any $f \in E_1$ and $\lambda \in \text{Vert}_n(1/4)\cup D_n(1/4)$, $n \ge n_0$,
one has with $f = \sum_{\alpha \le N} \langle f | f_\alpha \rangle f_\alpha$,
\begin{equation}\label{42i}
A_{11}(\lambda) f   =  \sum_{\alpha \le N} \langle f | f_\alpha \rangle \Pi_1 T_v (L_{w_0} - \lambda)^{-1} f_\alpha 
=  \sum_{\alpha, \beta \le N} \langle f | f_\alpha \rangle \frac{ \langle v f_\alpha | f_\beta \rangle}{\lambda_\alpha - \lambda}  f_\beta \, .
\end{equation}
 Since 
 $$
 \sup_{\lambda \in \text{Vert}_n(\frac{1}{4})\cup D_n(\frac{1}{4})} \frac{1}{|  \lambda -  \lambda_\alpha |}  \le \frac{1}{n - 1/2 - N } \, , \qquad \forall \, n \ge n_0,
 $$
 one has with $\|f\| = \big( \sum_{\alpha \le N} | \langle f | f_\alpha \rangle |^2 \big)^{1/2}$,
 $$
 \begin{aligned}
  \sup_{\text{Vert}_n(\frac{1}{4})\cup D_n(\frac{1}{4})}  \| A_{11}(\lambda) f  \| & \le \frac{1}{n - 1/2 - N} 
  \big( \sum_{\beta \le N} \big| \sum_{\alpha \le N} \langle f | f_\alpha \rangle \langle v f_\alpha | f_\beta \rangle \big|^2 \big)^{1/2}\\
  & \le  \frac{N + 1}{n - 1/2 - N} \max_{\alpha, \beta} \big(\| vf_\alpha \|_{-s}  \|f_\beta\|_s\big) \, \|f\| \, .
  \end{aligned}
 $$
 Using that $\| vf_\alpha \|_{-s} \le C_s \|v\|_{-s} \|f_\alpha \|_{1-s}$ (cf. Lemma \ref{multiplication u f}) one can find 
 a constant $C_{11}^{N,s} > 0$, independent of $n \ge n_0,$ so that for any $n \ge n_0$ and $u = w_0 + v \in U^{-s}_w$, 
 \begin{equation}\label{estimate 2}
  \sup_{\lambda \in  \text{Vert}_n(\frac{1}{4})\cup D_n(\frac{1}{4})}  \| A_{11}(\lambda)  \|  \le C_{11}^{N, s} \,  \frac{1}{n}\, .
 \end{equation}
 As a consequence, there exists $n_1 \ge n_0$ so that 
 \begin{equation}\label{43i}
  \sup_{\lambda \in  \text{Vert}_n(\frac{1}{4})\cup D_n(\frac{1}{4})}  \| A_{11}(\lambda)  \|  \le 1/2\, , \qquad \forall \, n \ge n_1 ,
  \end{equation}
 and hence for any $u = w_0 + v \in U^{-s}_w$,
 \begin{equation}\label{estimate 3}
   \sup_{\lambda \in  \text{Vert}_n(\frac{1}{4})\cup D_n(\frac{1}{4})}  \| (\text{Id} - A_{11}(\lambda))^{-1}  \|  \le 2 \, , \qquad \forall \, n \ge n_1 .
 \end{equation} 
In particular, since by \eqref{42i}, $\lambda \mapsto A_{11}(\lambda)$ is analytic on $D_n(1/3)$ for any $n \ge n_1$,
it follows from \eqref{43i} that
$\lambda \mapsto (\text{Id}- A_{11}(\lambda))^{-1}$ is analytic on $D_n(1/3)$ for any $n \ge n_1$ as well and  
hence by definition \eqref{def F^0_n},
\begin{equation}\label{vanishing F^0_n}
 F^0_{n; \alpha, \beta}  (u) = 0\, , \qquad \forall \, n \ge n_1\, , \  \forall \, u = w_0 + v \in U^{-s}_w\, .
 \end{equation}
 
\noindent
{\em Estimate of $\| A_{21}(\lambda) \|.$} For any $f \in E_1$ and any $\lambda \in \text{Vert}_n(1/4)$, $n \ge n_0$,
one has with $f = \sum_{\alpha \le N} \langle f | f_\alpha \rangle f_\alpha$,
$$
A_{21}(\lambda) f   =  \sum_{\alpha \le N} \langle f | f_\alpha \rangle \Pi_2 T_v (L_{w_0} - \lambda)^{-1} f_\alpha 
= \sum_{\alpha \le N, k > N}  \langle f | f_\alpha \rangle \frac{ \langle v f_\alpha | f_k \rangle}{\lambda_\alpha - \lambda}  f_k \, .
$$ 
Since by \cite[Lemma 6]{GKT}, there exists a constant $C > 0$ so that
$$
 \|g\|_{-s}  \le C \big( \sum_{k \ge 0} \frac{1}{\langle k \rangle ^{2s}} | \langle g | f_k \rangle |^2 \big)^{1/2} \, , \qquad \forall \, g \in H^{-s}_+ \, ,
$$
one has by Cauchy-Schwarz,
$$
\begin{aligned}
  \sup_{\lambda \in \text{Vert}_n(1/4)}  & \| A_{21}(\lambda) f  \|_{-s}   \le 
  \frac{C}{n - 1/2 - N}   \big( \sum_{k >N}  \frac{1}{k  ^{2s}} \big| \sum_{\alpha \le N} \langle f | f_\alpha \rangle \langle v f_\alpha | f_k \rangle \big|^2 \big)^{1/2}\\
 & \le   \frac{C \cdot (N+1) }{n - 1/2 - N} \, \max_{\alpha \le N}  \big( \sum_{k > N}  \frac{1}{k  ^{2s}}   | \langle v f_\alpha | f_k \rangle |^2 \big)^{1/2} \, \|f\| \, .
  \end{aligned}
 $$
 Using that $f_k = g_\infty e^{ikx}$ for any $k \ge N$ one sees that
 $$
  \big(\sum_{k > N}  \frac{1}{k  ^{2s}}   | \langle v f_\alpha | f_k \rangle |^2 \big)^{1/2} \le   \|  v f_\alpha \overline{g}_\infty \|_{-s}\, .
 $$
Recall that 
$g_\infty = e^{i \partial_x^{-1}w_0}$ is $C^\infty-$smooth and that $H^{1-s}_c$ is an algebra, implying that there exists a constant $C' > 0$ 
so that 
$$
 \| h_1 h_2  \|_{1-s} \le C' \|h_1\|_{1-s} \|h_2\|_{1-s} \, , \qquad \forall \, h_1, h_2 \in H^{1-s}_{c}.
 $$
It then follows from Lemma  \ref{multiplication u f}  and \eqref{Moser} that
 $$
  \|  v f_\alpha \overline{g}_\infty \|_{-s} \le  C_s \|v\|_{-s}  \cdot C' \|f_\alpha \|_{1-s} C_{s,1}(1 + \|w_0\|_{-s}) \, .
 $$
 Altogether we have shown that there exists a constant $C_{21}^{N,s} > 0 $ so that
  for any $n \ge n_0$ and $u = w_0 + v \in U^{-s}_w$, 
 \begin{equation}\label{estimate 4}
   \sup_{\lambda \in \text{Vert}_n(1/4)}   \| A_{21}(\lambda)   \|   \le C_{21}^{N,s} \frac{1}{n} \, .
 \end{equation}
 \medskip
\noindent
{\em Estimate of $\| (\text{Id} - A_{22}(\lambda ))^{-1} \|_{-s; n}$.} It follows from \eqref{estimate 1} that for any $n \ge n_0$ and  any $u = w_0 + v \in U^{-s}_w$
\begin{equation}\label{estimate 5a}
 \sup_{\lambda \in \text{Vert}_n(1/4)} \| A_{22}(\lambda)  \|_{-s; n} \le 1/2 \, .
 \end{equation}
 Hence writing $( \text{Id} - A_{22}(\lambda) )^{-1}$ as a Neumann series, one sees that
\begin{equation}\label{estimate 5}
 \sup_{\lambda \in \text{Vert}_n(1/4)} \| ( \text{Id} - A_{22}(\lambda) )^{-1} \|_{-s; n} \le 2\, .
\end{equation}
 \medskip
\noindent
{\em Estimate of $\| A_{12}(\lambda) \|_{-s; n}.$}  
Recall that $  A_{12}(\lambda) = \Pi_1  T_v(L_{w_0} - \lambda)^{-1}  \iota_2$. Note that
$\iota_2 : E_2^{-s;n} \hookrightarrow H^{s;-n}_+$ and $\Pi_1 : H^{s;-n}_+ \to E_1^{-s; n}$ satisfy
$\| \iota_2 \|_{- s; n} = 1$ and $\| \Pi_1 \|_{- s; n} = 1$. Hence it follows  
from \eqref{estimate 1} that for any $n \ge n_0$ and  any $u = w_0 + v \in U^{-s}_w$
\begin{equation}\label{estimate 6}
 \sup_{\lambda \in \text{Vert}_n(1/4)} \| A_{12}(\lambda)  \|_{-s; n} \le \frac{1}{2}\, . 
\end{equation}
 From the estimates   \eqref{estimate 2} - \eqref{estimate 6} and Lemma \ref{equivalence of norms}
 one concludes by \eqref{38i} that 
  there exists $C_B^{N,s} > 0$ so that for any $n \ge n_0$
\begin{equation}\label{estimate 7a}
\sup_{\lambda \in \text{Vert}_n(1/4)}   \| B_1(\lambda)  \|  \le C_B^{N,s} \, \frac{n^{2s}}{n}.
\end{equation}
Since $B =  B_1 (\text{Id}  - A_{11})^{-1}$ one then infers from  \eqref{estimate 3}, by increasing $n_1$, if needed, that 
\begin{equation}\label{estimate 7}
\sup_{\lambda \in \text{Vert}_n(1/4)} \| B(\lambda)  \| \le 2 C_B^{N,s} \, \frac{n^{2s}}{n} \le \frac{1}{2} \, , \qquad \forall \, n \ge n_1 \, .
\end{equation}

\medskip
\noindent 
{\em Proof of Proposition \ref{F analytic general}.}
Since $F_n$, $n \ge 1$, defined  by \eqref{def F_n general}, is analytic on $U^{-s}_w$, it remains to show that 
$\sum_{n \ge n_1} n^{2-2s} |F_n(u) | < \infty$ locally uniformly on $U^{-s}_w$ where $n_1\ge n_0$ is chosen so that 
 \eqref{estimate 3} and \eqref{estimate 7} are satisfied.
By \eqref{identity 1 F_n } - \eqref{identity 2 F_n } and \eqref{vanishing F^0_n},  it suffices to show that for any $0 \le \alpha, \beta \le N$ and 
$G \in \{ B_1 A_{11}, A_{11}B_1, B_1^k \, (1 \le k < K_s), R\},$
$$
\sup_{u \in U^{-s}_w} \,  \sum_{n \ge  n_1} n^{2-2s}  \Big| \frac{1}{2\pi i}
\int_{\partial D_{n}(1/3)}  \frac{1}{\lambda_\beta - \lambda} \langle G(\lambda) f_\alpha \, | \, f_\beta \rangle d \lambda \, \Big| < \infty .
$$

\smallskip
\noindent
{\em Estimate of $R$.}
From the estimates \eqref{estimate 2}, \eqref{estimate 3}, and \eqref{estimate 7a}, \eqref{estimate 7},
one infers that there exists a constant $C_R^{N, s} \ge 1$ so that for any $0 \le \alpha, \beta \le N$ and any $\Xi(\lambda)$ in
$$
\begin{aligned}
&\{ A_{11}B A_{11},  (\text{Id}- A_{11})^{-1}A_{11}^2B,  (\text{Id}- A_{11})^{-1}A_{11} (\text{Id} - B)^{-1} B^2, \\
& BA_{11}^2 ,  B_1^jBA_{11}B^{k-j-1} \, (0 \le j < k, 2 \le k < K_s)   \},
\end{aligned}
$$
$$
\Big| \frac{1}{\lambda_\beta - \lambda} \langle \Xi(\lambda) f_\alpha | f_\beta \rangle \Big|
\le \frac{1}{n -\frac 12 - N} \sup_{\lambda \in {\text Vert}_n(1/4)} \| \Xi(\lambda) \|
\le C_R^{N, s} \frac{1}{n} \frac{n^{4s}}{n^3} ,
$$
implying that for any $0 \le \alpha, \beta \le N$,
\begin{align}\label{estimate R}
\sup_{u \in U_w^{-s}} \sum_{n \ge n_1} n^{2-2s} 
\big| \frac{1}{2\pi i} \int_{\partial D_n(1/3)} \frac{1}{\lambda_\beta - \lambda} \langle \Xi(\lambda) f_\alpha | f_\beta \rangle d \lambda   \big|
\lesssim \sum_{n \ge n_1} C^{N,s}_R \frac{n^{2s}}{n^2} < \infty \, .
\end{align}

The operator $B^{K_s} (\text{Id} - B)^{-1}$ is estimated similarly,
$$
\begin{aligned}
& \Big| \frac{1}{\lambda_\beta - \lambda} \langle B^{K_s} (\text{Id} - B)^{-1} f_\alpha | f_\beta \rangle \Big| \\
&\le \frac{1}{n -\frac 12 - N} \sup_{\lambda \in {\text Vert}_n(1/4)} \| B^{K_s} (\text{Id} - B)^{-1} \|
\le C_R^{N, s} \frac{1}{n} \big( \frac{n^{2s}}{n} \big)^{K_s} .
\end{aligned}
$$
Since $K_s \cdot (1-2s) > 2 - 2s$ by the definition of $K_s$, it follows that
$$
\sum_{n \ge n_1} n^{2-2s} \frac{1}{n} \big( \frac{n^{2s}}{n} \big)^{K_s} < \infty
$$
and we have
\begin{align}\label{estimate Ra}
\sup_{u \in U_w^{-s}} \sum_{n \ge n_1} n^{2-2s} 
\big| \frac{1}{2\pi i} \int_{\partial D_n(1/3)} \frac{1}{\lambda_\beta - \lambda} \langle B^{K_s} (\text{Id} - B)^{-1} f_\alpha | f_\beta \rangle d \lambda   \big| < \infty .
\end{align}

\smallskip
\noindent
{\em Estimate of $G \in \{ A_{11}B_1, \,  B_1A_{11}, \, B_1^k$ ($1 \le k <  K_s$) \}.} 
%First note that
%$$
%B = B_1 + B_1A_{11} + B_1 (\text{Id} - A_{11})^{-1}  A_{11}^2\, , \qquad B_1 := A_{12} (\text{Id}  - A_{22})^{-1} A_{21}\, ,
%$$
%implying that
%$$
%A_{11}B = A_{11}B_1 +  A_{11}B_1 (\text{Id} - A_{11})^{-1}  A_{11} \, ,
%$$
%$$
%B A_{11} = B_1A_{11} +  B_1 (\text{Id} - A_{11})^{-1}  A_{11} ^2\, , \quad
%$$
%and
%$$
%B^2 = B_1^2 + B_1(\text{Id} - A_{11})^{-1}  A_{11}B_1 + BB_1(\text{Id} - A_{11})^{-1}  A_{11}\, .
%$$
%The estimates   \eqref{estimate 2} - \eqref{estimate 6} imply (cf. \eqref{estimate 7}) that there exists $C_{B_1}^{N,s} > 0$ so that for any $n \ge n_1$
%\begin{equation}\label{estimate 8}
%\sup_{\lambda \in \text{Vert}_n(1/4)} \| B_1(\lambda)  \| \le C_{B_1}^{N,s} \frac{1}{n}.
%\end{equation}
%It then follows that 
%$$
%B_1 (\text{Id} - A_{11})^{-1}  A_{11}^2, \qquad A_{11}B_1 (\text{Id} - A_{11})^{-1}  A_{11},
%$$
%$$
%B_1(\text{Id} - A_{11})^{-1}  A_{11}B_1, \qquad BB_1(\text{Id} - A_{11})^{-1}  A_{11}
%$$
%can be treated in the same way as $R$ and 
The operators in the latter list will be treated individually. 

\smallskip
\noindent
{\em Estimate of $B_1$.} We claim that for any given $0 \le \alpha, \beta \le N$,
\begin{equation}
\sup_{u \in U^{-s}_w} \sum_{n \ge  n_1} n^{2-2s}  |\frac{1}{2\pi i}
\int_{\partial D_{n}(1/3)}  \frac{1}{\lambda_\beta - \lambda} \langle B_1(\lambda) f_\alpha \, | \, f_\beta \rangle d \lambda \, |  < \infty \, .
\end{equation}
One computes, using the definition \eqref{38i} of $B_1$ and the identities $f_k = g_\infty e^{ikx}$, $\lambda_k = k$, which are valid for any $k > N$, 
$$
\begin{aligned}
\langle B_1 f_\alpha \, | \, f_\beta \rangle & = \sum_{k, \ell > N} \frac{\langle v f_\alpha | f_k \rangle}{\lambda_\alpha - \lambda} 
\frac{\overline{\langle \overline v f_\beta | f_\ell \rangle}}{\ell - \lambda} 
\langle (\text{Id}  - A_{22})^{-1} f_k | f_\ell \rangle \\
& = \sum_{k, \ell > N} \frac{\langle v f_\alpha \overline g_\infty | e^{ikx} \rangle}{\lambda_\alpha - \lambda} 
\frac{\langle v \overline f_\beta g_\infty | e^{-i\ell x} \rangle}{\ell - \lambda} 
\langle (\text{Id}  - A_{22})^{-1} f_k | f_\ell \rangle \, .
\end{aligned}
$$
By \eqref{estimate 5a} it follows that for any $\lambda \in \text{Vert}_n(1/4)$ with $n \ge n_0,$
one can write $(\text{Id}  - A_{22})^{-1}$ as a Neumann series, $\sum_{m \ge 0}  (A_{22})^m $.
We are thus lead to introduce for any $m \ge 0$ and any $k, \ell > N,$
$$
G_{n, k, \ell, m} :=  \frac{1}{2\pi i} \int_{\partial D_{n}(1/3)}  \frac{1}{(\lambda_\beta - \lambda)(\lambda_\alpha - \lambda)(\ell - \lambda)}
\langle A_{22}(\lambda)^m f_k \, | \, f_\ell \rangle d \lambda
$$
so that
$$
 \frac{1}{2\pi i} \int_{\partial D_{n}(1/3)} \frac{ \langle B_1(\lambda) f_\alpha \, | \, f_\beta \rangle }{\lambda_\beta - \lambda} d \lambda =
 \sum_{k, \ell > N, m \ge 0}  \widehat{ v f_\alpha \overline g_\infty}(k) \, \widehat{ v \overline f_\beta g_\infty}(-\ell )
 G_{n, k, \ell, m}\, .
$$
For $m = 0$, we get by Cauchy's theorem
$$
\begin{aligned}
 G_{n, k, \ell, 0} & =  \frac{1}{2\pi i} \int_{\partial D_{n}(1/3)}  \frac{1}{(\lambda_\beta - \lambda)(\lambda_\alpha - \lambda)(\ell - \lambda)} \langle f_k \, | \, f_\ell \rangle d \lambda\\
 & = - \delta_{k\ell} \delta_{n\ell} \frac{1}{(\lambda_\beta - n)(\lambda_\alpha - n)}\, ,
 \end{aligned}
$$
implying that
$$
\begin{aligned}
& \sum_{n \ge  n_1} n^{2-2s}  \big|  \sum_{k, \ell > N}  \, \widehat{ v f_\alpha \overline g_\infty}(k) \, \widehat{ v \overline f_\beta g_\infty}(-\ell ) G_{n, k, \ell, 0} | \\
& \le  \sum_{n \ge  n_1} \frac{n^2}{(n - \lambda_\alpha)(n - \lambda_\beta)}  
\frac{| \widehat{ v f_\alpha \overline g_\infty}(n)|}{n^s} \frac{|\widehat{ v \overline f_\beta g_\infty}(-n ) |}{n^s}\\
& \lesssim \|  v f_\alpha \overline g_\infty\|_{-s} \|v \overline f_\beta g_\infty\|_{-s} \, .
\end{aligned}
$$
We thus have proved that 
\begin{equation}\label{estimate for m=0}
 \sum_{n \ge  n_1} n^{2-2s}  \big|  \sum_{k, \ell > N}  \, \widehat{ v f_\alpha \overline g_\infty}(k) \, \widehat{ v \overline f_\beta g_\infty}(-\ell )  G_{n, k, \ell, 0} |  
 \lesssim  \|v\|_{-s}^2\, .
\end{equation}
Let us now consider the case $m =1$. Note that for any $k > N,$ one has
$$
 A_{22} f_k =  \Pi_2T_v(L_{w_0} - \lambda)^{-1}  f_k = \frac{\Pi_2(v f_k)}{k - \lambda}\, .
$$
Since $|g_\infty (x)| =1$ one then infers that for any $\ell > N$
$$
\langle  A_{22} f_k  | f_\ell \rangle =   \frac{\langle v g_\infty e^{ikx}  | g_\infty e^{i \ell x}\rangle }{k - \lambda} = \frac{ \widehat v(\ell - k)}{k - \lambda}\, .
$$
Hence using that $\widehat v(0) = 0,$  one sees that for $n \ge n_1$ and $(k, \ell) \ne (n, n)$, $G_{n, k, \ell, 1}$ can be computed as
$$
\begin{aligned}
 G_{n, k, \ell, 1} & =  \frac{1}{2\pi i} \int_{\partial D_{n}(1/3)}  \frac{ \widehat v(\ell - k)}{(\lambda_\beta - \lambda)(\lambda_\alpha - \lambda)(\ell - \lambda)(k - \lambda)} d \lambda\\
 & = \frac{\widehat v(n- k) \delta_{\ell n}}{(\lambda_\beta - n)(\lambda_\alpha - n)(n - k)} +  \frac{\widehat v(\ell- n) \delta_{k n}}{(\lambda_\beta - n)(\lambda_\alpha - n)(n - \ell )}  \, ,
 \end{aligned}
$$
whereas for $(k, \ell) = (n, n)$ one has $G_{n, n, n, 1} = 0$.
This implies that
$$
\begin{aligned}
& \sum_{n \ge  n_1} n^{2-2s}  |  \sum_{k, \ell > N}  \widehat{ v f_\alpha \overline g_\infty}(k) \, \widehat{ v \overline f_\beta g_\infty}(-\ell )  G_{n, k, \ell, 1} | \\
& \lesssim  \sum_{n \ge  n_1}   \frac{|\widehat{ v \overline f_\beta g_\infty}(-n ) |}{n^s} \,  
\frac{ 1}{n^s}  \sum_{k > N, k \ne n } | \widehat{ v f_\alpha \overline g_\infty}(k)|  \frac{|\widehat v(n- k)|}{|n- k|}  \\
 & +  \sum_{n \ge  n_1}   \frac{|\widehat{ v f_\alpha \overline g_\infty}(n ) |}{n^s} \,  
 \frac{ 1}{n^s}  \sum_{\ell > N, \ell \ne n} | \widehat{ v \overline f_\beta g_\infty}(- \ell)|  \frac{|\widehat v( \ell -n )|}{| \ell - n|}  \\
& \lesssim \|  v f_\alpha \overline g_\infty\|_{-s} \|v \overline f_\beta g_\infty\|_{-s} \|v\|_{-s}\, .
\end{aligned}
$$
Hence
$$
 \sum_{n \ge  n_1} n^{2-2s}  |  \sum_{k, \ell > N}  \widehat{ v f_\alpha \overline g_\infty}(k) \, \widehat{ v \overline f_\beta g_\infty}(-\ell )   G_{n, k, \ell, 1} | 
 \lesssim \|v\|_{-s}^3 \, .
$$
Now we consider the case $m \ge 2$. One computes
$$
\langle  (A_{22})^2 f_k  | f_\ell \rangle =  
\sum_{k_1 > N} \frac{\langle v f_k  | f_{k_1} \rangle \langle v f_{k_1} | f_\ell \rangle }{(k - \lambda) (k_1 - \lambda)} = 
\sum_{k_1 > N} \frac{\widehat v(k_1 - k) \widehat v(\ell - k_1)}{(k - \lambda)(k_1 - \lambda)} \, .
$$
%\textcolor{red}{NOT TO BE INCLUDED Using Cauchy's formula for derivatives of an analytic function and again the assumption $\widehat v(0) = 0$,  
%$ G_{n, k, \ell, 2}$ can be computed as
%$$
%\begin{aligned}
%G_{n, k, \ell, 2}  & = 
% \sum_{k_1 > N} \frac{1}{2\pi i} \int_{\partial D_{n}(1/3)}  
% \frac{ \widehat v(k_1 - k) \widehat v(\ell - k_1)}{(\lambda_\beta - \lambda)(\lambda_\alpha - \lambda)(\ell - \lambda)(k - \lambda)(k_1 - \lambda)} d \lambda\\
%&=  \frac{\widehat v(k_1 - k)  \widehat v(n- k_1) 1_{\{\ell = n, k \ne n, k_1\ne n\}}}{(\lambda_\beta - n)(\lambda_\alpha - n)(k - n)(n - k_1) } \\
% &\quad  +  \frac{\widehat v(k_1 - n)  \widehat v(\ell - k_1) 1_{\{\ell \ne n, k = n, k_1\ne n\}}}{(\lambda_\beta - n)(\lambda_\alpha - n)(\ell - n)(n - k_1) } \\
%&\quad + \frac{\widehat v(n - k)  \widehat v(\ell - n) 1_{\{\ell \ne n, k \ne n, k_1= n\}}}{(\lambda_\beta - n)(\lambda_\alpha - n)(\ell - n)(n - k) }\\
%&\quad + \widehat v(k_1 - n)  \widehat v(n - k_1) 1_{\{\ell = n, k = n, k_1 \ne n\}} \mu(k_1) \end{aligned}
%$$
%where
%$$
%\mu(k_1) := \frac{d}{d\lambda}|_{\lambda = n}\frac{1}{(\lambda_\beta - \lambda)(\lambda_\alpha - \lambda)(k_1 - \lambda) }\, .
%$$}
More generally, for any $m \ge 2$, the expression $\langle  (A_{22})^m f_k  | f_\ell \rangle$ can be expanded as
$$
%\langle  A_{22} (\lambda)^m f_k  | f_\ell \rangle =
\sum_{k_j > N} \frac{\widehat v(k_1 - k)\widehat v(k_2 - k_1)  \cdots  \widehat v(k_{m-1} - k_{m-2}) \widehat v(\ell - k_{m-1})}{(k - \lambda)(k_1 - \lambda) \cdots (k_{m-1} - \lambda)} \, ,
$$
leading to the following formula for $G_{n, k, \ell, m} $,
$$
 \sum_{k_j > N} \frac{1}{2\pi i} \int_{\partial D_{n}(1/3)} 
 \frac{\widehat v(k_1 - k)\widehat v(k_2 - k_1)  \cdots  \widehat v(k_{m-1} - k_{m-2}) \widehat v(\ell - k_{m-1})}
 {(\lambda_\beta - \lambda)(\lambda_\alpha - \lambda)(\ell - \lambda)(k - \lambda) (k_1 - \lambda) \cdots (k_{m-1} - \lambda)}
d \lambda\, .
$$
One then shows by the arguments of the proof of Proposition \ref{F analytic near 0} that
$$
\sup_{u \in U^{-s}_w} \sum_{n \ge  n_1} n^{2-2s}  \Big| \frac{1}{2\pi i}
\int_{\partial D_{n}(1/3)}  \frac{1}{\lambda_\beta - \lambda} \langle B_1(\lambda) f_\alpha \, | \, f_\beta \rangle d \lambda \, \Big|  < \infty \, .
$$

\smallskip
\noindent
{\em Estimate of $A_{11}(\lambda)B_1(\lambda)$.} For any $0 \le \alpha, \beta \le N$,
$$
 \frac{1}{\lambda_\beta - \lambda}\langle A_{11} B_1 f_\alpha | f_\beta \rangle 
 =   \sum_{\beta_1 \le N} \langle v \overline f_\beta | \overline f_{\beta_1} \rangle
 \frac{1}{\lambda_\beta - \lambda} \frac{1}{\lambda_{\beta_1} - \lambda} \langle B_1 f_\alpha \, | \, f_{\beta_1} \rangle 
$$
and hence $\langle A_{11} B_1 f_\alpha | f_\beta \rangle$ can be treated in the same way as
$ \langle B_1 f_\alpha \, | \, f_{\beta} \rangle $.

\medskip
\noindent
{\em Estimate of $B_1(\lambda) A_{11}(\lambda)$.} For any $0 \le \alpha, \beta \le N$,
$$
 \frac{1}{\lambda_\beta - \lambda}\langle  B_1 A_{11} f_\alpha | f_\beta \rangle = \sum_{\alpha_1 \le N} \langle v  f_\alpha |  f_{\alpha_1} \rangle
 \frac{1}{\lambda_\beta - \lambda} \frac{1}{\lambda_{\alpha} - \lambda} \langle B_1 f_{\alpha_1} \, | \, f_{\beta} \rangle 
$$
and so $\langle  B_1 A_{11} f_\alpha | f_\beta \rangle$ can also be treated in the same way as
$ \langle B_1 f_\alpha \, | \, f_{\beta} \rangle $.

\medskip
\noindent
{\em Estimate of $B_1(\lambda)^2$.} The expression $\langle B_1^2 f_\alpha \, | \, f_\beta \rangle$ can be expanded as
$$
\begin{aligned}
& \sum_{k, \ell > N} \frac{\langle v f_\alpha | f_k \rangle}{\lambda_\alpha - \lambda} 
\frac{\overline{\langle \overline v f_\beta | f_\ell \rangle}}{\ell - \lambda} 
\langle (\text{Id}  - A_{22})^{-1} A_{21}A_{12}(\text{Id}  - A_{22})^{-1}  f_k | f_\ell \rangle \\
& = \sum_{k, \ell > N} \frac{\widehat{ v f_\alpha \overline g_\infty} (k) }{\lambda_\alpha - \lambda} 
\frac{\widehat{ v \overline f_\beta g_\infty}(-\ell)}{\ell - \lambda} 
\langle (\text{Id}  - A_{22})^{-1}  A_{21}A_{12}(\text{Id}  - A_{22})^{-1}  f_k | f_\ell \rangle \, .
\end{aligned}
$$
Since $A_{12} f_{k_1} = \sum_{\alpha_1 \le N} \frac{\langle vf_{k_1} | f_{\alpha_1} \rangle}{k_1 - \lambda} f_{\alpha_1}$ one has
$$
\begin{aligned}
A_{21}A_{12} f_{k_1}
&= \sum_{\alpha_1 \le N, k_2 > N} \frac{\langle vf_{k_1} | f_{\alpha_1} \rangle}{k_1 - \lambda} \frac{ \langle v f_{\alpha_1} | f_{k_2} \rangle }{\lambda_{\alpha_1} - \lambda} f_{k_2} \\
& = \sum_{\alpha_1 \le N, k_2 > N} \frac{\widehat{ v \overline f_{\alpha_1} g_\infty}(-k_1)}{k_1 - \lambda} 
\frac{ \widehat{ v f_{\alpha_1} \overline g_\infty}( k_2) }{\lambda_{\alpha_1} - \lambda} f_{k_2}
\end{aligned}
$$
and hence $\langle B_1^2 f_\alpha \, | \, f_\beta \rangle$ can be written as 
$$
\langle B_1^2 f_\alpha \, | \, f_\beta \rangle = \sum_{\alpha_1 \le N} H_{1, \alpha_1} H_{2, \alpha_1}
$$
where $H_{j, \alpha_1} \equiv H_{j, \alpha_1}(\lambda)$, $j = 1,2$, are given by
$$
H_{1, \alpha_1} :=  \sum_{k, k_1 >N}  \frac{\widehat{ v f_\alpha \overline g_\infty} (k) }{\lambda_\alpha - \lambda} 
\frac{\widehat{ v \overline{f}_{\alpha_1}  g_\infty }(-k_1)}{k_1 - \lambda} \langle (\text{Id}  - A_{22})^{-1} f_k | f_{k_1} \rangle
$$
and
$$
H_{2, \alpha_1} :=  \sum_{\ell, k_2 >N}  \frac{\widehat{ v \overline f_\beta g_\infty}(-\ell)}{\ell - \lambda} 
\frac{\widehat{ v f_{\alpha_1} \overline g_\infty}(k_2)  }{\lambda_{\alpha_1} - \lambda}  \langle (\text{Id}  - A_{22})^{-1} f_{k_2} | f_{\ell} \rangle \, .
$$
Writing $(\text{Id}  - A_{22})^{-1}$ as a Neumann series, $\sum_{m \ge 0}  (A_{22})^m $, and using that
for any $m \ge 2$, $\langle  (A_{22})^m f_k  | f_\ell \rangle$ can be expanded as
$$
\sum_{k_j > N} \frac{\widehat v(k_1 - k)\widehat v(k_2 - k_1)  \cdots  \widehat v(k_{m-1} - k_{m-2}) \widehat v(\ell - k_{m-1})}{(k - \lambda)(k_1 - \lambda) \cdots (k_{m-1} - \lambda)},
$$
one then shows again by the arguments of the proof of Proposition \ref{F analytic near 0} that
$$
\sup_{u \in U^{-s}_w} \sum_{n \ge  n_1} n^{2-2s}  \Big| \frac{1}{2\pi i}
\int_{\partial D_{n}(1/3)}  \frac{1}{\lambda_\beta - \lambda} \langle B_1(\lambda)^2 f_\alpha \, | \, f_\beta \rangle d \lambda \, \Big|  < \infty \, .
$$

\medskip
\noindent
{\em Estimate of $B_1(\lambda)^k,$ $k \ge 3$.} 
One proceeds as in the case $k=2$.
\hfill $\square$

%%%%%%%%%%%%%%%%%%%%%%%%%%%%%%%%%%%%%%%%%%%%%%%%%%%%%%%%%%%%%%%
%%%%%%%%%%%%%%%%%%%%%%%%%%%%%%%%%%%%%%%%%%%%%%%%%%%%%%%%%%%%%%%%

 \section{Proof of Theorem \ref{main theorem}($ii$) and Corollary \ref{product representations}}\label{proof of main results}
 
 In this section we prove Theorem \ref{main theorem}(ii), saying that for any $0 \le s < 1/2$, the moment map 
 $$
 \Gamma : H^{-s}_{r,0} \to \ell^{1,1-2s}(\N, \R) \,: \, u \mapsto (\gamma_n(u))_{n \ge 1}
 $$ 
 analytically extends to a neighborhood of $H^{-s}_{r, 0}$ in $H^{-s}_{c, 0}$. 
 By Theorem \ref{main theorem near zero}, we already know that such an extension exists in a neighborhood of $0$ in $H^{-s}_{c, 0}$.
 Since by Theorem  \ref{main theorem}($i$),
 the gaps $\gamma_n(u)$, $n \ge 1$, analytically extend to a neighborhood  $H^{-s}_{r, 0}$, which is independent of $n$,
 it remains to show that for any $w \in H^{-s}_{r,0}$, there exists a neighborhood $U^{-s}_{w}$ of $w$ in $H^{-s}_{c,0}$ so that
 $\sup_{u \in U^{-s}_{w}}\sum_{n \ge n_*} n^{1-2s} |\gamma_n(u)| < \infty$ for some  $n_* \ge 1$. 
Since we only need to consider $\gamma_n(u)$ for $n$ large, we again approximate
 $u$ by a finite gap potential, which allows to use arguments of the proof of Theorem \ref{main theorem near zero}.
 
 Theorem \ref{main theorem}(ii) will then be applied to prove Corollary \ref{product representations} and to
  define for all $n \ge 1$ and $u \in U^{-s}_{w}$ the scaling factor 
 $\kappa_n(u)$ and the normalizing constant $\mu_n(u)$  by extending product formulas
 of these quantities, established in \cite{GK}, \cite{GKT} for $u \in H^{-s}_{r,0}$.
 In addition, we prove properties of $\kappa_n(u)$ and $\mu_n(u)$, $n \ge 1$, needed in subsequent work.
  
 \smallskip
 
 As a first step, we study the extension of the generating function $\mathcal H_{\lambda}(u)$.
 For $w \in H^{-s}_{r,0}$ with $0 \le s < 1/2$ and $M:= \|w\|_{-s}$, 
 choose $w_0 \in \mathcal U_N$ with $N\ge 1$, satisfying \eqref{estimate w_0}, and $n_0 > N$,
 so that  Lemma \ref{T u (D - lambda)-1 general case} holds.
 Denote by $U^{-s}_w$ the neighborhood of $w$ in $H^{-s}_{c,0}$,
 given by Theorem \ref{general counting lemma}  with $\rho = 1/4$ and $M= \|w\|_{-s}$. Then for any $u \in U^{-s}_w$, 
 $$
\lambda \mapsto  \mathcal H_{\lambda}(u) = \langle (L_u - \lambda)^{-1} 1 | 1 \rangle 
 $$
 defines a meromorphic function on $\C$ with the set of poles contained in $\text{spec}(L_u)$.
 By the definition \eqref{def F_n general}, it follows that for any $n \ge 0$ and $u \in U^{-s}_w$, the residue
 of  $ \mathcal H_{\lambda}(u)$ at $\lambda = \lambda_n(u)$ is given by $F_n(u)$
 and arguing as in the proof of Lemma \ref{mathcal H as a sum}, one sees that 
 %$F_n(u) =  - \langle P_n(u) 1 | 1 \rangle$,  \quad $\forall \, n \ge 0$;
  $$
  \mathcal H_\lambda(u) = \sum_{n \ge 0} \frac{F_n(u)}{\lambda - \lambda_n(u)} \, .
  $$
  Furthermore, the arguments in the proof of Lemma \ref{zeros of mathcal H} show that for any  $u \in U^{-s}_w$ and $n \ge 1$ with $\gamma_n(u) \ne 0, $
  \begin{equation}\label{zeroes of mathcal H for u general}
   \mathcal H_{\lambda_{n-1}(u) +1}(u) = 0\, .
  \end{equation}
Denote by $f_n \equiv f_n(\cdot, w_0)$, $n \ge 0$, the eigenfunctions of $L_{w_0}$, corresponding to the eigenvalues $\lambda_n \equiv \lambda_n(w_0)$, $n \ge 0$,
 normalized as in \cite{GK}. Then $\lambda_n = n$ and $f_n= g_\infty e^{inx}$ for any $n \ge N$ where $g_\infty = e^{i \partial_x^{-1}w_0}$.
 By  \eqref{expansion of 1}, one has $ 1 = \sum_{\alpha \le N} \langle 1 | f_\alpha \rangle f_\alpha$
 and by \eqref{formula 0}
 $$
  \mathcal H_\lambda(u) =
   \sum_{\alpha, \beta \le N} \frac{ \langle 1 | f_\alpha \rangle  \langle f_\beta | 1 \rangle  }{\lambda_\beta - \lambda} 
   \langle ( \text{Id} - T_v(L_{w_0} - \lambda)^{-1})^{-1} f_\alpha | f_\beta \rangle \, .
$$
 We need the following estimates for $\mathcal H_\lambda(u)$
  \begin{lemma}\label{estimate of mathcal H for general u}
  There exist $n_2 \ge n_1$ with $n_1$ given by  \eqref{estimate 3},
  %and a constant $C_{\mathcal H} \ge 1$
  so that for any $n \ge n_2$ and $u \in U_w^{-s}$, 
 $$
 \frac{1}{2| \lambda |} \le | \mathcal H_\lambda(u) | \le   \frac{2}{| \lambda |} \, ,
\quad \qquad \forall  \, \lambda \in  \rm{Vert}_n(1/4) \, .
$$
 \end{lemma}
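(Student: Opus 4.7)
The approach is to extend the Neumann-series strategy of Lemma \ref{estimate of mathcal H} to the general case by expanding around the finite-gap potential $w_0 \in \mathcal{U}_N$ instead of around $0$. Writing $u = w_0 + v$ with $v = u - w_0 \in H^{-s}_{c,0}$ and using
$$(L_u-\lambda)^{-1} = R_0 \bigl(\text{Id} - T_v R_0\bigr)^{-1}, \qquad R_0 := (L_{w_0}-\lambda)^{-1},$$
one expands $(\text{Id} - T_v R_0)^{-1}$ in Neumann series, which converges in $\mathcal{B}(H^{-s;n}_+)$ by Corollary \ref{invertibility general case} whenever $n \ge n_0$. Using the duality $\langle R_0 h \, | \, 1\rangle = \langle h \, | \, R_0^* 1\rangle$ with $R_0^* = (L_{w_0}-\bar\lambda)^{-1}$ (self-adjointness of $L_{w_0}$), this yields the decomposition
$$\mathcal{H}_\lambda(u) = \mathcal{H}_\lambda(w_0) + \sum_{k \ge 1} \langle (T_v R_0)^k 1 \, | \, R_0^* 1\rangle.$$

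For the main term, since $w_0 \in \mathcal{U}_N$ one has $\gamma_k(w_0) = 0$ for $k > N$, and hence by Lemma \ref{characterization gamma_n = 0} the expansion $1 = \sum_{\alpha \le N} \langle 1 \, | \, f_\alpha(w_0)\rangle f_\alpha(w_0)$ is a \emph{finite} sum. Consequently
$$\mathcal{H}_\lambda(w_0) = \sum_{\alpha \le N} \frac{|\langle 1 \, | \, f_\alpha(w_0)\rangle|^2}{\lambda_\alpha(w_0) - \lambda},$$
and Parseval's identity $\sum_{\alpha \le N}|\langle 1 \, | \, f_\alpha\rangle|^2 = \|1\|_0^2 = 1$ together with the uniform bound $|\lambda_\alpha(w_0)| \le C_N$ for $\alpha \le N$ gives $|\mathcal{H}_\lambda(w_0) + 1/\lambda| \le C_N/|\lambda|^2$. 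Choosing $n_2$ large enough so that $|\lambda| \ge n_2 - 1/2$ dominates $4 C_N$, this error is at most $1/(4|\lambda|)$ on $\text{Vert}_n(1/4)$ for every $n \ge n_2$.

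For the perturbation terms, estimate each summand with $k \ge 1$ by
$$|\langle (T_v R_0)^k 1 \, | \, R_0^* 1\rangle| \le \|(T_v R_0)^k 1\|_{-s;n} \, \|R_0^* 1\|_{s;n}.$$
Corollary \ref{invertibility general case} gives $\|(T_v R_0)^k 1\|_{-s;n} \le (1/2)^k \|1\|_{-s;n} = (1/2)^k \langle n \rangle^{-s}$, while the finite-sum representation of $R_0^* 1$ and the smoothness $f_\alpha(w_0) \in \bigcap_m H^m_+$ (valid for $\alpha \le N$ by the finite-gap property) yield $\|f_\alpha(w_0)\|_{s;n} \lesssim \langle n \rangle^s$ and hence $\|R_0^* 1\|_{s;n} \lesssim C_{w_0} \langle n \rangle^s/|\lambda|$. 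The factors $\langle n\rangle^{-s}$ and $\langle n\rangle^s$ cancel, and the geometric series in $k$ sums to give total perturbation $\lesssim C_{w_0}\|T_v R_0\|_{-s;n}/|\lambda|$, which, by Lemma \ref{T u (D - lambda)-1 general case}, is proportional to $\|v\|_{-s}/|\lambda|$. After shrinking $U^{-s}_w$ if necessary so that $\|v\|_{-s}$ is small enough, this perturbation is bounded by $1/(4|\lambda|)$.

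Adding the two estimates gives $|\mathcal{H}_\lambda(u) + 1/\lambda| \le 1/(2|\lambda|)$, and the claimed bounds follow from the triangle inequality. The main obstacle is the estimate $\|R_0^* 1\|_{s;n} \lesssim \langle n\rangle^s/|\lambda|$: one must exploit both the smoothness of the low-index eigenfunctions $f_\alpha(w_0)$ (so that $\|f_\alpha\|_{s;n}$ grows only polynomially in $n$, via the rapid Fourier decay together with Lemma \ref{equivalence of norms}) \emph{and} the fact that $|\lambda_\alpha(w_0) - \bar\lambda| \gtrsim |\lambda|$ for $\lambda$ in $\text{Vert}_n(1/4)$ with $n \ge n_2$ large, which gives the $1/|\lambda|$ factor needed to pair with the $O(1)$ contribution from the perturbation side.
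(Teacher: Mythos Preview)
Your overall strategy---expanding around the finite-gap potential $w_0$ via a Neumann series and treating $\mathcal H_\lambda(w_0)$ as the main term---is the same as the paper's, and your treatment of the main term is correct. The gap lies in the perturbation estimate.

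You bound the remainder by $C_{w_0}\|T_vR_0\|_{-s;n}/|\lambda| \lesssim C_{w_0}\,C_{M,s,\rho}\|v\|_{-s}/|\lambda|$, and then propose to make this $\le 1/(4|\lambda|)$ by shrinking $U^{-s}_w$ so that $\|v\|_{-s}$ is small. This does not work: $v = u - w_0$, not $u - w$, and the choice of $w_0$ in \eqref{estimate w_0} only guarantees $\|w-w_0\|_{-s}\le 1/(8C_{M,s,\rho})$. Shrinking the neighbourhood of $w$ controls $\|u-w\|_{-s}$ but leaves the fixed contribution $\|w-w_0\|_{-s}$ untouched, so $\|v\|_{-s}$ cannot be driven below $1/(8C_{M,s,\rho})$. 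Your remainder is therefore of size $\sim C_{w_0}/|\lambda|$ with a constant $C_{w_0}$ that you have no reason to expect to be $\le 1/4$. Re-choosing $w_0$ closer to $w$ is circular, since $C_{w_0}$ (which depends on $N$ and on $\max_{\alpha\le N}\|f_\alpha(w_0)\|_s$) moves with $w_0$.

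The missing idea is to extract a \emph{second} factor of $1/|\lambda|$ from the perturbation. The paper does this by writing the remainder with the explicit prefactor $1/(\lambda_\alpha-\lambda)(\lambda_\beta-\lambda)\sim 1/\lambda^2$ (cf.\ \eqref{expansion mathcal H(u)}--\eqref{estimate error R alpha, beta}); in your notation the analogue is to stop using the crude bound $\|(T_vR_0)^k1\|_{-s;n}\le (1/2)^k\|1\|_{-s;n}$ and instead compute $(T_vR_0)1=\sum_{\alpha\le N}\frac{\langle 1|f_\alpha\rangle}{\lambda_\alpha-\lambda}\Pi(vf_\alpha)$ directly, which carries an explicit $1/|\lambda|$. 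Combined with the $1/|\lambda|$ from $\|R_0^*1\|_{s;n}$, the remainder becomes $O(\langle n\rangle^{2s}/|\lambda|^2)$, hence $\le \tfrac{1}{4|\lambda|}$ for $n\ge n_2$ large (since $2s<1$), with \emph{no} need to shrink $U^{-s}_w$.
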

 \begin{proof}
 We adapt the proof of Lemma \ref{estimate of mathcal H} to the more general situation at hand.
 For any $u = w_0 + v \in U^{-s}_w$ and $n \ge n_0$, we use
 $$
 ( \text{Id} - T_v(L_{w_0} - \lambda)^{-1})^{-1} =  \text{Id} + ( \text{Id} - T_v(L_{w_0} - \lambda)^{-1})^{-1}T_v(L_{w_0} - \lambda)^{-1}  \, ,
 $$
 to expand $ \mathcal H_\lambda(u)$,
 \begin{equation}\label{expansion mathcal H(u)}
 \mathcal H_\lambda(u) =
  \mathcal H_{\lambda, 0}  + \frac{1}{\lambda^2}\sum_{\alpha, \beta \le N} 
   \frac{\langle 1 | f_\alpha \rangle  \langle f_\beta | 1 \rangle}{(1 - \lambda_\alpha / \lambda)(1 -\lambda_\beta /\lambda)} R_{\alpha, \beta} \, ,
\end{equation}
where the principal term $ \mathcal H_{\lambda, 0}$ and the remainders $R_{\alpha, \beta}$ are given by 
$$
 \mathcal H_{\lambda, 0} : = \sum_{\alpha \le N} \frac{ |\langle 1 | f_\alpha \rangle |^2 }{\lambda_\alpha - \lambda}  \, ,
 \qquad
R_{\alpha, \beta} := \langle ( \text{Id} - T_v(L_{w_0} - \lambda)^{-1})^{-1} \Pi(v f_\alpha) | f_\beta \rangle \, .
$$
The term $\mathcal H_{\lambda, 0}$ is expanded further,
$$
\mathcal H_{\lambda, 0} 
= - \frac{1}{\lambda} \sum_{\alpha \le N} \frac{ |\langle 1 | f_\alpha \rangle |^2 }{ 1 - \lambda_\alpha / \lambda} 
= - \frac{1}{\lambda} \big( 1 +  \sum_{\alpha \le N} \frac{ |\langle 1 | f_\alpha \rangle |^2 }{ 1 - \lambda_\alpha / \lambda} \, \frac{\lambda_\alpha}{\lambda} \big) \, ,
$$ 
where we used that  $ \sum_{\alpha \le N}  |\langle 1 | f_\alpha \rangle |^2 =1$.
Choose $n_2 \ge n_1$ so that for any $n \ge n_2$
\begin{equation}\label{lower and upper bounds}
 \sum_{\alpha \le N} \big| \frac{ |\langle 1 | f_\alpha \rangle |^2 }{ 1 - \lambda_\alpha / \lambda} \,  \frac{\lambda_\alpha}{\lambda} \big|  \le \frac 14 \, ,
 \qquad \forall  \, \lambda \in  \rm{Vert}_n(1/4) \, .
\end{equation}
 Now let us estimate the remainder terms $R_{\alpha, \beta}$.  By Corollary \ref{invertibility general case} with 
 $M= \|w\|_{-s}$, $\rho = 1/4$, one has for any $n \ge n_0$ and any $u = w + v \in U_{w}^{-s}$
$$
\| ( \text{Id} - T_v(L_{w_0} - \lambda)^{-1})^{-1} \|_{H^{-s; n}_+ \to H^{-s; n}_+}  \le 2 \, , \qquad \forall \, \lambda \in \text{Vert}_n(1/4) \, .
$$
One then infers from Lemma \ref{equivalence of norms} and Lemma \ref{multiplication u f} that
$$
\begin{aligned}
\| ( \text{Id} - & T_v(L_{w_0} - \lambda)^{-1})^{-1} \Pi(v f_\alpha) \|_{-s}  \\
&\le  2^s n^s \| ( \text{Id} - T_v(L_{w_0} - \lambda)^{-1})^{-1} \Pi(v f_\alpha) \|_{-s; n}\\
& \le 2^s n^s  2 \| \Pi(v f_\alpha) \|_{-s; n} \le  2^{1+s} n^s  2^s n^s  \| v f_\alpha) \|_{-s}\\
& \le  2^{1 + 2s} n^{2s}  C_s \|v\|_{-s} \|f_\alpha \|_{1-s}
\end{aligned}
$$
with $C_s$ given as in Lemma \ref{multiplication u f}.
It then follows that for any $0 \le \alpha, \beta \le N$, 
\begin{equation}\label{estimate error R alpha, beta}
| R_{\alpha, \beta} |  
 \le 2^{1 +2s}C_s n^{2s}  \|v \|_{-s} \max_{\alpha \le N} \|f_\alpha \|^2_{1-s} \, , \quad \forall \, \lambda  \in \text{Vert}_n(1/4)\, .
\end{equation}
Since $0 \le 2s < 1$, one then concludes by increasing $n_2$, if needed, that for any $n \ge n_2$
$$
\frac{1}{|\lambda|}  \sum_{\alpha, \beta \le N} 
 \big|  \frac{\langle 1 | f_\alpha \rangle  \langle f_\beta | 1 \rangle}{(1 - \lambda_\alpha / \lambda)(1 -\lambda_\beta /\lambda)} R_{\alpha, \beta} \big|
  \le \frac 14 \, , \quad \forall \, \lambda  \in \text{Vert}_n(1/4) \, .
$$
Combining this with \eqref{expansion mathcal H(u)} - \eqref{lower and upper bounds} then yields the claimed estimates.
 \end{proof}
 Arguing is in the proof of Lemma \ref{argument principle}
 one sees that Lemma \ref{estimate of mathcal H for general u} yields the following lemma and its corollary.
 \begin{lemma}\label{argument principle for u general}
 For any $u \in U^{-s}_w$ and $n \ge n_2$ (with $n_2$ as in Lemma \ref{estimate of mathcal H for general u}), 
 the difference $ZP_{u;n}$ of the number of zeroes of $\mathcal H_\lambda(u)$ in $D_n(1/3)$
 and the number of its poles  in $D_n(1/3)$ vanishes.  As a consequence, for any $n \ge n_2$, 
 $\mathcal H_\lambda(u)$  has at most one zero in $D_n(1/3)$.
 \end{lemma}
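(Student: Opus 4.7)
The plan is to follow the pattern of the proof of Lemma \ref{argument principle} from Section \ref{proof of main results near zero}, using the two-sided estimate of Lemma \ref{estimate of mathcal H for general u} as the new uniform control needed on the contour.

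First I would note that for every $n \ge n_2$ and every $\lambda \in \partial D_n(1/3)$ one has $|\lambda - n| = 1/3 > 1/4$ and $|\Re\lambda - n| \le 1/3 < 1/2$, so $\partial D_n(1/3)$ is contained in $\mathrm{Vert}_n(1/4)$. Lemma \ref{estimate of mathcal H for general u} then gives a uniform two-sided bound $\tfrac{1}{2|\lambda|} \le |\mathcal H_\lambda(u)| \le \tfrac{2}{|\lambda|}$ on this contour for all $u \in U^{-s}_w$, so $\mathcal H_\lambda(u)$ has neither zeros nor poles on $\partial D_n(1/3)$. By the argument principle,
\begin{equation*}
ZP_{u;n} = \frac{1}{2\pi i}\int_{\partial D_n(1/3)} \frac{\partial_\lambda \mathcal H_\lambda(u)}{\mathcal H_\lambda(u)}\, d\lambda.
\end{equation*}

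Next I would argue that this integer is locally constant in $u$. The analyticity of $u \mapsto (L_u - \lambda)^{-1}$ on $U^{-s}_w \times \mathrm{Vert}_n(1/4)$ (Theorem \ref{general counting lemma}) together with the uniform lower bound keeping the integrand $\partial_\lambda \mathcal H_\lambda(u)/\mathcal H_\lambda(u)$ bounded on $\partial D_n(1/3)$, shows the right hand side is continuous in $u$. Being $\mathbb{Z}$-valued, $ZP_{u;n}$ is therefore constant on the connected component of $u$ in $U^{-s}_w$; after possibly shrinking $U^{-s}_w$ to a ball around $w$, it is constant on all of $U^{-s}_w$. It then suffices to compute $ZP_{w;n}$.

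For the real-valued potential $w$, I would invoke the product representation
\begin{equation*}
\mathcal H_\lambda(w) = \frac{1}{\lambda_0(w) - \lambda}\prod_{p \ge 1}\Big(1 - \frac{\gamma_p(w)}{\lambda_p(w) - \lambda}\Big),
\end{equation*}
proved in \cite{GK} $(s=0)$ and \cite{GKT} $(0 < s < 1/2)$ for $w \in H^{-s}_{r,0}$. The factor $\frac{1}{\lambda_0(w) - \lambda}$ has its pole at $\lambda_0(w)$, which for $n \ge n_2$ is far from $D_n(1/3)$. For $p \ne n$ the $p$-th factor has pole at $\lambda_p(w)$ and zero at $\lambda_p(w) - \gamma_p(w) = \lambda_{p-1}(w) + 1$, both lying in a disc associated with index $p$ that is disjoint from $D_n(1/3)$ by the Counting Lemma (using $|p - n| \ge 1$ and disc radii at most $1/4$). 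The factor with $p = n$ either equals $1$ (if $\gamma_n(w) = 0$), contributing no zeros or poles, or equals $(\lambda_{n-1}(w) + 1 - \lambda)/(\lambda_n(w) - \lambda)$, contributing exactly one simple zero and one simple pole in $D_n(1/4) \subset D_n(1/3)$. Either way, $ZP_{w;n} = 0$, and by constancy $ZP_{u;n} = 0$ for every $u \in U^{-s}_w$.

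The ``at most one zero'' consequence then follows immediately from the Counting Lemma: for $n \ge n_2 \ge n_0$, $L_u$ has exactly one, algebraically simple, eigenvalue in $D_n(1/3)$, hence $\mathcal H_\lambda(u)$ has at most one simple pole in $D_n(1/3)$, and combined with $ZP_{u;n} = 0$ this gives at most one zero. The only point that requires some care is the case $\gamma_n(w) = 0$ at the reference potential — here one must check that the cancellation between would-be zero and pole is complete, which is transparent from the product formula — and the verification that the continuity argument indeed runs on a connected subneighborhood, which we may freely arrange by taking $U^{-s}_w$ to be an open ball.
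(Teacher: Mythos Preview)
Your proof is correct and follows essentially the same strategy as the paper, which simply says ``arguing as in the proof of Lemma \ref{argument principle}'' and leaves the details implicit. The only elaboration you supply is the computation of $ZP_{w;n}$ at the real reference point $w$ via the product representation; in the near-zero case the paper could use the explicit formula $\mathcal H_\lambda(0)=-1/\lambda$, and your use of the product formula for general real $w$ is the natural replacement.
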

\begin{corollary}\label{residue and gamma for u general}
 Assume that $u \in U^{-s}_w$ with $0 \le s < 1/2$. For any $n \ge n_2,$  $F_n(u) = 0$ if and only if $\gamma_n(u)=0$.
 \end{corollary}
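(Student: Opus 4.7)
The plan is to mirror the proof of Corollary \ref{residue and gamma}, replacing its local ingredients (Lemma \ref{argument principle} and Lemma \ref{characterization gamma_n = 0}(ii)) by their counterparts in the neighborhood $U^{-s}_w$, namely Lemma \ref{argument principle for u general} and an $U^{-s}_w$-version of the fact that $\gamma_n(u) = 0$ forces $\langle P_n(u) 1 \, | \, 1 \rangle = 0$. The key observation that allows this transfer is that for $n \ge n_2 \ge n_0$, Theorem \ref{general counting lemma} gives $D_{\tau_n}(r_n + 1/3) = D_n(1/3)$, so by the definition \eqref{def F_n general} of $F_n(u)$ together with the Riesz projector formula (cf.\ Corollary \ref{Riesz projector analytic I}, extended to $U^{-s}_w$ by Theorem \ref{general counting lemma}), one has
\[
F_n(u) \;=\; -\,\langle P_n(u) 1 \, | \, 1 \rangle, \qquad \forall \, n \ge n_2, \ u \in U^{-s}_w.
\]

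For the forward direction, fix $n \ge n_2$ and assume $\gamma_n(u) \ne 0$. Then $\lambda_{n-1}(u) + 1 \ne \lambda_n(u)$, and since for $n$ large enough the eigenvalues $\lambda_{n-1}(u),\lambda_n(u)$ lie close to $n-1$ and $n$ respectively (by Theorem \ref{general counting lemma} applied at $n$ and $n-1$), the point $\lambda_{n-1}(u) + 1$ lies in $D_n(1/3)$; by enlarging $n_2$ if necessary we may assume this. By \eqref{zeroes of mathcal H for u general}, $\mathcal H_\lambda(u)$ has a zero at $\lambda_{n-1}(u) + 1 \in D_n(1/3)$. Lemma \ref{argument principle for u general} then forces $\mathcal H_\lambda(u)$ to also have a pole in $D_n(1/3)$; since the only candidate is $\lambda_n(u)$ and the residue there is $F_n(u)$, we conclude $F_n(u) \ne 0$.

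For the reverse direction, assume $\gamma_n(u) = 0$ for some $n \ge n_2$. I would establish the following $U^{-s}_w$-analog of Lemma \ref{characterization gamma_n = 0}(ii): if $\gamma_n(u) = 0$, then $\langle P_n(u) 1 \, | \, 1 \rangle = 0$. The proof proceeds exactly as in Lemma \ref{characterization gamma_n = 0}, since the shift identity \eqref{shift identity} and Lemma \ref{identity with p k } hold for arbitrary $u \in H^{-s}_{c,0}$ (their proofs used only the algebraic structure of $L_u$ and the fact that the eigenfunctions lie in $H^{1-s}_+$, both of which are provided in the present setting by Theorem \ref{general counting lemma}). Applying Lemma \ref{identity with p k } with $p = n$, $k = n-1$ gives
\[
\gamma_n(u)\, P_n(u) \mathcal S h_{n-1} \;=\; -\,\langle u \mathcal S h_{n-1} \,|\, 1 \rangle\, P_n(u) 1,
\]
and the equivalences $(G1)\Leftrightarrow (G2) \Leftrightarrow (G3)$ from Lemma \ref{characterization gamma_n = 0}(i), whose proof goes through unchanged once we know $\lambda_{n-1}(u) + 1$ and $\lambda_n(u) - 1$ are isolated in $\mathrm{spec}(L_u)$ in the correct discs (again guaranteed by Theorem \ref{general counting lemma} for $n \ge n_2$), then yield $\langle P_n(u) 1 \, | \, 1 \rangle = 0$ as in the original argument. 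Combined with the displayed identity for $F_n(u)$ above, this gives $F_n(u) = 0$.

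The main obstacle I anticipate is bookkeeping around the threshold $n_2$: the arguments from Section \ref{Lax operator} rely on $D_{\tau_n}(r_n + 1/3)$ coinciding with $D_n(1/3)$ (to extract the Riesz-projector formula for $F_n$) and on $\lambda_{n-1}(u) + 1$ lying inside $D_n(1/3)$, both of which require $n$ to be sufficiently large. Since Theorem \ref{general counting lemma} already supplies such a threshold $n_0$ and Lemma \ref{estimate of mathcal H for general u} has enlarged it to $n_2$, one may simply enlarge $n_2$ further, if needed, to ensure both geometric conditions hold; this is routine but should be stated explicitly. The algebraic part of the argument is then formally identical to the proof of Corollary \ref{residue and gamma}.
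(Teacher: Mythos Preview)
Your proposal is correct and follows essentially the same route as the paper, which does not give a separate proof but simply records that one argues as in the near-zero case (Corollary~\ref{residue and gamma}, via Lemma~\ref{argument principle} and Lemma~\ref{characterization gamma_n = 0}(ii)), with Lemma~\ref{estimate of mathcal H for general u} replacing Lemma~\ref{estimate of mathcal H}. Your explicit bookkeeping around the threshold~$n_2$ (ensuring $\lambda_{n-1}(u)+1 \in D_n(1/3)$ and that the shift-identity/projector arguments transfer) is exactly the routine verification the paper leaves implicit.
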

 
 For any $u \in U^{-s}_w$ and $n \ge n_2$, define (cf. \eqref{def eta near zero})
  $$
  \eta_n(\lambda, u) := - 
  \frac{\lambda -  \lambda_n(u)}{\lambda -  \lambda_{n-1} (u) - 1 } (\lambda - \lambda_0(u))\mathcal H_\lambda(u)\, ,
  \quad \forall \lambda \in \rm{Vert}_n(1/4)\, .
 $$
For any $n \ge n_2$, $\eta_n(\cdot, u)$ is analytic on $\rm{Vert}_n(1/4)$ and 
by \eqref{zeroes of mathcal H for u general},  Lemma \ref{argument principle for u general}, 
and Corollary \ref{residue and gamma for u general}
analytically extends to $D_n(1/4)$. Hence for any $\lambda \in D_n(1/4)$,
one has by Cauchy's formula
\begin{equation}\label{Cauchy eta_n}
\eta_n(\lambda, u) = \frac{1}{2\pi i} \int_{\partial D_n(1/3)} \frac{\eta_n(\mu, u)}{\mu - \lambda} d \mu .
\end{equation}
As a consequence, for any $n \ge n_2$,
$$
\eta_n : \big( D_n(1/4) \cup \rm{Vert}_n(1/4) \big) \times U^{-s}_w \to \C
$$
is analytic and by Lemma \ref{argument principle} vanishes nowhere.
Arguing as in the proof of Lemma \ref{estimate of eta_n}
and using that $ | \lambda_0(u) - \lambda_0(w)| \le 1/4$ for any $u \in U^{-s}_w$ (cf. Theorem \ref{general counting lemma})
and $\lambda_0(w) \le 0$ (\cite{GK}($s=0$), \cite{GKT} ($0 < s < 1/2$))
one obtains the following
\begin{lemma}\label{estimate of eta_n for u general}
For any $u \in U^{-s}_w$ and $n \ge n_2$ (with $n_2$ as in Lemma \ref{estimate of mathcal H for general u}),
$$
\frac{1}{C_w} \le | \eta_n(\lambda, u) | \le C_w\, , \qquad \forall \lambda \in D_n(1/4) \, ,
$$
where $C_w \ge 1$ is a constant, only depending on $\lambda_0(w)$.
\end{lemma}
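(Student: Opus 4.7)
The plan is to mimic the proof of Lemma \ref{estimate of eta_n}, substituting Lemma \ref{estimate of mathcal H for general u} for Lemma \ref{estimate of mathcal H} and using the Counting Lemma (Theorem \ref{general counting lemma}) together with the bound $|\lambda_0(u) - \lambda_0(w)| \le 1/4$ to absorb the non-trivial location of $\lambda_0$ into the constant $C_w$. Throughout we work with $n \ge n_2$, so that $\tau_n = n$, $r_n = 0$, and by Theorem \ref{general counting lemma} the eigenvalues $\lambda_n(u)$ and $\lambda_{n-1}(u)+1$ both lie in $D_n(1/4)$.

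First I would treat the upper bound. By the Cauchy integral formula \eqref{Cauchy eta_n}, for any $\lambda \in D_n(1/4)$,
$$
|\eta_n(\lambda, u)| \le \sup_{\mu \in \partial D_n(1/3)} |\eta_n(\mu,u)| \, .
$$
On $\partial D_n(1/3)$ one splits $\eta_n$ as in the definition \eqref{def eta near zero}. The factor $|\mu - \lambda_n(u)|/|\mu - \lambda_{n-1}(u) - 1|$ is bounded by the constant $7 \cdot 12$ exactly as in Lemma \ref{estimate of eta_n}, since both $\lambda_n(u)$ and $\lambda_{n-1}(u)+1$ are in $D_n(1/4)$. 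The remaining factor $|(\mu - \lambda_0(u)) \mathcal H_\mu(u)|$ is bounded by $2 |\mu - \lambda_0(u)|/|\mu|$ thanks to Lemma \ref{estimate of mathcal H for general u}; writing $|\mu - \lambda_0(u)| \le |\mu| + |\lambda_0(w)| + 1/4$ on $\partial D_n(1/3)$ and using $|\mu| \ge n - 1/3$ yields a bound depending only on $\lambda_0(w)$ (the key point being that the dependence on $n$ disappears in the ratio).

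For the lower bound I would apply the same argument to $1/\eta_n$, which by \eqref{zeroes of mathcal H for u general}, Lemma \ref{argument principle for u general}, and Corollary \ref{residue and gamma for u general} is analytic and non-vanishing on $D_n(1/4) \cup \text{Vert}_n(1/4)$ (so Cauchy again applies). The reciprocal factor $|\mu - \lambda_{n-1}(u) - 1|/|\mu - \lambda_n(u)|$ is bounded by $7 \cdot 12$ exactly as before. To bound $1/|(\mu - \lambda_0(u)) \mathcal H_\mu(u)|$ from above I would use the lower bound $|\mathcal H_\mu(u)| \ge 1/(2|\mu|)$ from Lemma \ref{estimate of mathcal H for general u}, giving
$$
\frac{1}{|(\mu - \lambda_0(u)) \mathcal H_\mu(u)|} \le \frac{2 |\mu|}{|\mu - \lambda_0(u)|} \, .
$$
Since $|\mu - \lambda_0(u)| \ge |\mu| - |\lambda_0(w)| - 1/4 \ge n - 1/3 - |\lambda_0(w)| - 1/4$ and since $n \ge n_2$ can be assumed (by enlarging $n_2$ if necessary, the enlargement depending only on $\lambda_0(w)$) to satisfy $n \ge 2(|\lambda_0(w)|+1)$, this ratio is bounded by a constant depending only on $\lambda_0(w)$.

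The only step that is slightly more delicate than in Lemma \ref{estimate of eta_n} is ensuring that $n_2$ is large enough relative to $|\lambda_0(w)|$ so that $\mu - \lambda_0(u)$ is bounded away from zero on $\partial D_n(1/3)$; since Lemma \ref{estimate of mathcal H for general u} already required $n_2 \ge n_1$, one simply enlarges $n_2$ further in a manner depending on $\lambda_0(w)$. Taking $C_w$ to be the maximum of the resulting upper and lower bounds yields the claim.
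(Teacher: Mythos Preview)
Your proposal is correct and follows essentially the same route as the paper: mimic the proof of Lemma \ref{estimate of eta_n}, replacing Lemma \ref{estimate of mathcal H} by Lemma \ref{estimate of mathcal H for general u} and using $|\lambda_0(u)-\lambda_0(w)|\le 1/4$ from Theorem \ref{general counting lemma}. The only point where you diverge from the paper is in handling the lower bound on $|\mu-\lambda_0(u)|$: you propose to enlarge $n_2$ (depending on $|\lambda_0(w)|$) so that $|\mu|$ dominates $|\lambda_0(w)|$, whereas the paper instead invokes the fact that $\lambda_0(w)\le 0$ for real $w$ (cf. \cite{GK}, \cite{GKT}), which immediately gives $|\mu-\lambda_0(u)|\ge \Re\mu - \lambda_0(w) - 1/4 \ge n - 7/12$ without touching $n_2$. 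This matters because the lemma is stated for the $n_2$ already fixed in Lemma \ref{estimate of mathcal H for general u}; with the sign observation your enlargement becomes unnecessary and the proof matches the statement exactly.
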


 \smallbreak

For any $n \ge n_2$, define (cf. \eqref{def kappa_n near zero})
$$
\kappa_n : U^{-s}_w \to \C, \ u \mapsto  \frac{1}{\lambda_n(u) - \lambda_0(u)} \eta_n(\lambda_n(u), u) \, .
$$
Being a composition of analytic functions, $\kappa_n$ is analytic as well.
Arguing as in the proof of Proposition \ref{proposition kappa_n} one obtains the following
\begin{proposition}\label{proposition kappa_n for u general}
For any $n \ge n_2$ (with $n_2$ as in Lemma \ref{estimate of mathcal H for general u}), 
the map $\kappa_n :  U^{-s}_w \to \C$ is analytic.
For any $u \in U^{-s}_w$, one has
$$
| \kappa_n(u) | \le 2 C'_w \frac{1}{n} \, , \qquad
\frac{1} {| \kappa_n(u) |} \le 2C'_w  n \, ,  \qquad  \forall \, n \ge n_2 \, ,
$$
where $C'_w \ge 1$ is a constant, only depending on $\lambda_0(w)$.
In particular, the map 
$$
U^{-s}_w \to \ell^{\infty}(\Z_{\ge n_2}, \C), u \mapsto (1 / n \kappa_n(u))_{n \ge n_2}
$$ 
is analytic.
\end{proposition}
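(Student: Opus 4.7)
The plan is to mirror the proof of Proposition~\ref{proposition kappa_n}, with the estimates for $|\eta_n(\lambda_n(u),u)|$ replaced by Lemma~\ref{estimate of eta_n for u general} and the estimates for $|\lambda_n(u)-\lambda_0(u)|$ obtained from Theorem~\ref{general counting lemma} applied at $w$. Analyticity of $\kappa_n$ on $U^{-s}_w$ is immediate: by construction, $\kappa_n(u) = \eta_n(\lambda_n(u),u)/(\lambda_n(u)-\lambda_0(u))$ is a composition of the analytic functions $u\mapsto \lambda_n(u)$, $u\mapsto \lambda_0(u)$ (Theorem~\ref{main theorem}$(i)$/Theorem~\ref{general counting lemma}), and $(\lambda,u)\mapsto \eta_n(\lambda,u)$ on $D_n(1/4)\times U^{-s}_w$; the denominator does not vanish since Theorem~\ref{general counting lemma} keeps $\lambda_n(u)$ and $\lambda_0(u)$ in disjoint discs for $n \ge n_2$.

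For the two-sided pointwise bound, I would first observe that by Theorem~\ref{general counting lemma} one has, for every $u \in U^{-s}_w$ and every $n \ge n_2$,
\begin{equation*}
|\lambda_n(u) - \lambda_0(u)| \;\le\; n + C_1(w), \qquad |\lambda_n(u) - \lambda_0(u)| \;\ge\; n - C_1(w),
\end{equation*}
where $C_1(w)$ depends only on $|\lambda_0(w)|$ and on the (finite) sum $\sum_{k\le n_2}|\gamma_k(w)|$ that controls the location of the low eigenvalues of $L_{w_0}$. By enlarging $n_2$ if needed so that $n - C_1(w) \ge n/2$ for all $n \ge n_2$, combining with the bounds $1/C_w \le |\eta_n(\lambda_n(u),u)| \le C_w$ from Lemma~\ref{estimate of eta_n for u general} yields
\begin{equation*}
|\kappa_n(u)| \;\le\; \frac{C_w}{n - C_1(w)} \;\le\; \frac{2C_w}{n}, \qquad \frac{1}{|\kappa_n(u)|} \;\le\; C_w(n + C_1(w)) \;\le\; 2C_w(1 + C_1(w))\, n,
\end{equation*}
giving the claimed bounds with $C'_w := C_w(1 + C_1(w))$. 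The argument uses only that for $n \ge n_2$, $\lambda_n(u)$ sits in $D_n(1/4)$, which in turn is at distance comparable to $n$ from $D_0(\cdot)$; this is where we use that $n_2$ was chosen above the ``exceptional'' index $n_0$ produced by the finite-gap approximation.

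Finally, for the $\ell^\infty$-valued analyticity of $u\mapsto (1/(n\kappa_n(u)))_{n \ge n_2}$, I would invoke \cite[Theorem A.3]{KP} exactly as in the proof of Proposition~\ref{proposition kappa_n}: each component is analytic, and by the lower bound $1/|n\kappa_n(u)| \le 2C'_w$ the family is uniformly bounded on $U^{-s}_w$, so the induced map into $\ell^\infty(\mathbb{Z}_{\ge n_2},\mathbb{C})$ is analytic.

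The only non-routine point is verifying the two-sided control of $|\lambda_n(u)-\lambda_0(u)|$ uniformly in $u \in U^{-s}_w$ and $n \ge n_2$; once this is in place, the rest is a direct transcription of the zero-potential argument. Nothing here requires an analogue of the quantitative trace formula $\lambda_n(u) = n - \sum_{k>n}\gamma_k(u)$ valid in the small-potential case (which has not yet been established at this stage of the paper for general $w$), so the approach is self-contained given Theorem~\ref{general counting lemma} and Lemma~\ref{estimate of eta_n for u general}.
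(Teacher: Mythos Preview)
Your proposal is correct and follows essentially the same route as the paper, which simply says ``arguing as in the proof of Proposition~\ref{proposition kappa_n}.'' Two minor remarks: first, since $\lambda_0(u)\in D_{\lambda_0(w)}(1/4)$ with $\lambda_0(w)\le 0$ and $\lambda_n(u)\in D_n(1/4)$ for $n\ge n_2$, one gets directly $|\lambda_n(u)-\lambda_0(u)|\ge n-1/2$ and $|\lambda_n(u)-\lambda_0(u)|\le n+1/2+|\lambda_0(w)|$, so no enlargement of $n_2$ is needed and the constant depends only on $\lambda_0(w)$ (not on $\sum_{k\le n_2}|\gamma_k(w)|$); second, this matches the statement's assertion that $C'_w$ depends only on $\lambda_0(w)$.
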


Arguing as in the proof of Corollary \ref{identity residues}
one sees, 
using our results on $F_n(u)$ and the gap lengths $\gamma_n(u)$, 
that Proposition \ref{proposition kappa_n for u general} yields the following 
\begin{corollary}\label{identity residues for u general}
Let $0 \le s < 1/2.$
  For any $u$ in $U^{-s}_w$,
  \begin{equation}\label{identity quasi moments}
  F_n(u) = - \kappa_n(u) \gamma_n(u)\, , \qquad \forall n \ge n_2\,  .
  \end{equation}
\end{corollary}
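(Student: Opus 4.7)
The plan is to mimic the proof of Corollary~\ref{identity residues} from the near-zero section: since both sides of the claimed identity are analytic on $U^{-s}_w$ and are already known to agree on the real-valued potentials in $U^{-s}_w$, analytic continuation from the totally real subset $U^{-s}_w \cap H^{-s}_{r,0}$ forces them to agree everywhere. In other words, everything needed has already been assembled; what remains is to assemble it.

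First, I would note that the analyticity of each factor has been established: Proposition~\ref{F analytic general} gives analyticity of $F_n : U^{-s}_w \to \C$, Proposition~\ref{proposition kappa_n for u general} gives analyticity of $\kappa_n : U^{-s}_w \to \C$ for $n \ge n_2$, and Theorem~\ref{main theorem}$(i)$ (combined with Theorem~\ref{general counting lemma}) gives analyticity of $\gamma_n : U^{-s}_w \to \C$. Consequently, the scalar function
\[
\Delta_n(u) := F_n(u) + \kappa_n(u)\gamma_n(u)
\]
is analytic on $U^{-s}_w$ for every $n \ge n_2$.

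Next, I would check that $\Delta_n$ vanishes on the real locus $U^{-s}_w \cap H^{-s}_{r,0}$. For such $u$, the canonically normalized $L^2$-orthonormal eigenbasis $(f_j(\cdot,u))_{j \ge 0}$ of \cite{GK} / \cite{GKT} is available, and the residue computation leading to \eqref{identity1 for F_n for u real} gives $F_n(u) = -|\langle 1 \,|\, f_n(\cdot,u)\rangle|^2$; together with the identity \eqref{name quasi-momentum} of Remark~\ref{identity2 for F_n for u real}, namely $|\langle 1 \,|\, f_n(\cdot,u)\rangle|^2 = \gamma_n(u)\kappa_n(u)$, this yields $\Delta_n(u) = 0$ on the real slice.

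Finally, I would conclude by analytic continuation from the totally real subspace. Write $u = w + v$ with $v \in H^{-s}_{c,0}$ small, and expand $\Delta_n(w+v)$ as a convergent power series in $v$ around $v=0$. The hypothesis that $\Delta_n$ vanishes whenever $v \in H^{-s}_{r,0}$ (in a neighborhood) forces each homogeneous component, viewed as a continuous symmetric multilinear form on $H^{-s}_{c,0}$, to vanish on $H^{-s}_{r,0}^{\,k}$; polarizing and using that $H^{-s}_{c,0} = H^{-s}_{r,0} \oplus i\,H^{-s}_{r,0}$ shows all such forms vanish identically on $H^{-s}_{c,0}$, so $\Delta_n \equiv 0$ on a neighborhood of $w$, and hence on the whole connected component of $U^{-s}_w$ containing $w$ (shrinking $U^{-s}_w$ if necessary to ensure connectedness). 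There is no substantive obstacle here beyond invoking the correct results that have already been proved; the only mild technical point is the shrinking to a connected neighborhood and the polarization argument for the Banach-analytic extension from the real form.
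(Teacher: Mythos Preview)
Your proof is correct and follows essentially the same approach as the paper: the paper's proof of Corollary~\ref{identity residues for u general} simply says to argue as in the proof of Corollary~\ref{identity residues}, which is exactly the analytic-continuation-from-the-real-slice argument you spell out (analyticity of $F_n$, $\kappa_n$, $\gamma_n$ on $U^{-s}_w$, agreement on real $u$ via \eqref{identity1 for F_n for u real} and \eqref{name quasi-momentum}, hence agreement everywhere). Your added detail on polarization and connectedness is harmless but not strictly needed, since $U^{-s}_w$ is a ball around $w$.
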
 

\medskip 

With all these preliminary results established, we now can prove Theorem \ref{main theorem}(ii) and Corollary \ref{product representations}.
We reformulate Theorem \ref{main theorem}(ii) as follows:
 \begin{theorem}\label{main theorem (ii) reformulated}
Let $0 \le s < 1/2$, $w \in H^{-s}_{r,0}$, and let $ U^{-s}_w$ be the neighborhood of $w$ in $H^{-s}_{c,0}$ of Theorem \ref{general counting lemma}.
Then the moment map
$$
\Gamma :  U^{-s}_w \to \ell^{1, 1- 2s}_+, \ u \mapsto (\gamma_n(u))_{n \ge 1}
$$
is analytic. In addition, $\Gamma( U^{-s}_w)$ is bounded in $\ell^{1, 1- 2s}_+$.
\end{theorem}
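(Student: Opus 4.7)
The plan is to combine the analyticity of the quasi-moment map $F$ (Proposition \ref{F analytic general}) with the analyticity and uniform bounds on the scaling factors $\kappa_n$ (Proposition \ref{proposition kappa_n for u general}) via the identity $F_n = -\kappa_n \gamma_n$ (Corollary \ref{identity residues for u general}), and then patch in the finitely many low-index gaps using Theorem \ref{main theorem}($i$).

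First, for any fixed $n$ with $1 \le n < n_2$, Theorem \ref{main theorem}($i$) already tells us that $\gamma_n : U^{-s}_w \to \C$ is analytic. There are only finitely many such indices, so the contribution of these components to $\Gamma$ poses no issue of convergence or of analyticity; after possibly shrinking $U^{-s}_w$ these components are uniformly bounded.

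For the tail $n \ge n_2$, I would use Corollary \ref{identity residues for u general} to write
\[
 n^{1-2s}\gamma_n(u) \;=\; -\, n^{2-2s} F_n(u) \cdot \frac{1}{n\,\kappa_n(u)}\, , \qquad n \ge n_2\, .
\]
By Proposition \ref{F analytic general}, the map $u \mapsto (n^{2-2s} F_n(u))_{n \ge n_2}$ is analytic into $\ell^{1}(\Z_{\ge n_2}, \C)$ with bounded image, and by Proposition \ref{proposition kappa_n for u general} the map $u \mapsto (1/(n\kappa_n(u)))_{n \ge n_2}$ is analytic into $\ell^{\infty}(\Z_{\ge n_2}, \C)$ with bounded image. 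The componentwise product $\ell^1 \times \ell^\infty \to \ell^1$ is a continuous bilinear map and hence analytic; composing with the two analytic maps above gives an analytic map $U^{-s}_w \to \ell^{1}(\Z_{\ge n_2}, \C)$ whose image is bounded, namely $u \mapsto (n^{1-2s}\gamma_n(u))_{n \ge n_2}$. Concatenating with the finite block $(\gamma_n(u))_{1 \le n < n_2}$ produces the analytic and bounded map $\Gamma : U^{-s}_w \to \ell^{1,1-2s}_+$.

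I do not expect any significant obstacle here: all the hard analytic work has been carried out in Proposition \ref{F analytic general} (the summability of the quasi-moment map, which required the delicate Neumann expansion together with the Schur-type estimates of Lemma \ref{key estimate} in the finite-gap approximation framework) and in Proposition \ref{proposition kappa_n for u general} (the two-sided bound $|\kappa_n(u)| \asymp 1/n$, which uses Lemma \ref{estimate of eta_n for u general} together with the product representation of $\mathcal H_\lambda$). The only mild point to check is that $n_2$ can be chosen uniformly for $u$ in a (possibly smaller) neighborhood of $w$; this is already ensured by Theorem \ref{general counting lemma}, which produces a fixed $n_2 \ge n_1 \ge n_0$ depending only on $w$. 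If desired, one may further shrink $U^{-s}_w$ to guarantee the uniform bounds needed in the $\ell^1$--$\ell^\infty$ pairing, after which the bilinear-composition argument yields both analyticity and the boundedness of $\Gamma(U^{-s}_w) \subset \ell^{1,1-2s}_+$.
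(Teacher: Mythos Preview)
Your proof is correct and follows essentially the same approach as the paper: split off the finitely many low indices $1 \le n < n_2$ using Theorem \ref{main theorem}(i), and for the tail combine the analyticity of $(F_n)_{n \ge n_2}$ into $\ell^{1,2-2s}$ (Proposition \ref{F analytic general}) with that of $(1/(n\kappa_n))_{n \ge n_2}$ into $\ell^\infty$ (Proposition \ref{proposition kappa_n for u general}) via the identity $F_n = -\kappa_n \gamma_n$. The paper's version is slightly terser but the argument is the same.
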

\begin{proof}
Let $0 \le s < 1/2$ and $w \in H^{-s}_{r,0}$
and denote by $U^{-s}_w$ the neighborhood of $w$ in $H^{-s}_{c,0}$ of Theorem \ref{general counting lemma}.
Since by Proposition \ref{F analytic general},
 $$
 U^{-s}_w \to \ell^{1, 2-2s}(\Z_{\ge n_2}, \C), u \mapsto (F_n(u))_{n \ge n_2}
 $$ 
and by Proposition \ref{proposition kappa_n for u general},
$$
U^{-s}_w \to \ell^{\infty}(\Z_{\ge n_2}, \C), u \mapsto (1/n \kappa_n(u))_{n \ge n_2}
$$ 
 are analytic, it follows from  \eqref{identity quasi moments} that
 $$
 U^{-s}_w  \to \ell^{1, 1-2s}(\Z_{\ge n_2}, \C)\, , \, u \mapsto (\gamma_n(u))_{n \ge n_2}
 $$ 
 is analytic as well. 
 Since by Theorem \ref{main theorem}(i), for any $n \ge 1$, $\gamma_n$ is analytic on $U^{-s}_w $
 it then follows  that $\Gamma  :  U^{-s}_w \to \ell^{1, 1- 2s}_+$ is analytic.
 By Proposition \ref{F analytic general} and 
 Proposition \ref{proposition kappa_n for u general}, $\Gamma( U^{-s}_w)$ is bounded in $\ell^{1, 1- 2s}_+$.
 \end{proof}
 
 \medskip
 \noindent
{\em Proof of Corollary \ref{product representations}.} 
Arguing as in the proofs of Corollary \ref{product representations near zero} (i) - (iii),  the claimed identities follow.
\hfill $\square$

\medskip
Using Theorem \ref{main theorem (ii) reformulated} and the arguments of the proof of Corollary \ref{product representations near zero} (iv), 
it follows that for any $u \in U^{-s}_w$,  $\kappa_n(u)$, $n \ge n_2$, admit  product representations, 
\begin{equation}\label{kappa for u general and n large}
\kappa_n(u) = \frac{1}{\lambda_n(u) - \lambda_0(u)}  \prod_{p \ne n} \Big(1 - \frac{\gamma_p(u)}{\lambda_p(u) - \lambda_n(u)} \Big)
\, , \qquad \forall \, n \ge n_2\, ,
\end{equation}
where the infinite products are absolutely convergent. Furthermore,
Theorem \ref{main theorem (ii) reformulated} allows to define the scaling factors $\kappa_n$ also for the integers $0 \le n < n_2$.
 For any $1 \le n < n_2$, $\kappa_n : U^{-s}_w \to \C$ is {\em defined} by the right hand side of \eqref{kappa for u general and n large},
whereas for $n=0$,
$$
\kappa_0(u) :=  \prod_{p \ge 1} \Big(1 - \frac{\gamma_p(u)}{\lambda_p(u) - \lambda_0(u)} \Big) \, .
$$
All these infinite products are  bounded on $U^{-s}_w$ and hence analytic on $U^{-s}_w$.
We point out, that  in the case $u$ is real valued, these definitions coincide with the product representations obtained in \cite{GK}($s=0$) and \cite{GKT}($0 < s < 1/2$).

Finally we consider the spectral invariants $\mu_n \equiv \mu_n(u)$, $n \ge 1$, discussed at the end of Section \ref{proof of main results near zero}.
For $u \in L^2_{r,0},$ they admit the product representations \eqref{formula m_n for u real}, 
   $$
    \mu_n = \big( 1 - \frac{\gamma_n}{\lambda_n - \lambda_0} \big) 
  \prod_{p \ne n} \big(1 - \frac{\gamma_n \gamma_p}{(\lambda_{p-1} - \lambda_{n-1})(\lambda_{p} - \lambda_{n})} \big) \, .
  $$
  It follows from Theorem \ref{main theorem (ii) reformulated} that $\mu_n$, $n \ge 1$, analytically extend to $U^{-s}_w$
  and are bounded on $U^{-s}_w$.
  
  Theorem \ref{main theorem (ii) reformulated} leads to estimates of the maps $\kappa_n$, $n \ge 1$, and  $\mu_n$, $n \ge 1,$
  which will be used in subsequent work to show that for any $0 \le s < 1/2$, the Birkhoff map analytically extends to a neighborhood of $H^{-s}_{r,0}$  in $H^{-s}_{c,0}$.
  \begin{proposition}\label{proposition kappa_n mu_n}
Let $0 \le s < 1/2$,  $w \in H^{-s}_{r,0}$, and let $U^{-s}_w$ be
the neighborhood of $w$ in $H^{-s}_{c,0}$ of Theorem \ref{general counting lemma}.
% (and Proposition \ref{F analytic general}). 
Then there exists $n_3 \ge n_2$ with $n_2$ as in Lemma \ref{estimate of mathcal H for general u}
so that the following holds.\\
(i) For any $u \in U^{-s}_w$ and $n \ge n_3$,
$$
| n \kappa_n(u) - 1 |  \le \frac{7}{12} e^{1/3} < 1 \, , \qquad 
| \mu_n(u) - 1 | \le \frac{7}{6} | \gamma_n(u) | e^{1/3}   \le \frac{7}{12} e^{1/3} < 1     \, .
 $$ 
 (ii) There exists a neighborhood $\tilde U^{-s}_w \subset  U^{-s}_w$ of $w$ in $H^{-s}_{c,0}$ so that for any $1 \le n < n_3$
 and any $u \in \tilde U^{-s}_w$
 $$
 | \kappa_n(u) - \kappa_n(w) | < \frac{1}{2}  \kappa_n(w) \, , \qquad 
  | \mu_n(u) - \mu_n(w) | < \frac{1}{2}  \mu_n(w) \, .
 $$
 (iii) For any $n \ge 1$, the principal  branch of the square root of $n \kappa_n$ and the one of $\mu_n$ are well defined
  on $\tilde U^{-s}_w$ and
  $$
 \big(\sqrt{n \kappa_n} \big)_{n \ge 1} :  \tilde U^{-s}_w \to \ell^\infty_+ , \
 u \mapsto   \big(\sqrt{n \kappa_n(u)} \big)_{n \ge 1}
  $$
 and
 $$
 \big(\sqrt{\mu_n} \big)_{n \ge 1} :  \tilde U^{-s}_{w} \to \ell^\infty_+ , \ 
 u \mapsto   \big(\sqrt{\mu_n(u)} \big)_{n \ge 1}
 $$
are analytic.
\end{proposition}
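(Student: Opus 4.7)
The plan is to split the analysis into indices $n \ge n_3$ (for some $n_3$ to be chosen below) and the finitely many remaining $1 \le n < n_3$, mimicking the arguments used in the near-zero case. The key preparatory observation is that by Theorem \ref{main theorem (ii) reformulated}, $\Gamma(U^{-s}_w)$ is bounded in $\ell^{1,1-2s}_+$, so in particular there exists $M_w > 0$ with $\sum_{k \ge 1} |\gamma_k(u)| \le M_w$ uniformly for $u \in U^{-s}_w$. By the absolute convergence of this series (uniform in $u$) we may choose $n_3 \ge n_2$ large enough that
\[
\sum_{k \ge n_3} |\gamma_k(u)| \le 1/5, \qquad \forall\, u \in U^{-s}_w,
\]
which will play the role of the smallness assumption \eqref{def r_ s,**} used in the proofs of Proposition \ref{estimate for kappa_n} and Proposition \ref{proposition mu_n}.

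To prove item (i), I would revisit those two proofs but now working with the product representations \eqref{kappa for u general and n large} for $\kappa_n$ and \eqref{formula m_n for u real} for $\mu_n$. The only step where the prior arguments use the full smallness of $\sum_{k \ge 1}|\gamma_k|$ is in bounding sums of the form $\sum_{p \ne n}|\gamma_p|/|\lambda_p - \lambda_n|$. I would split this sum into the ranges $p < n_3$ and $p \ge n_3$, $p \ne n$. For $p < n_3$, Corollary \ref{product representations} gives $|\lambda_p(u) - \lambda_n(u)| \ge n - n_3 - M_w \gtrsim n$ for $n$ large, so the contribution is at most $n_3 \max_{p < n_3} |\gamma_p(u)| / (n - n_3 - M_w)$, which is $O(1/n)$ uniformly on $U^{-s}_w$ and can be forced below $1/20$ by enlarging $n_3$. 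For $p \ge n_3$, $p \ne n$, the same identity gives $|\lambda_p - \lambda_n| \ge |p - n| - 1/5 \ge 4/5$, so this contribution is bounded by $(5/4)\sum_{p \ge n_3, p \ne n}|\gamma_p(u)| \le 1/4$. The elementary bounds on $\log(1 - y)$ and $e^x - 1$ that conclude the proofs of Propositions \ref{estimate for kappa_n} and \ref{proposition mu_n} then deliver the claimed inequalities verbatim, perhaps after a final enlargement of $n_3$ to absorb the $O(1/n)$ defect into the target constants.

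For item (ii), the finitely many indices $1 \le n < n_3$ are handled by continuity. For each such $n$, both $\kappa_n$ and $\mu_n$ are analytic on $U^{-s}_w$ -- $\kappa_n$ by its extension via the product formula (see the discussion following \eqref{kappa for u general and n large}) and $\mu_n$ from \eqref{formula m_n for u real} together with Theorem \ref{main theorem (ii) reformulated}. For real $w$, the positivity $\kappa_n(w) > 0$ and $\mu_n(w) > 0$ follows factor by factor from the product formulas: the ordering $\lambda_0(w) < \lambda_1(w) < \cdots$ and the nonnegativity $\gamma_k(w) \ge 0$ make every factor strictly positive (e.g.\ $(\lambda_{p-1} - \lambda_{n-1})(\lambda_p - \lambda_n) \ge (1 + \gamma_n)(1 + \gamma_p) > \gamma_n \gamma_p$ for $p > n$, and analogously for $p < n$). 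Applying continuity simultaneously to the finite collection $\{\kappa_n, \mu_n : 1 \le n < n_3\}$ yields a neighborhood $\tilde U^{-s}_w \subset U^{-s}_w$ of $w$ on which the required relative bounds hold.

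Item (iii) is then an immediate consequence of (i) and (ii): on $\tilde U^{-s}_w$, every value $n\kappa_n(u)$ and $\mu_n(u)$ lies in an open disk contained in $\C \setminus (-\infty, 0]$, so the principal branch of the square root is well-defined and analytic, and the sequences $\bigl(\sqrt{n\kappa_n(u)}\bigr)_{n \ge 1}$ and $\bigl(\sqrt{\mu_n(u)}\bigr)_{n \ge 1}$ are uniformly bounded. Analyticity of the maps into $\ell^\infty_+$ then follows from \cite[Theorem A.3]{KP}, exactly as in Propositions \ref{estimate for kappa_n} and \ref{proposition mu_n}. The main technical difficulty is executing the two-range splitting argument in item (i) with the sharp constants $(7/12)e^{1/3}$ and $(7/6)|\gamma_n| e^{1/3}$, which demands a careful choice of $n_3$ so that the tail sum, the $O(1/n)$ defect from the small-index terms, and the residual errors in the elementary inequalities all fit simultaneously inside the target bounds.
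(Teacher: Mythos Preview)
Your proposal is correct and follows essentially the same route as the paper. The only noteworthy difference is in how you justify the key bound $\sum_{p\ne n}|\gamma_p(u)|/|\lambda_p(u)-\lambda_n(u)|\le 1/4$ for $n\ge n_3$: you make the two-range split $p<n_3$ versus $p\ge n_3$ explicit, whereas the paper simply asserts the existence of such an $n_3$ as a direct consequence of the uniform $\ell^{1,1-2s}$ bound on $\Gamma(U^{-s}_w)$ (which, since $1-2s>0$, forces uniform tail decay of $\sum_k|\gamma_k|$). Your phrasing ``forced below $1/20$ by enlarging $n_3$'' is slightly garbled---enlarging the cutoff also enlarges the number of small-$p$ terms---but the intent (first fix the tail threshold, then choose the lower bound on $n$ separately) is clear and correct. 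For items (ii) and (iii) your argument matches the paper's; the paper simply cites \cite{GK}, \cite{GKT} for the positivity of $\kappa_n(w),\mu_n(w)$ at real $w$, whereas you verify it directly from the product formulas, which is equally valid.
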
  
\begin{proof} 
By Theorem \ref{main theorem (ii) reformulated}, there exists $n_3 \ge \max(n_2, 3)$ so that
\begin{equation}\label{choice n_3}
\sum_{p  \ne n} \frac{|\gamma_p(u)|}{\big| \lambda_p(u) - \lambda_n(u) \big|} \le \frac{1}{4}\, , \qquad \forall \, n \ge n_3 \, , \ \ \forall \, u \in U^{-s}_w\, .
\end{equation}
(i)  To simplify notation, whenever possible, we do not indicate the dependence of the quantities involved on $u$. 

First we estimate $ | \kappa_n(u) - 1 |$ for $n \ge n_3$.
We argue similarly as in the proof of Proposition \ref{estimate for kappa_n}.
For any $n \ge n_3$ and $u \in U^{-s}_w$, write $ n \kappa_n - 1 =  I_n + II_n$ where
  $$
 I_n := \big (\frac{n}{\lambda_n - \lambda_0} -1 \big)\prod_{p \ne n} \big(1 - \frac{\gamma_p}{\lambda_p - \lambda_n} \big) \, , \qquad
II_n := \prod_{p \ne n} \big(1 - \frac{\gamma_p}{\lambda_p - \lambda_n} \big) - 1 \, .
 $$
 We begin by estimating the term $I_n$.
 Since for any $n \ge n_3$,   $|\lambda_n - n| \le 1/4$ (cf. Theorem \ref{general counting lemma})
 and  $\lambda_0(w) \le 0$ ($w$ real valued) it follows that
 \begin{equation}\label{estimate term 1}
 |\lambda_n - \lambda_0| \ge  n-1/4 - \lambda_0(w) - 1/4 \ge n - 1/2 \, .
  \end{equation}
  Since by assumption  $n_3 \ge 3$, it follows that
$$
 | \frac{n}{\lambda_n - \lambda_0} - 1 |  \le \frac{1}{4}.
$$
 Furthermore, since $\log(1 +  |x|) \le |x|$ one has 
 $$
 \begin{aligned}
&| \prod_{p \ne n} \big(1 - \frac{\gamma_p}{\lambda_p - \lambda_n} \big)|   \le 
 \prod_{p \ne n} \big(1 + \frac{|\gamma_p|}{|\lambda_p - \lambda_n|} \big) \\
& \le \exp\big(  \sum_{p \ne n} \log(1 + \frac{|\gamma_p|}{|\lambda_p - \lambda_n|})\big)
\le \exp\big(  \sum_{p \ne n}  \frac{|\gamma_p|}{|\lambda_p - \lambda_n|}\big) \, .
\end{aligned}
 $$
 From \eqref{choice n_3} one then infers that altogether one has
$$
| I_n(u)  | \le \frac{1}{4} e^{1/4}\, , \qquad \forall \, n \ge n_3 \, , \quad \forall \, u \in U^{-s}_w\, .
$$
To estimate $II_n$, first note that  by \eqref{choice n_3},
 the principal branch of the logarithm of $ 1 -  \frac{\gamma_p}{\lambda_p - \lambda_n}$
 is well defined for any $p \ne n$ and $n \ge n_3$. Thus one has
 $$
  \prod_{p \ne n} \big(1 - \frac{\gamma_p}{\lambda_p - \lambda_n} \big) = 
 \exp\big(  \sum_{p \ne n} \log\big(1 - \frac{\gamma_p}{\lambda_p - \lambda_n}\big) \big) \, .
 $$
 Arguing as in the proof of Proposition \ref{estimate for kappa_n} one then infers from \eqref{choice n_3} that
 $$
 | II_n(u) | \le \frac{1}{3} e^{1/3} \, , \qquad \forall \, n \ge n_3 \, , \quad  \forall \, u \in U^{-s}_w \, .
 $$
 Combining all the estimates derived yields the desired bound,
 $$
 | n \kappa_n(u) - 1  | \le  \frac{1}{4} e^{1/4} +  \frac{1}{3} e^{1/3} = \frac{7}{12} e^{1/3} \, ,
 \qquad \forall \, n \ge n_3 \, , \quad  \forall \, u \in U^{-s}_w \, .
 $$
  Now let us turn to the estimate for $|\mu_n(u) - 1|$ for $n \ge n_3$.
 We argue as in the proof of Proposition \ref{proposition mu_n}. By the definition of $\mu_n$
 one has $\mu_n -1 = I_n + II_n$ where
 $$
 I_n:=   \big( \big( 1 - \frac{\gamma_n}{\lambda_n - \lambda_0} \big)  - 1 \big)
  \prod_{p \ne n} \big(1 - \frac{\gamma_n \gamma_p}{(\lambda_{p-1} - \lambda_{n-1})(\lambda_{p} - \lambda_{n})} \big),
  $$
  $$
  II_n:=   \prod_{p \ne n} \big(1 - \frac{\gamma_n \gamma_p}{(\lambda_{p-1} - \lambda_{n-1})(\lambda_{p} - \lambda_{n})} \big) - 1 \, . \qquad \qquad \qquad \quad
 $$
 We begin with estimating the term $I_n$.
 By \eqref{estimate term 1} one has for any $n \ge n_3,$
 $$
| \big( 1 - \frac{\gamma_n}{\lambda_n - \lambda_0} \big)  - 1|
 \le \frac{1}{n - 1/2} | \gamma_n | \le \frac{1}{2}  | \gamma_n | \, .
 $$ 
 Using that for any $p \ne n$ and $n \ge n_3$, $| \lambda_{p-1}- \lambda_{n-1} | \ge 1/2$ 
 and $|\gamma_n| \le 1/2$ (cf. Theorem \ref{general counting lemma}), 
 it then follows by \eqref{choice n_3} that
  $$
 | \prod_{p \ne n} \big(1 - \frac{\gamma_n \gamma_p}{ (\lambda_{p-1} - \lambda_{n-1})(\lambda_p - \lambda_n)} \big)|   \le
\exp\big( 2  |\gamma_n| \frac{1}{4}  \big)  \le e^{1/4} \, 
$$ 
and hence
$$
| I_n(u) | \le \frac{1}{2}\,  |\gamma_n(u)| \, e^{1/4}\, , \qquad \forall \, n \ge n_3 \, , \ \ \forall \, u \in U^{-s}_w\, .
$$
 It remains to estimate $II_n =  \prod_{p \ne n}  \big(1 - \frac{\gamma_n \gamma_p}{(\lambda_{p-1} - \lambda_{n-1})(\lambda_{p} - \lambda_{n})} \big)  - 1$.
 First note that since for any $p \ne n,$
 $$
 | \frac{\gamma_n \gamma_p}{(\lambda_{p-1} - \lambda_{n-1})(\lambda_p - \lambda_n)} | \le 2|\gamma_n|  \frac{|\gamma_p|}{|\lambda_p - \lambda_n|}  \le 1/4\, , 
 $$
 the principal branch of the logarithm of $\big(1 - \frac{\gamma_n \gamma_p}{(\lambda_{p-1} - \lambda_{n-1})(\lambda_{p} - \lambda_{n})} \big)$
 is well defined and hence one has
 $$
 \begin{aligned}
  & \prod_{p \ne n} \big( 1 - \frac{\gamma_n \gamma_p}{(\lambda_{p-1} - \lambda_{n-1})(\lambda_{p} - \lambda_{n})}  \big) \\
  & = 
 \exp\big(  \sum_{p \ne n} \log\big(1 - \frac{\gamma_n \gamma_p}{(\lambda_{p-1} - \lambda_{n-1})(\lambda_{p} - \lambda_{n})} \big) \big) \, .
 \end{aligned}
 $$
 One then concludes as in the proof of Proposition \ref{proposition mu_n} that
$$
|  II_n(u) | \le \frac{2 |\gamma_n|}{3}  \exp(\frac{2 |\gamma_n|}{3})
 \le \frac{2 |\gamma_n|}{3}  e^{1/3}\, ,  \qquad \forall \, n \ge n_3 \, , \ \ \forall \, u \in U^{-s}_w\, .
$$
 Combining the estimates derived yields
 $$
 |\mu_n(u) - 1  | \le \frac{7}{6} |\gamma_n|  e^{1/3} \le \frac{7}{12}  e^{1/3} \,  , \qquad \forall \, n \ge n_3 \, , \quad  \forall \, u \in U^{-s}_w \, .
 $$

\smallskip
\noindent
(ii) Note that for any $n \ge 1$, $\kappa_n : U^{-s}_w \to \C$ is  continuous and $\kappa_n(w) > 0$ since $w$ is assumed to be real (cf. \cite{GK}, \cite{GKT}).
Hence there exists a neighborhood $\tilde U^{-s}_w$ of $w$ so that  
$$
| \kappa_n(u) - \kappa_n(w) | < \frac{1}{2}  \kappa_n(w)\, , \qquad \forall \, 1 \le n < n_3 \, , \ \  \forall u \in \tilde U^{-s}_w \, .
$$
By shrinking $\tilde U^{-s}_w$ if needed, the same arguments as in the case of $ \kappa_n$   show that 
$$ 
| \mu_n(u) - \mu_n(w) | < \frac{1}{2}  \mu_n(w)  \, ,  \qquad \forall \, 1 \le n < n_3 \, , \ \  \forall u \in \tilde U^{-s}_w \, .
$$

\smallskip
\noindent
(iii) By item (i) and (ii), for any $n \ge 1$, the principal branch of the square roots $\sqrt{n \kappa_n(u)}$ and $\sqrt{\mu_n(u)}$
are well defined on $\tilde U^{-s}_w$. They are analytic and uniformly in $n$ bounded on $\tilde U^{-s}_w$. 
The claimed analyticity of the sequences $ \big(\sqrt{n \kappa_n} \big)_{n \ge 1}$
$\big(\sqrt{\mu_n} \big)_{n \ge 1}$  then follows from  \cite[Theorem A.3]{KP}.
\end{proof}

 %%%%%%%%%%%%%%%%%%%%%%%%%%%%%%%%%%%%%%%%%%%%%%%%%%%%%%
 %%%%%%%%%%%%%%%%%%%%%%%%%%%%%%%%%%%%%%%%%%%%%%%%%%%%%%
 
  \appendix
  
  \section{Asymptotics of eigenfunctions of $L_u$}\label{asymptotics eigenfunctions}
  
 Let  $u \in H^{-s}_{r, 0}$ with $0 \le s < 1/2$. 
 %Since $u$ is real valued, $L_u$ is self-adjoint.
 Denote by $f_n \equiv f_n(\cdot, u)$, $n \ge 0$,
 the eigenfunctions of the Lax operator $L_u$, corresponding to the eigenvalues $\lambda_n \equiv \lambda_n(u)$,
  introduced in \cite{GK} $(s = 0)$ and \cite{GKT} $(0 < s < 1/2)$. Recall that for any $n \ge 0$, $f_n$ is in $H^{1-s}_+$
  and satisfies $\|f_n \| =1$, whereas the phase of $f_n$ is determined recursively by
  \begin{equation}\label{normalization of eigenfunctions}
  \langle 1 | f_0 \rangle > 0\, , \qquad   \langle f_n | e^{ix} f_{n-1} \rangle > 0\, , \quad \forall \, n \ge 1 \,  .
  \end{equation}

  The aim of this appendix is to prove asymptotic estimates of  
  \begin{equation}\label{def g_n}
  g_n(x, u) := e^{-inx} f_n(x, u) \in H^{1-s}_c 
  \end{equation}
  as $n \to \infty$. 
  
  \begin{proposition}\label{asymptotics g_n}
  For any $0 \le s < 1/2$ the following holds:\\
  (i) For any $n \ge 0$, the map 
  $$
  g_n: H^{-s}_{r, 0} \to H^{1-s}_c, \ u \mapsto g_n(\cdot, u)
  $$
   is weakly sequentially continuous.\\
  (ii) For any $u \in H^{-s}_{r,0}$, the sequence $(g_n(\cdot, u))_{n \ge 0}$ converges in $H^{1-s}_c$ 
  and the limit $ g_{\infty} \equiv g_{\infty}(\cdot, u)$ is given by 
  $$
  g_{\infty} = e^{D^{-1}u} \, , \qquad D^{-1}u(x) =  i \partial_x^{-1} u(x) = \sum_{k \ne 0} \frac{\widehat u(k)}{k} e^{ikx} \,  .
  $$
  In particular, $| g_{\infty}(x) | = 1$ for any $x \in \T$ and $\| g_\infty \| = 1$.\\
  (iii) For any $u \in H^{-s}_{r,0}$,  $\sum_{k \ge 1} \| g_k(\cdot, u) - g_{k-1}(\cdot, u) \| < \infty $ and
  $$
 \sum_{k=n}^\infty \| g_k(\cdot, u) - g_{k-1}(\cdot, u) \| 
 \le \sqrt{2} \sum_{k = n}^\infty \sqrt[+]{1 - \mu_k}
 \le C \frac{1}{\langle n \rangle ^{\tau}}\, , \quad \forall \, n \ge 1\,  ,
  $$
  where $\tau:= (1/2 - s)/ 2$ and where the constant $C > 0$ can be chosen locally uniformly with respect to $u \in H^{-s}_{r,0}$.
  \end{proposition}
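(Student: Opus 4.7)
\noindent\emph{Plan.}
The strategy is to reduce everything to an $L^2$ estimate for the consecutive differences $g_n - g_{n-1}$ controlled by $1-\mu_n$, and then to leverage the quantitative bounds on $\mu_n$ from Proposition \ref{proposition kappa_n mu_n} and the Remark following Proposition \ref{proposition mu_n} to obtain the polynomial rate in $(iii)$. The limit is identified by first treating finite gap potentials, where \eqref{formula eigenfunctions} gives it directly, and then extending to general $u$ by density. The $H^{1-s}_c$ convergence in $(ii)$ and the weak sequential continuity in $(i)$ then follow by combining the analyticity of the Riesz projectors with a uniform $H^{1-s}$ bound on $(g_n)$ in the shifted norm of Section~\ref{Lax operator}.

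The cornerstone is the identity $\|g_n - g_{n-1}\|_{L^2} = \|f_n - \mathcal{S} f_{n-1}\|_{L^2}$, immediate from \eqref{def g_n}. I would expand $\mathcal{S} f_{n-1}$ in the orthonormal eigenbasis $(f_k)_{k\ge 0}$ of $L_u$: applying the shift identity \eqref{shift identity} to $f_{n-1}$ and taking inner products against $f_k$ gives $(\lambda_k - \lambda_{n-1} - 1)\langle \mathcal{S} f_{n-1}|f_k\rangle = -\langle u \mathcal{S} f_{n-1}|1\rangle \langle 1|f_k\rangle$ for every $k$ with $\lambda_k \ne \lambda_{n-1}+1$, from which the product formula \eqref{formula m_n for u real} for $\mu_n$ together with the normalization \eqref{normalization of eigenfunctions} forces $\langle \mathcal{S} f_{n-1}|f_n\rangle = \sqrt[+]{\mu_n}$. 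Since $\|\mathcal{S} f_{n-1}\| = \|f_n\| = 1$, expanding the squared $L^2$ norm yields $\|g_n - g_{n-1}\|^2 = 2 - 2\sqrt{\mu_n} \le 2(1-\mu_n)$.

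To extract the rate $n^{-\tau}$ with $\tau = (1/2-s)/2$, I would invoke the estimate $|1-\mu_n| \le C(|\gamma_n|/n + |\gamma_n|\,\ell^1_n/n^{1-2s})$ from the Remark after Proposition \ref{proposition mu_n}, with $(\ell^1_n) \in \ell^1$, and apply Cauchy--Schwarz to each summand. For the first term, pairing $\sqrt{|\gamma_k| k^{1-2s}}$ against $k^{-(1-s)}$ and using $\sum_{k\ge n} k^{-2(1-s)} \lesssim n^{-(1-2s)}$ together with $(\gamma_k) \in \ell^{1,1-2s}_+$ from Theorem \ref{main theorem}(ii) gives $\sum_{k\ge n}\sqrt{|\gamma_k|/k} \le C n^{-(1/2-s)}$; the second summand is handled by the crude bound $\sum_{k\ge n} |\gamma_k|/k^{1-2s} \le n^{-(1-2s)}\|\gamma\|_{\ell^{1,1-2s}_+}$ combined with $(\ell^1_k) \in \ell^1$. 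Local uniformity in $u$ follows from the local uniformity of the bounds in Proposition \ref{proposition kappa_n mu_n} and Theorem \ref{main theorem}(ii). This establishes $(iii)$ and, by telescoping, Cauchy convergence of $(g_n(\cdot,u))$ in $L^2$.

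For $(ii)$, the limit is identified by approximation: for $u_0 \in \mathcal U_N$, formula \eqref{formula eigenfunctions} gives $g_n(\cdot, u_0) = e^{D^{-1}u_0}$ for all $n \ge N$, and Moser continuity of $w \mapsto e^{D^{-1}w}$ on $H^{-s}_{r,0}$ with values in $H^{1-s}_c$, together with density of $\bigcup_N \mathcal U_N$ in $H^{-s}_{r,0}$ and the local uniformity of $(iii)$, forces the $L^2$ limit of $g_n(\cdot,u)$ to be $e^{D^{-1}u}$. To upgrade to $H^{1-s}_c$ convergence I would derive a uniform bound $\sup_n \|g_n\|_{1-s} = \sup_n \|f_n\|_{1-s; n} \le C(u)$ by inserting the eigenvalue equation $D f_n = \lambda_n f_n + T_u f_n$ into the shifted-norm framework of Section \ref{Lax operator} (using Lemma \ref{multiplication u f} and Lemma \ref{estimate for N-gap potentials}), and then revisit the bound $\|g_n - g_{n-1}\| \le \sqrt{2(1-\mu_n)}$ in that same shifted norm to produce a genuine $H^{1-s}_c$-summable estimate. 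Part $(i)$ follows by combining the analyticity of the Riesz projector $P_n(u)$ from Corollary \ref{Riesz projector analytic I} and Theorem \ref{main theorem}(i) with the Rellich embedding $H^{-s}_c \hookrightarrow H^{-s-\varepsilon}_c$ and the uniform $H^{1-s}$ bound just obtained. The main obstacle I anticipate is precisely this $H^{1-s}_c$ upgrade: the bare bound $\sqrt{2(1-\mu_n)}$ is an $L^2$ statement and does not lift automatically, so the technical core lies in producing the correct $H^{1-s;n}$-decay of $f_n - \mathcal{S} f_{n-1}$ by revisiting the Neumann-series machinery of Section \ref{Lax operator}.
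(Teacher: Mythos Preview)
Your treatment of (iii) is essentially the paper's: the key identity $\langle g_n\mid g_{n-1}\rangle=\sqrt{\mu_n}$ (hence $\|g_n-g_{n-1}\|^2=2(1-\sqrt{\mu_n})\le 2(1-\mu_n)$) and the subsequent rate extraction via $\gamma_n\in\ell^{1,1-2s}$ match the paper's Lemma~\ref{estimate 1 for mu_n} and Lemma~\ref{estimate 1 of g_k - g_(k-1)} closely.

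The genuine gap is in your plan for the $H^{1-s}_c$ upgrade in (ii). The identity $\|g_n-g_{n-1}\|^2=2(1-\sqrt{\mu_n})$ is tied to the $L^2$ inner product through $\|f_k\|=1$ and $\mu_n=\langle f_n\mid \mathcal S f_{n-1}\rangle^2$; there is no analogue in a shifted or weighted norm, so ``revisiting the bound in the shifted norm'' does not produce an $H^{1-s}$-summable estimate on the differences. The paper takes a different route that avoids this entirely. After establishing a uniform bound $\sup_n\|g_n\|_{1-s}<\infty$ (Lemma~\ref{regularity 1 of g_n}, via the eigenvalue equation $-i\partial_x f_n=\lambda_n f_n+T_u f_n$, interpolation between $L^2$ and $H^{1-s}$, and Young's inequality), one derives the key identity
\[
Dg_n=(\lambda_n-n)g_n+\Pi_{\ge -n}(ug_n)
\]
by multiplying the eigenvalue equation by $e^{-inx}$ and projecting. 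The $L^2$ convergence from (iii) plus the uniform $H^{1-s}$ bound gives $g_n\to g_\infty$ strongly in $H^{1-s-\tau}_c$, hence $ug_n\to ug_\infty$ in $H^{-s}_c$; since also $\lambda_n-n\to 0$ and $\|\Pi_{\le -n-1}(ug_\infty)\|_{-s}\to 0$, one passes to the limit in the displayed equation and obtains $Dg_n\to Dg_\infty$ \emph{strongly} in $H^{-s}_c$. That is precisely $H^{1-s}_c$ convergence. The limiting equation $Dg_\infty=ug_\infty$ forces $g_\infty=a\,e^{D^{-1}u}$ with $|a|=1$, and $a=1$ follows by continuity of $u\mapsto g_\infty(\cdot,u)$ from the finite gap case (Lemma~\ref{g_infty finite gap}, which carries out an explicit computation to pin down the phase).

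For (i) the paper does not use analyticity of Riesz projectors. Instead it uses a direct weak-compactness argument: given $u_k\rightharpoonup u$, the uniform bound $\sup_k\|f_n(\cdot,u_k)\|_{1-s}<\infty$ (from \cite[Lemma 8]{GKT}) gives a weakly convergent subsequence $f_n(\cdot,u_{k_j})\rightharpoonup h_n$ in $H^{1-s}_+$; strong convergence in $H^{1-s-\tau}_+$ then yields $u_{k_j}f_n(\cdot,u_{k_j})\rightharpoonup uh_n$ in $H^{-s}_c$, so $L_uh_n=\lambda_n(u)h_n$, and the normalization conditions \eqref{normalization of eigenfunctions} pass to the limit to identify $h_n=f_n(\cdot,u)$.
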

  \begin{remark}
  For $u = 0,$ one has $f_n(x) = e^{inx}$ and hence $g_n \equiv 1$ for any $n \ge 0$. 
  \end{remark}
  
  The proof of Proposition \ref{asymptotics g_n} is based on the following lemmas.
  First we compute $\lim_{n \to \infty} g_n$ for a finite gap potential. Recall
  from \cite{GK} that for any integer $N \ge 1$, we denote by $\mathcal O_N$ the subset of functions $u \in H^0_{r,0}$ with
  $$
  \gamma_n(u) > 0\, , \quad \forall 1 \le n \le N, \qquad \gamma_n(u) = 0 ,  \quad  \forall \, n \ge N+1 \, .
  $$
  We remark that since $\Phi: L^2_{r,0} \to h^{1/2 }_+$ is homeomorphism (cf. \cite{GK}),
  $\bigcup_{N \ge 1} \mathcal O_N$ is dense in $L^2_{r,0}$ and hence in $H^{-s}_{r,0}$ for any $-1/2 < s \le 0$.
  \begin{lemma}\label{g_infty finite gap}
  For any integer $N \ge 1$ and any $u \in \mathcal O_N$, one has in $H^1_c$
  $$
  \lim_{n \to \infty} g_n(\cdot, u) = e^{D^{-1}u} \, .
  $$
  \end{lemma}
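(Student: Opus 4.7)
The plan is to deduce the lemma essentially as an immediate corollary of the explicit description of the canonically normalized eigenfunctions on the finite-gap stratum, already recorded in \eqref{formula eigenfunctions} in the introduction.

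First I would observe that $\mathcal O_N \subset \mathcal U_N$: any $u \in \mathcal O_N$ satisfies $\gamma_N(u) > 0$ and $\gamma_n(u) = 0$ for all $n > N$, which is exactly the defining condition for $\mathcal U_N$. Therefore \eqref{formula eigenfunctions} applies and yields, for every $n \ge N$,
$$
f_n(x, u) \,=\, g_\infty(x, u) \, e^{inx}, \qquad g_\infty(x, u) \,=\, e^{i \partial_x^{-1} u(x)},
$$
where $f_n(\cdot, u)$ is the eigenfunction canonically normalized by \eqref{normalization of eigenfunctions}. By the very definition \eqref{def g_n} of $g_n$, this gives $g_n(\cdot, u) = g_\infty(\cdot, u)$ for all $n \ge N$, so the sequence $(g_n(\cdot, u))_{n \ge 0}$ is stationary from index $N$ on, and in particular converges in every $H^\sigma_c$.

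It remains only to identify the stationary value $g_\infty(\cdot, u)$ with the quantity $e^{D^{-1} u}$ appearing in the statement of Proposition \ref{asymptotics g_n}. Since $D = -i\partial_x$ acts on the zero-mean Fourier mode $e^{ikx}$ ($k \ne 0$) as multiplication by $k$, the operator $D^{-1}$ acts as multiplication by $1/k$; the operator $i \partial_x^{-1}$ acts as $i/(ik) = 1/k$ on the same mode. Hence $D^{-1} u = i \partial_x^{-1} u$ on $H^s_{c,0}$ and
$$
g_\infty(\cdot, u) \,=\, e^{i \partial_x^{-1} u} \,=\, e^{D^{-1} u}.
$$
Finally, since $u$ is a finite gap potential, $u \in \bigcap_{m \ge 0} H^m_{r,0}$, so $D^{-1} u$ is smooth and real valued, whence $g_\infty(\cdot, u) \in C^\infty \subset H^1_c$ and $|g_\infty(x, u)| = 1$.

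There is no real obstacle here: the serious work was already carried out in \cite{GK} to establish \eqref{formula eigenfunctions}, including the fact that the eigenfunctions there appear with the phase dictated by \eqref{normalization of eigenfunctions}. The only point worth verifying explicitly is precisely that phase normalization, so that the $g_n$ defined by \eqref{def g_n} actually equal $g_\infty$ on the nose (rather than only up to a unimodular constant); this is built into the statement of \eqref{formula eigenfunctions} as recalled in the introduction.
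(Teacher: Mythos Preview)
Your argument hinges entirely on \eqref{formula eigenfunctions} being a result already established in \cite{GK} with the canonical phase normalization of \eqref{normalization of eigenfunctions} built in. If that were so, Lemma~\ref{g_infty finite gap} would indeed be a one-line corollary. But the paper's own proof of this lemma does \emph{not} invoke \eqref{formula eigenfunctions}; rather, Appendix~\ref{asymptotics eigenfunctions} is where that formula is actually justified, and Lemma~\ref{g_infty finite gap} is the vehicle. Within the logic of the appendix, your proposal is circular: you are citing the conclusion as the hypothesis.

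Concretely, the paper starts from what \cite{GK} actually supplies in Section~7: the rational representation $f_N(x) = a_N\, e^{iNx}\,\overline{Q(e^{ix})}/Q(e^{ix})$ with $Q(z)=\prod_j(1-q_j z)$ and $\Pi u = -e^{ix}Q'(e^{ix})/Q(e^{ix})$. From $D\log Q(e^{ix}) = -\Pi u$ one obtains $e^{D^{-1}u} = \overline{Q(e^{ix})}/Q(e^{ix})$, hence $g_\infty = e^{-iNx}f_N = a_N\, e^{D^{-1}u}$ with $|a_N|=1$. The substantive content of the lemma is precisely that $a_N = 1$, and this is what occupies most of the paper's proof: $\langle 1\,|\,f_N\rangle$ is computed two ways --- once from the product expansion of $f_N$, yielding $a_N\langle 1|f_N\rangle = (-1)^N\prod_j q_j$, and once by identifying $(-1)^N\prod_j q_j$ with $(-1)^N\det M_{N-1}$ and evaluating the latter via the Cauchy determinant formula --- and the comparison forces $a_N>0$. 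You correctly flag the phase as ``the only point worth verifying explicitly,'' but then resolve it by appealing to \eqref{formula eigenfunctions}, which is exactly the statement under proof.
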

  \begin{proof}
  We use the notation and the results established in \cite{GK}. For any $u \in \mathcal O_N$ and $n \ge N,$
  the eigenfunction $f_n \equiv f_n(\cdot, u)$ of $L_u$, corresponding to the eigenvalue $\lambda_n \equiv \lambda_n(u)$,
  is given by $f_n(x) = e^{i(n-N)x} f_N(x)$ and therefore, $g_n \equiv g_n(\cdot, u)$ satisfies
  $$
  g_n(x) = e^{-inx}f_n(x) = g_N(x) , \quad \forall \, n \ge N .
  $$
 As a consequence, the sequence $(g_n)_{n \ge 1}$ converges and its limit $g_\infty$ is given by
 \begin{equation}\label{formula 1 limit}
 g_{\infty}(x) = e^{-iNx} f_N(x) \, .
 \end{equation}
 By \cite[Theorem 7.1]{GK} and its proof, $\Pi u(x)$ is of the form 
 \begin{equation}\label{formula 1 u}
 \Pi u(x) = - e^{ix}\frac{Q'(e^{ix})}{Q(e^{ix})}
 \end{equation}
 where $Q(z)$ is a polynomial in $z \in \C$ of degree $N$ of the form
 $$
 Q(z) = \prod_{j=0}^{N-1}(1 - q_j z) , \qquad q_j \in \C, \quad 0 < |q_j| < 1 \, ,
 $$
 and $Q'(z) = \partial_z Q(z)$. In addition, $f_N$ is given by
 \begin{equation}\label{formula 1 f_N}
 f_N(x) = a_N e^{iNx} \overline{Q(e^{ix})} / Q(e^{ix})
 \end{equation}
 where $a_N \in \C\setminus {0}$ is constant. Since 
 $$
 D \log Q(e^{ix}) = \frac{1}{Q(e^{ix})} Q'(e^{ix}) e^{ix} = - \Pi u(x)\, ,
 $$
 one concludes that $D^{-1} \Pi u(x) = \log(1/Q(e^{ix}))$ and hence
 $$
 e^{D^{-1} \Pi u(x) } = \frac{1}{Q(e^{ix})} \, .
 $$
 Taking the complex conjugate of the latter identity one obtains 
 $$
 e^{- D^{-1} \overline{\Pi u(x)} } = \frac{1}{ \overline{Q(e^{ix})}} \, ,  \qquad \text{or} \qquad
 e^{D^{-1} \overline{\Pi u(x)} } = \overline{Q(e^{ix})} \, .
 $$
 Combining the latter two formulas one concludes that
 $$
 e^{D^{-1} u(x) } = \frac{ \overline{Q(e^{ix})}}{Q(e^{ix})} \, .
 $$
 The identity \eqref{formula 1 f_N} then becomes
 $$
 f_N(x) = a_N e^{iNx} e^{D^{-1} u(x) } \, .
 $$
  Since $u$ is assumed to be real valued it then follows by the normalization condition $\| f_N \| = 1$
  that $ |a_N| = 1$.  Hence \eqref{formula 1 limit} becomes
  \begin{equation}\label{formula 2 limit}
 g_{\infty}(x) = a_N e^{D^{-1} u(x) } \ , \qquad   | a_N | = 1 \, .
  \end{equation}
 It remains to show that $a_N = 1$. To this end we compute $\langle 1 | f_N \rangle$. Since by \eqref{formula 1 f_N}
  $$
  f_N(x) = a_N \prod_{j=0}^{N-1} \frac{e^{ix} - \overline{q_j}}{1 - q_je^{ix}}
  =  a_N \prod_{j=0}^{N-1} \big( (e^{ix} - \overline{q_j}) \sum_{n=0}^{\infty} (q_je^{ix})^n \big) \, ,
  $$
  one concludes that $ f_N(x) = a_N(-1)^N \prod_{j=0}^{N-1} \overline{q_j}  +  e^{ix}\tilde f_N(x)$ and hence
  \begin{equation}\label{formula 2 for 1 f_N}
  \langle 1 | f_N \rangle = \overline{a_N} (-1)^N \prod_{j=0}^{N-1} q_j \, ,  \qquad \text{or}  \qquad
  a_N \langle 1 | f_N \rangle =  (-1)^N \prod_{j=0}^{N-1} q_j \, .
  \end{equation}
  On the other hand, as $z \to \infty,$
  $$
  \frac{Q(z)}{z^N} = \prod_{j=0}^{N-1} \frac{1 - q_j z}{z} = (-1)^N  \prod_{j=0}^{N-1} q_j + O(\frac{1}{z}) \, .
  $$
 Alternatively, the asymptotics of $Q(z)/z^N$ can be computed using \cite[(7.6), (7.7)]{GK},
  $$
    \frac{Q(z)}{z^N} =  \frac{\det(\text{Id} - z M_{N-1})}{z^N} = (-1)^N \det(M_{N-1}) + O(\frac{1}{z})
   $$
   and hence
     \begin{equation}\label{asymptotics 1 Q/ z^N}
      (-1)^N \prod_{j=0}^{N-1} q_j = (-1)^N \det(M_{N-1}) \, .
     \end{equation}   
  Here $M_{N-1}$ is the $N \times N$ matrix $(M_{np})_{0 \le n, p \le N-1}$ with (cf. \cite[(7.8)]{GK})
  $$
  M_{np} = \sqrt[+]{\mu_{n+1}} \gamma_{n+1} \frac{ \langle f_p | 1 \rangle } {(\lambda_p - \lambda_n -1)  \langle f_{n+1} | 1 \rangle } 
  $$
  where we used that $u \in \mathcal O_N$ and hence $ \langle f_{n+1} | 1 \rangle \ne 0$ for any $0 \le n \le N-1$. Clearly,
  $$
  \det(M_{N-1}) = \big( \prod_{n=1}^N  \sqrt[+]{\mu_{n}} \gamma_{n} \big)  \frac{ \langle f_0 | 1 \rangle }{ \langle f_N | 1 \rangle }
  \det\big(  (\frac{1} {\lambda_p - \lambda_n -1} )_{0 \le n, p \le N-1} \big) \, .
  $$
  Since by the formula for the Cauchy determinant,
  $$
   \det\big(  (\frac{1} {\lambda_p - (\lambda_n +1)} )_{0 \le n, p \le N-1} \big) 
   = \frac{(-1)^{N(N-1)/2}\prod_{0 \le p < q \le N-1} (\lambda_p - \lambda_q)^2}{\prod_{0 \le n, p \le N-1} (\lambda_p - (\lambda_n +1))}
  $$
  and since 
  $$
  \prod_{0 \le n, p \le N-1} (\lambda_p - (\lambda_n +1)) = (-1)^{N(N+1)/2} \prod_{0 \le n, p \le N-1} |\lambda_p - (\lambda_n +1)|
  $$
  it follows together with $\overline{\langle f_N | 1 \rangle} = \langle 1 | f_N  \rangle$ that 
  $ (-1)^N \det M_{N-1} = c_N  \langle 1 | f_N  \rangle$ where
  \begin{equation}\label{formula det M_(N-1)}
  c_N = \big( \prod_{n=1}^N  \sqrt[+]{\mu_{n}} \gamma_{n} \big)  \frac{ \langle f_0 | 1 \rangle }{ |\langle f_N | 1 \rangle |^2 }
  \frac{\prod_{0 \le p < q \le N-1} (\lambda_p - \lambda_q)^2}{\prod_{0 \le n, p \le N-1} |\lambda_p - (\lambda_n +1)|} > 0 \, .
  \end{equation}
 Combining \eqref{formula 2 for 1 f_N} - \eqref{formula det M_(N-1)}  one concludes $a_N = c_N > 0$. Since $| a_N | =1$
 this shows that $a_N = 1$.
   \end{proof} 
  For estimating $\| g_k - g_{k-1} \|$  we first need to study the normalizing constants $\mu_n \equiv \mu_n(u)$.
  By \cite{GK} $(s=0)$ and \cite{GKT} $(0 < s < 1/2)$, $\mu_n$, $n \ge 1,$ is given by
  $$
  \mu_n = (1 - \frac{\gamma_n}{\lambda_n - \lambda_0}) \prod_{p \ne n} \frac{1 -  \frac{\gamma_p}{\lambda_p - \lambda_n}}{ 1 -  \frac{\gamma_p}{\lambda_p - \lambda_{n-1} - 1}} \, .
  $$
 Using that by the definition of $\gamma_{n+1}$, $\lambda_{n+1} =  \lambda_n + 1 + \gamma_{n+1}$,  one obtains by algebraic transformations, 
 $ \mu_n = (1 - \frac{\gamma_n}{\lambda_n - \lambda_0})  \prod_{p \ne n} (1- b_{np})$ where 
 $$
   b_{np} := \gamma_n \frac{\gamma_p}{(\lambda_p - \lambda_n) (\lambda_{p-1} - \lambda_{n-1})} \, , \quad \forall \, p \ne n \, .
 $$
 \begin{lemma}\label{estimate 1 for mu_n}
 Let $0 \le s < 1/2$.
 For any $u \in H^{-s}_{r,0}$, $0 < \mu_n \le 1$ and
 $$
 0 \le 1 - \mu_n \le \frac{\gamma_n}{n} + \gamma_n\sum_{p \ne n} \frac{\gamma_p}{(p-n)^2} (1+ \gamma_n) (1+ \gamma_p) \, , \quad \forall \, n \ge 1 \, .
 $$
 \end{lemma}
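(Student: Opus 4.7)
The plan is to exploit the rewritten identity $\mu_n = A_n B_n$, with $A_n := 1 - \gamma_n/(\lambda_n - \lambda_0)$ and $B_n := \prod_{p \ne n}(1 - b_{np})$, already displayed just before the statement, and to use throughout that $u \in H^{-s}_{r, 0}$ forces $\gamma_k \ge 0$ for all $k \ge 1$ and $\lambda_0 < \lambda_1 < \cdots$.

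First I would verify that each factor $(1 - b_{np})$ lies in $(0, 1]$. The recursion $\lambda_k = \lambda_{k-1} + 1 + \gamma_k$ gives $\lambda_{p-1} - \lambda_{n-1} = (\lambda_p - \lambda_n) + \gamma_n - \gamma_p$, which lets one factor
\begin{equation*}
(\lambda_p - \lambda_n)(\lambda_{p-1} - \lambda_{n-1}) - \gamma_n \gamma_p = (\lambda_p - \lambda_n + \gamma_n)(\lambda_p - \lambda_n - \gamma_p),
\end{equation*}
so that $1 - b_{np} = (\lambda_p - \lambda_n + \gamma_n)(\lambda_p - \lambda_n - \gamma_p)/[(\lambda_p - \lambda_n)(\lambda_{p-1} - \lambda_{n-1})]$. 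A case analysis on $p > n$ and $p < n$ shows that both numerator factors carry the same sign as the two denominator factors (e.g.\ for $p > n$ all four are positive, for $p < n$ all four are negative), hence $1 - b_{np} > 0$; and $b_{np} \ge 0$ because $\lambda_p - \lambda_n$ and $\lambda_{p-1} - \lambda_{n-1}$ have the same sign. Moreover $|\lambda_p - \lambda_n| \ge |p - n|$ and $|\lambda_{p-1} - \lambda_{n-1}| \ge |p - n|$ follow from the non-negativity of the gaps, giving the uniform bound $b_{np} \le \gamma_n \gamma_p/(p-n)^2$, which in particular makes the infinite product absolutely convergent since $\Gamma(u) \in \ell^{1, 1-2s}_+$.

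Next I would control $A_n$ using the trace formula \eqref{trace formula for eigenvalues}: $\lambda_n - \lambda_0 = n + \sum_{k=1}^n \gamma_k \ge n$, so $0 < A_n \le 1$ and $0 \le 1 - A_n \le \gamma_n/n$. For $B_n$, the bounds $b_{np} \in [0, 1)$ let me apply the Weierstrass product inequality
\begin{equation*}
\prod_{p \ne n}(1 - b_{np}) \ge 1 - \sum_{p \ne n} b_{np},
\end{equation*}
which yields $0 \le 1 - B_n \le \sum_{p \ne n} b_{np}$ and $B_n \in (0, 1]$.

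Finally I would assemble the estimate by writing $1 - \mu_n = (1 - A_n) + A_n(1 - B_n)$; since $0 \le A_n \le 1$ this gives
\begin{equation*}
0 \le 1 - \mu_n \le \frac{\gamma_n}{n} + \sum_{p \ne n} \frac{\gamma_n \gamma_p}{(p-n)^2},
\end{equation*}
and the stated form follows from the trivial $(1 + \gamma_n)(1 + \gamma_p) \ge 1$ (the stated bound is in fact a mild weakening, chosen for later convenience). The two-sided bound $0 < \mu_n \le 1$ drops out because each of $A_n$ and $1 - b_{np}$ is strictly positive and at most $1$. The only delicate point is the sign/positivity bookkeeping that underlies $b_{np} \in [0, 1)$, after which the Weierstrass step and the telescoping decomposition are routine.
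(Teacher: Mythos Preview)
Your proof is correct and follows the same overall structure as the paper's: decompose $\mu_n = A_n B_n$ with $A_n = 1 - \gamma_n/(\lambda_n-\lambda_0)$ and $B_n = \prod_{p\ne n}(1-b_{np})$, show each factor lies in $(0,1]$, and split $1-\mu_n$ into a piece controlled by $1-A_n \le \gamma_n/n$ and a piece controlled by $1-B_n$.

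The one substantive difference is how you bound $1-B_n$. You invoke the Weierstrass product inequality $\prod(1-b_{np}) \ge 1 - \sum b_{np}$, which immediately gives $1 - B_n \le \sum_{p\ne n} b_{np} \le \gamma_n\sum_{p\ne n}\gamma_p/(p-n)^2$. The paper instead writes $B_n = \exp\big(\sum\log(1-b_{np})\big)$, uses $1-e^{-x}\le x$ and $-\log(1-y)\le y/(1-y)$, and so obtains only $1-B_n \le \sum b_{np}/(1-b_{np})$; bounding $1/(1-b_{np}) \le (1+\gamma_n)(1+\gamma_p)$ is what produces the extra factors in the statement. Your route is shorter and yields a strictly sharper inequality; the stated bound is then, as you note, a harmless weakening via $(1+\gamma_n)(1+\gamma_p)\ge 1$.
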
  
  \begin{proof}
  Since $\lambda_n = n - \sum_{k = n+1}^\infty \gamma_k,$ $n \ge 0$, one has 
  \begin{equation}\label{estimate 2 for mu_n}
  0 \le \frac{\gamma_n}{\lambda_n - \lambda_0} = \frac{\gamma_n}{n + \sum_{k=1}^n \gamma_k}  
  \le \frac{\gamma_n}{1 + \gamma_n } < 1 \, .
  \end{equation}
Furthermore, for any $p \ge n+1,$
 $$
 \begin{aligned}
 (\lambda_p - \lambda_n) (\lambda_{p-1} - \lambda_{n-1}) &=  \big( (p-n) + \sum_{k=n+1}^{p}\gamma_k \big)  \big( (p-n) + \sum_{k=n}^{p-1}\gamma_k \big) \\
 & \ge  ((p-n) + \gamma_p) ((p-n) + \gamma_n) \, .
 \end{aligned}
 $$
 Similarly, for $1 \le p \le n-1$, exchanging the role of $n$ and $p$,
$$
 (\lambda_p - \lambda_n) (\lambda_{p-1} - \lambda_{n-1}) \ge  ((n-p) + \gamma_n) ((n-p) + \gamma_p)\, .
$$
Hence for any $p \ne n$
\begin{equation}\label{estimate 3 for mu_n}
b_{np} \le \frac{\gamma_n \gamma_p}{(|n-p| + \gamma_p)(|n-p| + \gamma_n)}
\le  
 \frac{\gamma_n \gamma_p}{(1 + \gamma_p)(1 + \gamma_n)} < 1 \, .
\end{equation}
Altogether this proves that
\begin{equation}\label{estimate 4 for mu_n}
0 < \prod_{p \ne n}(1 - b_{np}) \le 1 \, .
\end{equation}
 To estimate $1 - \mu_n$, write it as a sum $I_n + II_n$ where
 $$
 I_n := \big( 1 - (1 - \frac{\gamma_n}{\lambda_n - \lambda_0}) \big) \prod_{p \ne n} (1 - b_{np})
% = a_n  \prod_{p \ne n} (1 - b_{np}) 
 \ge 0 \, , 
 $$  
  $$
  II_n :=  1 -  \prod_{p \ne n} (1 - b_{np}) \ge 0\, .
  $$ 
  By \eqref{estimate 2 for mu_n} and \eqref{estimate 4 for mu_n} it then follows that
  $$
  0 \le I_n \le \frac{\gamma_n}{\lambda_n - \lambda_0} \le \frac{\gamma_n}{n} \, , \qquad \forall \, n \ge 1 \,  .
  $$
  To estimate $II_n$ note that 
  $$
  0 \le II_n = 1 - \exp\big( - \sum_{p \ne n} - \log(1 - b_{np}) \big) .
  $$
  Since by the mean value theorem, for any $x \ge 0$,
  $1 - e^{-x } = \int_0^x e^{-t} d t \le x $ one infers 
  $$
 0 \le  II_n  \le \sum_{p \ne n} - \log(1 - b_{np}) \, .
  $$
  Similarly, for $0 \le y \le 1$, one has $- \log(1-y) = \int_0^y \frac{1}{1-t} d t \le y/(1-y)$,
  implying together with \eqref{estimate 3 for mu_n} that
  $$
   0 \le  II_n  \le \sum_{p \ne n} \frac{b_{np}}{1 - b_{np} } \le \gamma_n  \sum_{p \ne n} \frac{\gamma_{p}}{(p-n)^2 } \frac{1}{1 - b_{np} }
  $$ 
  and
  $$
   \frac{1}{1 - b_{np} } \le \frac{(1 + \gamma_n)(1 + \gamma_p)}{1 + \gamma_n + \gamma_p}
   \le (1 + \gamma_n)(1 + \gamma_p) \, .
  $$
  Therefore,
  $$
 0 \le  II_n  \le \gamma_n \sum_{p \ne n} \frac{\gamma_{p}}{(p-n)^2 } (1 + \gamma_n)(1 + \gamma_p) \, . 
  $$
  \end{proof}  
  Lemma \ref{estimate 1 for mu_n} is used to prove the following
  \begin{lemma}\label{estimate 1 of g_k - g_(k-1)}
  Let $0 \le s < 1/2.$ For any $u \in H^{-s}_{r,0},$  $(g_k)_{k \ge 0}$ is a Cauchy sequence in $ L^2_c.$
  More precisely, one has for any $n \ge 1,$
  \begin{equation}\label{estimate 2 of g_k - g_(k-1)}
  \sum_{k = n}^\infty \| g_k - g_{k-1} \|  \le   \sum_{k = n}^\infty \sqrt[+]{1 - \mu_k} \le C \frac{1}{n^\tau}
  \end{equation}
  where $\tau = (\frac{1}{2} - s)/2$ and where the constant $C > 0$ can be chosen locally uniformly with respect to $u \in H^{-s}_{r,0}$.
  As  a consequence, the limit $g_\infty$ of $(g_k)_{k \ge 0}$ in $L^2_c$ is given by the telescoping sum,
  \begin{equation}\label{estimate 3 of g_k - g_(k-1)}
  g_\infty = g_0 + \sum_{k=1}^\infty (g_k - g_{k-1})
  \end{equation}
  which is absolutely convergent in $L^2_c$.  Furthermore,  $\|g_\infty \| =1$.
  \end{lemma}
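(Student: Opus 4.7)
The plan is to reduce $\|g_k - g_{k-1}\|$ to a quantity involving the normalizing constant $\mu_k$, then estimate the tail $\sum_{k \ge n} \sqrt{1-\mu_k}$ using Lemma \ref{estimate 1 for mu_n} together with two applications of Cauchy--Schwarz, and finally invoke absolute summability to identify the $L^2$-limit $g_\infty$ and to verify $\|g_\infty\| = 1$ by continuity of the norm.

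\emph{Step 1 (from $g_k$ to $\mu_k$).} Since $\|f_k\|=1$, also $\|g_k\|=\|e^{-ikx}f_k\|=1$. The phase normalization \eqref{normalization of eigenfunctions} gives $\langle g_k \,|\, g_{k-1}\rangle = \langle f_k \,|\, e^{ix} f_{k-1}\rangle \ge 0$, and from the construction of $\mu_k$ in \cite{GK}, \cite{GKT} this inner product equals $\sqrt{\mu_k}$. Hence
$$\|g_k - g_{k-1}\|^2 = 2(1-\sqrt{\mu_k}) = \frac{2(1-\mu_k)}{1+\sqrt{\mu_k}} \le 2(1-\mu_k),$$
so $\|g_k - g_{k-1}\| \le \sqrt{2}\sqrt{1-\mu_k}$ (the harmless constant $\sqrt 2$ is absorbed into the final $C$).

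\emph{Step 2 (decay of $\sum_{k\ge n}\sqrt{1-\mu_k}$; the main obstacle).} Lemma \ref{estimate 1 for mu_n}, combined with the local boundedness of $\Gamma(u) \in \ell^{1,1-2s}_+$ ensured by Theorem \ref{main theorem (ii) reformulated}, yields
$$1 - \mu_k \le C_1\gamma_k/k + C_1 \gamma_k B_k, \qquad B_k := \sum_{p \ne k}\frac{\gamma_p}{(p-k)^2},$$
where $C_1$ depends only on $\|\Gamma(u)\|_{\ell^{1,1-2s}_+}$. Using $\sqrt{a+b}\le \sqrt{a}+\sqrt{b}$ I split the square root and treat the two pieces. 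For the first, Cauchy--Schwarz with the weight $k^{1-2s}$ gives
$$\sum_{k\ge n}\sqrt{\gamma_k/k} \le \Bigl(\sum_{k\ge n}\gamma_k k^{1-2s}\Bigr)^{1/2}\Bigl(\sum_{k\ge n}k^{-(2-2s)}\Bigr)^{1/2} \le C\,n^{-(1-2s)/2} = C\,n^{-2\tau},$$
valid since $s<1/2$. For the second piece, Cauchy--Schwarz over $k$ gives
$$\sum_{k\ge n}\sqrt{\gamma_k B_k} \le \Bigl(\sum_{k\ge n}\gamma_k\Bigr)^{1/2}\Bigl(\sum_{k\ge n}B_k\Bigr)^{1/2},$$
and $\sum_{k\ge n}\gamma_k \le n^{-(1-2s)}\|\Gamma\|_{\ell^{1,1-2s}_+}$ handles the first factor. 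For the second factor, Fubini yields $\sum_{k\ge n}B_k = \sum_p\gamma_p\sum_{k\ge n,\,k\ne p}(p-k)^{-2}$; the inner sum is $O(1)$ for $p \ge n$, contributing $O(n^{-(1-2s)})$ overall, and is $\le 1/(n-p-1)$ for $p<n-1$. Splitting the latter regime into $p \le n/2$ (yielding $O(1/n)$ via $(n-p) \ge n/2$) and $n/2 < p < n$ (where the bound $\gamma_p \le (n/2)^{-(1-2s)}(\gamma_p p^{1-2s})$ together with $\sum_{j \le n/2}1/j = O(\log n)$ produces $O((\log n)\,n^{-(1-2s)})$) shows $\sum_{k\ge n}B_k = O((\log n)\,n^{-4\tau})$. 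Hence the cross term is $O(\sqrt{\log n}\,n^{-4\tau}) = o(n^{-\tau})$. Adding both contributions,
$$\sum_{k\ge n}\sqrt{1-\mu_k} \le C\,n^{-\tau},$$
where $C$ depends only on $\|\Gamma(u)\|_{\ell^{1,1-2s}_+}$ and is therefore locally uniform in $u$.

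\emph{Step 3 (conclusion).} Combining Steps 1 and 2 gives $\sum_{k \ge 1}\|g_k - g_{k-1}\| < \infty$, so $(g_k)_{k \ge 0}$ is Cauchy in $L^2_c$ and the telescoping series \eqref{estimate 3 of g_k - g_(k-1)} converges absolutely to its $L^2$-limit $g_\infty$. Continuity of the $L^2$-norm then forces $\|g_\infty\| = \lim_{k\to\infty}\|g_k\| = 1$. The delicate point of the whole argument is the decay estimate in Step 2: without the three-regime splitting of $\sum_p \gamma_p/(n-p)$ and the exploitation of the $\ell^{1,1-2s}$-weight one cannot convert the bare summability of $\Gamma(u)$ into a polynomial rate in $n$.
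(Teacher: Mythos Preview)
Your proof is correct. Step 1 and Step 3 match the paper exactly; the only difference lies in Step 2, where you and the paper obtain the tail decay in different ways.

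The paper does not apply Cauchy--Schwarz to the tail sum $\sum_{k\ge n}$. Instead it shows a \emph{pointwise} decomposition: writing $2\tau = \tfrac12 - s$ and using $(\sqrt{\gamma_k})_{k\ge 1}\in h^{1/2-s}_+$, one has
\[
\sqrt{\gamma_k/k}= \frac{1}{k^\tau}\cdot\Bigl(k^{1/2-s}\sqrt{\gamma_k}\cdot\frac{1}{k^{1/2+\tau}}\Bigr),
\qquad
\sqrt{\gamma_k B_k}= \frac{1}{k^{2\tau}}\cdot\Bigl(k^{1/2-s}\sqrt{\gamma_k}\cdot\sqrt{B_k}\Bigr),
\]
and in each case the factor in parentheses is an $\ell^1$ sequence (product of two $\ell^2$ sequences; for the second term one uses $\sum_k B_k \le C\sum_p \gamma_p < \infty$). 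Summing over $k\ge n$ and pulling out the smallest weight $n^{-\tau}$ immediately gives the bound $C n^{-\tau}$ with a constant depending only on $\|\Gamma(u)\|_{\ell^{1,1-2s}}$.

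Your route---Cauchy--Schwarz in $k$ followed by a direct estimate of $\sum_{k\ge n}B_k$---is valid but costs you the three-regime splitting of $\sum_{p<n}\gamma_p/(n-p)$ and a stray $\log n$ factor. The paper's argument avoids all of this because it never needs the tail of $\sum_k B_k$, only its total mass. Both approaches rely on exactly the same input, namely $(\gamma_k)\in \ell^{1,1-2s}$, and both give the stated rate; the paper's is shorter and yields a cleaner constant, while yours has the minor advantage of making the decay of each factor $\sum_{k\ge n}\gamma_k$ and $\sum_{k\ge n}B_k$ explicit.
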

  \begin{proof}
 We write $g_n$ as a telescoping sum, $g_n = g_0 + \sum_{k=1}^n (g_k - g_{k-1})$ 
 and note that 
 $$
 \langle g_k | g_{k-1} \rangle = \langle e^{ikx} g_k \ | \ e^{ix} e^{i(k-1)x}g_{k-1} \rangle
 = \langle f_k | e^{ix} f_{k-1} \rangle = \sqrt[+]{\mu_k} > 0\, .
 $$
  Since $\| g_k \| = \| e^{ikx}g_k \| = \| f_k \| = 1$, it then follows that
  $$
  \| g_k - g_{k-1} \|^2 =  \| g_k \| ^2 - 2  \langle g_k | g_{k-1} \rangle    + \| g_{k-1} \|^2 
  = 2(1 - \sqrt[+]{\mu_k}) \le 2(1 - \mu_k) \, .
  $$
  By Lemma \ref{estimate 1 for mu_n}, $ \| g_k - g_{k-1} \| \le \sqrt[+]{2} \sqrt[+]{1 - \mu_k}$
  can be estimated as
  \begin{equation}\label{estimate 4 of g_k - g_(k-1)} 
 \| g_k - g_{k-1} \| 
 \le  \sqrt[+]{2 \gamma_k / k} +  \
 \sqrt[+]{2 \gamma_k } \Big( \sum_{p \ne k} \frac{\gamma_p}{(p-k)^2}(1 + \gamma_k)(1 + \gamma_p) \Big)^{1/2} \, . 
 \end{equation}
  By \cite{GK} $(s = 0)$ and \cite{GKT} ($0 < s < 1/2$), $(\sqrt[+]{\gamma_k})_{k \ge 1}$ is in $h_+^{\frac{1}{2}- s}$. Hence 
  with $1-s = \frac{1}{2} + 2\tau$
  \begin{equation}\label{estimate 5 of g_k - g_(k-1)} 
   \sqrt[+]{\gamma_k / k} = k^{\frac{1}{2} - s} \sqrt[+]{ \gamma_k } \frac{1}{k^{1-s}} 
   = k^{\frac{1}{2} - s} \sqrt[+]{ \gamma_k } \frac{1}{k^{\frac{1}{2} +\tau}}  \frac{1}{k^\tau}
   =  \frac{1}{k^\tau} \ell_k^1 \, .
  \end{equation}
  Similarly, since
   \begin{equation}\label{estimate 6 of g_k - g_(k-1)} 
   \sqrt[+]{ \gamma_k } \sum_{p \ne k} \frac{\gamma_p}{(p-k)^2} 
   \le   \frac{1}{k^{2\tau}}  (k^{\frac{1}{2} - s} \sqrt[+]{ \gamma_k })  \big(\sum_{p \ne k} \frac{\gamma_p}{(p-k)^2}\big)^{1/2}
   =   \frac{1}{k^{2\tau}} \ell_k^1 \, .
  \end{equation}
Combining  \eqref{estimate 4 of g_k - g_(k-1)}  -- \eqref{estimate 6 of g_k - g_(k-1)}  yields the claimed estimate \eqref{estimate 2 of g_k - g_(k-1)}.
Since $\| g_k \| =1 $ for any $k \ge 0$, the limit $g_\infty$ also satisfies $\|g_\infty \| = 1$.
   \end{proof}
   We further investigate the sequence $(g_n)_{n \ge 0}$.
   \begin{lemma}\label{regularity 1 of g_n}
   For any $u \in H^{-s}_{r, 0}$ with $0 \le s < 1/2,$ $(g_n)_{n \ge 0}$ is a bounded sequence in $H^{1-s}_c$.
   \end{lemma}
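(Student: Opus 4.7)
\medskip

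The plan is to exploit the eigenvalue equation $L_u f_n = \lambda_n f_n$ directly in Fourier variables, and to combine a low/high frequency decomposition of $\|g_n\|_{1-s}^2$ with Lemma \ref{multiplication u f} and interpolation. From $(D - \lambda_n) f_n = \Pi(uf_n)$ one obtains, since $\widehat{f_n}(k) = 0$ for $k<0$,
$$
(k - \lambda_n)\,\widehat{f_n}(k) = \widehat{uf_n}(k),\qquad k \ge 0 .
$$
Substituting $\widehat{g_n}(j) = \widehat{f_n}(j+n)$ and $\widehat{uf_n}(j+n) = \widehat{ug_n}(j)$, the identity becomes
$$
\widehat{g_n}(j) \,=\, \frac{\widehat{ug_n}(j)}{\,j - \delta_n\,},\qquad j \ge -n,\quad \delta_n := \lambda_n - n,
$$
while $\widehat{g_n}(j) = 0$ for $j<-n$. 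By Corollary \ref{product representations}, $|\delta_n| = |\sum_{k>n}\gamma_k| \le M$ for some $M = M(u) > 0$, uniformly in $n$.

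Set $J := \lceil 2M\rceil + 1$ and split
$$
\|g_n\|_{1-s}^2 \,=\, \sum_{|j| \le J} \langle j\rangle^{2(1-s)} |\widehat{g_n}(j)|^2 \,+\, \sum_{|j| > J,\, j \ge -n} \langle j\rangle^{2(1-s)} |\widehat{g_n}(j)|^2 .
$$
The first sum contains at most $2J+1$ terms and each is bounded by $\langle J\rangle^{2(1-s)} \|f_n\|^2 = \langle J\rangle^{2(1-s)}$, giving a constant $C_1 = C_1(u,s)$. For the second sum, $|j - \delta_n| \ge |j| - M \ge |j|/2$, so $\langle j\rangle^{2(1-s)}/|j-\delta_n|^2 \lesssim \langle j\rangle^{-2s}$, and substituting the Fourier formula yields
$$
\sum_{|j|>J} \langle j\rangle^{2(1-s)} |\widehat{g_n}(j)|^2 \,\le\, 4 \sum_{j \in \Z} \langle j\rangle^{-2s} |\widehat{ug_n}(j)|^2 \,=\, 4\|ug_n\|_{-s}^2 .
$$
Lemma \ref{multiplication u f}(ii) with $\sigma = (s+1/2)/2 \in (s, 1/2)$ now bounds this by $4 C_s^2 \|u\|_{-s}^2 \|g_n\|_{1-\sigma}^2$.

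To close the estimate I interpolate between $L^2$ and $H^{1-s}$. Since $\sigma > s$, one has $\alpha := (1-\sigma)/(1-s) \in (0,1)$ and the standard interpolation $\|g_n\|_{1-\sigma} \le \|g_n\|^{1-\alpha} \|g_n\|_{1-s}^{\alpha}$ applies; combined with $\|g_n\| = 1$ this gives $\|g_n\|_{1-\sigma}^2 \le \|g_n\|_{1-s}^{2\alpha}$. Thus, writing $X = \|g_n\|_{1-s}^2$,
$$
X \,\le\, C_1 \,+\, C_2(u) X^{\alpha},\qquad \alpha < 1,
$$
and by Young's inequality $C_2 X^\alpha \le \tfrac{1}{2} X + C_3(u,\alpha)$, from which $X \le 2(C_1+C_3)$, uniformly in $n$.

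The only delicate point is controlling the finitely many Fourier modes for which the denominator $|j - \delta_n|$ can be small; this is handled by the trivial bound $|\widehat{g_n}(j)| \le \|f_n\| = 1$. The subsequent interpolation-plus-Young step is routine provided one checks $\sigma > s$, which is immediate from $s < 1/2$. All constants can be chosen locally uniformly in $u \in H^{-s}_{r,0}$, which will be useful for the weak sequential continuity statement of Proposition \ref{asymptotics g_n}(i).
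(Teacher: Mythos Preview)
Your proof is correct and follows essentially the same route as the paper's. Both derive the identity $(j-\delta_n)\widehat{g_n}(j)=\widehat{ug_n}(j)$ for $j\ge -n$ (the paper writes it as $Dg_n=(\lambda_n-n)g_n+\Pi_{\ge -n}(ug_n)$, obtained via an operator-level computation with projections rather than direct Fourier manipulation), then bound $\|ug_n\|_{-s}$ by $C_s\|u\|_{-s}\|g_n\|_{1-\sigma}$ via Lemma~\ref{multiplication u f}, interpolate using $\|g_n\|=1$, and close with Young's inequality; in fact $1-\sigma=1-s-\tau$ and your exponent $\alpha=(1-\sigma)/(1-s)$ coincides with the paper's $1-\tau/(1-s)$. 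The only cosmetic difference is that you handle the modes where $|j-\delta_n|$ is small by a low/high frequency split and the trivial bound $|\widehat{g_n}(j)|\le 1$, whereas the paper avoids inverting $(j-\delta_n)$ altogether by estimating $\|Dg_n\|_{-s}\le|\delta_n|+\|ug_n\|_{-s}$ directly and adding back the zero mode via $|\langle g_n|1\rangle|\le 1$.
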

   \begin{proof}
   Let  $u$ be an element in $H^{-s}_{r, 0}$ with $0 \le s < 1/2$.
   Recall that $L_u = D - T_u$ is a bounded operator $H^{1-s}_+ \to H^{-s}_+$
   (cf. \cite{GK} ($s=0$) and \cite{GKT} ($0 < s < 1/2$)). One has for any $n \ge 0$,
   $$
   D e^{inx}g_n = n e^{inx} g_n + e^{inx} Dg_n
   $$
   and 
   $$
   T_u(e^{inx}g_n) = \Pi(u e^{inx}g_n) = u e^{inx}g_n - (\text{Id} -\Pi)(u e^{inx}g_n)
   $$
   where $u e^{inx}g_n$ denotes the element in $H^{-s}_c$, given by the bounded linear functional
   $$
   H^s_c \to \C\ , \ v \mapsto \langle u \, , \ v e^{inx}g_n \rangle
   $$
   and $\langle \cdot, \cdot \rangle$ denotes the dual pairing between $H^{-s}_c$ and $H^s_c$. Here we used that
   $$
   H^{s}_c \times H^{1-s}_c \to H^s_c, \, (v, f) \mapsto vf
   $$
   is a bounded bilinear map. Multiplying both sides of the identity $L_u f_n = \lambda_n f_n$ by $e^{-inx}$,
   the above computations lead to the following identity in $H^{-s}_c$,
   \begin{equation}\label{regularity 2 of g_n}
   Dg_n = (\lambda_n - n) g_n + ug_n - e^{-inx}(\text{Id} -  \Pi)(u e^{inx}g_n) \, .
   \end{equation}
   For any given $m \in \Z$, denote by $\Pi_{\ge m}$ the $L^2-$orthogonal projection on $H^s_c$, $s \in \R$, defined by
   $$
   \Pi_{\ge m} :  \sum_{k \in \Z} \widehat f(k) e^{ikx} \mapsto   \sum_{k \ge m} \widehat f(k) e^{ikx}\,  , 
   $$
   and let $ \Pi_{\le m} := \text{Id} -  \Pi_{\ge m + 1} $. One then has
   $$
    \Pi_{\ge -n }  \ g_n = g_n\ , \qquad   \Pi_{\ge -n }  \ D g_n = D g_n \, ,
   $$
   and
   $$
    \Pi_{\ge -n }  \big( e^{-inx} (\text{Id} - \Pi) (u e^{inx} g_n)  \big)  = 0 \, .
   $$
   Applying $\Pi_{\ge -n}$ to both sides of \eqref{regularity 2 of g_n} thus yields
   \begin{equation}\label{regularity 3 of g_n}
    Dg_n = (\lambda_n - n) g_n + \Pi_{\ge -n} (ug_n) \, ,
   \end{equation}
   implying the estimate
   $$
    \| Dg_n  \|_{-s} \le  |\lambda_n - n| \|g_n\|_{-s} + \| \Pi_{\ge -n} (ug_n) \|_{-s} \, .
   $$
   Using that $\|g_n\|_{-s} \le \|g_n\| = 1$, $n - \lambda_n = \sum_{k = n+1}^\infty \gamma_k,$  
   and that by the definition of the norm $\| \cdot \|_{-s},$
   $$
   \| g_n \|_{1-s} \le \| Dg_n \|_{-s} + | \langle g_n | 1 \rangle | \le \| Dg_n \|_{-s} + 1\, ,
   $$
   one then obtains 
   \begin{equation}\label{regularity 4 of g_n}
    \| g_n \|_{1-s} \le 1 +  \sum_{k = 1}^\infty \gamma_k + \|  \Pi_{\ge -n} (ug_n) \|_{-s}  \, .
   \end{equation}
   Since $H^{-s}_{r, 0} \to \ell_+^{1, 1-2s} , \  u \mapsto (\gamma_n(u))_{n \ge 1}$ is a continuous map and
   $$
     \|  \Pi_{\ge -n} (ug_n) \|_{-s}  \le \| ug_n \|_{-s} \,  , 
  $$
   it remains to estimate $ \| ug_n \|_{-s}$.
   Recall that $\tau = (\frac{1}{2} - s)/2$. Then there exists a constant $C_1 \ge 1$ so that
   $$
    \| ug_n \|_{-s} \le C_1  \| u \|_{-s}  \| g_n \|_{1 - s - \tau}\ , \quad \forall n \ge 0\,  .
   $$
   Using that $\|g_n\| = 1$ it follows by interpolation between $L^2_c$ and $H^{1-s}_c$ that for some constant $C_2 \ge 1$,
   $$
   \| g_n \|_{1 - s - \tau} \le C_2 \| g_n \|_{1 - s }^{1 - \frac{\tau}{1-s}} \,  .
   $$
   By Young's inequality one then infers
   $$
   C_1 \| u \|_{-s}  C_2 \| g_n \|_{1 - s}^{1 - \frac{\tau}{1-s}} \le \frac{1}{q} \big(C_1   \| u \|_{-s}  C_2 \big)^q + (1 - \frac{1}{q} ) \| g_n \|_{1 - s }
   $$
   where $\frac{1}{q} =  \frac{\tau}{1-s}$ . Hence \eqref{regularity 4 of g_n} yields
   $$
   \frac{1}{q}   \| g_n \|_{1-s} \le 1 + \sum_{k=1}^\infty \gamma_k + \frac{1}{q} \big(  C_1   \| u \|_{-s}  C_2   \big)^{q} 
   $$
   or
   $$
    \| g_n \|_{1-s} \le \frac{1-s}{\tau} + \frac{1-s}{\tau}\sum_{k=1}^\infty \gamma_k +     \big(  C_1   \| u \|_{-s}  C_2   \big)^{\frac{1-s}{\tau} } \, , 
    $$
    showing that $(g_n)_{n \ge 0}$ is bounded in $H^{1-s}_c$.
   \end{proof}
   Lemma \ref{estimate 1 of g_k - g_(k-1)} and Lemma \ref{regularity 1 of g_n} lead to the following
   \begin{corollary}\label{Lemma limit 1 of g_n}
   For any $u \in H^{-s}_{r,0}$ with $0 \le s < 1/2$, the sequence $(g_n)_{n \ge 0}$ converges strongly in $H^{1-s}_c$ to $g_\infty$.
   In particular, $g_\infty$ is in $H^{1-s}_c$. It is given by
   $$
   g_\infty = a e^{D^{-1}u}\,  , \qquad a \in \C , \, \, |a| = 1 \,  .
   $$
   \end{corollary}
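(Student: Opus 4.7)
The plan is to upgrade the $L^2_c$-convergence $g_n \to g_\infty$ from Lemma \ref{estimate 1 of g_k - g_(k-1)} to strong convergence in $H^{1-s}_c$ and, along the way, to identify $g_\infty$. The starting point is identity \eqref{regularity 3 of g_n},
\begin{equation*}
Dg_n = (\lambda_n - n)\,g_n + \Pi_{\ge -n}(u g_n),
\end{equation*}
which I pass to the limit $n \to \infty$ in $H^{-s}_c$. Since $\lambda_n - n = -\sum_{k > n}\gamma_k(u) \to 0$ and $\|g_n\| = 1$, the first summand tends to $0$ in $L^2_c$ and hence in $H^{-s}_c$.

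For the second summand, I first show $ug_n \to ug_\infty$ in $H^{-s}_c$. By Lemma \ref{regularity 1 of g_n}, $(g_n)$ is bounded in $H^{1-s}_c$; combined with strong $L^2_c$-convergence, Sobolev interpolation (with $\sigma = (s+1/2)/2 > s$, so $1-\sigma < 1-s$) yields $g_n \to g_\infty$ strongly in $H^{1-\sigma}_c$. Lemma \ref{multiplication u f}(ii) then gives $\|u(g_n - g_\infty)\|_{-s} \le C_s \|u\|_{-s} \|g_n - g_\infty\|_{1-\sigma} \to 0$. Writing $\Pi_{\ge -n}(ug_n) = ug_n - \Pi_{\le -n-1}(ug_n)$, the remaining piece is bounded by $\|\Pi_{\le -n-1}(u(g_n - g_\infty))\|_{-s} + \|\Pi_{\le -n-1}(ug_\infty)\|_{-s}$, both of which tend to $0$ (the first by the preceding convergence, the second because the Fourier tail of a fixed $H^{-s}_c$-element shrinks). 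Passing to the limit in the identity thus yields $Dg_\infty = u g_\infty$ in $H^{-s}_c$. Strong $H^{1-s}_c$-convergence then follows at once from
\begin{equation*}
\|g_n - g_\infty\|_{1-s}^2 \;\lesssim\; |\widehat{g_n}(0) - \widehat{g_\infty}(0)|^2 + \|Dg_n - Dg_\infty\|_{-s}^2,
\end{equation*}
each term tending to $0$.

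It remains to identify $g_\infty$. Set $h := g_\infty \cdot e^{-D^{-1}u}$. Moser's estimate \eqref{Moser} gives $e^{-D^{-1}u} \in H^{1-s}_c$, and since $1 - s > 1/2$ the space $H^{1-s}_c$ is a Banach algebra, so $h \in H^{1-s}_c$. Using the Leibniz rule (justified at this regularity by density together with the bilinear estimate of Lemma \ref{multiplication u f}) together with the identities $\partial_x D^{-1}u = iu$ and $\partial_x g_\infty = iu g_\infty$ (the latter being the equation $Dg_\infty = ug_\infty$ rewritten), I compute in $H^{-s}_c$,
\begin{equation*}
\partial_x h = (iu g_\infty)\,e^{-D^{-1}u} + g_\infty\,(-iu)\,e^{-D^{-1}u} = 0.
\end{equation*}
Hence $h$ is a constant $a \in \C$, and $g_\infty = a\, e^{D^{-1}u}$. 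Since $u$ is real valued and mean zero, $\overline{D^{-1}u} = -D^{-1}u$, so $|e^{D^{-1}u(x)}| = 1$ pointwise; combined with $\|g_\infty\| = 1$ from Lemma \ref{estimate 1 of g_k - g_(k-1)}, this forces $|a| = 1$.

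The main obstacle is making the Leibniz identification $\partial_x h = 0$ rigorous at regularity $H^{1-s}$ with $s \in [0, 1/2)$, where $u$ is only a distribution: all three products $ug_\infty$, $g_\infty e^{-D^{-1}u}$, and $(ug_\infty)\,e^{-D^{-1}u}$ must be shown to define bona fide elements of the appropriate Sobolev spaces, and the product rule must be extended from smooth approximants. This is the point where the condition $s < 1/2$ (equivalently $1-s > 1/2$) enters decisively, both through the algebra property of $H^{1-s}_c$ and through Lemma \ref{multiplication u f}(ii), which furnishes the continuous extension of multiplication needed for the limiting argument.
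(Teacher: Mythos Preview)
Your proof is correct and follows essentially the same route as the paper's: bootstrap via the identity \eqref{regularity 3 of g_n}, use boundedness in $H^{1-s}_c$ plus $L^2_c$-convergence to get $ug_n \to ug_\infty$ in $H^{-s}_c$, control the tail projection $\Pi_{\le -n-1}$, and deduce $Dg_n \to Dg_\infty = ug_\infty$ in $H^{-s}_c$. The only difference is expositional: the paper simply asserts that the equation $Dg_\infty = ug_\infty$ yields $g_\infty = a\,e^{D^{-1}u}$, whereas you supply the integrating-factor argument with $h = g_\infty e^{-D^{-1}u}$ and the justified Leibniz rule; this is the standard way to make that implication precise and is a welcome addition rather than a departure.
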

   \begin{proof}
   By Lemma \ref{regularity 1 of g_n}, $(g_n)_{n \ge 0}$ is bounded in $H^{1-s}_c$.
   Since by Lemma \ref{estimate 1 of g_k - g_(k-1)}, $g_n \to g_\infty$ in $L^2_c,$ it follows that $g_\infty$ is in $H^{1-s}_c$
   and that $g_n \rightharpoonup g_\infty$ weakly in $H^{1-s}_c$.  Therefore  $g_n \to g_\infty$ strongly in
   $H^{1-s - \tau}_c$ and hence $ug_n \to ug_\infty$ strongly in $H^{-s}_c$. Since
   $$
   \begin{aligned}
      \|  \Pi_{\le -n -1} (ug_n) \|_{-s} &  \le  \|  \Pi_{\le -n -1} (ug_n - ug_\infty) \|_{-s} + \|  \Pi_{\le -n -1} (ug_\infty) \|_{-s}  \\
      & \le \| ug_n - ug_\infty \|_{-s} +  \|  \Pi_{\le -n -1} (ug_\infty) \|_{-s}
   \end{aligned}
   $$
   and $\|  \Pi_{\le -n -1} (ug_\infty) \|_{-s} \to 0$ as $n \to \infty$, one then concludes that
   $$
   \lim_{n \to \infty}    \Pi_{\ge -n } (ug_n) = ug_\infty
   $$
   strongly in $H^{-s}_c$. Since $\lim_{n \to \infty} (\lambda_n - n) = 0$ it then follows from \eqref{regularity 3 of g_n} that
   $(Dg_n)_{n \ge 0}$ converges strongly to $Dg_\infty$ in $H^{-s}_c$  and that
   \begin{equation}\label{limit 1 of g_n}
   Dg_\infty = u g_\infty \, .
   \end{equation}
   It implies that $g_\infty = a e^{D^{-1}u}$ where $a \in \C$. Since $\|g_\infty\| = 1$ by Lemma \ref{estimate 1 of g_k - g_(k-1)}
   one has $|a| =1$.
   \end{proof}
   We have now all the ingredients to prove Proposition \ref{asymptotics g_n}.
   
   \smallskip
   \noindent
   {\em Proof of Proposition \ref{asymptotics g_n}.}
   Let $0 \le s < 1/2.$  First we prove item (i).
   By \cite[Lemma 8]{GKT} it follows that for any $N \ge 1$, 
   $\sup_{0 \le n \le N}  \| f_n(\cdot, u) \|_{1-s}$ is bounded on bounded subsets of potentials $u \in H^{-s}_{r,0}$. Choose any sequence
   $(u_k)_{k \ge 1}$ in $H^{-s}_{r, 0}$ which converges weakly to an element $u$ in $H^{-s}_{r,0}$. Then
   $$
   \sup_{k \ge 1, 0 \le n \le N}  \| f_n(\cdot, u_k) \|_{1-s} < \infty \ . 
   $$
   Hence there exists a subsequence $(u_{k_j})_{j \ge 1}$ of  $(u_{k})_{k \ge 1}$ so that
   for any $0 \le n \le N,$ $f_n(\cdot, u_{k_j}) \rightharpoonup h_n$ in $H_+^{1-s}$.
   By \cite[Lemma 7]{GKT}, $\lim_{j \to \infty} \lambda_n(u_{k_j}) = \lambda_n(u)$ for any $0 \le n \le N$.
   Since $f_n(\cdot, u_{k_j}) \to h_n$ strongly in $H_+^{1-s - \tau}$, where $\tau (\frac{1}{2} - s)/2$,
   it follows that $u_{k_j} f_n(\cdot, u_{k_j}) \rightharpoonup uh_n$ weakly in $H^{-s}_c$, implying that
   $$
   L_u h_n = \lambda_n(u) h_n \, , \qquad \forall \, 0 \le n \le N \, .   
   $$
   Since $f_n(\cdot, u_{k_j}) \to h_n$ strongly in $L^2_+,$ it follows from the normalization conditions of $f_n$ (cf. \eqref{normalization of eigenfunctions})
   that $\|h_n\| = 1$ for any $0 \le n \le N$ and
   $$
  \langle 1 | h_0 \rangle > 0\, , \qquad   \langle h_n | e^{ix} h_{n-1} \rangle > 0\,  , \quad \forall  \, 1 \le n \le N \,  .
  $$
  Hence $h_n = f_n(\cdot, u)$ for any $0 \le n \le N$. Since the subsequence  
  $(u_{k_j})_{j \ge 1}$ with the property that for any $0 \le n \le N,$ $f_n(\cdot, u_{k_j}) \rightharpoonup h_n$ in $H_+^{1-s}$
  was chosen arbitrarily, it follows that in $H_+^{1-s}$,
  $$
  f_n(\cdot, u_{k}) \rightharpoonup f_n(\cdot, u)\,  , \qquad  \forall \, 0 \le n \le N \,  ,
  $$
  and in turn that
  $$
   g_n(\cdot, u_{k}) \rightharpoonup g_n(\cdot, u)\,  , \qquad  \forall \, 0 \le n \le N \,  .
  $$
  This establishes claim (i). Item (iii) follows from Lemma \ref{estimate 1 of g_k - g_(k-1)}  and item (ii) from Corollary \ref{Lemma limit 1 of g_n}
  except for the phase of $a$. It remains to prove that $a$ is a real, positive number. By Lemma \ref{g_infty finite gap},
  $a = 1$ for any potential $u \in \bigcup_{N \ge 1} \mathcal O_N$. Since $ \bigcup_{N \ge 1} \mathcal O_N$ is dense in $H^{-s}_{r,0}$,
  it suffices to show that $a \equiv a(u)$ depends continuously on $u \in H^{-s}_{r,0}$, or equivalently that the map
  $$
  H^{-s}_{r,0} \to L^2_c\, , \, u \mapsto g_\infty(\cdot, u)
  $$
  is continuous.  This can be seen as follows. Let $(u_k)_{k \ge 1}$ be any sequence in $H^{-s}_{r, 0}$ which converges in $ H^{-s}_{r,0}$.
  Denote its limit by $u$. Then  by the triangle inequality, $\| g_\infty(\cdot, u_k ) - g_\infty(\cdot, u) \|$ is bounded by
  $$
 \| g_\infty(\cdot, u_k ) - g_n(\cdot, u_k) \| +
   \| g_n(\cdot, u_k ) - g_n(\cdot, u) \| +  \| g_n(\cdot, u ) - g_\infty(\cdot, u) \| \,  .
  $$
  By \eqref{estimate 2 of g_k - g_(k-1)} in Lemma \ref{estimate 1 of g_k - g_(k-1)},
  for any given $\varepsilon > 0$, there exists  $n_0 \ge 1$ so that 
  $$
   \| g_\infty(\cdot, u_k ) - g_n(\cdot, u_k) \| \le \e \,  , \qquad  \, \forall \, n \ge n_0, \quad \forall \, k \ge 1  \
  $$ 
  and
  $$
   \| g_n(\cdot, u ) - g_\infty(\cdot, u) \| \le \e \ , \qquad \forall \, n \ge n_0 \, .
  $$
  Since by item (i), $\lim_{k \to \infty} \| g_{n_0}(\cdot, u_k ) - g_{n_0}(\cdot, u) \| = 0$ 
  there exists $k_\e \ge 1$ so that
  $$
   \| g_{n_0}(\cdot, u_k ) - g_{n_0}(\cdot, u) \|  \le  \e \,  , \qquad \forall \, k \ge k_{\e} \, .
  $$
  Altogether we have proved that
  $$
  \| g_\infty(\cdot, u_k ) - g_\infty(\cdot, u) \| \le 3\e \, , \qquad \forall \, k \ge k_{\e} \, .
  $$
  Since $\e > 0$ is arbitrary, one has $\lim_{k \to \infty} g_\infty( \cdot, u_k) = g(\cdot, u)$ in $L^2_c$.
   \hfill $\square$
   
  %%%%%%%%%%%%%%%%%%%%%%%%%%%%%%%%%%%%%%%%%%%%%%%%%%%%%%%%%%%%%%%%%%%%%%%%%%%%%%%%%%%
  %%%%%%%%%%%%%%%%%%%%%%%%%%%%%%%%%%%%%%%%%%%%%%%%%%%%%%%%%%%%%%%%%%%%%%%%%%%%%%%%%%%
  
 \section{The notion of normally analytic maps}\label{normally analytic}
In this appendix, we briefly review the notion of normally analytic maps. We restrict our attention
to the setup encountered in Section \ref{Quasi-moment map}. Without any further reference, we use
the notation established in the paper.
Let $0 \le s < 1/2$, $r > 0$ and assume that $f : B^{-s}_{c,0}(r) \to \C$ is an analytic function given by its Taylor series
$f(u) = \sum_{k = 0}^\infty f^k(u)$ where
$$
f^k(u) = \sum_{|\alpha | = k} f^k_\alpha \widehat u^\alpha , \qquad f^k_\alpha \in \C, \ \ \alpha \in \N_0^\Z.
$$
Here we used the multi-index notation $\widehat u^\alpha = \prod_{n \in \Z} \widehat u(n)^{\alpha_n}$ with 
$\alpha = (\alpha_n)_{n \in \Z} \in \N_0^\Z$ and $|\alpha | = \sum_{n \in \Z} \alpha_n$.
\begin{definition}
The map $f: B^{-s}_{c,0}(r) \to \C$ is said to be normally analytic if $\underline f : B^{-s}_{c,0}(r) \to \C$ is well defined and analytic where
$$
\underline f(u) := \sum_{k=0}^\infty \underline f^k(u)\, , \qquad 
 \underline f^k(u) := \sum_{|\alpha| = k} |f^k_\alpha | \widehat u^\alpha\, .
$$
%locally uniformly converges on $B^{-s}_{c,0}(r)$.\\
Similarly, we say that a map $F= (F_k)_{k \ge 1} : B^{-s}_{c,0}(r) \to \ell^{1, 2 - 2s}(\N, \C)$
is normally analytic if for any $k \ge 1$, $F_k : B^{-s}_{c,0}(r) \to \C$ is normally analytic and 
$\underline F := (\underline{F_k})_{k \ge 1}  : B^{-s}_{c,0}(r) \to \ell^{1, 2 - 2s}(\N, \C)$ is analytic.
\end{definition}
For any $u = \sum_{n \in \Z} \widehat u(n) e^{inx} \in B^{-s}_{c,0}(r)$, let $ u_* := \sum_{n \in \Z} |\widehat u(n)| e^{inx}$. It then follows in a straightforward way
from the definition of $f : B^{-s}_{c,0}(r) \to \C$ being normally analytic that for any $u \in B^{-s}_{c,0}(r)$,
$$
| \underline f( u) | \le \underline f( u_*) 
%=  \sum_{k=0}^{\infty} \sum_{|\alpha| = k} |f^k_\alpha| |\widehat u|^\alpha 
=  \sum_{k=0}^{\infty} \sum_{|\alpha| = k} |f^k_\alpha|  \prod_{n \in \Z} |\widehat u(n)|^{\alpha_n} < \infty \, .
$$

%%%%%%%%%%%%%%%%%%%%%%%%%%%%%%%%%%%%%%%%%%%%%%%%%%%%%%
%%%%%%%%%%%%%%%%%%%%%%%%%%%%%%%%%%%%%%%%%%%%%%%%%%%%%%

\end{document}